\newcommand{\Ker}{\mathop{\mathrm{Ker}}}
\newcommand{\sgn}{\mathop{\mathrm{sgn}}} 
\newcommand{\ac}{\mathrm{ac}} 
\newcommand{\re}{\mathop{\mathrm{Re}}} 
\newcommand{\im}{\mathop{\mathrm{Im}}} 
\newcommand{\tr}{\mathop{\mathrm{Tr}}}
\newcommand{\supp}{\mathop{\mathrm{supp}}}
\DeclareMathOperator{\slim}{s-lim}
\newcommand{\N}{\mathbb{N}} 
\newcommand{\Z}{\mathbb{Z}}
\newcommand{\R}{\mathbb{R}} 
\newcommand{\C}{\mathbb{C}}
\numberwithin{equation}{section}
\theoremstyle{plain}
\newtheorem{thm}{Theorem}[section]
\newtheorem{proposition}[thm]{Proposition}
\newtheorem{lemma}[thm]{Lemma} 
\newtheorem{corollary}[thm]{Corollary}
\theoremstyle{definition} 
\newtheorem{defn}[thm]{Definition} 
\newtheorem{example}[thm]{Example}
 \newtheorem{remark}[thm]{Remark}
 \newtheorem*{remarks*}{Remarks}
\newtheorem*{remark*}{Remark}
\title{Uniform resolvent and orthonormal Strichartz estimates for repulsive Hamiltonian}
\author{Akitoshi Hoshiya\thanks{Graduate School of Mathematical Sciences, The University of Tokyo, 3-8-1 Komaba, Meguro-ku, Tokyo 153-8914, Japan \\
 Email address: hoshiya@ms.u-tokyo.ac.jp}}
\begin{document}
\maketitle

\begin{abstract}
We consider the uniform resolvent and orthonormal Strichartz estimates for the Schr\"odinger operator. First we prove the Keel-Tao type theorem for the orthonormal Strichartz estimates, which means that the dispersive estimates yield the orthonormal Strichartz estimates for strongly continuous unitary groups. This result applies to many Schr\"odinger propagators which are difficult to treat by the smooth perturbation theory, for example, local-in-time estimates for the Schr\"odinger operator with unbounded electromagnetic potentials, the $(k, a)$-generalized Laguerre operators and global-in-time estimates for the Schr\"odinger operator with scaling critical magnetic potentials including the Aharonov-Bohm potentials. Next we observe mapping properties of resolvents for the repulsive Hamiltonian and apply to the orthonormal Strichartz estimates. We prove the Kato-Yajima type uniform resolvent estimates with logarithmic decaying weight functions. This is new even when without perturbations. The proof is dependent on the microlocal analysis and the Mourre theory. We also discuss mapping properties on the Schwartz class and the Lebesgue space.  
\end{abstract}

\section{Introduction}\label{23110221}
The Strichartz estimates have been studied by many authors for dispersive and hyperbolic equations. For the Schr\"odinger equation:
\[
\left\{
\begin{array}{l}
i\partial_t u= Hu, \\
u(0)=u_0,
\end{array}
\right.
\]    
where $H$ is a self-adjoint operator on $L^2 (\R^d)$, they are inequalities like
\begin{align}
\|e^{-itH} P_{\ac} (H)u\|_{L^q _t L^r _x} \lesssim \|u\|_2 \label{2406221304}
\end{align} 
where $L^q _t L^r _x = L^q (\R_t; L^r (\R^d _x))$ and $q, r \in [1, \infty]$ satisfy certain conditions. For the free Hamiltonian $H=-\Delta$, (\ref{2406221304}) is proved for any $\frac{d}{2}$ admissible pair $(q, r)$ (see Notations below) by \cite{St}, \cite{GV}, \cite{Y1} and \cite{KT}. Afterwards, (\ref{2406221304}) is extended to the Schr\"odinger operator $H=-\Delta +V$, for example, by \cite{BPST}, \cite{BM}, \cite{D2}, \cite{DF}, \cite{EGS}, \cite{M1}, \cite{M2}, \cite{RS} and \cite{Ta1} (see also \cite{GVV} for a negative result). Furthermore, \cite{D2}, \cite{DF}, \cite{EGS} and \cite{GYZZ} proved (\ref{2406221304}) for the magnetic Schr\"odinger operator. See also \cite{BGT}, \cite{BGH}, \cite{BT1}, \cite{BT2}, \cite{MMT}, \cite{M3}, \cite{M4}, \cite{MT}, \cite{MY1} and \cite{Ta2} for the Strichartz estimates on manifolds or for variable coefficient operators. Note that the Strichartz estimates for the higher order or fractional operators are proved in \cite{FSWY}, \cite{MY1} and \cite{MY2}. Recently (\ref{2406221304}) for $-\Delta$ is extended to systems of orthonormal functions:
\begin{align}
\left \| \sum_{j=0}^ \infty{\nu_j |e^{it\Delta}f_j|^2} \right\|_{L^{q/2} _t L^{r/2} _x} \lesssim \| \nu\|_{l^{\beta}}.
\label{2406221341}
\end{align}
where $\{f_j\}$ is any orthonormal system in $L^2 (\R^d)$ and $\nu = \{\nu_j\}$ is any complex-valued sequence. This is called the orthonormal Strichartz estimate. (\ref{2406221341}) first appeared in \cite{FLLS} and extended in \cite{FS}, \cite{BHLNS}, \cite{BKS2} and \cite{BLN2}. See also \cite{FS}, \cite{BKS1} and \cite{BLN} for the wave or Klein-Gordon equations. The first purpose of this paper is to extend (\ref{2406221341}) for various operators. In the author's previous research, \cite{H1}, (\ref{2406221341}) is extended to the Schr\"odinger operator with scaling critical electric potentials and magnetic Schr\"odinger operator with very short range potentials by the perturbation method based on the smooth perturbation theory. This also works for the wave, Klein-Gordon and Dirac equations with potentials (see \cite{H2}). See \cite{Ms}, \cite{MS} and \cite{SBMM} for the orthonormal Strichartz estimates for the Dunkl Laplacian or $(k, a)$-generalized Laguerre operator. In the case of the ordinary Strichartz estimates, thanks to the Keel-Tao theorem (\cite{KT}), (\ref{2406221304}) is reduced to prove the (microlocal) dispersive estimates. This makes it possible to prove the Strichartz estimates for the Schr\"odinger operator with slowly decaying potentials (\cite{M1}), which are difficult to prove by the perturbation method invented in \cite{RS}. On the other hand, proofs for the orthonormal Strichartz estimates have been based on the duality principle invented in \cite{FS} since sufficient conditions like the Keel-Tao theorem have not been proved. The first result in this paper is the Keel-Tao type theorem for the orthonormal Strichartz estimates, which is a refinement of the Keel-Tao theorem for strongly continuous unitary groups.
\begin{thm}\label{2405021538}
Let $\mathcal{H} =L^2 (X)$ and $H$ be a self-adjoint operator on $\mathcal{H}$, where $X$ is a $\sigma$-finite measure space. Furthermore we assume there exist bounded Borel measurable function $\phi$ and $\sigma >0$ satisfying
\[\|e^{-itH} |\phi|^2 (H)\|_{1 \rightarrow \infty} \lesssim |t|^{-\sigma}\]
for all $t \in \R \setminus \{0\}$. If $\beta >0$ and $(q, r)$ is a $\sigma$-admissible pair (see Notations below) satisfying either of the following:
\begin{enumerate}
\item
If $\sigma \ge 1$, $(r, \beta)$ satisfies $r \in [2, \frac{2(2\sigma +1)}{2\sigma -1})$ and $\beta = \frac{2r}{r+2}$ or $r \in [\frac{2(2\sigma +1)}{2\sigma -1}, \frac{2\sigma}{\sigma -1})$ and $\beta < \frac{q}{2}$,
\item
If $\sigma \in (\frac{1}{2}, 1)$, $(r, \beta)$ satisfies $r \in [2, \frac{2(2\sigma +1)}{2\sigma -1})$ and $\beta = \frac{2r}{r+2}$ or $r \in [\frac{2(2\sigma +1)}{2\sigma -1}, \infty)$ and $\beta < \frac{r(2\sigma -1)}{r(2\sigma -1) -2}$,
\item
If $\sigma \in (0, \frac{1}{2}]$, $(r, \beta)$ satisfies $r \in [2, \infty)$ and $\beta = \frac{2r}{r+2}$,
\end{enumerate}
then we have
\begin{align}
\left\| \sum_{j=0}^{\infty} \nu_j |e^{-itH} \phi (H) f_j|^2 \right\|_{L^{\frac{q}{2}} _t  L^{\frac{r}{2}} _x} \lesssim \|\nu\|_{l^{\beta}} \label{2405021614}
\end{align}
for all orthonormal systems $\{f_j\}$ in $\mathcal{H}$ and complex-valued sequences $\nu = \{\nu _j\}$.
\end{thm}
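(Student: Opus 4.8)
The plan is to reduce (\ref{2405021614}) to a Schatten-norm bound by the Frank--Sabin duality principle, and then to prove that bound by inserting Schatten norms into the $TT^{*}$ machinery of Keel--Tao. First, by the duality principle (\cite{FS}; in the space--time mixed-norm form needed here, see also \cite{BHLNS}), inequality (\ref{2405021614}) with a given $\beta$ is equivalent to
\[
\bigl\| V(t,x)^{1/2}\, k_{t-s}(x,y)\, V(s,y)^{1/2} \bigr\|_{\mathfrak{S}^{\beta'}\!\bigl(L^{2}(\R_{t}\times X)\bigr)} \;\lesssim\; \|V\|_{L^{(q/2)'}_{t}L^{(r/2)'}_{x}}, \qquad V\ge 0,
\]
where $k_{\tau}(x,y)$ is the integral kernel of $e^{-i\tau H}|\phi|^{2}(H)$ on $L^{2}(X)$ — a genuine bounded kernel exactly because $\|e^{-i\tau H}|\phi|^{2}(H)\|_{1\to\infty}<\infty$ — and the left side is the $\mathfrak{S}^{\beta'}$-norm of the operator on $L^{2}(\R_{t}\times X)$ with that kernel. (This uses $(e^{-itH}\phi(H))(e^{-isH}\phi(H))^{*}=e^{-i(t-s)H}|\phi|^{2}(H)$.) For $\beta=1$, i.e. $\beta'=\infty$, this is precisely the $TT^{*}$ form of the ordinary inhomogeneous Strichartz estimate for $e^{-itH}\phi(H)$, which holds for every $\sigma$-admissible $(q,r)$ by the Keel--Tao theorem, since $\|e^{-i\tau H}|\phi|^{2}(H)\|_{2\to2}\lesssim1$ and $\|e^{-i\tau H}|\phi|^{2}(H)\|_{1\to\infty}\lesssim|\tau|^{-\sigma}$. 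So the whole point is to allow $\beta>1$.

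To get $\beta>1$ I would interpolate, within the Schatten scale, this $\mathfrak{S}^{\infty}$ estimate against a Hilbert--Schmidt estimate coming straight from the dispersive bound. Indeed, using $|k_{\tau}(x,y)|\lesssim|\tau|^{-\sigma}$ a.e.,
\[
\bigl\| V^{1/2}k_{t-s}V^{1/2}\bigr\|_{\mathfrak{S}^{2}}^{2} \;=\; \iint V(t,x)\,|k_{t-s}(x,y)|^{2}\,V(s,y)\,dx\,dy\,dt\,ds \;\lesssim\; \iint |t-s|^{-2\sigma}\,\|V(t)\|_{L^{1}_{x}}\,\|V(s)\|_{L^{1}_{x}}\,dt\,ds .
\]
For $\sigma\in(0,\tfrac12)$ the Hardy--Littlewood--Sobolev inequality in $t$ makes the last expression $\lesssim\|V\|_{L^{1/(1-\sigma)}_{t}L^{1}_{x}}^{2}$; for $\sigma\ge\tfrac12$ one first decomposes $|t-s|\sim 2^{k}$, estimates each piece by the pointwise kernel bound on the long-time pieces and by $\|e^{-i\tau H}|\phi|^{2}(H)\|_{2\to2}\lesssim1$ on the short-time pieces, and keeps the powers $2^{k\theta}$. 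Feeding these Hilbert--Schmidt bounds, together with the trivial energy bound $\|V^{1/2}e^{-itH}\phi(H)\|_{\mathfrak{S}^{\infty}}\le\|\phi\|_{\infty}\|V\|_{L^{\infty}_{t,x}}^{1/2}$ and the classical Strichartz estimates, into Stein's complex interpolation theorem — applied to analytic families obtained by raising the weight $V$ to complex powers — should produce $\mathfrak{S}^{\beta'}$ estimates at $\sigma$-admissible $(q,r)$ for a range of $\beta$, with each dyadic piece carrying a factor $2^{k\gamma}$, $\gamma=\gamma(\sigma,r,\beta)$ computable from the interpolation parameters.

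It then remains to sum over $k\in\Z$, and this is where the trichotomy of the statement appears. The Hilbert--Schmidt input alone sums at the large scales iff $\sigma>\tfrac12$ and at the small scales iff $\sigma<\tfrac12$, so in every case one must interpolate against the operator-norm estimate to repair the bad end; when $\sigma\ge1$ both ends can be repaired for a whole interval of $\beta$ (up to $q/2$), when $\sigma\in(\tfrac12,1)$ only for $\beta<\frac{r(2\sigma-1)}{r(2\sigma-1)-2}$, and when $\sigma\le\tfrac12$ only at the single value $\beta=\frac{2r}{r+2}$; the transition exponent $r=\frac{2(2\sigma+1)}{2\sigma-1}$ separating $\beta=\frac{2r}{r+2}$ from the larger range is exactly the Stein--Tomas exponent attached to $\sigma$. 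On the borderline $\beta=\frac{2r}{r+2}$ the exponent $\gamma$ vanishes and the geometric series just fails to converge; there I would replace the crude summation by the refined endpoint argument of Keel--Tao performed in the Schatten setting, separating the dyadic scales with a Lorentz-space improvement (equivalently, an atomic decomposition of $V$) so that only a summable remainder is left.

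The main obstacle is making the Schatten-valued interpolation and this endpoint summation rigorous. One works with $\mathfrak{S}^{\beta'}$ for $\beta'$ on both sides of $2$: when $\beta'>2$ the Minkowski-type inequality that would let one bound the space--time operator by the $L^{p}_{t,s}$-norm of its $(t,s)$-slices on $L^2(X)$ runs the unfavourable way, so the dyadic pieces must be handled as genuinely global operators on $L^{2}(\R_{t}\times X)$, and one has to verify that Stein interpolation applies to bona fide analytic families of compact operators. That, together with the endpoint case, is where essentially all of the difficulty lies; the remaining steps are routine applications of Hölder, Young and Hardy--Littlewood--Sobolev.
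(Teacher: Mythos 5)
You correctly reduce to a Schatten bound via Frank--Sabin duality and identify that the theorem should come from a complex interpolation between an $\mathfrak{S}^\infty$ estimate and an $\mathfrak{S}^2$ estimate, but the analytic family you propose --- raising only the weight $V$ to complex powers while keeping the $TT^*$ operator fixed --- does not close, for two reasons. First, the ``trivial energy bound'' $\|V^{1/2}e^{-itH}\phi(H)\|_{\mathfrak{S}^\infty}\le\|\phi\|_\infty\|V\|_{L^\infty_{t,x}}^{1/2}$ is false: $L^2_x\to L^2_{t,x}$ boundedness of $V^{1/2}e^{-itH}\phi(H)$ costs $\|V\|_{L^1_tL^\infty_x}^{1/2}$, not $\|V\|_{L^\infty_{t,x}}^{1/2}$, so this is the wrong endpoint weight class. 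Second, and more seriously, for $\sigma\ge\tfrac12$ the Hilbert--Schmidt computation with the physical kernel $|t-s|^{-\sigma}$ diverges near $t=s$, so there is no global $\mathfrak{S}^2$ estimate on any line $\re z=\mathrm{const}$ for a weight-only family. This is exactly why you fall back on a dyadic decomposition of $|t-s|$; but the endpoint summation at $\beta=\frac{2r}{r+2}$ --- the hard case, from which everything else in the trichotomy follows by interpolation --- is only sketched, and it is not routine.

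The paper's Proposition~\ref{2405021420} avoids both problems at once by putting the fractional power on the \emph{kernel}, not just on the weight. The analytic family is
\begin{align*}
T_zF(s)=\int_\R i\bigl(e^{iz\pi/2}(t-s)_+^{-z-1}-e^{-iz\pi/2}(s-t)_+^{-z-1}\bigr)\,e^{-i(t-s)H}|\phi|^2(H)\,F(t)\,dt ,
\end{align*}
which under $\mathcal F_t$ is the spectral multiplier $\Gamma(z+1)^{-1}(\tau-H)_+^{z}|\phi|^2(H)$. On $\re z=0$ this multiplier has modulus bounded by a constant depending only on $\im z$, so $T_{ik}$ is bounded on $L^2_{t,x}$: that is the correct $\mathfrak{S}^\infty$ endpoint, obtained from the spectral theorem, not from Strichartz, and needing no positive power of $V$. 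On $\re z=-\tilde r/2$ the effective time kernel is softened to $|t-s|^{\tilde r/2-1-\sigma}$, which lies in the Hardy--Littlewood--Sobolev range precisely when $1+2\sigma<\tilde r<2+2\sigma$, so the Hilbert--Schmidt computation closes in one stroke for every $\sigma>0$, with no dyadic decomposition and with the Lorentz improvement in $t$ (hence the endpoint $\beta=\frac{2r}{r+2}$) produced automatically. Stein interpolation of $S_z=W_1^{-z}T_zW_2^{-z}$ then gives the $\mathfrak{S}^{\tilde r}$ bound at $z=-1$, and the remaining ranges of $(r,\beta)$ in the theorem are obtained by interpolating this against the $\ell^1$ estimate --- that is the only place Keel--Tao itself enters.
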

\begin{remark}\label{2405050957}
Under the conditions in Theorem \ref{2405021538} we have
\begin{align*}
\|e^{-itH} \phi (H)\|_{\mathcal{B} (\mathcal{H})} \lesssim 1 \quad and \quad \|e^{-isH} \phi (H) (e^{-itH} \phi (H))^* \|_{1 \to \infty} \lesssim |t-s|^{-\sigma}
\end{align*}
for all $t, s \in \R$ satisfying $t \ne s$. Then by the Keel-Tao theorem (\cite{KT}) we have the ordinary Strichartz estimates:
\begin{align}
&\|e^{-itH} \phi (H) u\|_{L^{q} _t L^r _x} \lesssim \|u\|_{\mathcal{H}} \label{2405051013}, \\
&\left\|\int_{0}^{t} e^{-i(t-s)H} |\phi |^2 (H) F(s) ds \right\|_{L^q _t L^r _x} \lesssim \|F\|_{L^{\tilde{q}'} _t L^{\tilde{r}'} _x} \notag
\end{align}
for all $\sigma$-admissible pairs $(q, r)$ and $(\tilde{q}, \tilde{r})$. We note that (\ref{2405051013}) is equivalent to
\begin{align}
\left\| \sum_{j=0}^{\infty} \nu_j |e^{-itH} \phi (H) f_j|^2 \right\|_{L^{\frac{q}{2}} _t  L^{\frac{r}{2}} _x} \lesssim \|\nu\|_{l^{1}} \label{2405051024}
\end{align} 
which is weaker than (\ref{2405021614}). Though the endpoint is excluded in Theorem \ref{2405021538}, it is proved in \cite{FS2} that if $H=-\Delta$, (\ref{2405051024}) cannot be refined, i.e. the $l^1$ norm in the right hand side cannot be changed to $l^{\beta}$ norm for $\beta >1$.   
\end{remark}
The next corollary applies to many Schr\"odinger type operators, for example, with unbounded potentials, which cannot be regarded as perturbations. 
\begin{corollary}\label{2405021556}
Let $I \subset \R$, $\mathcal{H} =L^2 (X)$ and $H$ be a self-adjoint operator on $\mathcal{H}$, where $X$ is a $\sigma$-finite measure space. Furthermore we assume there exist bounded Borel measurable function $\phi$ and $\sigma >0$ satisfying
\[\|e^{-i(t-s)H} |\phi|^2 (H)\|_{1 \rightarrow \infty} \lesssim |t-s|^{-\sigma}\]
for all $t, s \in I$ with $t \ne s$. Then we have
\begin{align}
\left\| \sum_{j=0}^{\infty} \nu_j |e^{-itH} \phi (H) f_j|^2 \right\|_{L^{\frac{q}{2}} (I; L^{\frac{r}{2}} _x )} \lesssim \|\nu\|_{l^{\beta}} \label{2405021618}
\end{align}
for all orthonormal systems $\{f_j\}$ in $\mathcal{H}$, complex-valued sequences $\nu = \{\nu _j\}$ and $(q, r, \beta)$ as in Theorem \ref{2405021538}.
\end{corollary}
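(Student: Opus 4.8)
The plan is to re-run the proof of Theorem \ref{2405021538} with the time line $\R$ replaced everywhere by the set $I$. The point is that in that proof the dispersive hypothesis enters only as a pointwise bound on the integral kernel of $e^{-i(t-s)H}|\phi|^2(H)$, and only at time-pairs $(t,s)$ lying in the domain over which one integrates; every other ingredient is insensitive to whether this domain is $\R$ or $I$. Thus I would first invoke the Frank--Sabin duality principle to reduce \eqref{2405021618} to an estimate, in a Schatten norm on $L^2(I\times X)$, for the space-time operator $\overline{V}\,(TT^*)\,V$, where $T\colon f\mapsto e^{-itH}\phi(H)f$ is viewed as a map $\mathcal{H}\to L^q(I;L^r_x)$, the operator $TT^*$ acts by $F\mapsto \int_I e^{-i(t-s)H}|\phi|^2(H)F(s)\,ds$, and $V$ ranges over the appropriate time-space Lebesgue class on $I\times X$; since $(I,dt)$ is $\sigma$-finite, the duality principle applies verbatim.

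Next I would produce this Schatten estimate by complex interpolation between two endpoints, exactly as in the proof of Theorem \ref{2405021538}. At the ``trivial'' endpoint one needs only the operator-norm bound coming from $\|e^{-itH}\phi(H)\|_{\mathcal{B}(\mathcal{H})}\le\|\phi\|_\infty$, which is a consequence of unitarity of $e^{-itH}$ and the spectral theorem --- \emph{not} of any dispersive input --- and hence holds regardless of $I$. At the Hilbert--Schmidt endpoint one computes the Hilbert--Schmidt norm of the $I$-truncated kernel and controls it by $\|e^{-i(t-s)H}|\phi|^2(H)\|_{1\to\infty}\lesssim|t-s|^{-\sigma}$ for $t,s\in I$, which is precisely the hypothesis of the corollary. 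Off the admissible endpoint the passage to the retarded operator is effected by the Christ--Kiselev lemma, which is valid over an arbitrary measurable time set; and the sub-endpoint ranges with $\beta<q/2$ are reached by H\"older's inequality in $t$ combined with a slightly larger admissible exponent, again a one-variable manipulation carried out on $I$. Collecting these, the same chain of estimates delivers \eqref{2405021618} for all $(q,r,\beta)$ as in Theorem \ref{2405021538}.

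As a sanity check, and as an alternative route when $I$ is bounded, one can instead cover $I$ by the pieces $I\cap[k,k+1)$, $k\in\Z$ (finitely many when $I$ is bounded), apply the already established estimate on each piece --- the dispersive bound being available there since $|t-s|\le 1$ --- and sum the $L^{q/2}_t$ norms, the finite number of pieces being absorbed into the implied constant. When $I$ is unbounded this covering is of no help, but then the hypothesis already supplies dispersive decay at every time-difference that can occur, so the direct argument above runs without modification.

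The main obstacle is bookkeeping rather than a new idea: one must check that no step in the proof of Theorem \ref{2405021538} silently uses an estimate derived from the \emph{global} dispersive bound beyond the kernel bound itself --- in particular that the $L^2$-boundedness invoked is the elementary one above --- and that the constants in the duality principle, in the Schatten (Stein) interpolation, and in the Christ--Kiselev lemma do not depend on the ambient $\sigma$-finite measure space, so that replacing $(\R,dt)$ by $(I,dt)$ is legitimate. Once this is verified the corollary follows at once.
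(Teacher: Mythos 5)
Your high-level plan — localize to $I$ and re-run the Stein interpolation from Proposition \ref{2405021420} via the duality principle — is the right idea and matches the paper's strategy, but the claim that one can \emph{verbatim} replace $\R$ by $I$ in that proof is where a real gap appears. Step 2 of the proof of Proposition \ref{2405021420} defines the analytic family $\tilde{T}_z$ through the time-Fourier transforms $\mathcal{F}_t,\mathcal{F}^*_\tau$ over all of $\R$, and the subsequent identification $\langle 2\pi\tilde{T}_z G,J\rangle=\langle T_z G,J\rangle$ as well as the verification that $\{T_z\}$ is an analytic family in the sense of Stein are carried out on the full line. This machinery does not transfer to a proper subinterval $I$ by mere substitution, since the Fourier transform and the fractional-integral identity $\mathcal{F}\bigl[\tau^z_+/\Gamma(z+1)\bigr](t)$ are inherently defined on $\R$. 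The paper instead keeps all operators on the full line and multiplies by smooth cutoffs: it works with $\chi_I(t)\,T_z\,\chi_I(t)$ and $\chi_I(t)\,\tilde{T}_z\,\chi_I(t)$ where $\chi_I\in C^\infty_0(\R)$ equals $1$ on $[a+\epsilon,b-\epsilon]$ and is supported in $I$. Because $\chi_I(t)\chi_I(s)\ne 0$ forces $t,s\in I$, only the kernel values on $I\times I$ enter the Hilbert--Schmidt endpoint computation, which is precisely where the corollary's weaker dispersive hypothesis is needed; the $\mathfrak{S}^\infty$ endpoint uses only unitarity. One then sends $\epsilon\to 0$ and uses monotone convergence to pass from $|\chi_I(t)|^2$ to the indicator of $I$. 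This cutoff device is the essential technical point your write-up omits, and it is what makes ``localizing the time line'' compatible with the Fourier-analytic construction of the analytic family.

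Two smaller inaccuracies: the Christ--Kiselev lemma plays no role in this proof (there is no retarded operator to handle; $T_{-1}$ already equals $2\,(e^{isH}\phi(H))(e^{isH}\phi(H))^*$ by \eqref{2405042010} and feeds directly into the duality principle), so invoking it is a red herring. And your alternative covering argument for bounded $I$ is circular as stated: to apply ``the already established estimate'' on each piece $I\cap[k,k+1)$ you would still need the local-in-time orthonormal Strichartz estimate on a short interval, which is exactly what the corollary is proving — the covering argument in Example \ref{2405051203} is used \emph{after} Corollary \ref{2405021556} to glue together intervals on which the dispersive estimate holds, not to establish it.
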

Now we give applications of Theorem \ref{2405021538}. In the next corollary we consider the Schr\"odinger operator with the Aharonov-Bohm potential defined by the Friedrichs extension.   
\begin{corollary}\label{2405080015}
Let $H_{AB} := \left(D_x + \frac{A(\hat{x})}{|x|} \right)^2$ with $A(\hat{x}) := \alpha \left( -\frac{x_2}{|x|}, \frac{x_1}{|x|} \right), \alpha \ne 0$ be the Schr\"odinger operator with the Aharonov-Bohm potential, where $\hat{x}= \frac{x}{|x|} \in \mathbb{S}^1$. Then
\begin{align*}
\left\| \sum_{j=0}^{\infty} \nu_j |e^{-itH_{AB}} f_j|^2 \right\|_{L^{\frac{q}{2}} _t  L^{\frac{r}{2}} _x} \lesssim \|\nu\|_{l^{\beta}}
\end{align*}
holds for $(q, r, \beta)$ satisfying the assumption in Theorem \ref{2405021538} with $\sigma =1$.
\end{corollary}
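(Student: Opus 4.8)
The plan is to derive the corollary from Theorem \ref{2405021538} applied with the trivial spectral cutoff $\phi \equiv 1$ and exponent $\sigma = 1$. With this choice one has $|\phi|^2(H_{AB}) = \mathrm{Id}$ and $\phi(H_{AB}) = \mathrm{Id}$, so the hypothesis of Theorem \ref{2405021538} reduces to the single global-in-time dispersive bound
\[
\|e^{-itH_{AB}}\|_{L^1(\R^2) \to L^\infty(\R^2)} \lesssim |t|^{-1}, \qquad t \in \R \setminus \{0\},
\]
and its conclusion (\ref{2405021614}) becomes exactly the inequality claimed for $e^{-itH_{AB}}$, for every triple $(q,r,\beta)$ satisfying the assumption of Theorem \ref{2405021538} with $\sigma = 1$.

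First I would record why this dispersive estimate is available. The operator $H_{AB}$, realized as the Friedrichs extension of the quadratic form $u \mapsto \int_{\R^2}|(D_x + A(\hat x)/|x|)u|^2\,dx$ on $C_c^\infty(\R^2\setminus\{0\})$, is a nonnegative self-adjoint operator which is homogeneous of degree $-2$ under the dilations $u(x) \mapsto u(\lambda x)$, exactly like $-\Delta$ on $\R^2$; in particular $e^{-itH_{AB}}$ is a strongly continuous unitary group by Stone's theorem. The kernel of $e^{-itH_{AB}}$ can be computed by separation of variables in polar coordinates: decomposing along the Fourier modes $e^{ik\theta}$ on $\mathbb{S}^1$ turns $H_{AB}$ into an orthogonal sum of Bessel-type operators $-\partial_r^2 - r^{-1}\partial_r + (k+\alpha)^2 r^{-2}$ on $L^2(r\,dr)$, whose propagators are Hankel transforms with explicit oscillatory Bessel kernels; summing the resulting series and using uniform bounds for Bessel functions yields the $|t|^{-1}$ decay. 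This is precisely the time-decay estimate for scaling critical electromagnetic Schr\"odinger flows established by Fanelli, Felli, Fontelos and Primo, which in the pure two-dimensional magnetic case gives $\|e^{-itH_{AB}}\|_{L^1\to L^\infty} \lesssim |t|^{-1}$.

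Granting the dispersive bound, the proof is immediate: the hypotheses of Theorem \ref{2405021538} hold with $\phi \equiv 1$ and $\sigma = 1$, and its conclusion is the asserted orthonormal Strichartz estimate. The only real obstacle is the dispersive estimate itself, which is quoted rather than reproved here; the content of the corollary is that Theorem \ref{2405021538} promotes such an estimate --- for an operator with a genuinely singular (scaling critical) magnetic potential, outside the reach of smooth perturbation theory --- to the orthonormal setting at no extra cost.
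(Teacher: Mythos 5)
Your argument is correct and follows the paper's own route: the paper proves Corollary~\ref{2405080015} in Example~\ref{2405072241} by quoting the dispersive bound $\|e^{-itH_{A,a}}\|_{1\to\infty}\lesssim |t|^{-1}$ from \cite{FFFP} and feeding it into Theorem~\ref{2405021538} with $\phi\equiv 1$ and $\sigma=1$, exactly as you do. Your sketch of how the decay estimate arises from the Hankel/Bessel decomposition is fine supplementary context, but, just as in the paper, the estimate is ultimately cited rather than rederived.
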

The above example is the orthonormal Strichartz estimates for the Schr\"odinger operator with scaling critical magnetic potentials. Due to the lack of the Kato-smoothness of $|D|^{\frac{1}{2}} |x|^{-\frac{1}{2}}$, it is difficult to prove the orthonormal Strichartz estimates by the perturbation method. However since the dispersive estimates are known, we can prove Corollary \ref{2405080015}. We give details in Example \ref{2405072241} for Corollary \ref{2405080015} and more general scaling critical magnetic potentials. As applications of Corollary \ref{2405021556}, we prove the orthonormal Strichartz estimates for the Schr\"odinger operator with unbounded electromagnetic potentials in Example \ref{2405051203} and for the $(k, a)$-generalized Laguerre operator in Example \ref{2405061101}. As far as the author knows, this would be the first result on the Schr\"odinger operator with general unbounded potentials and $(k, a)$-generalized Laguerre operator with general $a \in (0, 2]$, though the ordinary Strichartz estimates are known. The author believes that Theorem \ref{2405021538} and Corollary \ref{2405021556} would be useful when we consider the Schr\"odinger operator with slowly decaying potentials such as Coulomb potentials or on manifolds. We refer to \cite{M1}, \cite{BT1} and \cite{BT2} where the Keel-Tao theorem is indispensable to prove the ordinary Strichartz estimates for such Hamiltonians. See also Remark \ref{2405080047} for further comments and examples including the orthonormal Strichartz estimates on manifolds. Next we consider the refined Strichartz estimates.
\begin{corollary}\label{2405091036}
Let $H_{AB}$ be as in Corollary \ref{2405080015}. Then we have
\begin{align*}
\|e^{-itH_{AB}} u\|_{L^q _t L^r _x} \lesssim \|u\|_{\dot{B}^0 _{2, 2\beta} (\sqrt{H_{AB}})}
\end{align*}
for all $(q, r, \beta)$ as in Corollary \ref{2405080015}. See Corollary \ref{2405081750} for the definition of $\dot{B}^0 _{2, 2\beta} (\sqrt{H_{AB}})$.
\end{corollary}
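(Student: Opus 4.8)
The plan is to derive Corollary~\ref{2405091036} from the orthonormal Strichartz estimate of Corollary~\ref{2405080015} by combining a Littlewood--Paley decomposition adapted to $\sqrt{H_{AB}}$ with an almost--orthogonality argument. Fix a dyadic partition of unity $\psi\in C_c^\infty((0,\infty))$ with $\sum_{k\in\Z}\psi(2^{-k}\lambda)=1$ for $\lambda>0$, and put $P_k:=\psi(2^{-k}\sqrt{H_{AB}})$ and $u_k:=P_ku$. Since $H_{AB}$ is nonnegative with purely absolutely continuous spectrum $[0,\infty)$, we have $\sum_kP_k=I$ strongly on $L^2$, and $e^{-itH_{AB}}$ commutes with every $P_k$. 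Writing $G(t):=e^{-itH_{AB}}u$ and $g_k(t):=P_kG(t)=e^{-itH_{AB}}u_k$, so that $\{g_k(t)\}_k$ is the Littlewood--Paley decomposition of $G(t)$, the first step is the square function reduction: for $1<r<\infty$ the homogeneous Littlewood--Paley inequality for $H_{AB}$ (part of the theory underlying Corollary~\ref{2405081750}) gives, at each fixed $t$, $\|G(t)\|_{L^r_x}\lesssim\|(\sum_k|g_k(t)|^2)^{1/2}\|_{L^r_x}$, hence $\|G(t)\|_{L^r_x}^2\lesssim\|\sum_k|g_k(t)|^2\|_{L^{r/2}_x}$; taking the $L^{q/2}_t$ norm yields
\[
\|e^{-itH_{AB}}u\|_{L^q_tL^r_x}^2\ \lesssim\ \Big\|\sum_{k\in\Z}|g_k|^2\Big\|_{L^{q/2}_tL^{r/2}_x}.
\]
(When $r=2$ one has $\beta=1$ and the asserted inequality is just the ordinary Strichartz estimate of Remark~\ref{2405050957}, which also serves as a consistency check, so one may as well assume $r>2$.)

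The second step controls the right--hand side by Corollary~\ref{2405080015}. Decompose $\Z=\bigsqcup_{j=0}^{2}S_j$ with $S_j:=\{k\in\Z:k\equiv j\ (\mathrm{mod}\ 3)\}$. For distinct $k,k'\in S_j$ the supports of $\psi(2^{-k}\cdot)$ and $\psi(2^{-k'}\cdot)$ are disjoint up to a set of spectral measure zero, so $P_kP_{k'}=0$ and $\{u_k/\|u_k\|_2:k\in S_j,\ u_k\neq0\}$ is an orthonormal system in $L^2$. Applying Corollary~\ref{2405080015} to this system with the coefficient sequence $\nu_k=\|u_k\|_2^2$ (and using that $e^{-itH_{AB}}$ commutes with $P_k$) gives $\|\sum_{k\in S_j}|g_k|^2\|_{L^{q/2}_tL^{r/2}_x}\lesssim(\sum_{k\in S_j}\|u_k\|_2^{2\beta})^{1/\beta}$. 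Summing over $j\in\{0,1,2\}$, the triangle inequality in $L^{q/2}_tL^{r/2}_x$ (legitimate since $q/2,r/2\ge1$) on the left and $\sum_{j}a_j^{1/\beta}\le 3\big(\sum_ja_j\big)^{1/\beta}$ on the right give
\[
\Big\|\sum_{k\in\Z}|g_k|^2\Big\|_{L^{q/2}_tL^{r/2}_x}\ \lesssim\ \Big(\sum_{k\in\Z}\|u_k\|_2^{2\beta}\Big)^{1/\beta}\ =\ \|u\|_{\dot B^0_{2,2\beta}(\sqrt{H_{AB}})}^2 .
\]
Combined with the square function reduction this proves the estimate for $u$ in the dense class of data with finitely many nonzero Littlewood--Paley pieces, and a routine limiting argument extends it to all $u\in\dot B^0_{2,2\beta}(\sqrt{H_{AB}})$.

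I expect the main work to lie in the Littlewood--Paley theory for $\sqrt{H_{AB}}$ rather than in the argument above: one must justify $\|f\|_{L^r}\sim\|(\sum_k|P_kf|^2)^{1/2}\|_{L^r}$ for $1<r<\infty$ --- in particular the nontrivial lower bound used in the first step --- as well as the fact that the norm defining $\dot B^0_{2,2\beta}(\sqrt{H_{AB}})$ is independent of $\psi$ and reduces to $\|\cdot\|_{L^2}$ when $\beta=1$. This rests on a H\"ormander--type functional calculus for $H_{AB}$ on $L^r(\R^2)$, which follows from the pointwise Gaussian bound $0\le e^{-tH_{AB}}(x,y)\le(4\pi t)^{-1}e^{-|x-y|^2/(4t)}$; the latter is in turn a consequence of Kato's diamagnetic inequality $|e^{-tH_{AB}}f|\le e^{t\Delta}|f|$, valid for the Friedrichs extension $H_{AB}$. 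Once this heat kernel bound is in hand the remaining ingredients are classical, and --- importantly --- the magnetic singularity causes no new difficulty in this step, since all of the relevant dispersive information (namely $\sigma=1$) has already been absorbed into Corollary~\ref{2405080015}.
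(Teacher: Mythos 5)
Your proof is correct and follows essentially the same route the paper takes for its Corollary~\ref{2405081750}, of which the present statement is the special case $a\equiv0$ (where $H_{A,a}=H_{A,0}=H_{AB}$, so the paper's additional wave-operator step becomes vacuous). In both arguments the key steps are the square-function reduction $\|G(t)\|_{L^r_x}^2\lesssim\|\sum_k|P_kG(t)|^2\|_{L^{r/2}_x}$, justified by the Littlewood--Paley/Triebel--Lizorkin theory that the Gaussian heat-kernel bound for $H_{AB}$ makes available, a splitting of $\Z$ into finitely many residue classes to achieve orthogonality of $\{u_k/\|u_k\|_2\}$, and an application of the orthonormal Strichartz estimate of Corollary~\ref{2405080015} with $\nu_k=\|u_k\|_2^2$; your mod-$3$ split versus the paper's mod-$2$ split is an inessential difference.
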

There are some papers on the refined Strichartz estimates. \cite{B}, \cite{BV} and \cite{CaKe} have proved the refined Strichartz estimates for $-\Delta$. Moreover their refinements are based on the Bourgain-Morrey type norms and they are more precise than our types. However, for example in \cite{BV}, their arguments are based on precise observations on the restriction theorem and the explicit formula of $e^{it\Delta}$ is essential. For the Schr\"odinger operator $H$ with general potentials, it is difficult to obtain an explicit formula of $e^{-itH} P_{\ac} (H)$. Hence there are few results on the refined Strichartz estimates for general $H$. On the other hand, it is found in \cite{FS} that, for $e^{it\Delta}$, the Besov-type refined Strichartz estimates are deduced from the orthonormal Strichartz estimates. Their proofs do not need explicit formula and much simpler than others. Based on their arguments, in \cite{H1}, the refined Strichartz estimates are proved for the Schr\"odinger operator with inverse square type potentials and for the magnetic Schr\"odinger operator with very short range potentials. In Section 2, we give a proof of Corollary \ref{2405091036} and give a refinement of the small data scattering for mass critical NLS with scaling critical magnetic potentials. Moreover, in Section 4, we prove the small data scattering for infinitely many particle systems with scaling critical magnetic potentials.

The second purpose of this paper is to prove mapping properties of resolvents for the repulsive Hamiltonian $H = -\Delta -x^2 +V$ and apply to the orthonormal Strichartz estimates. Here $V :\R^d \to \R$ is a perturbation to $H_0 = -\Delta -x^2$. Concerning the classical trajectories corresponding to $H_0$;
 \[
\left\{
\begin{array}{l}
\dot{y} (t) = 2\eta (t) \\
\dot{\eta} (t) = 2y(t) \\
y(0) =x, \eta (0) = \xi  \\
\end{array}
\right.
\]    
$y(t) = \frac{x+ \xi}{2} e^{2t} + \frac{x-\xi}{2} e^{-2t}$ and $\eta (t) = \frac{x+ \xi}{2} e^{2t} + \frac{\xi -x}{2} e^{-2t}$ hold. This implies that classical particles scatter faster than those of the free Laplacian. Therefore we may expect for better mapping properties of resolvents and faster decay of the propagator in some sense. The first result is the Kato-Yajima type uniform resolvent estimates for $H$. We set $\tilde{S}^0 (\R^d) := \{V \in C^{\infty} (\R^d; \R) \mid \forall \alpha \in \N^d _0, |\partial^{\alpha} _x V(x)| \lesssim \langle x \rangle^{-|\alpha|}\}$. 
\begin{thm}\label{2405200111}
Assume $H=H_0 +V$ satisfies $V \in \tilde{S}^0 (\R^d)$ and $\langle [H_0, iA]u, u \rangle \lesssim \langle [H, iA]u, u \rangle$ for all $u \in \mathcal{S} (\R^d)$. Then we have
\begin{align}
\sup_{z \in \C \setminus \R} \|\langle \log \langle x \rangle \rangle^{-1} (H-z)^{-1} \langle \log \langle x \rangle \rangle^{-1}\|_{\mathcal{B} (L^2 (\R^d))} < \infty. \label{2405200125}
\end{align}
where $A = a^w (x, D_x)$ and $a(x, \xi) = \log \langle x+\xi \rangle - \log \langle x-\xi \rangle \in S(\langle \log \langle x \rangle \rangle, dx^2 +d\xi^2)$.
\end{thm}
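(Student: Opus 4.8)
The plan is to run the commutator (Mourre / Kato--Yajima) method with the conjugate operator $A$ in a form uniform in the spectral parameter, and then to exchange the resulting weight for $\langle \log \langle x \rangle \rangle^{-1}$. The first step is a reduction: it suffices to prove
\[
\sup_{z \in \C \setminus \R} \big\| \langle A \rangle^{-s} (H-z)^{-1} \langle A \rangle^{-s} \big\|_{\mathcal{B}(L^2(\R^d))} < \infty
\]
for some $s \in (\tfrac12, 1]$. Indeed, since $a \in S(\langle \log \langle x \rangle \rangle, dx^2+d\xi^2)$ has order $0$ in $\xi$, the Helffer--Sjöstrand formula shows that the Weyl symbol of $\langle A \rangle^{s}$ lies in $S(\langle \log \langle x \rangle \rangle^{s}, dx^2+d\xi^2) \subset S(\langle \log \langle x \rangle \rangle, dx^2+d\xi^2)$ for $s \le 1$; hence $\langle A \rangle^{s}\langle \log \langle x \rangle \rangle^{-1}$ and $\langle \log \langle x \rangle \rangle^{-1}\langle A \rangle^{s}$ have symbols in $S(1,dx^2+d\xi^2)$ and are bounded on $L^2(\R^d)$, and sandwiching the displayed bound gives \eqref{2405200125}.

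The heart of the matter is the commutator $[H_0, iA]$. First, $a$ is real and slowly (logarithmically) growing, so $A = a^w(x,D_x)$ is essentially self-adjoint on $\mathcal{S}(\R^d)$, a core for $H$ as well. Next, because $|\xi|^2 - |x|^2$ is a quadratic polynomial, the Weyl calculus computes the commutator \emph{exactly}:
\[
[H_0, iA] \;=\; \big(\,4 - 2\langle x+\xi \rangle^{-2} - 2\langle x-\xi \rangle^{-2}\,\big)^{w}(x,D_x) \;=\; \big(\, f(x+\xi) + f(x-\xi)\,\big)^{w}(x,D_x),
\]
where $f(y) := 2|y|^2 \langle y \rangle^{-2}$ satisfies $0 \le f < 2$ and vanishes only at $y = 0$. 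Since $f(x+\xi)$ and $f(x-\xi)$ are functions of linear forms, covariance of the Weyl calculus (the quantization of a function of a linear form is that function of its quantization) gives the operator identity
\[
[H_0, iA] = f(x+D_x) + f(x-D_x) \ge 0 ,
\]
where $f(x \pm D_x)$ is defined by the (joint) functional calculus of the self-adjoint operators $x_j \pm D_{x_j}$. Moreover $[H_0,iA]$ is injective: $x \pm D_x$ are, after a metaplectic change of variables, constant multiples of $x$ and $D_x$, so their joint spectral measure is absolutely continuous, $f(x \pm D_x)$ has trivial kernel (as $f^{-1}(0) = \{0\}$ is a null set), and hence so does the sum. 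Finally, $V \in \tilde{S}^0(\R^d)$ is bounded with bounded derivatives, so $H \in C^\infty(A)$ and all iterated commutators of $H$ with $A$ are bounded pseudodifferential operators whose bounds are \emph{uniform in the energy} --- this uniformity is what will make the limiting absorption principle uniform in $z$.

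Two consequences follow immediately. First, $[H,iA] \ge 0$ as a quadratic form: by hypothesis $\langle [H_0,iA]u,u\rangle \lesssim \langle [H,iA]u,u\rangle$, and the left side is $\ge 0$. Second, neither $H$ nor $H_0$ has an eigenvalue: if $Hu = \lambda u$, the virial theorem (valid since $H \in C^1(A)$) gives $\langle [H,iA]u,u\rangle = 0$, hence $\langle [H_0,iA]u,u\rangle \le 0$; with $[H_0,iA] \ge 0$ this puts $u$ in the trivial kernel of $[H_0,iA]$, so $u = 0$. (For $H_0$ this reproves the classical fact that $-\Delta - x^2$ has purely absolutely continuous spectrum $\R$.)

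It remains to conclude via the Kato--Yajima / weakly-conjugate-operator machinery. From $[H,iA] \ge 0$, the $C^2(A)$ regularity, the uniform-in-energy bounds on the first two commutators, and the absence of eigenvalues, one obtains a global-in-energy limiting absorption principle $\sup_{z\in\C\setminus\R}\|\langle A \rangle^{-s}(H-z)^{-1}\langle A \rangle^{-s}\| < \infty$ for a suitable $s \in (\tfrac12,1]$; plugging this into the first step gives \eqref{2405200125}. I expect this last step to be the main obstacle. Unlike a strict Mourre estimate, the positive commutator $[H,iA] = f(x+D_x) + f(x-D_x)$ degenerates exactly at the hyperbolic fixed point $x = \xi = 0$ of the classical flow --- equivalently, the resolvent bound must survive as $z \to 0$ --- so the Kato smoothing estimate has to be run with care there, paying for the degeneracy by allowing $s$ up to $1$ rather than only slightly above $\tfrac12$, and keeping all constants uniform in $z$; and the whole argument must be carried out inside a Weyl calculus carrying the non-standard logarithmic weight $\langle \log \langle x \rangle \rangle$ and the unbounded symbol $|\xi|^2 - |x|^2$, where the usual symbolic gains are only logarithmic.
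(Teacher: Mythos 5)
Your preliminary observations are correct and in one place genuinely cleaner than the paper: you compute $[H_0, iA]$ exactly by noting that the Weyl calculus is exact for quadratic Hamiltonians, arriving at the elegant operator identity $[H_0,iA] = f(x+D_x) + f(x-D_x)$ with $f(y)=2|y|^2\langle y\rangle^{-2}$, whereas the paper arrives at an equivalent formula by conjugating with the metaplectic FIO $U$ and computing $U[\cdot]U^*$. Your reduction to a $\langle A\rangle^{-s}$-sandwich via the pseudodifferential calculus for the logarithmic weight is also sound, and your absence-of-eigenvalues argument via the virial theorem is fine.

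However, the last step is a genuine gap, and you correctly anticipate it. You write that one should ``obtain a global-in-energy limiting absorption principle $\sup_{z}\|\langle A\rangle^{-s}(H-z)^{-1}\langle A\rangle^{-s}\|<\infty$'' from the degenerate positivity $[H,iA]\ge 0$, $C^2(A)$-regularity, and absence of eigenvalues, paying for the degeneracy by ``allowing $s$ up to $1$.'' This is precisely the content that needs a proof and none is given. Because the commutator degenerates at the hyperbolic fixed point, one cannot invoke the standard Mourre LAP with $\langle A\rangle^{-s}$ weights; the weakly conjugate operator method does not directly produce $\langle A\rangle^{-s}$-weighted bounds. What it produces, and what the paper's Steps 1--4 actually establish, is a bound for $F_\epsilon(z) = S(A+ik)^{-1}R_\epsilon(z)(A-ik)^{-1}S$ where $S = [H_0,iA]^{1/2}$, obtained by differentiating $F_\epsilon$ in $\epsilon$, using the Mourre inequality $M_1 \gtrsim S^2$ to get $\|F_\epsilon\| \lesssim \epsilon^{-1}$ and then the differential inequality $\|\partial_\epsilon F_\epsilon\| \lesssim \|F_\epsilon\| + \epsilon^{-1/2}\|F_\epsilon\|^{1/2}$, and integrating twice. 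The role of $S$ in the sandwich is essential to this cancellation. Only at the very end does one replace $S(A+ik)^{-1}$ by $\langle\log\langle x\rangle\rangle^{-1}$, and this requires the nontrivial observation (proved in the paper via a Poincaré-type inequality) that $S^{-1}$ is \emph{bounded} on $L^2$, i.e.\ $[H_0,iA]$ is not merely positive with trivial kernel but is \emph{strictly} bounded below --- a stronger statement than your ``trivial kernel'' claim, which by itself does not give the invertibility needed to undo the sandwich. Your proposal neither introduces the $S$-weighting that makes the $\epsilon$-integration work nor establishes the strict lower bound for $[H_0,iA]$; as written, it is a strategy description rather than a proof, and the gap is exactly where you flag it.
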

Typical examples for $H$ are $H= -\Delta -x^2 + c \frac{1}{\langle \log \langle x \rangle \rangle ^{\alpha}}$ for $\alpha >0$ and sufficiently small $c>0$. See Lemma \ref{2405111618} for details. Since for the free Laplacian, for $d \ge 3$, 
\begin{align*}
\sup_{z \in \C \setminus \R} \|\langle x \rangle^{-\gamma} (-\Delta -z)^{-1} \langle x \rangle^{-\gamma}\|_{\mathcal{B} (L^2 (\R^d))} < \infty
\end{align*}
holds iff $\gamma \ge 1$, Theorem \ref{2405200111} implies that $(H-z)^{-1}$ has a better property than $(-\Delta -z)^{-1}$ in terms of maps on weighted $L^2$ spaces. We remark that Theorem \ref{2405200111} contains a refinement of \cite{KaYo}, where the uniform resolvent estimates are proved with polynomially decaying weight functions and potentials which are not necessarily $C^{\infty}$. Though there are some papers treating energy-localized resolvent estimates, for example, \cite{BCHM}, \cite{I} and references therein, there seems to be no other result on the uniform resolvent estimates. In particular, Theorem \ref{2405200111} is the first result treating logarithmic decaying weight functions and potentials. By the smooth perturbation theory (\cite{KY}, \cite{KatoYajima} and \cite{D1}), we obtain the following Kato smoothing estimates.
\begin{corollary}\label{2405231157}
Let $H$ be as in Theorem \ref{2405200111}. Then we have
\begin{align*}
&\left\|\frac{1}{\langle \log \langle x \rangle \rangle} e^{-itH} u \right\|_{L^2 _t L^2 (\R^d)} \lesssim \|u\|_2, \\
& \left\|\frac{1}{\langle \log \langle x \rangle \rangle} \int_{0}^{t} e^{-i(t-s)H} \frac{1}{\langle \log \langle x \rangle \rangle} F(s) ds \right\|_{L^2 _t L^2 (\R^d)} \lesssim \|F\|_{L^2 _t L^2 (\R^d)}.
\end{align*}
\end{corollary}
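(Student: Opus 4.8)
The statement is a black-box consequence of Theorem \ref{2405200111} via Kato's theory of smooth perturbations, so the plan is essentially to verify that theory's hypothesis and quote it. Write $\Gamma:=\langle\log\langle x\rangle\rangle^{-1}$, regarded as a multiplication operator on $L^2(\R^d)$; since $\langle x\rangle\ge 1$ forces $\langle\log\langle x\rangle\rangle\ge 1$, the operator $\Gamma$ is bounded and self-adjoint with $0<\Gamma\le 1$. Moreover $V\in\tilde S^0(\R^d)$ is bounded, so $H=H_0+V$ is self-adjoint on $D(H_0)$ by the Kato--Rellich theorem and $e^{-itH}$ is a strongly continuous unitary group. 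Theorem \ref{2405200111} asserts precisely that
\[
M:=\sup_{z\in\C\setminus\R}\big\|\Gamma (H-z)^{-1}\Gamma\big\|_{\mathcal{B}(L^2(\R^d))}<\infty ,
\]
and, since $\big\|\Gamma\,\im (H-z)^{-1}\,\Gamma\big\|\le\big\|\Gamma (H-z)^{-1}\Gamma\big\|\le M$ for all $z\in\C\setminus\R$, Kato's characterization of $H$-smoothness (\cite{KY}, \cite{KatoYajima}; see also \cite{D1}) shows that $\Gamma$ is $H$-smooth with smoothing constant controlled by $M$.

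By definition, $H$-smoothness of $\Gamma$ is exactly the first estimate, $\int_\R\|\Gamma e^{-itH}u\|_2^2\,dt\lesssim M\|u\|_2^2$. For the inhomogeneous estimate the plan is to use the companion statement from the same theory: $H$-smoothness of $\Gamma$ implies boundedness on $L^2_tL^2_x$ of the retarded Duhamel operator $F\mapsto\Gamma\int_0^t e^{-i(t-s)H}\Gamma F(s)\,ds$. If a self-contained argument is wanted, one takes $F$ Schwartz in $x$ and compactly supported in $t$, sets $g=\Gamma F$, and Fourier transforms in $t$ the causal solution of $i\partial_t u=Hu+ig$ (extended by zero to negative times): one finds, in the distributional sense, $\widehat u(\tau)=i(H-\tau-i0)^{-1}\widehat g(\tau)$, hence $\Gamma\widehat u(\tau)=i\,\Gamma(H-\tau-i0)^{-1}\Gamma\,\widehat F(\tau)$, so that Plancherel in $t$ together with the uniform bound from Theorem \ref{2405200111} gives
\[
\|\Gamma u\|_{L^2_tL^2_x}^2\;\lesssim\;\sup_{\tau\in\R,\ \varepsilon>0}\big\|\Gamma (H-\tau-i\varepsilon)^{-1}\Gamma\big\|^2\int_\R\big\|\widehat F(\tau)\big\|_{L^2_x}^2\,d\tau\;\lesssim\;M^2\|F\|_{L^2_tL^2_x}^2 ,
\]
and a general $F$ follows by density; replacing $\int_{-\infty}^t$ by $\int_0^t$ costs only one further application of $H$-smoothness and its dual, to control the term $e^{-itH}\int_{-\infty}^0 e^{isH}g(s)\,ds$. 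Alternatively one simply cites the inhomogeneous Kato smoothing estimate in \cite{D1}.

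I do not expect a genuine obstacle: all the analytic work is already inside Theorem \ref{2405200111}. The only points deserving a line of care are the rigorous justification of the Fourier-in-$t$ identity for $\widehat u$ with the $i0$ boundary limit --- which is routine, since $\Gamma(H-\tau\pm i\varepsilon)^{-1}\Gamma$ stays bounded by $M$ uniformly in $\varepsilon>0$, so one may keep $\varepsilon>0$ until the very end --- and the elementary bookkeeping reducing the $\int_0^t$ Duhamel term to the causal one. Both are standard and I would dispatch them by reference to \cite{D1} and \cite{KatoYajima}.
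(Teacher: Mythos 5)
Your proposal is correct and follows the same route the paper takes: the paper simply invokes the smooth perturbation theory (Kato, Kato--Yajima, D'Ancona) to deduce both estimates from the uniform bound of Theorem \ref{2405200111}, and your argument — identifying $\Gamma=\langle\log\langle x\rangle\rangle^{-1}$ as $H$-smooth via $\sup_{z}\|\Gamma(H-z)^{-1}\Gamma\|<\infty$ and then reading off the homogeneous and inhomogeneous smoothing estimates — is exactly that deduction, with the Fourier-in-$t$ Plancherel computation being the standard proof of the cited equivalence. No gap.
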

As mentioned above, for the free Laplacian with $d \ge 3$, the Kato smoothing estimates hold with weight function $\langle x \rangle^{-\gamma}$ iff $\gamma \ge 1$. Hence we know $e^{-itH}$ decays faster than $e^{it\Delta}$. This is also seen from the (orthonormal) Strichartz estimates. 
\begin{thm}\label{2405231314}
Assume $H=H_0 +V$ is as in Theorem \ref{2405200111} and $|V(x)| \lesssim \langle \log \langle x \rangle \rangle ^{-2}$ holds. Then we obtain
\begin{align*}
\left\| \sum_{j=0}^{\infty} \nu_j |e^{-itH} f_j|^2 \right\|_{L^{\frac{q}{2}} _t  L^{\frac{r}{2}} _x} \lesssim \|\nu\|_{l^{\beta}}
\end{align*}
for any $(q, r, k, \beta)$ satisfying $(q, r) \in [2, \infty]^2, \frac{2}{q} = 2k\left( \frac{1}{2} - \frac{1}{r} \right), (q, r, k) \ne (2, \infty, 1), k \ge \frac{d}{2}$ and assumptions in Theorem \ref{2405021538}.
\end{thm}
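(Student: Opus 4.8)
Since $V$ decays only logarithmically, a pointwise dispersive estimate for $e^{-itH}$ itself is not to be expected, so the plan is to proceed in two stages: first establish the orthonormal estimates for the free repulsive Hamiltonian $H_0=-\Delta-x^2$ from its explicit propagator and Theorem \ref{2405021538}, then transfer them to $H=H_0+V$ by a smooth-perturbation argument based on the Kato smoothing estimates of Corollary \ref{2405231157}. \emph{Stage 1.} The operator $H_0$ is essentially self-adjoint on $\mathcal{S}(\R^d)$, and the Mehler-type formula for quadratic Hamiltonians gives $e^{-itH_0}$ the integral kernel
\[
K_t(x,y)=c_d\,(\sinh 2t)^{-d/2}\exp\!\Bigl(\tfrac{i}{2}\bigl[\coth(2t)\,(|x|^2+|y|^2)-\tfrac{2x\cdot y}{\sinh 2t}\bigr]\Bigr)
\]
up to an irrelevant normalization, hence $\|e^{-itH_0}\|_{L^1\to L^\infty}\lesssim|\sinh 2t|^{-d/2}$. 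Since $|\sinh 2t|^{-d/2}\sim|t|^{-d/2}$ as $t\to0$ and decays exponentially as $|t|\to\infty$ — reflecting the faster classical escape noted in the introduction — one obtains $\|e^{-itH_0}\|_{1\to\infty}\lesssim|t|^{-k}$ on $\R\setminus\{0\}$ for \emph{every} $k\ge d/2$. Applying Theorem \ref{2405021538} with $\phi\equiv1$ and $\sigma=k$ then yields
\[
\Bigl\|\sum_{j=0}^\infty\nu_j|e^{-itH_0}f_j|^2\Bigr\|_{L^{q/2}_tL^{r/2}_x}\lesssim\|\nu\|_{l^\beta}
\]
for every orthonormal system $\{f_j\}$ and every $(q,r,k,\beta)$ in the stated range, which is precisely the set of $k$-admissible triples permitted there; the ordinary Strichartz estimates for $e^{-itH_0}$ follow from the same dispersive bound by Keel--Tao (cf.\ Remark \ref{2405050957}).

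\emph{Stage 2.} Set $B:=|V|^{1/2}$, viewed as a multiplication operator; the hypothesis $|V(x)|\lesssim\langle\log\langle x\rangle\rangle^{-2}$ gives $B\lesssim\langle\log\langle x\rangle\rangle^{-1}$ pointwise. Corollary \ref{2405231157} applied to $H$ shows that $\langle\log\langle x\rangle\rangle^{-1}$, hence $B$, is $H$-smooth, together with the companion inhomogeneous smoothing estimate; and the same corollary with $V=0$ — which lies in $\tilde{S}^0(\R^d)$ and trivially satisfies the commutator hypothesis of Theorem \ref{2405200111} — shows $B$ is $H_0$-smooth. From the Duhamel identity
\[
e^{-itH}u=e^{-itH_0}u-i\int_0^t e^{-i(t-s)H_0}\,B\,\sgn(V)\,B\,e^{-isH}u\,ds
\]
I would run the orthonormal smooth-perturbation scheme of \cite{H1}: by the duality principle of \cite{FS} the claim reduces to a Schatten-norm bound whose free part is Stage 1, while the Duhamel part is controlled by combining (i) the Schatten-valued homogeneous smoothing estimate for $B\,e^{-isH}$, (ii) the dual homogeneous smoothing for $e^{isH_0}B$ together with the Strichartz estimates for $e^{-itH_0}$, and (iii) a Christ--Kiselev-type lemma to deal with the time ordering $s<t$.

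\emph{Main obstacle.} Stage 1 is routine once the Mehler formula is in hand; the work is in Stage 2. The delicate point is to upgrade the \emph{scalar} Kato smoothing bounds of Corollary \ref{2405231157} to the Schatten-valued ($\mathfrak{S}^\alpha$) smoothing and smoothing-to-Strichartz estimates that the orthonormal argument requires — this is done by complex interpolation and a $TT^*$ argument, following \cite{H1} — and to check that the Christ--Kiselev step goes through with the relevant Schatten exponents, which is exactly what forces the exclusion of the endpoint $(q,r,k)=(2,\infty,1)$. A small but essential matter is that the decay rate $\langle\log\langle x\rangle\rangle^{-2}$ is precisely what makes $|V|^{1/2}$ dominated by the weight $\langle\log\langle x\rangle\rangle^{-1}$ appearing in Corollary \ref{2405231157}, so that no resolvent information beyond Theorem \ref{2405200111} is needed.
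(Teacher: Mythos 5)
Your proposal follows the paper's own argument: Stage 1 (the Mehler-formula dispersive bound $\|e^{-itH_0}\|_{1\to\infty}\lesssim|t|^{-k}$ for $k\ge d/2$ fed into Theorem \ref{2405021538} to obtain the orthonormal Strichartz estimates for $H_0$) and Stage 2 (Kato $H_0$- and $H$-smoothness of $|V|^{1/2}$ from the logarithmic resolvent bound of Theorem \ref{2405200111} / Corollary \ref{2405231157}, then the smooth-perturbation machinery of Theorem~2.3 in \cite{H1}) reproduce exactly what the paper does, with the paper factoring $V=v_1^*v_2$, $v_1=|V|^{1/2}$, $v_2=|V|^{1/2}\sgn V$. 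The only cosmetic difference is that you unpack the internal mechanism of the cited perturbation theorem (Duhamel expansion, Schatten-valued smoothing, Christ--Kiselev), whereas the paper invokes it as a black box.
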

Though there are no other result on the orthonormal Strichartz estimates for the repulsive Hamiltonian, \cite{C} proved the ordinary Strichartz estimates for $H_0$ with $\frac{d}{2}$ admissible pairs and \cite{KaYo} proved the ordinary Strichartz estimates for $H$ with $(q, r)$ as in Theorem \ref{2405231314} but $V$ is polynomially decaying. Therefore even for the ordinary Strichartz estimates, logarithmic decaying perturbations are new. These would have potential applications to NLS with repulsive potentials such as \cite{C} and \cite{Z}. Though we have seen good mapping properties of $(H-z)^{-1}$, the next theorem indicates that it does not act on the Schwartz space well, which is different from $(-\Delta -z)^{-1}$. As in \cite{Ta3}, we set $\C^{+} = \{z \in \C \mid \im z >0\}$, $\Omega_{r, R} (s) = \{(x, \xi) \in T^* \R^d \mid |x| >R, |\xi| >r, \cos (x, \xi) =s \}$, $\Omega_{s_1, s_2, r, R, mid} = \cup _{s \in [s_1, s_2]} \Omega_{r, R} (s)$ and $\Omega_{\epsilon, r, R, in / out} = \{(x, \xi) \in T^* \R^d \mid |x| >R, |\xi| >r, \pm \cos (x, \xi) <-1 +\epsilon \}$. We say that $f \in \mathcal{S}'$ satisfies $f \in \mathcal{S}$ microlocally outside $\Omega_{r, R} (1)$ iff for any $\epsilon >0$, there exists $a \in S^{0, 0}$ such that $a =1$ in $\Omega_{\epsilon, r, R, out}$, supported in $\Omega_{2\epsilon, r/2, R/2, out}$ and $(1-a^w (x, D_x))f \in \mathcal{S}$. Similarly, we say that $f \in \mathcal{S}'$ satisfies $f \notin \mathcal{S}$ microlocally in $\Omega_{r, R} (1)$ iff for any $\epsilon >0$, there exists $a \in S^{0, 0}$ such that $a$ is elliptic in $\Omega_{\epsilon, r, R, out}$, supported in $\Omega_{2\epsilon, r/2, R/2, out}$ and $a^w (x, D_x)f \notin \mathcal{S}$. Here we have used the scattering symbol class $S^{0, 0} = S \left(1, \frac{dx^2}{\langle x \rangle^2} + \frac{d\xi^2}{\langle \xi \rangle^2} \right)$. 
\begin{thm}\label{2406161602}
$(i)$ Let $V \in \tilde{S}^0 (\R^d)$ and $H=H_0 + cV$ for $c \in (0, 1)$. Then for any $z \in \C^{+}$, there exists $c_0 >0$ such that if $c \in (0, c_0)$ and $f \in \mathcal{S}$, $(H-z)^{-1} f \in \mathcal{S}$ microlocally outside $\Omega _{r, R} (1)$ for any large $r, R >0$. In particular, for any $z \in \C^{+}$ and $f \in \mathcal{S}$, $(H_0 -z)^{-1} f \in \mathcal{S}$ microlocally outside $\Omega _{r, R} (1)$ for any large $r, R >0$.

\noindent $(ii)$ For any $z \in \C^{+}$, there exists $f \in \mathcal{S}$ such that $(H_0 -z)^{-1} f \notin \mathcal{S}$ microlocally in $\Omega _{r, R} (1)$ for some $r, R >0$.
\end{thm}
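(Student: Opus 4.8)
\emph{Plan.} The plan is to combine the explicit metaplectic (Mehler) formula for the repulsive propagator with the identity $(H_0-z)^{-1}=i\int_0^\infty e^{itz}e^{-itH_0}\,dt$, valid in $\mathcal{B}(L^2(\R^d))$ for $z\in\C^+$, and to reduce both statements to a single asymptotic analysis, as $|x|\to\infty$, of the resulting oscillatory integral in the time variable. From the generating function $S_t(x,y)=\tfrac{\coth 2t}{2}(|x|^2+|y|^2)-\tfrac{x\cdot y}{\sinh 2t}$ of the classical flow displayed before the statement, the Schwartz kernel of $e^{-itH_0}$ is $c_d|\sinh 2t|^{-d/2}e^{iS_t(x,y)}$, so for $f\in\mathcal{S}$ one has the exact identity
\[
e^{-itH_0}f(x)=c_d|\sinh 2t|^{-d/2}e^{i\frac{\coth 2t}{2}|x|^2}\,g_t\!\left(\tfrac{x}{\sinh 2t}\right),\qquad g_t:=\mathcal{F}\!\left(e^{i\frac{\coth 2t}{2}|\cdot|^2}f\right),
\]
where $\{g_t\}_{t\ge 1}$ is bounded in $\mathcal{S}$, $g_t\to g_\infty:=\mathcal{F}(e^{i|\cdot|^2/2}f)$, and $\|g_t-g_\infty\|_{(N)}\lesssim e^{-4t}$. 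Substituting into the resolvent formula and changing variables ($u=1/\sinh 2t$, then $v=u|x|$, so that $\tfrac{\coth 2t}{2}|x|^2=\tfrac{|x|}{2}\sqrt{|x|^2+v^2}=\tfrac{|x|^2}{2}+\tfrac{v^2}{4}+O(v^4/|x|^2)$, $|\sinh 2t|^{-d/2}=v^{d/2}|x|^{-d/2}$, and $e^{itz}=(2|x|/v)^{iz/2}(1+o(1))$ on the dominant range $t\sim\tfrac12\log|x|$), one obtains
\[
(H_0-z)^{-1}f=e^{i|x|^2/2}\,\sigma+\rho,\qquad\rho\in\mathcal{S},
\]
where $\sigma$ is a classical symbol of order $-\tfrac d2$ with leading term $C\,(2|x|)^{iz/2}|x|^{-d/2}G(x/|x|)$ and $G(\omega):=\int_0^\infty v^{\frac d2-1-\frac{iz}{2}}e^{iv^2/4}g_\infty(v\omega)\,dv$ (this integral converges for all $d\ge 1$ and $z\in\C^+$). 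The same structure can also be read off from the outgoing eikonal equation $|\nabla\phi|^2=|x|^2+z$, solved by $\phi(x)=\tfrac{|x|^2}{2}+\tfrac z2\log|x|+\cdots$, together with its transport equations.

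\emph{Part (i).} Since $\sigma$ is a symbol, $e^{i|x|^2/2}\sigma$ is microlocalized on the outgoing Lagrangian $\Lambda=\{(x,\xi):\xi=\nabla(\tfrac{|x|^2}{2})=x\}$, and $\Lambda\cap\{|x|>R,\ |\xi|>r\}=\Omega_{r,R}(1)$. If $a\in S^{0,0}$ equals $1$ on $\Omega_{\epsilon,r,R,out}$ and is supported in $\Omega_{2\epsilon,r/2,R/2,out}$, then on $\supp(1-a)$ with $|x|$ large the $y$-gradient $y-\xi$ of the phase $(x-y)\cdot\xi+\tfrac{|y|^2}{2}$ in $(1-a^w)(e^{i|x|^2/2}\sigma)$ has size $\gtrsim|x|$ (there $\xi$ is either bounded or makes an angle $\gtrsim\sqrt\epsilon$ with $x\approx y$); integration by parts, using $\langle\xi\rangle^{-N}$-type gains from the symbol, gives $(1-a^w)(e^{i|x|^2/2}\sigma)\in\mathcal{S}$, and $(1-a^w)\rho\in\mathcal{S}$, so $(H_0-z)^{-1}f\in\mathcal{S}$ microlocally outside $\Omega_{r,R}(1)$ for every large $r,R$. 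For $H=H_0+cV$ with $V\in\tilde{S}^0(\R^d)$: as $cV$ is of order $0$, hence lower order than $-x^2$, the outgoing eikonal/transport construction applies verbatim with the same $\Lambda$, $cV$ affecting only the subleading part of $\sigma$; alternatively, for $c<\mathrm{dist}(z,\R)/\|V\|_{L^\infty}=:c_0$ one writes $(H-z)^{-1}=(H_0-z)^{-1}\sum_{n\ge 0}(-cV(H_0-z)^{-1})^n$ and uses that multiplication by $V\in\tilde{S}^0$ and $(H_0-z)^{-1}$ both preserve the space of $u\in\mathcal{S}'$ that are Schwartz microlocally outside $\Omega_{r,R}(1)$ — for $(H_0-z)^{-1}$ because a neighborhood of $\Omega_{r,R}(1)$ is invariant (indeed contracting) under the forward classical flow, so propagation of scattering singularities keeps $u$ outgoing — whence the series sums in that space. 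Either way (i) follows; the case $V=0$ is the stated ``in particular''.

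\emph{Part (ii).} By the formula above, $(H_0-z)^{-1}f=e^{i|x|^2/2}\sigma+\rho$ with $\rho\in\mathcal{S}$ and leading coefficient $C(2|x|)^{iz/2}|x|^{-d/2}G(\omega)$; since $|(2|x|)^{iz/2}|=(2|x|)^{-\im z/2}$, this function decays only like $|x|^{-d/2-\im z/2}$ in any direction $\omega$ with $G(\omega)\ne 0$, hence is not in $\mathcal{S}$, and being microlocalized on $\Lambda=\{\xi=x\}$ it is unchanged modulo $\mathcal{S}$ by $a^w$ for any $a$ elliptic on $\Omega_{\epsilon,r,R,out}$, so $a^w(H_0-z)^{-1}f\notin\mathcal{S}$. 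It remains to choose $f$ with $G\not\equiv 0$: for fixed $\omega$, $g_\infty\mapsto G(\omega)$ is a nonzero continuous linear functional on $\mathcal{S}(\R^d)$ and $f\mapsto g_\infty=\mathcal{F}(e^{i|\cdot|^2/2}f)$ is a linear isomorphism of $\mathcal{S}$, so a generic $f$ works; concretely take $g_\infty\ge 0$ supported in a small ball about $v_\ast\omega_0$ with $v_\ast>0$, so that $v^{\frac d2-1-\frac{iz}{2}}e^{iv^2/4}$ is an essentially constant nonzero factor there and $G(\omega_0)\ne 0$, and set $f=e^{-i|\cdot|^2/2}\mathcal{F}^{-1}g_\infty$; one takes $r,R$ large enough that the asymptotics hold for $|x|>R$.

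\emph{Main obstacle.} The crux is the uniform (in $|x|$ and in the direction $\omega$) analysis of the $t$-integral $i\int_0^\infty e^{itz}c_d|\sinh 2t|^{-d/2}e^{i\frac{\coth 2t}{2}|x|^2}g_t(x/\sinh 2t)\,dt$: the dominant time $t\sim\tfrac12\log|x|$ escapes to $+\infty$, precisely where the Mehler prefactor degenerates ($|\sinh 2t|^{-d/2}\sim e^{-dt}$) and the amplitude $g_t(x/\sinh 2t)$ concentrates at the same scale $\sinh 2t\sim|x|$; thus the stationary point and the amplitude are not separated and a naive stationary-phase bound is unavailable. One must first rescale ($v=x/\sinh 2t$) to turn this into an oscillatory integral with bounded, non-degenerate phase $\tfrac{|x|}{2}\sqrt{|x|^2+v^2}$ and only then extract the complete expansion with remainder $o(|x|^{-d/2-\im z/2})$ (in fact rapidly decaying past the main term), uniformly in $\omega$. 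A secondary point is making the perturbative step of (i) — stability of the outgoing parametrix, equivalently of the microlocal‑Schwartz property, under $cV$ for small $c$ — rigorous within the scattering calculus, in the spirit of \cite{Ta3}.
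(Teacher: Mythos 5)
Your approach is genuinely different from the paper's and mostly parallel to it for the unperturbed operator, but the perturbed case has a real gap. The paper proves both parts abstractly: a local-regularity parametrix (Lemma \ref{2406161703}), ellipticity off the characteristic set $\{|x|=|\xi|\}$ (Lemma \ref{2406161842}), a threshold preservation $(H-z)^{-1}:D(N^\alpha)\to D(N^\alpha)$ via the MDFM decomposition (Lemma \ref{2406162127}), and then positive-commutator propagation and radial-source estimates in the scattering calculus (Lemmas \ref{2406172116}--\ref{2406180906} and Theorem \ref{2406181741}); part $(ii)$ is obtained separately from the very clean identity $U^*H_0U=xD+Dx-1$ (Proposition \ref{2406161956}), which gives an explicit integral formula for $(2B-z-1)^{-1}$ and a one-line contradiction. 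You instead read the outgoing Lagrangian structure directly off the Mehler/resolvent oscillatory integral. For $H_0$ this is a legitimate alternative: your leading-order computation is consistent (the change of variables and the formula $\frac{\coth 2t}{2}|x|^2=\frac{|x|}{2}\sqrt{|x|^2+v^2}$ check out, and the constructed $G(\omega)\ne 0$ gives a valid witness for $(ii)$, recovering Proposition \ref{2406161956} by a different route). What your method buys is explicit asymptotics — the precise power $|x|^{-d/2-\im z/2}$ and the outgoing phase $\frac{|x|^2}{2}+\frac{z}{2}\log|x|$ — which the paper's commutator estimates never exhibit; conversely, the paper's route is robust to general $V\in\tilde S^0$ perturbations.

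The gap is in part $(i)$ for $H=H_0+cV$. Your Neumann series argument needs $(H_0-z)^{-1}$ to map the space of tempered distributions that are Schwartz microlocally outside $\Omega_{r,R}(1)$ into itself — not merely to map $\mathcal{S}$ into that space, which is all your oscillatory-integral analysis actually gives. The iterate $(H_0-z)^{-1}(Vu_0)$, with $u_0$ only known to be microlocally Schwartz outside $\Omega_{r,R}(1)$, cannot be handled by the Mehler formula as you wrote it, and the invariance claim you invoke (``propagation of scattering singularities keeps $u$ outgoing'') is precisely the nontrivial content of the paper's Theorem \ref{2406181741}. That theorem requires the positive-commutator machinery \emph{and} the delicate point flagged in the paper: the radial-source estimate needs a priori decay $(H-z)^{-1}f\in H^{\alpha,\alpha}$, supplied by Lemma \ref{2406162127} via the exponential operator bound $\|N^\alpha e^{-itH_0}\|\lesssim e^{4\alpha|t|}$ and the constraint $4\alpha<\im z$. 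None of this is present in your sketch. Your alternative eikonal/transport route (``$cV$ affects only the subleading part of $\sigma$'') is a plausible strategy but is stated in one clause; you would need to produce the outgoing phase solving $|\nabla\phi|^2-|x|^2+cV=z$ with symbol-class control and show the transport hierarchy closes with a Schwartz remainder, which is nontrivial precisely because the residual error after finitely many transport steps is only microlocally Schwartz outside $\Omega_{r,R}(1)$, returning you to the same invariance question. So $(ii)$ and the $c=0$ case of $(i)$ are defensible by your method; the perturbed $(i)$ requires the paper's propagation/radial estimates or a genuinely worked-out parametrix.
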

Actually we prove $(H-z)^{-1} f \in \mathcal{S}$ microlocally outside $\Omega _{r, R} (1) \cap \{ |x| = |\xi|\}$. Theorem \ref{2406161602} implies that there exists $f \in \mathcal{S}$ such that $(H-z)^{-1} f$ does not decay rapidly in the radial sink (outgoing region). On the other hand, for any $f \in \mathcal{S}$, $(H-z)^{-1} f$ decays rapidly away from the radial sink. This nature and difficulties in the proof come from bad behavior of $\frac{1}{\xi^2 -x^2 -z}$, which does not belong to good symbol classes. Such non-ellipticity also appears in the case of the Klein-Gordon operator (see \cite{Ta3}). However, in that case, resolvents are homeomorphisms on $\mathcal{S}$. This difference comes from the totally different behavior of classical particles $(y(t), \eta (t))$, where in the case of the Klein-Gordon operator, null-bicharacteristics satisfy $|\eta (t)| \lesssim _{x, \xi} 1$.

This paper is organized as follows. In section \ref{2311152141}, we prove Theorem \ref{2405021538}, Corollary \ref{2405021556} and give their applications. NLS with scaling critical magnetic potentials is also discussed. In section \ref{2405081322}, the Kato-Yajima estimates, orthonormal Strichartz estimates, uniform Sobolev estimates and mapping properties Theorem \ref{2406161602} are proved. In section \ref{2406230934} we prove the small data scattering for infinitely many particle systems with scaling critical magnetic potentials.
\subsubsection*{\textbf{Notations}}

\begin{itemize}

\item For $(q, r) \in [2, \infty]^2$ and $\sigma >0$, we say that $(q, r)$ is a $\sigma$-admissible pair if $\frac{2}{q} =2\sigma (\frac{1}{2} - \frac{1}{r})$ and $(\sigma, q, r) \ne (1, 2, \infty)$ hold.
\item
For a measure space $(X, d\mu)$, $ L^p(X)$ denotes the ordinary Lebesgue space and its norm is denoted by $\| \cdot \|_p$. Similarly, $L^{p, q} (X)$ denotes the ordinary Lorentz space and its norm is denoted by $\|\cdot\|_{p, q}$.

\item
 $\mathcal{F}$ denotes the Fourier transform on $\mathcal{S}'$. Here $\mathcal{S}'$ denotes the set of all the tempered distributions.

\item
 For a Banach space $X$, $L^p_t X$ denotes the set of all the measurable functions $f : \mathbb{R} \rightarrow X$ such that $\|f\|_{L^p_t X} :=(\int_{\mathbb{R}} \|f(t)\|^p_X dt)^{1/p} < \infty$.
 
\item For $p, q \in [1, \infty]$ and $s \in \R$, $B^s _{p, q}$ ($\dot{B}^s _{p, q}$) denotes the ordinary inhomogeneous (homogeneous) Besov space. $H^{s, p}$ ($\dot{H}^{s, p}$) denotes the ordinary $L^p$-inhomogeneous (homogeneous) Sobolev space. 

\item For a self-adjoint operator $H$ and a Borel measurable function $f$, $f(H)$ is defined as $f(H)= \int_{\mathbb{R}} f(\lambda) dE(\lambda)$, where $E(\lambda)$ is the spectral measure associated to $H$. $P_{\ac} (H)$ denotes the orthogonal projection onto the absolutely continuous subspace.

\item For a normed space $X$, $\mathcal{B}(X)$ denotes the set of all the bounded operators on $X$.
\end{itemize}

\section{A refinement of the Keel-Tao theorem for strongly continuous unitary groups}\label{2311152141}
\subsection{\textbf{Abstract theorem}}
In this subsection we prove a refinement of the Keel-Tao theorem, which states that the dispersive estimate implies the orthonormal Strichartz estimate. For $r \ge 2$, we define $\tilde{r}:=2(\frac{r}{2})'$, where $(\frac{r}{2})'$ is the H\"older conjugate exponent  of $\frac{r}{2}$. The next lemma is called the duality principle, which is originated in \cite{FS}. See \cite{BHLNS} and \cite{BLN} for our type.
\begin{lemma}\label{2405031343}
Let $p, q, r \in [2, \infty ]$ and $\beta \in [1, \infty ]$. Suppose $A: \R \rightarrow \mathcal{B} (L^2 (X))$ is strongly continuous and $\sup _{t \in \R} \|A(t)\| < \infty$ holds, where $X$ is a $\sigma$-finite measure space. Then the following are equivalent:
\begin{enumerate}
\item
We have
\begin{align*}
\left\| \sum_{j=0}^{\infty} \nu_j |A(t) f_j|^2 \right\|_{L^{\frac{q}{2}, \frac{p}{2}} _t  L^{\frac{r}{2}} _x} \lesssim \|\nu\|_{l^{\beta}}
\end{align*}
for all orthonormal systems $\{f_j\}$ in $L^2 (X)$ and all complex-valued sequences $\nu = \{\nu _j\}$.
\item
We have
\begin{align*}
\|WA(t) A(t)^* \overline{W}\|_{\mathfrak{S}^{\beta'}} \lesssim \|W\|_{L^{\tilde{q}, \tilde{p}} _t L^{\tilde{r}} _x}
\end{align*}
for all $W \in L^{\tilde{q}, \tilde{p}} _t L^{\tilde{r}} _x$, where $A(t)^* : L^1 _t L^2 (X) \rightarrow L^2 (X)$ is a formal adjoint of $A(t): L^2 (X) \rightarrow L^{\infty} _t L^2 (X)$.
\end{enumerate}
\end{lemma}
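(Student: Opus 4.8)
The plan is to prove the equivalence $(1)\Leftrightarrow(2)$ by a standard duality argument in the framework of Schatten classes, following the scheme of Frank--Sabin \cite{FS} as adapted in \cite{BHLNS} and \cite{BLN}, but carrying the extra Lorentz-in-time index $p$ throughout. First I would reformulate $(1)$ in operator-theoretic language: given an orthonormal system $\{f_j\}$ and a sequence $\nu=\{\nu_j\}$, set $\gamma = \sum_j \nu_j \langle \cdot, f_j\rangle f_j$, which is a self-adjoint (more precisely, a general compact) operator on $L^2(X)$ with eigenvalues $\nu$, so that $\|\nu\|_{l^\beta} = \|\gamma\|_{\mathfrak{S}^\beta}$ and $\sum_j \nu_j |A(t)f_j|^2 (x) = (A(t)\gamma A(t)^*)(x,x)$ is the integral kernel of $A(t)\gamma A(t)^*$ on the diagonal. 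Thus the left-hand side of $(1)$ is the $L^{q/2,p/2}_t L^{r/2}_x$ norm of the diagonal $x\mapsto (A(t)\gamma A(t)^*)(x,x)$, and $(1)$ asserts the bound $\|\,\mathrm{diag}(A(t)\gamma A(t)^*)\,\|_{L^{q/2,p/2}_t L^{r/2}_x} \lesssim \|\gamma\|_{\mathfrak{S}^\beta}$ uniformly over such $\gamma$. The point is that by density this is equivalent to the same bound for all $\gamma \in \mathfrak{S}^\beta$, since finite-rank operators of the form above are dense.

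Next I would dualize. The predual (or dual, depending on the range of exponents) of $L^{q/2,p/2}_t L^{r/2}_x$ is $L^{\tilde q/2',\tilde p/2'}_t L^{\tilde r/2'}_x$ in the appropriate sense — here one should be careful that Lorentz spaces $L^{a,b}$ are reflexive-like only for $1<a<\infty$ and that the $b=\infty$ and $a=1,\infty$ endpoints require the usual care (this is precisely why the notation $\tilde r = 2(r/2)'$ is introduced, so that testing against $W$ with $|W|^2 \in (L^{r/2}_x)^*$ is natural). So $(1)$ is equivalent to: for all $W$ in the dual time-space Lorentz-Lebesgue class and all $\gamma\in\mathfrak{S}^\beta$,
\begin{align*}
\left| \int_{\R}\int_X |W(t,x)|^2 \,(A(t)\gamma A(t)^*)(x,x)\, dx\, dt \right| \lesssim \|W\|_{L^{\tilde q,\tilde p}_t L^{\tilde r}_x}\,\|\gamma\|_{\mathfrak{S}^\beta}.
\end{align*}
The integral on the left is, by the definition of the trace, $\big|\tr\big(\gamma \int_{\R} A(t)^* M_{|W(t)|^2} A(t)\, dt\big)\big|$ where $M_g$ denotes multiplication by $g$; writing $M_{|W(t)|^2} = \overline{M_{W(t)}}\,{}^* M_{W(t)}$ and moving the factors, this equals $\big|\tr\big(\gamma\,(W A(t))^* (W A(t))\big)\big|$ where $WA(t): L^2(X)\to L^2_{t,x}$ is the operator $u\mapsto W(t,x)(A(t)u)(x)$. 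By the duality $\mathfrak{S}^\beta$--$\mathfrak{S}^{\beta'}$ (trace duality, valid for all $\beta\in[1,\infty]$ with the convention that $\beta=1$ pairs with $\mathfrak{S}^\infty=\mathcal{B}$), taking the supremum over $\|\gamma\|_{\mathfrak{S}^\beta}\le 1$ converts the displayed inequality into $\|(WA(t))^*(WA(t))\|_{\mathfrak{S}^{\beta'}} \lesssim \|W\|_{L^{\tilde q,\tilde p}_t L^{\tilde r}_x}^2$. Finally, since $\|T^*T\|_{\mathfrak{S}^{\beta'}} = \|T\|_{\mathfrak{S}^{2\beta'}}^2$ and $(WA(t))^* (WA(t))$ has the same nonzero singular values as $(WA(t))(WA(t))^* = W A(t)A(t)^* \overline{W}$ (the operator with kernel $W(t,x)(A(t)A(t)^*)(x,y)\overline{W(s,y)}$ acting on $L^2_{t,x}$), we get $\|WA(t)A(t)^*\overline W\|_{\mathfrak{S}^{\beta'}} = \|(WA(t))^*(WA(t))\|_{\mathfrak{S}^{\beta'}} \lesssim \|W\|^2$, which after replacing $W$ by $|W|^{1/2}\cdot$(phase) — or simply by homogeneity in $W$ — is exactly statement $(2)$. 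Running the chain of equivalences backwards gives $(2)\Rightarrow(1)$.

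The main obstacle I expect is bookkeeping at the endpoints and the Lorentz refinement: one must justify (a) that the relevant Lorentz-Lebesgue space is normed by duality against $L^{\tilde q,\tilde p}_t L^{\tilde r}_x$ even when some index is $2$ or $\infty$, using that the functionals coming from $(A(t)\gamma A(t)^*)(x,x)$ are weak-$*$ dense enough, and (b) the strong continuity and uniform boundedness hypothesis $\sup_t\|A(t)\|<\infty$ is used to make $WA(t)$ a genuine bounded operator from $L^2(X)$ into $L^2_{t,x}$ on a dense domain and to define the formal adjoint $A(t)^*: L^1_tL^2(X)\to L^2(X)$ unambiguously, so that all the trace manipulations (Fubini for the trace, $\tr(\gamma S) = \tr(S\gamma)$, cyclicity) are legitimate. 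The singular-value identities $\sigma_n(T^*T)=\sigma_n(T)^2$ and $\sigma_n(T^*T) = \sigma_n(TT^*)$ are elementary once $T=WA$ is known to be compact (indeed Hilbert--Schmidt after localization), and the density argument reducing general $\gamma\in\mathfrak{S}^\beta$ to finite-rank $\sum_j\nu_j\langle\cdot,f_j\rangle f_j$ with $\{f_j\}$ orthonormal is routine by the spectral theorem for compact operators. I would present the argument for $\gamma$ self-adjoint first and then remark that a general $\gamma\in\mathfrak{S}^\beta$ decomposes, or equivalently note the bound is unchanged since $|A(t)f_j|^2\ge 0$ only the moduli $|\nu_j|$ enter on the left while $\|\nu\|_{l^\beta}$ is all that enters on the right.
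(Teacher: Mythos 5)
The paper does not prove Lemma \ref{2405031343}: it is stated with citations to \cite{FS}, \cite{BHLNS} and \cite{BLN}, and your proposal reproduces exactly the duality scheme used there (operator reformulation via $\gamma=\sum_j\nu_j\langle\cdot,f_j\rangle f_j$, Lorentz--Lebesgue/Schatten trace duality, the identities $\|T^*T\|_{\mathfrak{S}^{\beta'}}=\|T\|_{\mathfrak{S}^{2\beta'}}^2$ and $\sigma_n(T^*T)=\sigma_n(TT^*)$), so you are on the same track as the cited proofs. One small imprecision worth flagging: operators of the stated form with $\{f_j\}$ orthonormal are exactly the compact \emph{normal} operators, which are \emph{not} dense in $\mathfrak{S}^\beta$; this is harmless here, since in the direction $(1)\Rightarrow(2)$ one only needs the $\mathfrak{S}^{\beta'}$-norm of the positive operator $(WA)^*(WA)$, and for a positive operator $S$ one already has $\|S\|_{\mathfrak{S}^{\beta'}}=\sup\{\tr(\gamma S):\ 0\le\gamma,\ \|\gamma\|_{\mathfrak{S}^\beta}\le1\}$, so testing against positive finite-rank $\gamma$ (which \emph{are} of the allowed form) suffices, and in the converse direction one reduces to $\nu_j\ge0$ by the pointwise bound $|\sum_j\nu_j|A(t)f_j|^2|\le\sum_j|\nu_j||A(t)f_j|^2$.
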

Now we prove the orthonormal Strichartz estimates for restricted admissible pairs. We remark that (\ref{2405021534}) is a more refined estimate than (\ref{2405021614}). Our proof is based on that of the orthonormal Strichartz estimates for $-\Delta$ in \cite{FS} and for $\psi (D_x)$ in \cite{BLN}.
\begin{proposition}\label{2405021420}
Let $\mathcal{H} =L^2 (X)$ and $H$ be a self-adjoint operator on $\mathcal{H}$, where $X$ is a $\sigma$-finite measure space. Furthermore we assume there exist bounded Borel measurable function $\phi$ and $\sigma >0$ satisfying
\[\|e^{-itH} |\phi|^2 (H)\|_{1 \rightarrow \infty} \lesssim |t|^{-\sigma}\]
for all $t \in \R \setminus \{0\}$. If $(q, r)$ is a $\sigma$-admissible pair (see Notations in Section 1) satisfying $\max \{1+2\sigma, 2\} < \tilde{r} <2+2\sigma$, we have
\begin{align}
\left\| \sum_{j=0}^{\infty} \nu_j |e^{-itH} \phi (H) f_j|^2 \right\|_{L^{\frac{q}{2}, \beta} _t  L^{\frac{r}{2}} _x} \lesssim \|\nu\|_{l^{\beta}} \label{2405021534}
\end{align}
for $\beta = \frac{2r}{r+2}$, all orthonormal systems $\{f_j\}$ in $\mathcal{H}$ and all complex-valued sequences $\nu = \{\nu _j\}$.
\end{proposition}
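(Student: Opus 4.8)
The plan is to use the duality principle of Lemma~\ref{2405031343} to convert the claim into a Schatten-class estimate, and then to prove the latter by interpolation, following the scheme of \cite{FS} and \cite{BLN}. Write $A(t):=e^{-itH}\phi(H)$; by the spectral theorem $A(t)A(t)^{*}$ is the integral operator on $L^{2}(\R_{t};\mathcal{H})$ whose operator-valued kernel is $T(t-s):=e^{-i(t-s)H}|\phi|^{2}(H)$. Since $\beta=\frac{2r}{r+2}$ satisfies $\beta'=\tilde r$, applying Lemma~\ref{2405031343} with $p=2\beta$ (so the time-Lorentz exponent on the left of (\ref{2405021534}) is $p/2=\beta$, with dual exponent $\tilde p=2\tilde r$) reduces (\ref{2405021534}) to
\begin{align*}
\|W\,A(t)A(t)^{*}\,\overline W\|_{\mathfrak{S}^{\tilde r}}\lesssim\|W\|_{L^{\tilde q,2\tilde r}_{t}L^{\tilde r}_{x}}^{2}
\end{align*}
for all $W\in L^{\tilde q,2\tilde r}_{t}L^{\tilde r}_{x}$, where $WA(t)A(t)^{*}\overline W$ denotes the operator $F\mapsto\int W(t)\,T(t-s)\,\overline{W(s)}\,F(s)\,ds$ on $L^{2}(\R\times X)$.

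This Schatten bound is obtained by interpolating the sesquilinear map $W\mapsto WA(t)A(t)^{*}\overline W$ — via a Stein analytic family built from $|W|^{z}$, as in \cite{FS}, \cite{BHLNS}, \cite{BLN} — between two endpoints. One endpoint is the operator-norm ($\mathfrak{S}^{\infty}$) bound: viewing $WA(t)A(t)^{*}\overline W=BB^{*}$ with $B\colon g\mapsto W(t)e^{-itH}\phi(H)g$ a map $\mathcal{H}\to L^{2}(\R\times X)$, its operator norm is $\|B\|^{2}$, and
\begin{align*}
\|Bg\|_{L^{2}_{t}L^{2}_{x}}\le\|W\|_{L^{2}_{t}L^{\infty}_{x}}\,\|e^{-itH}\phi(H)g\|_{L^{\infty}_{t}L^{2}_{x}}\lesssim\|W\|_{L^{2}_{t}L^{\infty}_{x}}\,\|g\|_{\mathcal{H}}
\end{align*}
by the unitarity of $e^{-itH}$ and the boundedness of $\phi$, so $\|WA(t)A(t)^{*}\overline W\|_{\mathfrak{S}^{\infty}}\lesssim\|W\|_{L^{2}_{t}L^{\infty}_{x}}^{2}$ (and the Keel--Tao estimates of Remark~\ref{2405050957} give analogous bounds in terms of $\|W\|_{L^{\tilde q_{0}}_{t}L^{\tilde r_{0}}_{x}}^{2}$ for any $\sigma$-admissible $(q_{0},r_{0})$). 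The other endpoint is of Hilbert--Schmidt type: the dispersive hypothesis gives $|T(t-s)(x,y)|\lesssim|t-s|^{-\sigma}$ pointwise, hence
\begin{align*}
\|WA(t)A(t)^{*}\overline W\|_{\mathfrak{S}^{2}}^{2}=\iint\|W(t)T(t-s)\overline{W(s)}\|_{\mathfrak{S}^{2}}^{2}\,dt\,ds\lesssim\iint|t-s|^{-2\sigma}\|W(t)\|_{2}^{2}\|W(s)\|_{2}^{2}\,dt\,ds,
\end{align*}
which the Hardy--Littlewood--Sobolev inequality in time controls by $\|W\|_{L^{2/(1-\sigma)}_{t}L^{2}_{x}}^{4}$ when $\sigma<\frac12$; when $\sigma\ge\frac12$ the bare $\mathfrak{S}^{2}$ endpoint fails and must be replaced by an $\mathfrak{S}^{\gamma_{0}}$ endpoint with $2<\gamma_{0}<\tilde r$, produced from an interpolated dispersive estimate $\|T(t-s)\|_{L^{a'}\to L^{a}}\lesssim|t-s|^{-\sigma(1-2/a)}$ together with a Kato--Seiler--Simon/Cwikel-type Schatten inequality, so that the time weight becomes integrable in the relevant range. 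Interpolating the two endpoints at parameter $\theta=2/\tilde r$ produces $\mathfrak{S}^{\tilde r}$ on the left, $L^{\tilde r}_{x}$ in space, and (using $1/q=\sigma/\tilde r$, equivalently $1/\tilde q=(\tilde r-2\sigma)/2\tilde r$) the time exponent $\tilde q$; a Lorentz-refined interpolation of the kind carried out in \cite{FS} and \cite{BLN}, using the Lorentz forms of the two endpoint estimates, upgrades the time index to the required $L^{\tilde q,2\tilde r}_{t}$, and Lemma~\ref{2405031343} then gives (\ref{2405021534}).

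The main obstacle is the time-convolution step together with the bookkeeping of the Lorentz indices: $|t-s|^{-\sigma}$ and its interpolated analogues are only weak-type convolution kernels, so obtaining the sharp endpoint estimate (\ref{2405021534}) — equivalently, the $\mathfrak{S}^{\tilde r}$ estimate with $L^{\tilde q,2\tilde r}_{t}L^{\tilde r}_{x}$ on the right, rather than the coarser one with $L^{\tilde q,\tilde r}_{t}L^{\tilde r}_{x}$ that a crude interpolation produces — requires the refined endpoint argument of \cite{FS} (in the abstract form of \cite{BLN}), essentially a bilinear Lorentz interpolation of the dyadic building blocks. This is also where the hypotheses enter: simultaneous admissibility of the Hardy--Littlewood--Sobolev/Young exponents, of the lower Schatten endpoint $\gamma_{0}\in(2,\tilde r)$, and of the Lorentz interpolation forces $\tilde r>2$ and, when $\sigma>\frac12$, an upper cutoff on $r$; the precise condition is exactly $\max\{1+2\sigma,2\}<\tilde r<2+2\sigma$, and outside it one recovers only the weaker estimate (\ref{2405021614}) of Theorem~\ref{2405021538}. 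A secondary technical point, by now standard and quotable from \cite{FS} and \cite{BLN}, is the justification of the Stein analytic-family interpolation for the sesquilinear map $W\mapsto WA(t)A(t)^{*}\overline W$ and of the Schatten-valued Hardy--Littlewood--Sobolev / Cwikel inequalities used at the Hilbert--Schmidt-type endpoint.
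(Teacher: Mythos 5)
Your plan is right at the level of structure: reduce via the duality principle (Lemma~\ref{2405031343}) to a Schatten bound for $W\mapsto WA(t)A(t)^*\overline W$, then run Stein interpolation between an $\mathfrak{S}^\infty$ endpoint and a Hilbert--Schmidt-type endpoint, arriving at $\mathfrak{S}^{\tilde r}$. You also correctly identify that a naive $\mathfrak{S}^2$ estimate on $AA^*$ itself fails once $\sigma\ge\tfrac12$, which is the case of main interest.

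What you are missing is \emph{how} the paper circumvents that failure, and your proposed substitute would not work in this generality. The paper's analytic family is not only in the power of $W$; it is simultaneously a \emph{fractional integration in time}. It sets
\[
T_z F(s)=\int i\bigl(e^{iz\pi/2}(t-s)_+^{-z-1}-e^{-iz\pi/2}(s-t)_+^{-z-1}\bigr)e^{-i(t-s)H}|\phi|^2(H)F(t)\,dt,
\]
so that $T_{-1}=2AA^*$, and then interpolates $S_z:=W_1^{-z}T_zW_2^{-z}$. At $\re z=-\tfrac{\tilde r}{2}$ the time kernel contributes an extra factor $|t-s|^{\tilde r/2-1}$, so the pointwise bound on the kernel of $T_z$ becomes $|t-s|^{\tilde r/2-1-\sigma}$, and the Hilbert--Schmidt norm is controlled by an honest Hardy--Littlewood--Sobolev convolution with kernel exponent $2+2\sigma-\tilde r\in(0,1)$ --- this is precisely where the hypothesis $\max\{1+2\sigma,2\}<\tilde r<2+2\sigma$ enters, and it works for \emph{every} $\sigma>\tfrac12$ with no side condition. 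The $\mathfrak{S}^\infty$ endpoint is taken at $\re z=0$, where $T_{ik}$ is a bounded operator on $L^2_t\mathcal{H}$ (proved via the spectral-measure representation $\tilde T_z$ and the identity $T_z=2\pi\tilde T_z$), so $\|W_1T_{ik}W_2\|_{\mathfrak{S}^\infty}\lesssim\|W_1\|_{L^\infty_{t,x}}\|W_2\|_{L^\infty_{t,x}}$. Interpolating these two, with the Lorentz bookkeeping that already comes out of the Hardy--Littlewood--Sobolev step, lands directly on $\|W_1T_{-1}W_2\|_{\mathfrak{S}^{\tilde r}}\lesssim\|W_1\|_{L^{\tilde q,2\tilde r}_tL^{\tilde r}_x}\|W_2\|_{L^{\tilde q,2\tilde r}_tL^{\tilde r}_x}$, with no need for a separate Lorentz-refined reinterpolation.

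By contrast, your proposed fix for the $\sigma\ge\tfrac12$ obstruction --- replacing $\mathfrak{S}^2$ by $\mathfrak{S}^{\gamma_0}$ via an interpolated dispersive estimate plus a Kato--Seiler--Simon / Cwikel inequality --- is a genuinely different device, and it is problematic here. Cwikel-type Schatten bounds are for operators of the separated form $f(x)g(D)$ on $\R^d$; they do not apply to $W(t)T(t-s)\overline{W(s)}$ on a general $\sigma$-finite measure space $X$, and no explicit Fourier structure is available in this abstract setting. The fractional-integration family $T_z$ is exactly what removes the need for such inequalities: it converts the Schatten-index deficit into an extra power of $|t-s|$ and lets the scalar Hardy--Littlewood--Sobolev inequality do all the work. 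You should also note that the $\mathfrak{S}^\infty$ endpoint you propose (with $\|W\|_{L^2_tL^\infty_x}$) is not the one that lines up with the $z$-dependent powers $W_j^{-z}$ at $\theta=2/\tilde r$; the paper's $\|W_j\|_{L^\infty_{t,x}}$ endpoint at $\re z=0$ is the correct complement to the $\|W_j^{\tilde r/2}\|_{L^{2u,4}_t\mathcal{H}}$ endpoint at $\re z=-\tilde r/2$, and together they produce $\|W_j\|_{L^{u\tilde r,2\tilde r}_tL^{\tilde r}_x}$ at $z=-1$.

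Finally, the justification that $\{T_z\}$ (equivalently $\{S_z\}$) really is an analytic family in the sense of Stein is not something you can simply quote: the paper spends Steps~2 and~3 proving the identity $T_z=2\pi\tilde T_z$ on a dense class and establishing continuity up to $\re z=0$, using the spectral-measure representation and a somewhat delicate limiting argument. This is the part that is specific to a general self-adjoint $H$ on $L^2(X)$ (as opposed to $-\Delta$ on $\R^d$, where Fourier inversion is available) and cannot be dismissed as "standard and quotable."
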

\begin{proof}
(Step 1) For a (space-time) simple function $F$ and $z \in \C$ with $\re z <0$, we define
\begin{align}
T_{z} F (s, x) = \int_{\R} i(e^{iz\pi /2} (t-s)^{-z-1} _{+} -e^{-iz\pi /2} (s-t)^{-z-1} _{+}) e^{-i(t-s)H} |\phi|^2 (H) F(t) dt \label{2405051128}
\end{align}
Here $s \in \R$, $x \in X$ and $a_+ = \max \{a, 0\}$ for $a \in \R$. Then it is easy to see that 
\begin{align*}
\langle T_{z} F, G \rangle = \int \int i(e^{iz\pi /2} (t-s)^{-z-1} _{+} -e^{-iz\pi /2} (s-t)^{-z-1} _{+}) \langle e^{-i(t-s)H} |\phi|^2 (H) F(t), G(s) \rangle dsdt
\end{align*}
is analytic in $\{z \in \C \mid \re z <0\}$ for all simple functions $F$ and $G$. Furthermore we have
\begin{align}
T_{-1} F (s, x) = 2\int e^{-i(t-s)H} |\phi|^2 (H) F(t) dt = 2 \{(e^{isH} \phi (H))(e^{isH} \phi (H))^* \} F. \label{2405042010}
\end{align}
We note that $(e^{isH} \phi (H))^* :L^1 _t \mathcal{H} \rightarrow \mathcal{H}$ is a formal adjoint operator of $e^{isH} \phi (H) : \mathcal{H} \rightarrow L^{\infty} _t \mathcal{H}$. By our assumption and the Dunford-Pettis theorem, $e^{-itH} |\phi|^2 (H)$ has a Schwartz kernel $K$ satisfying $K(t, \cdot ,  \cdot) \in L^{\infty} (X \times X)$ and
\[\|K(t)\|_{L^{\infty} (X \times X)} = \|e^{-itH} |\phi|^2 (H)\|_{1 \rightarrow \infty} \lesssim |t|^{-\sigma}.\]
Then for simple functions $W_1$, $W_2$, $G$ and $k \in \R$, we obtain
\begin{align*}
W_1 T_{-\frac{\tilde{r}}{2} +ik} W_2 G=W_1 (s, x) \int a_z (t, s) \int_{X} K(t-s, x, y) W_2 (t, y) G(t, y) dydt
\end{align*}
for $a_z (t, s) =  i(e^{iz\pi /2} (t-s)^{-z-1} _{+} -e^{-iz\pi /2} (s-t)^{-z-1} _{+})$. By computing the Hilbert-Schmidt norm we have
\begin{align*}
\|W_1 T_{-\frac{\tilde{r}}{2} +ik} W_2\|^2 _{\mathfrak{S}^2} &= \int |W_1 (s, x)|^2 |a_{-\frac{\tilde{r}}{2} +ik} (t, s)|^2 |K(t-s, x, y)|^2 |W_2 (t, y)|^2 dsdtdxdy \\
& \lesssim \int |W_1 (s, x)|^2 |W_2 (t, y)|^2  |t-s|^{-2\sigma -2+ \tilde{r}} dsdtdxdy \\
& = \int \|W^2 _1 (s)\|_1 \|W^2 _2 (t)\|_1 |t-s|^{-2\sigma -2+ \tilde{r}} dsdt \\
& \lesssim \|W^2 _1\|_{L^{u, 2} _t L^1 _x} \|\|W^2 _2 \|_1 * |\cdot|^{-2\sigma -2+ \tilde{r}}\|_{L^{u', 2} _t} \\
& \lesssim \|W_1\|^2 _{L^{2u, 4} _t \mathcal{H}} \|W_2\|^2 _{L^{2u, 4} _t \mathcal{H}}.
\end{align*}
Here $u \in \R$ is defined by $\frac{2}{u} = \tilde{r} -2\sigma$ and we have used the Hardy-Littlewood-Sobolev inequality in the last line. We also note that the implicit constant is at most exponential. This yields
\begin{align}
\|W_1 T_{-\frac{\tilde{r}}{2} +ik} W_2\|_{\mathfrak{S}^2} \lesssim  \|W_1\|_{L^{2u, 4} _t \mathcal{H}} \|W_2\|_{L^{2u, 4} _t \mathcal{H}} \label{2405021809}
\end{align}
for all simple functions $W_1$ and $W_2$. 

(Step 2) We restrict $z \in \{z \in \C \mid \re z <0\}$ to $\{z \in \C \mid -\frac{1}{10} <\re z <0\}$. We set
\begin{align}
& \tilde{T} _z G = \mathcal{F}^* _{\tau} \left[ \int_{\R} \frac{(\tau -\lambda)^{z} _+}{\Gamma (z+1)} |\phi|^2 (\lambda) dE(\lambda) \right] \mathcal{F} _t G  \label{2405021847} \\
& \tilde{T} _{z, R, \tilde{R}} G =  \mathcal{F}^* _{\tau} \chi _{R} (\tau) \left[ \int_{\R} \frac{(\tau -\lambda)^{z} _+}{\Gamma (z+1)} \tilde{\chi} _{\tilde{R}} (\tau - \lambda)|\phi|^2 (\lambda) dE(\lambda) \right] \mathcal{F} _t G \label{2405021848}
\end{align} 
for $R, \tilde{R} >1$ and $G$ as below:
\begin{align}
G= \sum_{i \in I} \phi_i (t) u_i \quad  where \quad |I| < \infty, \quad \phi_i \in C^{\infty} _0 (\R; \C) \quad and \quad  u_i \in \mathcal{H}. \label{2405021904}
\end{align}
In (\ref{2405021848}), $\chi$ and $\tilde{\chi}$ are cut off functions near the origin satisfying $\chi (t), \tilde{\chi} (t) =1$ if $|t| \le 1$ and $\chi (t), \tilde{\chi} (t) =0$ if $|t| \ge 2$. We have set $\chi _R (t) = \chi (\frac{t}{R})$ and $\tilde{\chi} _{\tilde{R}} (t)=\tilde{\chi} (\frac{t}{\tilde{R}})$. We recall that functions as in (\ref{2405021904}) are dense in $L^{p, q} _t \mathcal{H}$ for all $p, q \in [1, \infty )$. First we prove that (\ref{2405021847}) and (\ref{2405021848}) are well-defined. For $G$ as in (\ref{2405021904}), we have
\begin{align*}
\|(\tau -H)^{z} _+ \mathcal{F} _t G\|^2 _{L^2 _{\tau} \mathcal{H}} &\lesssim \sum_{i \in I} \|\hat{\phi_i} (\tau) (\tau -H)^{z} _{+} u_i\|^2 _{L^2 _{\tau} \mathcal{H}} \\
& \lesssim \sum_{i \in I} \int |\hat{\phi_i} (\tau)|^2 \int (\tau -\lambda)^{2\re z} _+ d\|E(\lambda) u_i\|^2 _{\mathcal{H}} d\tau 
\end{align*}
and
\begin{align*}
&\int |\hat{\phi_i} (\tau)|^2 \int (\tau -\lambda)^{2\re z} _+ d\|E(\lambda) u_i\|^2 _{\mathcal{H}} d\tau \\ &\lesssim \int \int_{|\tau -\lambda| \ge1} + \int \int_{|\tau -\lambda| \le1} |\hat{\phi_i} (\tau)|^2 (\tau -\lambda)^{2\re z} _+ d\|E(\lambda) u_i\|^2 _{\mathcal{H}} d\tau \\
& \lesssim \|\hat{\phi_i}\|^2 _2 \|u_i\|^2 _{\mathcal{H}} + \int |\hat{\phi_i}|^2 * (|\cdot|^{2\re z} \chi (\cdot)) (\lambda) d\|E(\lambda) u_i\|^2 _{\mathcal{H}} \\
& \lesssim \|\hat{\phi_i}\|^2 _2 \|u_i\|^2 _{\mathcal{H}} + \|\hat{\phi_i}\|^2 _{\infty} \||\cdot|^{2\re z} \chi (\cdot)\|_1 \|u_i\|^2 _{\mathcal{H}} < \infty.
\end{align*}
These estimates imply that $\left[ \int_{\R} \frac{(\tau -\lambda)^{z} _+}{\Gamma (z+1)} |\phi|^2 (\lambda) dE(\lambda) \right] \mathcal{F} _t G \in L^2 _{\tau} \mathcal{H}$ and hence (\ref{2405021847}) is well-defined. (\ref{2405021848}) is similarly proved to be well-defined. Then, for $G$ and $J$ as in (\ref{2405021904}), we have
\begin{align*}
\langle {2\pi} \tilde{T} _{z, R, \tilde{R}} G, J \rangle &= \int \chi _R (\tau) e^{-ik\tau} e^{is\tau} \left\langle \frac{(\tau -H)^{z} _+}{\Gamma (z+1)} \tilde{\chi} _{\tilde{R}} (\tau - H)|\phi|^2 (H)G(k), J(s) \right\rangle dsd\tau dk \\
& = \int \chi _R (\tau) e^{-ik\tau} e^{is\tau} \int_{\R} \frac{(\tau -\lambda)^{z} _+}{\Gamma (z+1)} \tilde{\chi} _{\tilde{R}} (\tau - \lambda)|\phi|^2 (\lambda) d \langle E(\lambda) G(k), J(s) \rangle dsd\tau dk \\
&= \int \frac{\tau ^{z} _+}{\Gamma (z+1)} \tilde{\chi} _{\tilde{R}} (\tau) e^{-i(k-s)\tau} \\
& \quad \quad \quad \quad \quad \quad \cdot \left( \int e^{-i\lambda (k-s)} |\phi|^2 (\lambda) \chi _R (\tau + \lambda) d \langle E(\lambda) G(k), J(s) \rangle \right) dsd\tau dk \\
& = \int \frac{\tau ^{z} _+}{\Gamma (z+1)} \tilde{\chi} _{\tilde{R}} (\tau) e^{-i(k-s)\tau} \langle e^{-i(k-s)H} |\phi|^2 (H) \chi _R (\tau +H) G(k), J(s) \rangle dsd\tau dk.
\end{align*}
The absolute value of the integrand is estimated by $\frac{\tau ^{\re z} _+}{|\Gamma (z+1)|} \tilde{\chi} _{\tilde{R}} (\tau) \|G(k)\|_{\mathcal{H}} \|J(s)\|_{\mathcal{H}}$ and by the dominated convergence theorem we obtain
\begin{align*}
\langle {2\pi} \tilde{T} _{z, R, \tilde{R}} G, J \rangle \rightarrow \int \left( \int \frac{\tau ^{z} _+}{\Gamma (z+1)} \tilde{\chi} _{\tilde{R}} (\tau) e^{-i(k-s)\tau} d\tau \right) \langle e^{-i(k-s)H} |\phi|^2 (H) G(k), J(s) \rangle dsdk
\end{align*}
as $R \rightarrow \infty$. Next we consider the limit as $\tilde{R} \rightarrow \infty$. Since $\tilde{\chi} _{\tilde{R}} \rightarrow 1$ in $\mathcal{S}'$ holds, we have $\mathcal{F} [\tilde{\chi} _{\tilde{R}}] \rightarrow \sqrt{2\pi} \delta$ in $\mathcal{S}'$. This yields $\mathcal{F} [\frac{\tau ^{z} _+}{\Gamma (z+1)}] * \mathcal{F} [\tilde{\chi} _{\tilde{R}}] \rightarrow \sqrt{2\pi} \mathcal{F} [\frac{\tau ^{z} _+}{\Gamma (z+1)}]$ in $\mathcal{S}'$. By using the convolution inequality and $\mathcal{F} [\frac{\tau ^{z} _+}{\Gamma (z+1)}] (t) = i(e^{iz\pi /2} t^{-z-1} _{+} -e^{-iz\pi /2} t^{-z-1} _{-}) / \sqrt{2\pi}$ (see \cite{GS}), we obtain
\begin{align*}
\left\|\mathcal{F} [\frac{\tau ^{z} _+}{\Gamma (z+1)}] * \mathcal{F} [\tilde{\chi} _{\tilde{R}}] \right\|_{L^{\frac{1}{1+ \re z}, \infty}} \lesssim \left\|\mathcal{F} [\frac{\tau ^{z} _+}{\Gamma (z+1)}] \right\|_{L^{\frac{1}{1+ \re z}, \infty}} \|\mathcal{F} [\tilde{\chi} _{\tilde{R}}]\|_1 \underset{\im z}{\lesssim} 1
\end{align*}
uniformly in $\tilde{R}$. Hence $\{\mathcal{F} [\frac{\tau ^{z} _+}{\Gamma (z+1)}] * \mathcal{F} [\tilde{\chi} _{\tilde{R}}]\}_{\tilde{R}}$ is bounded in $L^{\frac{1}{1+ \re z}, \infty}$, which is a dual of $L^{-\frac{1}{\re z}, 1}$. Then there exists $\{\tilde{R} _n\}$ satisfying $\tilde{R} _n \rightarrow \infty$ and $\mathcal{F} [\frac{\tau ^{z} _+}{\Gamma (z+1)}] * \mathcal{F} [\tilde{\chi} _{\tilde{R} _n}] \rightharpoonup \sqrt{2\pi} \mathcal{F} [\frac{\tau ^{z} _+}{\Gamma (z+1)}]$ in $L^{\frac{1}{1+ \re z}, \infty}$ as $n \rightarrow \infty$, where $\rightharpoonup$ implies the weak-$*$ convergence. This yields
\begin{align*}
&\left\langle \sqrt{2\pi} \int \left( \widehat{ \frac{\tau ^{z} _+}{\Gamma (z+1)} \tilde{\chi}_{\tilde{R}_n} } \right) (k-s) e^{-ikH} |\phi|^2 (H) G(k) dk, e^{-isH} J(s) \right\rangle \\
& = \sqrt{2\pi} \int  \left( \widehat{ \frac{\tau ^{z} _+}{\Gamma (z+1)} \tilde{\chi}_{\tilde{R}_n} } \right) (k-s) \langle e^{-ikH} |\phi|^2 (H) G(k), e^{-isH} J(s) \rangle dk \\
& \underset{n \rightarrow \infty}{\longrightarrow} \sqrt{2\pi} \int \mathcal{F} [\frac{\tau ^{z} _+}{\Gamma (z+1)}] (k-s) \langle e^{-ikH} |\phi|^2 (H) G(k), e^{-isH} J(s) \rangle dk
\end{align*}
for all $s \in \R$ since
\begin{align*}
|\langle e^{-ikH} |\phi|^2 (H) G(k), e^{-isH} J(s) \rangle| \lesssim \|G(k)\|_{\mathcal{H}} \|J(s)\|_{\mathcal{H}} \in L^{-\frac{1}{\re z}, 1}
\end{align*}
for all $s \in \R$. Furthermore we have
\begin{align*}
&\left| \left\langle \sqrt{2\pi} \int \left( \widehat{ \frac{\tau ^{z} _+}{\Gamma (z+1)} \tilde{\chi}_{\tilde{R}_n} } \right) (k-s) e^{-ikH} |\phi|^2 (H) G(k) dk, e^{-isH} J(s) \right\rangle \right| \\
& \lesssim \|J(s)\|_{\mathcal{H}} \left\| \left( \widehat{ \frac{\tau ^{z} _+}{\Gamma (z+1)} \tilde{\chi}_{\tilde{R}_n} } \right) (- \cdot) * (e^{-ikH} |\phi|^2 (H) G(k)) \right\|_{L^{\infty} _s \mathcal{H}} \\
& \lesssim \|J(s)\|_{\mathcal{H}} \|G\|_{L^{-\frac{1}{\re z}, 1}} \left\|\mathcal{F} [\frac{\tau ^{z} _+}{\Gamma (z+1)}] * \mathcal{F} [\tilde{\chi} _{\tilde{R}_n}] \right\|_{L^{\frac{1}{1+ \re z}, \infty}} \\
& \underset{\im z}{\lesssim} \|J(s)\|_{\mathcal{H}} \|G\|_{L^{-\frac{1}{\re z}, 1}} \in L^1 _s \mathcal{H}
\end{align*}
uniformly in $n \in \N$ by the above calculations. Then the dominated convergence theorem yields
\begin{align*}
\lim_{n \to \infty} (\lim_{R \to \infty} \langle {2\pi} \tilde{T} _{z, R, \tilde{R}_n} G, J \rangle) &= \sqrt{2\pi} \int \int \mathcal{F} [\frac{\tau ^{z} _+}{\Gamma (z+1)}] \langle e^{-ikH} |\phi|^2 (H) G(k), e^{-isH} J(s) \rangle dkds \\
& = \langle T_z G, J \rangle .
\end{align*}
On the other hand we have
\begin{align*}
\langle \tilde{T} _{z, R, \tilde{R}_n} G, J \rangle &= \left\langle \tilde{\chi} _{\tilde{R}_n} (\tau - H) \frac{(\tau -H) ^{z} _+}{\Gamma (z+1)} |\phi|^2 (H) \mathcal{F}_t G, \chi _R (\tau) \mathcal{F}_t J \right\rangle \\
& \underset{R \rightarrow \infty}{\longrightarrow} \left\langle \tilde{\chi} _{\tilde{R}_n} (\tau - H) \frac{(\tau -H) ^{z} _+}{\Gamma (z+1)} |\phi|^2 (H) \mathcal{F}_t G, \mathcal{F}_t J \right\rangle \\
& \underset{n \rightarrow \infty}{\longrightarrow} \left\langle \frac{(\tau -H) ^{z} _+}{\Gamma (z+1)} |\phi|^2 (H) \mathcal{F}_t G, \mathcal{F}_t J \right\rangle = \langle \tilde{T}_z G, J \rangle 
\end{align*}
since $\mathcal{F}_t J$, $\frac{(\tau -H) ^{z} _+}{\Gamma (z+1)} |\phi|^2 (H) \mathcal{F}_t G \in L^2 _{\tau} \mathcal{H}$. As a consequence $\langle 2\pi \tilde{T}_z G, J \rangle =  \langle T_z G, J \rangle$ holds for all $G, J$ as in (\ref{2405021904}). Moreover we have
\begin{align}
| \lim_{R \to \infty} \langle {2\pi} \tilde{T} _{z, R, \tilde{R}} G, J \rangle |  \underset{\im z}{\lesssim} \|J\|_{L^1 _s \mathcal{H}} \|G\|_{L^{-\frac{1}{\re z}, 1}}, \quad |\langle T_z G, J \rangle | \underset{\im z}{\lesssim} \|J\|_{L^1 _s \mathcal{H}} \|G\|_{L^{-\frac{1}{\re z}, 1}} \label{2405031333}
\end{align}
by the above argument, where the latter estimate can be proved similarly as the former one.

(Step 3) In this step we prove that $\langle T_z F, G \rangle$ extends to a continuous function in $\{ z \in \C \mid \re z \le 0\}$ for all simple functions $F$ and $G$. We take arbitrary space-time simple functions $F$ and $G$. Let $\{F_n\}$ satisfy (\ref{2405021904}) and $\|F-F_n\|_{L^1 _t \mathcal{H} \cap L^2 _t \mathcal{H} \cap L^{-\frac{1}{\re z}, 1} _t \mathcal{H} } \rightarrow 0$ as $n \rightarrow \infty$ (We may take $F_n$ in the same way as $G_n$ below). We define $\{G_n\}$ by 
\begin{align*}
G_n := G* \rho_{\frac{1}{n}} = \sum_{i \in I} (\rho_{\frac{1}{n}} * \chi_i) u_i \quad for \quad G= \sum_{i \in I} \chi_i u_i
\end{align*}
where $u_i$ is a simple function on $X$ and $\chi_i$ are characteristic functions of some disjoint intervals of $\R$. In the above definition, $\{\rho_{\frac{1}{n}} \}$ denotes a mollifier of $C^{\infty} _0$ functions. Hence for sufficiently large $n \in \N$, $G_n$ satisfies conditions in (\ref{2405021904}) by a support property of $\rho$. Furthermore we have $\|G-G_n\|_{L^1 _t \mathcal{H} \cap L^2 _t \mathcal{H} } \rightarrow 0$ as $n \rightarrow \infty$. By using these properties and the fact that $\tilde{T}_{ik}$ is bounded on $L^2 _t \mathcal{H}$, for $z \in \{z \in \C \mid -\frac{1}{10} <\re z <0\}$ and $k \in \R$, we have
\begin{align*}
&|\langle 2\pi \tilde{T}_{ik} F, G \rangle - \langle T_z F, G \rangle | \\
& \lesssim \varlimsup_{n \to \infty} |\langle 2\pi \tilde{T}_{ik} F, G-G_n \rangle| + \varlimsup_{n \to \infty} | \langle 2\pi \tilde{T}_{ik} F, G_n \rangle - \langle 2\pi \tilde{T}_{ik} F_n, G_n \rangle | \\
&  \quad + \varlimsup_{n \to \infty} | \langle 2\pi \tilde{T}_{ik} F_n, G_n \rangle - \langle 2\pi \tilde{T}_{z} F_n, G_n \rangle | + \varlimsup_{n \to \infty} | \langle 2\pi \tilde{T}_{z} F_n, G_n \rangle - \langle T_z F_n, G_n \rangle | \\
& \quad + \varlimsup_{n \to \infty} | \langle T_z F_n, G_n \rangle - \langle T_z F_n, G \rangle | + \varlimsup_{n \to \infty} | \langle T_z F_n, G \rangle - \langle T_z F, G \rangle | \\
& \lesssim \|F\|_{L^2 _t \mathcal{H}} \varlimsup_{n \to \infty} \|G-G_n\|_{L^2 _t \mathcal{H}} + \varlimsup_{n \to \infty} \|G_n\|_{L^2 _t \mathcal{H}} \|F-F_n\|_{L^2 _t \mathcal{H}} \\
& \quad + \varlimsup_{n \to \infty} \|F_n\|_{L^2 _t \mathcal{H}} \|(\tilde{T}_{ik} - \tilde{T}_z)^* G_n\|_{L^2 _t \mathcal{H}} + \varlimsup_{n \to \infty} \|F_n\|_{L^{-\frac{1}{\re z}, 1} _t \mathcal{H} } \|G- G_n\|_{L^1 _t \mathcal{H}} \\
& \quad + \varlimsup_{n \to \infty} \|G\|_{L^1 _t \mathcal{H}} \|F_n -F\|_{L^{-\frac{1}{\re z}, 1} _t \mathcal{H} } \\
& = \varlimsup_{n \to \infty} \|F_n\|_{L^2 _t \mathcal{H}} \|(\tilde{T}_{ik} - \tilde{T}_z)^* G_n\|_{L^2 _t \mathcal{H}} \lesssim \sup_{n \in \N} \|(\tilde{T}_{ik} - \tilde{T}_z)^* G_n\|_{L^2 _t \mathcal{H}}.
\end{align*}
Now we estimate the right hand side assuming $z \to ik$. By (\ref{2405021848}) we obtain
\begin{align*}
\|(\tilde{T}_{ik} - \tilde{T}_z)^* G_n\|^2 _{L^2 _t \mathcal{H}} &\lesssim \left\| \left( \frac{(\tau -H)^{z} _+}{\Gamma (z+1)} - \frac{(\tau -H)^{ik} _+}{\Gamma (ik+1)} \right)^* \mathcal{F} G_n \right\|^2 _{L^2 _t \mathcal{H}} \\
& \lesssim \sum_{i \in I} \left\| \widehat{\rho_{\frac{1}{n}} * \chi_i} \left( \frac{(\tau -H)^{z} _+}{\Gamma (z+1)} - \frac{(\tau -H)^{ik} _+}{\Gamma (ik+1)} \right)^* u_i \right\|^2 _{L^2 _t \mathcal{H}} \\
& \lesssim \sum_{i \in I} \int |\widehat{\rho_{\frac{1}{n}} * \chi_i} (\tau)|^2 \int \left| \frac{(\tau -\lambda)^{z} _+}{\Gamma (z+1)} - \frac{(\tau -\lambda)^{ik} _+}{\Gamma (ik+1)} \right|^2 d \|E(\lambda) u_i\|^2 _{\mathcal{H}} d \tau .
\end{align*}
For each term we sprit $\R ^2$ into $\{|\tau - \lambda| < 1\}$ and $\{|\tau - \lambda| \ge 1\}$. In the first region we have
\begin{align*}
& \int \int_{|\tau - \lambda| < 1} |\widehat{\rho_{\frac{1}{n}} * \chi_i} (\tau)|^2 \left| \frac{(\tau -\lambda)^{z} _+}{\Gamma (z+1)} - \frac{(\tau -\lambda)^{ik} _+}{\Gamma (ik+1)} \right|^2 d \|E(\lambda) u_i\|^2 _{\mathcal{H}} d \tau \\
& = \int |\widehat{\rho_{\frac{1}{n}} * \chi_i} ( \cdot )|^2 * \left( \left| \frac{( \cdot )^{z} _-}{\Gamma (z+1)} - \frac{( \cdot )^{ik} _-}{\Gamma (ik+1)} \right|^2 \chi ( \cdot ) \right) (\lambda) d \|E(\lambda) u_i\|^2 _{\mathcal{H}} \\
& \lesssim \|u_i\|^2 _{\mathcal{H}} \|\widehat{\rho_{\frac{1}{n}} * \chi_i }\|^2 _{\infty} \left\| \left|\frac{ \tau ^{z} _-}{\Gamma (z+1)} - \frac{ \tau ^{ik} _-}{\Gamma (ik+1)} \right|^2 \chi ( \tau ) \right\|_{1} \\
& \lesssim \|u_i\|^2 _{\mathcal{H}} \|\chi_i\|^2 _{1} \left\| \left|\frac{ \tau ^{z} _-}{\Gamma (z+1)} - \frac{ \tau ^{ik} _-}{\Gamma (ik+1)} \right|^2 \chi ( \tau ) \right\|_{1} \underset{z \to ik}{\longrightarrow} 0
\end{align*}
uniformly in $n \in \N$, where in the last limit we have used the dominated convergence theorem. In the second region we transform into
\begin{align*}
&\underbrace{ \int \int_{|\tau - \lambda| \ge 1} (|\widehat{\rho_{\frac{1}{n}} * \chi_i} (\tau) |^2 - |\hat{\chi_i} (\tau)|^2) \left| \frac{(\tau -\lambda)^{z} _+}{\Gamma (z+1)} - \frac{(\tau -\lambda)^{ik} _+}{\Gamma (ik+1)} \right|^2 d \|E(\lambda) u_i\|^2 _{\mathcal{H}} d \tau }_{=: I_1} \\
& + \underbrace{ \int \int_{|\tau - \lambda| \ge 1}  |\hat{\chi_i} (\tau)|^2 \left| \frac{(\tau -\lambda)^{z} _+}{\Gamma (z+1)} - \frac{(\tau -\lambda)^{ik} _+}{\Gamma (ik+1)} \right|^2 d \|E(\lambda) u_i\|^2 _{\mathcal{H}} d \tau }_{=: I_2}.
\end{align*}
For any $\epsilon >0$ we take $n_0 \in \N$ satisfying $n \ge n_0 \Rightarrow \| \widehat{\rho_{\frac{1}{n}} * \chi_i} - \hat{\chi_i} \|_2 = \| \rho_{\frac{1}{n}} * \chi_i - \chi_i\|_2 < \epsilon$. Then we have
\begin{align*}
|I_1| \lesssim \|u_i\|^2 _{\mathcal{H}} \||\widehat{\rho_{\frac{1}{n}} * \chi_i}|^2 - |\hat{\chi_i}|^2\|_1 \lesssim \|u_i\|^2 _{\mathcal{H}} (\| \widehat{\rho_{\frac{1}{n}} * \chi_i} \|_2 + \| \hat{\chi_i} \|_2)\| \widehat{\rho_{\frac{1}{n}} * \chi_i} - \hat{\chi_i} \|_2 \lesssim \epsilon
\end{align*}
uniformly in $n \ge n_0$ and $z$ near $ik$. For $I_2$, by the dominated convergence theorem, there exists $r_0 >0$ such that if $|z - ik| <r_0$ then $|I_2| < \epsilon$. This implies that if $|z - ik| <r_0$ and $n \ge n_0$ we have $|I_1| + |I_2| \lesssim \epsilon$. For $n \le n_0$, again by the dominated convergence theorem, there exists $r_1 < r_0$ such that if $|z-ik| < r_1$ then $|I_1| + |I_2| \lesssim \epsilon$. Therefore, for any $\epsilon >0$, we have
\begin{align*}
\sup_{n \in \N} \|(\tilde{T}_{ik} - \tilde{T}_z)^* G_n\|^2 _{L^2 _t \mathcal{H}} < \epsilon
\end{align*}
if $|z-ik|$ is sufficiently small. By the above argument we have $\lim_{z \to ik} |\langle 2\pi \tilde{T}_{ik} F, G \rangle - \langle T_z F, G \rangle | =0$, where $z$ moves in $\{ z \in \C \mid \re z <0\}$. Moreover it is easier to prove that $\langle \tilde{T} _{is} F, G \rangle \to \langle \tilde{T} _{ik} F, G \rangle$ if $ \R \ni s \to k$. As a result, by setting $T_{ik} = 2\pi \tilde{T}_{ik}$ for $k \in \R$, $\{T_z \}$ is an analytic family of operators in the sense of Stein (i.e.   for any simple functions $F$ and $G$, $\langle T_z F, G \rangle$ is analytic in $\{ z \in \C \mid \re z <0\}$ and continuous in $\{ z \in \C \mid \re z \le 0\}$). We note that 
\begin{align}
\|W_1 T_{ik} W_2 \|_{\mathfrak{S}^{\infty}} \lesssim \|W_1\|_{L^{\infty} _t L^{\infty} _x} \|W_2\|_{L^{\infty} _t L^{\infty} _x} \label{2405041543}
\end{align}
holds for any simple functions $W_1$ and $W_2$ from (\ref{2405021848}).

(Step 4) Now we define $S_z := {W_1}^{-z} T_z {W_2}^{-z}$ for non-negative simple functions $W_1, W_2$ and $z \in \{ z \in \C \mid \re z \le 0\}$. Then $\langle S_z F, G \rangle$ is analytic in $\{ z \in \C \mid \re z <0\}$ for any simple functions $F$ and $G$, which can be seen as the analyticity of $\langle T_z F, G \rangle$. To see the continuity near $\re z =0$, we calculate as
\begin{align*}
| \langle S_z F, G \rangle - \langle S_{ik} F, G \rangle| &\le | \langle T_{z} {W_2}^{-z} F, {W_1}^{-\bar{z}} G \rangle - \langle T_z {W_2}^{-ik} F, {W_1}^{-\bar{z}} G \rangle | \\
& \quad + |\langle T_z {W_2}^{-ik} F, {W_1}^{-\bar{z}} G \rangle - \langle T_z {W_2}^{-ik} F, {W_1}^{ik} G \rangle |  \\
& \quad + | \langle T_z {W_2}^{-ik} F, {W_1}^{ik} G \rangle - \langle T_{ik} {W_2}^{-ik} F, {W_1}^{ik} G \rangle|.
\end{align*}
The first term is estimated from above by $\|{W_1}^{-\bar{z}} G\|_{L^2 _t \mathcal{H}} \|T_{z} \{({W_2}^{-z} - {W_2}^{-ik}) F\}\|_{L^2 _t \mathcal{H}}$. By setting $W_2 = \sum_{i \in I} \chi_{i} (t) u_i$, $F = \sum_{j \in J} \tilde{\chi}_j (t) \tilde{u}_j$ and repeating a similar calculation as the proof of the well-definedness of (\ref{2405021847}), we obtain
\begin{align*}
\|T_{z} \{({W_2}^{-z} - {W_2}^{-ik}) F\}\|^2 _{L^2 _t \mathcal{H}} \lesssim \sum_{i, j} \|(u^{-z} _{i} - u^{-ik} _i)\tilde{u}_j\|^2 _{\mathcal{H}}
\end{align*}
where the implicit constant is dependent on $\chi_i$ and $\tilde{\chi}_j$ but independent of $z$. Hence the first term goes to $0$ as $z \to ik$ since $u_i$ and $\tilde{u}_j$ are simple functions on $X$. The second term also goes to $0$ by a similar calculation and $\| {W_1}^{-\bar{z}} G - {W_1}^{ik} G \|_{L^2 _t \mathcal{H}} \to 0$. Finally by using the continuity of $T_z$, the third term disappears as $z \to ik$. Therefore $\{S_z\}$ is an analytic family of operators in the sense of Stein. Furthermore by using (\ref{2405021809}) and (\ref{2405041543}) we obtain
\begin{align*}
\|S_{ik}\|_{\mathfrak{S}^{\infty}} \lesssim 1, \quad \|S_{- \frac{\tilde{r}}{2} +ik}\|_{\mathfrak{S}^{2}} \lesssim \|W^{\frac{\tilde{r}}{2}} _1\|_{L^{2u, 4} _t \mathcal{H}} \|W^{\frac {\tilde{r}}{2}} _2\|_{L^{2u, 4} _t \mathcal{H}} = \|W_1\|^{\frac{\tilde{r}}{2}} _{L^{u \tilde{r}, 2 \tilde{r}} _t L^{\tilde{r}} _x} \|W_2\|^{\frac{\tilde{r}}{2}} _{L^{u \tilde{r}, 2 \tilde{r}} _t L^{\tilde{r}} _x}
\end{align*}
where the implicit constant is at most exponential.
By the complex interpolation we have
\begin{align*}
\|S_{-1}\|_{\mathfrak{S}^{\tilde{r}}} \lesssim \|W_1\|_{L^{u \tilde{r}, 2 \tilde{r}} _t L^{\tilde{r}} _x} \|W_2\|_{L^{u \tilde{r}, 2 \tilde{r}} _t L^{\tilde{r}} _x}.
\end{align*}
This is equivalent to 
\begin{align}
\|W_1 T_{-1} W_2\|_{\mathfrak{S}^{\tilde{r}}} \lesssim \|W_1\|_{L^{\tilde{q}, 2 \tilde{r}} _t L^{\tilde{r}} _x} \|W_2\|_{L^{\tilde{q}, 2 \tilde{r}} _t L^{\tilde{r}} _x} \label{2405042005}
\end{align}
since $\tilde{q}= u \tilde{r}$ holds. Actually (\ref{2405042005}) is true for all simple functions $W_1$ and $W_2$ by considering $W_j = \sgn W_j |W_j|$. Then by Lemma \ref{2405031343}, (\ref{2405042010}) and (\ref{2405042005}) we obtain 
\begin{align*}
\left\| \sum_{j=0}^{\infty} \nu_j |e^{-itH} \phi (H) f_j|^2 \right\|_{L^{\frac{q}{2}, \beta} _t  L^{\frac{r}{2}} _x} \lesssim \|\nu\|_{l^{\beta}}
\end{align*}
since $\beta = \frac{2r}{r+2}$. Thus we are done.
\end{proof}
Now we are ready to prove Theorem \ref{2405021538} and Corollary \ref{2405021556}. The proof is based on simple interpolation arguments.
\begin{proof}[Proof of Theorem \ref{2405021538}]
First we assume $\sigma \in (\frac{1}{2}, \infty)$. If $r \in (\frac{2(\sigma +1)}{\sigma}, \frac{2(2\sigma +1)}{2\sigma -1})$, we have $\max \{1+2\sigma, 2\} < \tilde{r} <2+2\sigma$ since $\tilde{r} =2(\frac{r}{2})'$ holds. Furthermore, for $\beta = \frac{2r}{r+2}$, we have $\beta \le \frac{q}{2}$ since $(q, r)$ is $\sigma$-admissible. Then by Proposition \ref{2405021420} and the inclusion of the Lorentz spaces, we obtain
\begin{align}
\left\| \sum_{j=0}^{\infty} \nu_j |e^{-itH} \phi (H) f_j|^2 \right\|_{L^{\frac{q}{2}} _t  L^{\frac{r}{2}} _x} \lesssim \left\| \sum_{j=0}^{\infty} \nu_j |e^{-itH} \phi (H) f_j|^2 \right\|_{L^{\frac{q}{2}, \beta} _t  L^{\frac{r}{2}} _x} \lesssim \|\nu\|_{l^{\beta}}. \label{2405050900}
\end{align}
By the complex interpolation between (\ref{2405050900}) and the trivial one (i.e. $(q, r, \beta) = (\infty, 2, 1)$) we obtain (\ref{2405021614}) for all $r \in [2, \frac{2(2\sigma +1)}{2\sigma -1})$. If $\sigma >1$, by the endpoint Strichartz estimate (\cite{KT}) and the triangle inequality, we have (\ref{2405021614}) for $(q, r, \beta) = (2, \frac{2\sigma}{\sigma -1}, 1)$ (see (\ref{2405051024})). By interpolating (\ref{2405021614}) for $(2, \frac{2\sigma}{\sigma -1}, 1)$ and $(q, r, \beta)$, where $r < \frac{2(2\sigma +1)}{2\sigma -1}$ is sufficiently close to $\frac{2(2\sigma +1)}{2\sigma -1}$, we obtain (\ref{2405021614}) for all $r \in [2, \frac{2\sigma}{\sigma -1})$. If $\sigma =1$, instead of the endpoint estimate, we employ $(q, r)$ sufficiently close to $(2, \infty)$ and gain the same result. Finally in the case of $\sigma \in (\frac{1}{2}, 1)$, we use (\ref{2405021614}) for $(q, r, \beta) = (\frac{2}{\sigma}, \infty, 1)$ and repeating a similar argument, we obtain the desired result. Next we assume $\sigma \in (0, \frac{1}{2}]$. If $r \in (\frac{2(\sigma +1)}{\sigma}, \infty)$ and $\beta = \frac{2r}{r+2}$, then $\beta \le \frac{q}{2}$ and assumptions in Proposition \ref{2405021420} hold. Hence we have (\ref{2405050900}) for $r \in (\frac{2(\sigma +1)}{\sigma}, \infty)$. By the complex interpolation with the trivial estimate, we finish the proof.      
\end{proof}
We finish this subsection by proving Corollary \ref{2405021556}. However the arguments are almost the same.
\begin{proof}[Proof of Corollary \ref{2405021556}]
Let $I = [a, b]$ and $-\infty \le a < b \le \infty$. Let $\chi_{I} \in C^{\infty} _{0} (\R)$ denote a function satisfying $\chi_{I} (t) =1$ if $t \in [a+\epsilon, b-\epsilon]$ and $\chi_{I} (t) = 0$ if $t \in (-\infty, a + \frac{\epsilon}{2} ) \cup (b-\frac{\epsilon}{2}, \infty)$ for sufficiently small $\epsilon >0$. As in the proof of Proposition \ref{2405021420}, we consider $\chi_{I} (t) T_z \chi_{I} (t)$ and $\chi_{I} (t) \tilde{T} _z \chi_{I} (t)$, where $T_z$ and $\tilde{T} _z$ are as in (\ref{2405051128}) or (\ref{2405021847}) respectively. Then by repeating the proof of Proposition \ref{2405021420},
\begin{align*}
\left\| \sum_{j=0}^{\infty} \nu_j |e^{-itH} \phi (H) f_j|^2 |\chi_{I} (t)|^2 \right\|_{L^{\frac{q}{2}, \beta} _t  L^{\frac{r}{2}} _x} \lesssim \|\nu\|_{l^{\beta}}
\end{align*}
holds, where the implicit constant does not depend on $\chi_{I}$. By taking $\epsilon \to 0$ and the monotone convergence theorem we obtain
\begin{align*}
\left\| \sum_{j=0}^{\infty} \nu_j |e^{-itH} \phi (H) f_j|^2 \right\|_{L^{\frac{q}{2}, \beta} (I; L^{\frac{r}{2}} _x)} \lesssim \|\nu\|_{l^{\beta}}.
\end{align*}
Finally we obtain the desired estimates by repeating the proof of Theorem \ref{2405021538}.
\end{proof}
\subsection{\textbf{Applications to some operators}}
In this subsection we give some applications of Theorem \ref{2405021538} and Corollary \ref{2405021556}. These examples do not follow from the perturbation theory in \cite{H1} and \cite{H2} due to the lack of the Kato smoothing estimate. First we consider the magnetic Schr\"odinger operator with unbounded electromagnetic potentials and partially improve the known results in \cite{Y2}.
\begin{example}\label{2405051203}
Let $d \ge 1$ and $H = (D_{x} -A)^2 +V$ be the magnetic Schr\"odinger operator on $L^2 (\R^d)$ satisfying the following conditions:
\begin{enumerate}
\item
Electric potential $V: \R^d \to \R$ is a smooth function satisfying $|\partial^{\alpha} _x V(x)| \lesssim 1$ if $|\alpha| \ge 2$.
\item
Magnetic potential $A: \R^d \to \R^d$ is smooth and satisfies $|\partial^{\alpha} _x A(x)| \lesssim 1$ if $|\alpha| \ge 1$. 
\item
Magnetic field $B:= (\frac{\partial A_k}{\partial x_j} - \frac{\partial A_j}{\partial x_k})_{j, k}$ satisfies $|\partial^{\alpha} _x B(x)| \lesssim \langle x \rangle^{-1-\epsilon}$ if $|\alpha| \ge 1$ for some $\epsilon >0$. 
\end{enumerate}
Then it is well-known that $H$ is essentially self-adjoint on $C^{\infty} _{0} (\R^d)$. Furthermore Yajima (\cite{Y2} and \cite{Y3}) proved the following expression of the propagator:
\begin{align}
e^{-i(t-s)H} u= \int_{\R^d} E(t, s, x, y) u(y) dy, \quad E(t, s, x, y) = \frac{e^{\mp id\pi /4}}{(2\pi |t-s|)^{\frac{d}{2}}} e^{iS(t, s, x, y)} e(t, s, x, y) \label{2405060941}
\end{align}
for all $u \in \mathcal{S}$ and $0 < \pm (t-s) < T \ll 1$. Here $S$ is a real-valued function solving the Hamilton-Jacobi equation and $e$ satisfies $|e(t, s, x, y) -1| \lesssim |t-s|$. Therefore by taking sufficiently small $T >0$,
\begin{align*}
\|e^{-i(t-s)H} u\|_{\infty} \lesssim |t-s|^{-\frac{d}{2}} \|u\|_{1}
\end{align*} 
holds for $t \ne s$ satisfying $|t|, |s| < T$. By applying Corollary \ref{2405021556} we obtain
\begin{align*}
\left\| \sum_{j=0}^{\infty} \nu_j |e^{-itH} f_j|^2 \right\|_{L^{\frac{q}{2}} _T  L^{\frac{r}{2}} _x} \lesssim \|\nu\|_{l^{\beta}}
\end{align*}
for any $\frac{d}{2}$-admissible pair $(q, r)$ and $\beta$ satisfying the assumption in Theorem \ref{2405021538} with $\sigma = \frac{d}{2}$. Here $L^{\frac{q}{2}} _T  L^{\frac{r}{2}} _x$ denotes $L^{\frac{q}{2}} ( [-T, T]; L^{\frac{r}{2}} _x)$. For any $N \in \N$ we calculate
\begin{align*}
\left\| \sum_{j=0}^{\infty} \nu_j |e^{-itH} f_j|^2 \right\|_{L^{\frac{q}{2}} _{NT}  L^{\frac{r}{2}} _x} \lesssim \sum_{k=0}^{4N-1} \left\| \sum_{j=0}^{\infty} \nu_j |e^{-itH} f_j|^2 \right\|_{L^{\frac{q}{2}} ( [-NT+ \frac{k}{2} T, -NT + \frac{k+1}{2} T]; L^{\frac{r}{2}} _x)}
\end{align*}
by the triangle inequality. Since $\{ e^{-i(-NT+ \frac{k}{2} T)H} f_j \}_j$ is an orthonormal system in $L^2 (\R^d)$ for all $N$ and $k$, we have
\begin{align*}
\left\| \sum_{j=0}^{\infty} \nu_j |e^{-itH} f_j|^2 \right\|_{L^{\frac{q}{2}} ( [-NT+ \frac{k}{2} T, -NT + \frac{k+1}{2} T]; L^{\frac{r}{2}} _x)} &= \left\| \sum_{j=0}^{\infty} \nu_j |e^{-itH} (e^{-i(-NT+ \frac{k}{2} T)H} f_j) |^2 \right\|_{L^{\frac{q}{2}} ( [0, \frac{T}{2}]; L^{\frac{r}{2}} _x)} \\
& \lesssim \|\nu\|_{l^{\beta}}.
\end{align*} 
Hence we have proved the following corollary.
\end{example}
\begin{corollary}\label{2405061029}
Let $H$ be the magnetic Schr\"odinger operator introduced in Example \ref{2405051203}. For any bounded interval $I \subset \R$, we have
\begin{align*}
\left\| \sum_{j=0}^{\infty} \nu_j |e^{-itH} f_j|^2 \right\|_{L^{\frac{q}{2}} (I; L^{\frac{r}{2}} _x )} \lesssim \|\nu\|_{l^{\beta}}
\end{align*}
for any $(q, r, \beta)$ satisfying the assumption in Theorem \ref{2405021538} with $\sigma = \frac{d}{2}$.
\end{corollary}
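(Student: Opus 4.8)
The strategy is the one already outlined in Example~\ref{2405051203}: reduce the claim to a short-time dispersive bound, feed it into Corollary~\ref{2405021556}, and then bootstrap from a small interval to an arbitrary bounded one by exploiting unitarity of the propagator. First I would record the short-time dispersive estimate. By Yajima's parametrix~(\ref{2405060941}) the fundamental solution obeys $|E(t,s,x,y)| \lesssim |t-s|^{-d/2}$ for $0 < \pm(t-s) < T$ with $T$ small, hence $\|e^{-i(t-s)H}\|_{1 \to \infty} \lesssim |t-s|^{-d/2}$ for all $t \ne s$ in a sufficiently short interval $I_0 \ni 0$. This is precisely the hypothesis of Corollary~\ref{2405021556} taken with $\phi \equiv 1$ (so $\phi(H) = \mathrm{Id}$), $\sigma = d/2$, and $X = \R^d$. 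Therefore Corollary~\ref{2405021556} already gives $\big\| \sum_j \nu_j |e^{-itH} f_j|^2 \big\|_{L^{q/2}(I_0;\, L^{r/2}_x)} \lesssim \|\nu\|_{l^{\beta}}$ for every $(q,r,\beta)$ as in Theorem~\ref{2405021538} with $\sigma = d/2$ (the relevant case being $\sigma \ge 1$ when $d \ge 2$ and $\sigma \in (1/2,1)$ when $d = 1$), every orthonormal system $\{f_j\}$ in $L^2(\R^d)$, and every complex sequence $\nu$.

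Next I would pass from $I_0$ to a general bounded interval $I$. Choosing $N \in \N$ with $I \subseteq N I_0$ and partitioning $N I_0$ into $M = O(N)$ consecutive subintervals $J_1,\dots,J_M$, each of length $< |I_0|$, one uses that admissibility forces $q \ge 2$, so $q/2 \ge 1$ and $\|\cdot\|_{L^{q/2}_t L^{r/2}_x}$ satisfies the triangle inequality while restriction to a subinterval is non-increasing; hence $\big\| \sum_j \nu_j |e^{-itH} f_j|^2 \big\|_{L^{q/2}(I;\, L^{r/2}_x)} \le \sum_{k=1}^{M} \big\| \sum_j \nu_j |e^{-itH} f_j|^2 \big\|_{L^{q/2}(J_k;\, L^{r/2}_x)}$. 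On each $J_k$ let $t_k$ be its left endpoint and substitute $t = s + t_k$, a measure-preserving translation, so that $s$ runs over $J_k - t_k \subseteq I_0$; by the group law $e^{-itH} = e^{-isH} e^{-it_k H}$ and unitarity of $e^{-it_k H}$ on $L^2(\R^d)$, the family $\{e^{-it_k H} f_j\}_j$ is again an orthonormal system, so the $I_0$-estimate from the first step bounds the $k$-th term by $C\|\nu\|_{l^{\beta}}$ with $C$ independent of $k$, $N$, $\nu$ and the system. Summing over the $M = O(N)$ pieces yields the claim on $I$, with implicit constant depending on $I$ only through $|I|$.

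The one point that genuinely requires care — and the reason one cannot simply invoke the global Theorem~\ref{2405021538} — is that Yajima's dispersive estimate is purely local in time: the parametrix and the bound $|e(t,s,x,y)-1| \lesssim |t-s|$ are available only for small $|t-s|$, so there is no $1 \to \infty$ control of $e^{-i(t-s)H}$ across large time separations. The covering plus unitarity argument circumvents this, but inevitably produces a constant that grows (linearly) with $|I|$; a genuinely global-in-time orthonormal Strichartz estimate would demand extra structural hypotheses on $A$ and $V$ (non-trapping together with suitable decay of $B$ and $V$) guaranteeing a global dispersive or smoothing bound. Everything else, in particular the interpolation machinery that produces the admissible range of $(q,r,\beta)$, is already packaged inside Theorem~\ref{2405021538} and Corollary~\ref{2405021556}, so no further work is needed there.
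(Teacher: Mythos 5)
Your proof is correct and follows essentially the same route the paper takes (embedded in Example~\ref{2405051203}): short-time dispersive bound from Yajima's parametrix, Corollary~\ref{2405021556} on a small interval, then a covering argument using the group law, unitarity of $e^{-it_k H}$, and the fact that unitary images of orthonormal systems remain orthonormal, summed via the triangle inequality since $q/2\ge 1$. The only small slip is the parenthetical classification for $d=1$: there $\sigma=d/2=1/2$ falls in the case $\sigma\in(0,1/2]$ of Theorem~\ref{2405021538}, not $\sigma\in(1/2,1)$, but this has no bearing on the argument.
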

We notice that, in Corollary \ref{2405061029}, the implicit constant is dependent on the volume of $I$. Actually it is impossible to take $I = \R$ in general. As a counterexample we have $H= -\Delta + x^2$, which has infinitely many eigenfunctions. Next we consider the $(k, a)$-generalized Laguerre operator. Recently the ordinary and orthonormal Strichartz estimates are considered in \cite{Ms}, \cite{MS} and \cite{TT} but the latter are restricted to $a=1$ or $2$ (see \cite{MS}).
\begin{example}\label{2405061101}
We recall the definition of  the $(k, a)$-generalized Laguerre operator, which was introduced in \cite{BKO}. Let $\mathcal{R} \subset \R^d \setminus \{0\}$ be a reduced root system. The finite Coxeter group associated with $\mathcal{R}$ is a subgroup $C \subset O(d)$ generated by $\{ r_{\alpha}\}_{\alpha \in \mathcal{R}}$, where $r_{\alpha} (x):= x- 2 \frac{x \cdot \alpha}{|\alpha|^2} \alpha$ denotes the reflection. If $k: \mathcal{R} \to \C$ is invariant under the finite Coxeter group associated with $\mathcal{R}$, $k$ is called a multiplicity function. For $\xi \in \C^d$, a non-negative multiplicity function $k$ and $f \in C^1 (\R^d)$, we define
\begin{align*}
T_{\xi} (k) f(x) := \partial_{\xi} f(x) + \sum_{\alpha \in \mathcal{R}^{+}} k(\alpha) \alpha \cdot \xi \frac{f(x) - f(r_{\alpha} x)}{\alpha \cdot x}
\end{align*} 
where $\mathcal{R}^{+}$ is a positive subsystem of $\mathcal{R}$ and $\partial_{\xi}$ denotes the directional derivative for $\xi$. It is known that this definition is independent of the choice of $\mathcal{R}^{+}$ (see \cite{BKO} pp.1275). The Dunkl Laplacian $\Delta_{k}$ is defined by $\Delta_{k} := \sum_{j=1}^{d} \{ T_{\xi_{j}} (k)\}^2$, where $\xi_{1}, \ldots, \xi_{d}$ is an orthonormal basis of $\R^d$. This is also independent of the choice of an orthonormal basis (see \cite{BKO} pp.1278). Then the $(k, a)$-generalized Laguerre operator $H_{k, a}$ is defined by
\begin{align*}
H_{k, a} := \frac{-|x|^{2-a} \Delta_{k} + |x|^{a}}{a}
\end{align*} 
for $a >0$. We set $\langle k \rangle := \frac{1}{2} \sum_{\alpha \in \mathcal{R}^{+}} k(\alpha)$, $\sigma_{k, a} := \frac{a+2 \langle k \rangle +d-2}{a}$ and $\vartheta_{k, a} (x):= |x|^{a-2} \Pi_{\alpha \in \mathcal{R}^{+}} |\alpha \cdot x|^{2k(\alpha)}$. It is proved in \cite{BKO} Corollary 3.22 that if $\sigma_{k, a} >0$ then $H_{k, a}$ is essentially self-adjoint on $L^2 (\R^d, \vartheta_{k, a} (x)dx)$ and its spectrum is discrete. Recently Taira and Tamori (\cite{TT}) proved that if $\sigma_{k, a} >0$ and either of the following holds:
\begin{enumerate}
\item
$d=1$ and $a \ge 2- 4 \langle k \rangle$
\item
$d \ge 2$ and $0<a \le 1$ or $d \ge 2$ and $a=2$
\end{enumerate}
then the dispersive estimate holds: $\|e^{-itH_{k, a}} \|_{1 \to \infty} \lesssim |t|^{-\sigma_{k, a}}$ for $|t| \le \frac{\pi}{2}$. Then by using Corollary \ref{2405021556} and repeating a similar argument as in Example \ref{2405051203}, we obtain 
\begin{align}
\left\| \sum_{j=0}^{\infty} \nu_j |e^{-itH_{k, a}} f_j|^2 \right\|_{L^{\frac{q}{2}} (I; L^{\frac{r}{2}} _x )} \lesssim \|\nu\|_{l^{\beta}}\label{2405072143}
\end{align}
where $I \subset \R$ is an arbitrary bounded interval and $(q, r, \beta )$ satisfies the condition in Theorem \ref{2405021538} with $\sigma = \sigma_{k, a}$. We notice that $L^p _x = L^p (\R^d, \vartheta_{k, a} (x)dx)$. Next we assume $d \ge 2$, $k \equiv 0$ and $a \in (1, 2)$. We take $\phi_0 \in (0, 2\pi)$ and a finite partition of unity $\{ \chi_{i} \}_{1\le i \le N}$ for $\mathbb{S}^{d-1}$ satisfying $\omega, \eta \in \supp \chi_i \Rightarrow  \omega \cdot \eta \ge \cos \phi_0$. It is proved in \cite{TT} that 
\begin{align*}
\|\chi_i (\hat{x}) e^{-i(t-s)H_{k, a}} \chi_i (\hat{x})\|_{1 \to \infty} \lesssim |t-s|^{-\sigma_{k, a}}
\end{align*}
holds for $t, s \in (-\frac{\pi}{4}, \frac{\pi}{4})$, where $\hat{x} = \frac{x}{|x|}$. Then by repeating the proof of Proposition \ref{2405021420} and Corollary \ref{2405021556} with $\phi \equiv 1$ and $\{T_z \}$ replaced by $\{ \chi_i (\hat{x}) \chi_{I} (t)T_z \chi_{I} (t) \chi_i (\hat{x}) \}$, we obtain
\begin{align*}
\left\| \sum_{j=0}^{\infty} \nu_j | \chi_i (\hat{x}) e^{-itH_{k, a}} f_j|^2 \right\|_{L^{\frac{q}{2}} (I; L^{\frac{r}{2}} _x )} \lesssim \|\nu\|_{l^{\beta}}
\end{align*}
where $I, (q, r, \beta)$ are as above. By taking a sum with respect to $i$, we obtain (\ref{2405072143}). We notice that these are refinements of Theorem 1.1 in \cite{TT} and an extension of \cite{MS}. It would be also possible to prove the global-in-time orthonormal Strichartz estimate for $-|x|^a \Delta_{k}$ by a similar argument. The ordinary Strichartz estimate for $-|x|^a \Delta_{k}$ is proved in \cite{TT} Theorem 1.3. 
\end{example}
The next example is concerned with the Schr\"odinger operator with scaling critical electromagnetic potentials. The orthonormal Strichartz estimate for the Schr\"odinger operator with scaling critical electric potentials (including inverse square potentials) are proved in \cite{H1}. However critical magnetic potentials are not considered since it is difficult to treat the first order term by the smooth perturbation theory (\cite{KY}, \cite{KatoYajima}).
\begin{example}\label{2405072241}
Let $H_{A, a}$ be the magnetic Schr\"odinger operator defined by
\begin{align*}
H_{A, a} := \left(D_x + \frac{A(\hat{x})}{|x|} \right)^2 + \frac{a(\hat{x})}{|x|^2}
\end{align*}
on $\R^2$. Here $\hat{x} = \frac{x}{|x|}$, $A \in W^{1, \infty} (\mathbb{S}^1; \R^2)$ and $a \in W^{1, \infty} (\mathbb{S}^1; \R)$. Furthermore we assume the transversality condition: $A(\hat{x}) \cdot \hat{x} =0$ for all $x \in \R^2$. We define the total flux along $\mathbb{S}^1$ by $\Phi_{A} := \frac{1}{2\pi} \int_{0}^{2\pi} A(\cos \theta, \sin \theta) \cdot (-\sin \theta, \cos \theta) d\theta$. If $\Phi_{A} \notin \Z$, we assume $\|a_{-}\|_{L^{\infty} (\mathbb{S}^1)} < \min_{k \in \Z} |k - \Phi_{A}|^2$. If $\Phi_{A} \in \Z$, we may assume $A \equiv 0$ by the unitary equivalence. In the latter case we also assume $\min a(\hat{x})>0$. A typical example is the Schr\"odinger operator with the Aharonov-Bohm potential (see $H_{AB}$ in Corollary \ref{2405080015}). Under these assumptions $H_{A, a}$ is a self-adjoint operator on $L^2 (\R^2)$ defined by the Friedrichs extension. In \cite{FFFP}, the dispersive estimate $\| e^{-itH_{A, a}}\|_{1 \to \infty} \lesssim |t|^{-1}$ is proved. Therefore by using Theorem \ref{2405021538}, we obtain
\begin{align*}
\left\| \sum_{j=0}^{\infty} \nu_j |e^{-itH_{A, a}} f_j|^2 \right\|_{L^{\frac{q}{2}} _t  L^{\frac{r}{2}} _x} \lesssim \|\nu\|_{l^{\beta}}
\end{align*}
for all $(q, r, \beta)$ satisfying the assumption in Theorem \ref{2405021538} with $\sigma =1$.
\end{example}
\begin{remark}\label{2405080047}
We here give more comments and examples that are not considered before.
\begin{itemize}
\item
There seems to be no result on the orthonormal Strichartz estimate for variable coefficient operators or the Schr\"odinger operator on manifolds except for the Laplacian on $\mathbb{T}^d$ (\cite{N}). Concerning the Laplacian on compact Riemannian manifolds, the ordinary global-in-time Strichartz estimate never holds but the local-in-time Strichartz estimate holds with some derivative loss (\cite{BGT}). This is also true for non-elliptic operators (\cite{MT}). On non-compact manifolds, the ordinary (global-in-time or local-in-time) Strichartz estimate is proved by many authors under non-trapping conditions (\cite{BT1}, \cite{BT2}, \cite{MMT}, \cite{M3}, \cite{M4}, \cite{MY1} and \cite{Ta2}). However derivative losses may appear without non-trapping conditions. We give examples of non-compact Riemannian manifolds with (infinitely many) trapped classical trajectories on which, however, we have the orthonormal Strichartz estimate without derivative losses. Let $\Gamma \subset SO(3, 1)$ be a convex co-compact subgroup and $X := \Gamma \backslash \mathbb{H}^{3}$. Then $X$ is a hyperbolic manifold with infinitely many trapped geodesics (see \cite{BGH}). However, if its limit set has Hausdorff dimension $\delta < \frac{3}{2}$, the global dispersive estimate: $\|e^{it \Delta_{X}}\|_{1 \to \infty} \lesssim |t|^{-\frac{3}{2}}$ holds for all $t \in \R \setminus 0$ (\cite{BGH} Theorem 0.1). Therefore by using Theorem \ref{2405021538} we obtain the global-in-time orthonormal Strichartz estimate without derivative losses for $(q, r, \beta)$ with $\sigma = \frac{3}{2}$.
\item
Though we do not give details, Theorem \ref{2405021538} also applies to the Schr\"odinger operator with the Delta potential: $H_{\delta} = -\partial^2 _x + q\delta$, $q \in \R \setminus 0$. This is because the dispersive estimate $\|e^{-itH_{\delta}} P_{\ac} (H_{\delta})\|_{1 \to \infty} \lesssim |t|^{-\frac{1}{2}}$ for $q <0$ and $\|e^{-itH_{\delta}} \|_{1 \to \infty} \lesssim |t|^{-\frac{1}{2}}$ for $q >0$ is known (\cite{MMS} and \cite{Se}). 
\item
We consider the Stark Hamiltonian with perturbations: $H=H_0 +V$, $H_0 = -\Delta -x_1$ on $\R^d$, $d \ge 3$. We do not give details here (see Section \ref{2405081322} for a similar argument). By the Avron-Herbst formula (see \cite{BD}) we have $\|e^{-itH_0}\|_{1 \to \infty} \lesssim |t|^{- \frac{d}{2}}$, $t \in \R \setminus 0$, where the decay rate is optimal. By Theorem \ref{2405021538} we obtain the orthonormal Strichartz estimate for $H_0$. Furthermore by Theorem 10.1 in \cite{KT} we have the ordinary (homogeneous and inhomogeneous) Strichartz estimate with $L^r _x$ replaced by $L^{r, 2} _x$. By using the endpoint Strichartz estimate and the argument by Duyckaerts (see \cite{BM} Proposition 5.1 for $-\Delta$), we have the uniform Sobolev estimate $\|(H_0 -z)^{-1} f\|_{2^*, 2} \lesssim \|f\|_{2_{*}, 2}$ uniformly in $z \in \C \setminus \R$, where $2^* = \frac{2d}{d-2}$ and $2_{*} = (2^*)'$. This implies that $|x|^{-1}$ is $H_0$-smooth (see \cite{KY}, \cite{KatoYajima}). On the other hand Ben-Artzi and Devinatz (\cite{BD}) proved the $H_0$-smoothness of $\langle x \rangle^{- \frac{3}{4}}$. Therefore for $V=V_1 +V_2$, where $|x|^2 V_1 \in L^{\infty} (\R^d)$ and $|V_2 (x)| \lesssim \langle x \rangle^{-\frac{3}{2}}$, we have the Kato-Yajima estimate $\| |V|^{\frac{1}{2}} (H_0 -z)^{-1} |V|^{\frac{1}{2}} \|_{2 \to 2} \lesssim 1$ uniformly in $z \in \C \setminus \R$. If we set $H_{\epsilon} = H_0 + \epsilon V$ for sufficiently small $\epsilon >0$, by using $|\epsilon V|^{\frac{1}{2}} \sgn V (H_{\epsilon} -z)^{-1} |\epsilon V|^{\frac{1}{2}} = |\epsilon V|^{\frac{1}{2}} \sgn V (H_0 -z)^{-1} |\epsilon V|^{\frac{1}{2}} + |\epsilon V|^{\frac{1}{2}} \sgn V (H_0 -z)^{-1} |\epsilon V|^{\frac{1}{2}} (I + |\epsilon V|^{\frac{1}{2}} \sgn V (H_0 -z)^{-1} |\epsilon V|^{\frac{1}{2}})^{-1} |\epsilon V|^{\frac{1}{2}} \sgn V (H_0 -z)^{-1} |\epsilon V|^{\frac{1}{2}}$, we have the $H_{\epsilon}$-smoothness of $|\epsilon V|^{\frac{1}{2}}$. Hence by using Theorem 2.3 in \cite{H1} we obtain the orthonormal Strichartz estimate for $H_{\epsilon}$ with $\frac{d}{2}$-admissible pair $(q, r)$. For $H_{\epsilon}$ we can also prove the uniform Sobolev estimate as in Section \ref{2405081322}.  
\end{itemize}
\end{remark}
\subsection{\textbf{Refined Strichartz estimate}}\label{2405082127}
In this subsection we prove the refined Strichartz estimates. We refer to \cite{FS} for $-\Delta$ and to \cite{H1} and \cite{H2} for perturbed Hamiltonians. First we consider $H$ as in Example \ref{2405051203}. 
\begin{corollary}\label{2405081633}
For $H$ as in Example \ref{2405051203}, we further assume $V \ge 0$ and $0 \notin \sigma_{p} (H)$. Then we have
\begin{align*}
\|e^{-itH} u\|_{L^q (I; L^r _x)} \lesssim \|u\|_{\dot{B}^0 _{2, 2\beta} (\sqrt{H})}
\end{align*}
for all bounded $I \subset \R$, $(q, r)$ and $\beta$ as in Example \ref{2405051203}. Here $\|u\|_{\dot{B}^0 _{2, 2\beta} (\sqrt{H})} := \| \|\phi_{j} (\sqrt{H}) u\|_2 \|_{l^{2\beta}}$ denotes the homogeneous Besov space associated with $H$, where $\{\phi_j \}$ denotes a homogeneous Littlewood-Paley decomposition.
\end{corollary}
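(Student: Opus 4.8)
The plan is to run the Frank--Sabin argument for deducing a Besov-type refinement from an orthonormal Strichartz estimate, in the form adapted to perturbed operators in \cite{H1} and \cite{H2}: the two ingredients are the orthonormal Strichartz estimate of Corollary~\ref{2405061029} and a Littlewood--Paley square function inequality for $H$ on $L^r_x$. Since $H=(D_x-A)^2+V$ is a sum of squares plus the nonnegative potential $V$, we have $H\ge 0$, and by hypothesis $0\notin\sigma_p(H)$, so the $H$-adapted homogeneous Littlewood--Paley decomposition $\sum_j\phi_j(\sqrt{H})$ converges strongly to the identity on $L^2(\R^d)$. By density (using $2\beta<\infty$) it is enough to prove the estimate for $u=\sum_{|j|\le N}u_j$ with $u_j:=\phi_j(\sqrt{H})u$, with a constant independent of $N$. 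For the square function inequality I would invoke the diamagnetic inequality together with $V\ge 0$, which dominate the heat kernel of $H$ by the free one, $|K_{e^{-tH}}(x,y)|\le(4\pi t)^{-d/2}e^{-|x-y|^2/(4t)}$; by the standard theory of spectral multipliers for operators with Gaussian heat kernel bounds this gives a bounded H\"ormander functional calculus for $H$ on $L^r(\R^d)$ for every $r\in(1,\infty)$, and in particular the square function bound
\begin{align*}
\|g\|_{L^r_x}\lesssim\Big\|\Big(\sum_j|\phi_j(\sqrt{H})g|^2\Big)^{1/2}\Big\|_{L^r_x},\qquad r\in(1,\infty).
\end{align*}

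Set $v_j:=e^{-itH}u_j=\phi_j(\sqrt{H})e^{-itH}u$, so that for each fixed $t$ the family $\{v_j\}_j$ is the Littlewood--Paley decomposition of $e^{-itH}u$. Applying the square function bound in $x$ (for the admissible range one always has $2\le r<\infty$), then taking $L^q_t(I)$ and using Minkowski's inequality,
\begin{align*}
\|e^{-itH}u\|_{L^q(I;L^r_x)}\lesssim\Big\|\Big(\sum_j|v_j|^2\Big)^{1/2}\Big\|_{L^q(I;L^r_x)}=\Big\|\sum_j|v_j|^2\Big\|_{L^{q/2}(I;L^{r/2}_x)}^{1/2}.
\end{align*}
Splitting $\{-N,\dots,N\}$ into $O(1)$ residue classes so that the $\phi_j$ within one class have pairwise disjoint supports makes $\{u_j\}_{j\equiv m}$ an orthogonal family in $L^2(\R^d)$, hence $\{u_j/\|u_j\|_2\}_{j\equiv m}$ orthonormal; Corollary~\ref{2405061029}, applied with $\sigma=d/2$ and $\nu_j=\|u_j\|_2^2$, then gives
\begin{align*}
\Big\|\sum_{j\equiv m}|v_j|^2\Big\|_{L^{q/2}(I;L^{r/2}_x)}\lesssim\big\|(\|u_j\|_2^2)_{j\equiv m}\big\|_{\ell^\beta}=\big\|(\|u_j\|_2)_{j\equiv m}\big\|_{\ell^{2\beta}}^2\le\|u\|_{\dot{B}^0_{2,2\beta}(\sqrt{H})}^2.
\end{align*}
Summing over the finitely many classes (triangle inequality in $L^{q/2}(I;L^{r/2}_x)$, then taking square roots) and combining with the previous display yields the claim; the appearance of the exponent $2\beta$ rather than $\beta$ is precisely the cost of passing between $\sum_j|v_j|^2$ and $(\sum_j|v_j|^2)^{1/2}$.

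The step I expect to require the most care is the Littlewood--Paley one: one has to be sure that the Gaussian heat kernel bound really yields the $L^r_x$ square function inequality for the specific dyadic operators $\phi_j(\sqrt{H})$ throughout the full admissible range of $r$ appearing in Theorem~\ref{2405021538}, and that the homogeneous decomposition (which reaches down to the bottom of the spectrum $0$) is legitimate --- this is exactly where $0\notin\sigma_p(H)$ is used, and it is also what makes the term-by-term manipulations and the density reduction rigorous. The orthogonality bookkeeping for the Littlewood--Paley pieces and the passage to general $u$ are then routine.
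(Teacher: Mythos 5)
Your proposal matches the paper's proof in all essentials: both derive the Gaussian heat kernel bound from the diamagnetic inequality with $V\ge 0$, use it to obtain the $L^r$ Littlewood--Paley square function inequality (the paper phrases this as the equivalence $\|u\|_p\approx\|u\|_{\dot{F}^0_{p,2}(\sqrt{H})}$, you as a spectral multiplier consequence, but these are the same fact), split the dyadic indices into classes with pairwise disjoint spectral supports to produce orthonormal families, apply Corollary~\ref{2405061029} with $\nu_j=\|\phi_j(\sqrt{H})u\|_2^2$, and sum. The only superficial difference is that the paper uses exactly two classes (even and odd $j$) where you allow $O(1)$ residue classes; this is the same argument.
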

\begin{proof}
Since we assume $V \ge 0$ and $0 \notin \sigma_{p} (H)$, by the diamagnetic inequality, we have the following Gaussian upper bound for the Schr\"odinger semigroup:
\begin{align*}
|e^{-tH} (x, y)| \lesssim t^{-\frac{d}{2}} e^{-\frac{|x-y|^2}{At}}
\end{align*}
for some $A>0$ and all $t >0$. This yields $\|u\|_{p} \approx \|u\|_{\dot{F}^0 _{p, 2} (\sqrt{H})}$, where $\|u\|_{\dot{F}^0 _{p, 2} (\sqrt{H})} := \| \|\{ \phi_{j} (\sqrt{H}) u\}\|_{l^2} \|_p$ is the homogeneous Triebel-Lizorkin space associated with $H$. We refer to Proposition 4.12 in \cite{H1} for a similar proof. Then we have
\begin{align*}
\|e^{-itH} u\|_{L^{q} (I; L^{r}_x )} ^2
&\lesssim \left\|\sum_{j \in \Z} |e^{-itH} \phi_j (\sqrt{H})u|^2\right\|_{L^{\frac{q}{2}} (I; L^{\frac{r}{2}}_x )}\\
&\lesssim \left\|\sum_{j \in \Z} |e^{-itH} \phi_{2j} (\sqrt{H})u|^2\right\|_{L^{\frac{q}{2}} (I; L^{\frac{r}{2}}_x )} + \left\|\sum_{j \in \Z} |e^{-itH} \phi_{2j+1} (\sqrt{H})u|^2\right\|_{L^{\frac{q}{2}} (I; L^{\frac{r}{2}}_x )} \\
& = \left\|\sum_{j \in \Z} \|\phi_{2j} (\sqrt{H})u\|^2 _2 \left|e^{-itH} \frac{\phi_{2j} (\sqrt{H})u}{\|\phi_{2j} (\sqrt{H})u\|_2} \right|^2\right\|_{L^{\frac{q}{2}} (I; L^{\frac{r}{2}}_x )} \\
& \quad \quad + \left\|\sum_{j \in \Z} \|\phi_{2j+1} (\sqrt{H})u\|^2 _2 \left|e^{-itH} \frac{\phi_{2j+1} (\sqrt{H})u}{\|\phi_{2j+1} (\sqrt{H})u\|_2} \right|^2\right\|_{L^{\frac{q}{2}} (I; L^{\frac{r}{2}}_x )}
\end{align*}
where in the first line we have used $\|u\|_{p} \approx \|u\|_{\dot{F}^0 _{p, 2} (\sqrt{H})}$. By a support property of $\{\phi_j \}$, $\left\{ \frac{\phi_{2j} (\sqrt{H})u}{\|\phi_{2j} (\sqrt{H})u\|_2} \right\}$ and $\left\{ \frac{\phi_{2j+1} (\sqrt{H})u}{\|\phi_{2j+1} (\sqrt{H})u\|_2} \right\}$ are orthonormal systems in $L^2 (\R^d)$. Therefore by using Corollary \ref{2405061029} we obtain
\begin{align*}
\|e^{-itH} u\|_{L^{q} (I; L^{r}_x )} ^2 &\lesssim \|\{\|\phi_{2j} (\sqrt{H})u\|^2 _2 \} \|_{l^{\beta}} + \|\{\|\phi_{2j+1} (\sqrt{H})u\|^2 _2 \} \|_{l^{\beta}} \\
& \lesssim \|u\|^2 _{\dot{B}^0 _{2, 2\beta} (\sqrt{H})}
\end{align*}
and this gives the desired estimate.
\end{proof}
We notice that, since $2\beta >2$, this includes a refinement of \cite{Y2}, where the ordinary Strichartz estimate $\|e^{-itH} u\|_{L^{q} (I; L^{r}_x )} \lesssim \|u\|_2 = \|u\|_{\dot{B}^0 _{2, 2} (\sqrt{H})}$ is proved. Next we consider the Schr\"odinger operator with scaling critical electromagnetic potentials discussed in Example \ref{2405072241}. The following result is a refinement of \cite{FFFP}.
\begin{corollary}\label{2405081750}
Let $H_{A, a}$ be as in Example \ref{2405072241}. We further assume either $\Phi_{A} \notin \frac{1}{2} \Z$ or $a(\pi - \theta) = a (\pi + \theta)$ for $\theta \in [0, 2\pi]$. Then we have
\begin{align*}
\|e^{-itH_{A, a}} u\|_{L^q _t L^r _x} \lesssim \|u\|_{\dot{B}^0 _{2, 2\beta} (\sqrt{H_{A, 0}})}
\end{align*}
for all $(q, r, \beta)$ as in Example \ref{2405072241}. Notice that $\dot{B}^0 _{2, 2\beta} (\sqrt{H_{A, 0}})$ is similarly defined as in Corollary \ref{2405081633}.
\end{corollary}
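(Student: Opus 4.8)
The plan is to mimic the proof of Corollary \ref{2405081633}, the only new ingredient being a square-function (Littlewood--Paley) calculus adapted to the scaling-critical electromagnetic operator. Two facts are already available from the preceding discussion: first, by Example \ref{2405072241} and Theorem \ref{2405021538} (with $\sigma=1$) the orthonormal Strichartz estimate
\[
\Big\| \sum_{j} \nu_j |e^{-itH_{A,a}} f_j|^2 \Big\|_{L^{q/2}_t L^{r/2}_x} \lesssim \|\nu\|_{l^{\beta}}
\]
holds for all orthonormal $\{f_j\}\subset L^2(\R^2)$ and all admissible $(q,r,\beta)$ of Example \ref{2405072241}; second, $H_{A,0}=(D_x+A(\hat x)/|x|)^2\ge 0$ has, by the diamagnetic inequality, heat kernel dominated by the free one on $\R^2$, which (together with dilation covariance) yields $\|f\|_r\approx \|f\|_{\dot{F}^0_{r,2}(\sqrt{H_{A,0}})}$ for $1<r<\infty$. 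For the full operator $H_{A,a}$ the singular term $a(\hat x)/|x|^2$ must be handled by heat-kernel bounds for magnetic Schr\"odinger operators with scaling-critical potentials (cf. \cite{H1}, \cite{FFFP} and the references therein); this is where the extra hypothesis $\Phi_A\notin\tfrac12\Z$ or $a(\pi-\theta)=a(\pi+\theta)$ enters, as it guarantees that the angular Hamiltonian associated with $(A,a)$ is non-resonant, so that the relevant kernel estimates, and hence the $L^r$ square-function equivalence, remain valid on the whole range of $r$ in Example \ref{2405072241}.

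Granting this, I would argue exactly as for Corollary \ref{2405081633}. Applying the square-function equivalence to $g=e^{-itH_{A,a}}u$ and using that $\phi_j(\sqrt{H_{A,a}})$ commutes with $e^{-itH_{A,a}}$,
\[
\|e^{-itH_{A,a}}u\|_{L^r_x}^2 \approx \Big\| \sum_{j\in\Z} \big| e^{-itH_{A,a}} \phi_j(\sqrt{H_{A,a}}) u \big|^2 \Big\|_{L^{r/2}_x},
\]
and splitting the sum into even and odd $j$ produces, after normalisation and by the disjointness of the spectral supports, two orthonormal systems $\{\phi_{2j}(\sqrt{H_{A,a}})u / \|\phi_{2j}(\sqrt{H_{A,a}})u\|_2\}_j$ and $\{\phi_{2j+1}(\sqrt{H_{A,a}})u / \|\phi_{2j+1}(\sqrt{H_{A,a}})u\|_2\}_j$ in $L^2(\R^2)$. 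Taking $\nu_j=\|\phi_j(\sqrt{H_{A,a}})u\|_2^2$ and invoking the orthonormal Strichartz estimate for $H_{A,a}$ gives
\[
\|e^{-itH_{A,a}}u\|_{L^q_t L^r_x}^2 \lesssim \big\| \{ \|\phi_j(\sqrt{H_{A,a}})u\|_2^2 \}_j \big\|_{l^{\beta}} = \|u\|_{\dot{B}^0_{2,2\beta}(\sqrt{H_{A,a}})}^2 .
\]

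It remains to replace $\dot{B}^0_{2,2\beta}(\sqrt{H_{A,a}})$ by $\dot{B}^0_{2,2\beta}(\sqrt{H_{A,0}})$. Since both are $L^2$-based, this reduces to an almost-orthogonality bound $\|\phi_j(\sqrt{H_{A,a}})\,\psi_k(\sqrt{H_{A,0}})\|_{\mathcal{B}(L^2(\R^2))}\lesssim 2^{-\epsilon|j-k|}$ for some $\epsilon>0$ and another homogeneous Littlewood--Paley partition $\{\psi_k\}$, which by a Schur test exchanges the two $l^{2\beta}$ norms. Both $H_{A,a}$ and $H_{A,0}$ are non-negative, dilation covariant, and, after separation of variables in polar coordinates, reduce to Bessel-type operators in each angular eigenspace, so this bound should follow from a comparison of the associated Hankel transforms once the angular eigenvalues and eigenprojections of the two operators are under control --- which is once more ensured by the flux/symmetry hypothesis. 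I expect this last equivalence, together with the heat-kernel input for the singular operator on which the $L^r$ square-function estimate rests, to be the main obstacle; everything else is a routine transcription of the proof of Corollary \ref{2405081633}.
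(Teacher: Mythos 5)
Your first half — using the orthonormal Strichartz estimate from Example \ref{2405072241}, the Gaussian heat kernel bound, and the even/odd Littlewood--Paley splitting to obtain $\|e^{-itH_{A,a}}u\|_{L^q_tL^r_x}\lesssim\|u\|_{\dot B^0_{2,2\beta}(\sqrt{H_{A,a}})}$ — matches the paper, including the observation that the flux/symmetry hypothesis is exactly what secures the Gaussian upper bound for $e^{-tH_{A,a}}$ (the paper invokes Proposition 1.2 in \cite{GYZZ} for this; the diamagnetic inequality alone only handles $H_{A,0}$).

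For the remaining step — replacing $\dot B^0_{2,2\beta}(\sqrt{H_{A,a}})$ by $\dot B^0_{2,2\beta}(\sqrt{H_{A,0}})$ — you take a genuinely different route. The paper proves existence and asymptotic completeness of the wave operators $W_\pm=\slim e^{itH_{A,a}}e^{-itH_{A,0}}$ from the Kato--Yajima estimate of \cite{GWZZ}, then uses the intertwining property together with the magnetic Hardy inequality (giving boundedness of $H_{A,a}^{\pm 1/2}H_{A,0}^{\mp 1/2}$) and real interpolation (Appendix \ref{2405082014}) to conclude that $W_\pm^*$ is bounded on $\dot B^0_{2,2\beta}(\sqrt{H_{A,0}})$. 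You instead propose the almost-orthogonality bound $\|\phi_j(\sqrt{H_{A,a}})\psi_k(\sqrt{H_{A,0}})\|_{\mathcal B(L^2)}\lesssim 2^{-\epsilon|j-k|}$ followed by a Schur test. This is a legitimate and, in principle, shorter path: it avoids wave operators, completeness, and the interpolation lemma entirely.

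The gap is that you leave this almost-orthogonality bound unproven and, worse, propose to extract it from "a comparison of the associated Hankel transforms" after angular separation of variables — a route that is far from clear, since the angular eigenprojections of $(A,a)$ and $(A,0)$ need not align. What actually makes the bound work (and with $\epsilon=1$) is the very same magnetic Hardy inequality the paper uses: from $\langle H_{A,a}u,u\rangle\approx\langle H_{A,0}u,u\rangle$ one gets boundedness of $H_{A,a}^{\pm 1/2}H_{A,0}^{\mp 1/2}$ on $L^2$, and then for $j\le k$ one factors
\[
\phi_j(\sqrt{H_{A,a}})\psi_k(\sqrt{H_{A,0}})
=\big[\phi_j(\sqrt{H_{A,a}})H_{A,a}^{1/2}\big]\,\big[H_{A,a}^{-1/2}H_{A,0}^{1/2}\big]\,\big[H_{A,0}^{-1/2}\psi_k(\sqrt{H_{A,0}})\big],
\]
which has norm $\lesssim 2^{j-k}$; for $j\ge k$ one swaps the powers, giving $\lesssim 2^{-|j-k|}$ in both cases. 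Inserting a fattened projector $\tilde\psi_k$ then yields $\|\phi_j(\sqrt{H_{A,a}})u\|_2\lesssim\sum_k 2^{-|j-k|}\|\tilde\psi_k(\sqrt{H_{A,0}})u\|_2$, and Young's inequality in $l^{2\beta}$ finishes. As submitted, however, this crucial step is a stated "main obstacle" rather than a proof, so the argument is incomplete.
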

\begin{proof}
By our assumption and Proposition 1.2 in \cite{GYZZ}, the Gaussian upper bound:
\begin{align*}
|e^{-tH_{A, a}} (x, y)| \lesssim t^{-1} e^{-\frac{|x-y|^2}{At}}
\end{align*}
holds for some $A>0$ and all $t >0$. Hence by repeating the argument in the proof of Corollary \ref{2405081633}, we obtain
\begin{align*}
\|e^{-itH_{A, a}} u\|_{L^q _t L^r _x} \lesssim \|u\|_{\dot{B}^0 _{2, 2\beta} (\sqrt{H_{A, a}})}
\end{align*}
for all $(q, r, \beta)$ as in Example \ref{2405072241}. Now we employ the following Kato-Yajima estimate:
\begin{align}
\sup_{z \in \C \setminus \R} \| |x|^{-1} (H_{A, a} -z)^{-1} |x|^{-1} \|_{2 \to 2} \lesssim 1 \label{2405081902}
\end{align}
which is proved in \cite{GWZZ}. Then
\begin{align*}
|\langle e^{itH_{A, a}} e^{-itH_{A, 0}} u, v \rangle - \langle e^{isH_{A, a}} e^{-isH_{A, 0}} u, v \rangle| &= \left| i \int_{s}^{t} \langle \frac{a(\hat{x})}{|x|} e^{-irH_{A, 0}} u, \frac{1}{|x|} e^{-irH_{A, a}} v\rangle dr \right| \\
& \lesssim \left\|\frac{1}{|x|} e^{-irH_{A, 0}} u \right\|_{L^2 ([s, t]; L^2 _x)} \left\|\frac{1}{|x|} e^{-irH_{A, a}} v \right\|_{L^2 _r L^2 _x} \\
& \lesssim \|v\|_2 \left\|\frac{1}{|x|} e^{-irH_{A, 0}} u \right\|_{L^2 ([s, t]; L^2 _x)}
\end{align*}
holds. This yields
\begin{align*}
\| e^{itH_{A, a}} e^{-itH_{A, 0}} u- e^{isH_{A, a}} e^{-isH_{A, 0}} u \|_2 \lesssim \left\|\frac{1}{|x|} e^{-irH_{A, 0}} u \right\|_{L^2 ([s, t]; L^2 _x)} \to 0
\end{align*}
as $s, t \to \infty$, where we have used (\ref{2405081902}) for $a =0$. Hence we have proved the existence of wave operators $W_{\pm} := \slim_{t \to \pm \infty} e^{itH_{A, a}} e^{-itH_{A, 0}} $. The completeness of $W_{\pm}$ can be similarly proved. Then by the intertwining property 
\begin{align*}
\|\phi_j (\sqrt{H_{A, a}}) u\|_2 = \|W_{\pm} \phi_j (\sqrt{H_{A, 0}}) W^* _{\pm} u\|_2 \le \|  \phi_j (\sqrt{H_{A, 0}}) W^* _{\pm} u\|_2
\end{align*}
holds. By taking the $l^{2\beta}$ norm with respect to $j$, we obtain $\|u\|_{\dot{B}^0 _{2, 2\beta} (\sqrt{H_{A, a}})} \lesssim \|W^* _{\pm}u\|_{\dot{B}^0 _{2, 2\beta} (\sqrt{H_{A, 0}})}$. Though the ordinary Hardy inequality fails if $d=2$, we have the following Hardy inequality with magnetic potentials:
\begin{align*}
\min_{k \in \Z} |k - \Phi_{A}|^2 \int_{\R^2} \frac{|u|^2}{|x|^2} dx \le \int_{\R^2} \left| \left(D_x + \frac{A(\hat{x})}{|x|} \right) u\right|^2 dx
\end{align*}
for all $u \in C^{\infty} _0 (\R^2 \setminus \{0\})$ (see \cite{GYZZ}). By our assumption and this inequality we have $\langle H_{A, a} u, u \rangle \approx \langle H_{A, 0} u, u \rangle $ and hence $H^{\pm \frac{1}{2}} _{A, a} H^{\mp \frac{1}{2}} _{A, 0}$ is bounded on $L^2 (\R^2)$. Again by the intertwining property
\begin{align*}
\|H^{\pm \frac{1}{2}} _{A, 0} W^* _{\pm} u\|_2 = \|W^* _{\pm} H^{\pm \frac{1}{2}} _{A, a} u\|_2 \le \|H^{\pm \frac{1}{2}} _{A, a} u\|_2 \lesssim \|H^{\pm \frac{1}{2}} _{A, 0} u\|_2 
\end{align*}
holds. This implies that $W^* _{\pm}$ are bounded on $\dot{H}^{\pm1} (\sqrt{H_{A, 0}})$, where $\dot{H}^{\pm1} (\sqrt{H_{A, 0}})$ are defined similarly as the Besov spaces associated with $H_{A, 0}$. Now we use the real interpolation: $(\dot{H}^{s_0} (\sqrt{H_{A, 0}}), \dot{H}^{s_1} (\sqrt{H_{A, 0}}))_{\theta, q} = \dot{B}^{s} _{2, q} (\sqrt{H_{A, 0}})$ for $s_0 < s < s_1$, $s= (1-\theta)s_0 + \theta s_1$, $\theta \in (0, 1)$ and $q \in (2, \infty)$, which is proved in Appendix \ref{2405082014}. Then $W^* _{\pm}$ are bounded on $\dot{B}^0 _{2, 2\beta} (\sqrt{H_{A, 0}})$. As a result we obtain the desired estimate.
\end{proof}
Note that this argument does not hold if we replace $\dot{B}^0 _{2, 2\beta} (\sqrt{H_{A, 0}})$ with $\dot{B}^0 _{2, 2\beta}$ since we do not have the equivalence: $\langle -\Delta u, u\rangle \approx \langle H_{A, a} u, u \rangle$.
\subsection{\textbf{Mass critical NLS with scaling critical electromagnetic potentials}}\label{2405082014}
In this subsection we discuss about the $L^2$-critical NLS with scaling critical electromagnetic potentials:
\begin{align}
\left\{
\begin{array}{l}
i\partial_{t} u=H_{A, a} u + \lambda |u|^{2} u \\
u(0)=u_0
\end{array}
\right.
\tag{NLS}\label{2405082113}
\end{align}
where $\lambda \in \R \setminus \{0\}$. It is known (at least for $A=a=0$) that if the nonlinearity is $\lambda |u|^{p} u$ with $p <2$, the corresponding NLS is globally well-posed in $L^2 (\R^2)$. Concerning (\ref{2405082113}), if $\|u_0\|_2$ is sufficiently small, there exists a unique global solution to (\ref{2405082113}). By using the refined Strichartz estimate proved in subsection \ref{2405082127}, we refine this result and prove that a unique solution scatters.
\begin{thm}\label{2405082129}
Let $H_{A, a}$ be as in Corollary \ref{2405081750}. Then for any $M>0$, there exists $\eta >0$ such that if $\|u_0\|_2 <M$ and $\|u_0\|_{\dot{B}^{0} _{2, \infty} (\sqrt{H_{A, 0}})} < \eta$, then we have a unique global solution to (\ref{2405082113}) satisfying $u \in C(\R; L^2 _x) \cap L^4 (\R_t \times \R^2 _x)$. Furthermore $u$ scatters as $t \to \pm \infty$, i.e. there exists $u_{\pm} \in L^2 (\R^2)$ satisfying
\begin{align*}
\lim_{t \to \pm \infty} \|u(t) - e^{-itH_{A, 0}} u_{\pm}\|_2 = 0.
\end{align*}
\end{thm}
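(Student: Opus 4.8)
\subsection*{Proof proposal}

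The plan is to run the standard small-data fixed-point argument for (\ref{2405082113}) in the Strichartz space $L^4_{t,x}:=L^4(\R_t\times\R^2_x)$, using the refined estimate of Corollary \ref{2405081750} to convert the smallness of $\|u_0\|_{\dot{B}^0_{2,\infty}(\sqrt{H_{A,0}})}$ into smallness of the linear part of the evolution, and then to transfer the scattering so obtained for the $H_{A,a}$-dynamics into scattering for the $H_{A,0}$-dynamics by means of the wave operators constructed in the proof of Corollary \ref{2405081750}.

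The first step is to assemble the linear estimates. Since $(4,4)$ is a $1$-admissible pair with associated exponent $\beta=\frac{2\cdot 4}{4+2}=\frac{4}{3}$, Corollary \ref{2405081750} gives $\|e^{-itH_{A,a}}f\|_{L^4_{t,x}}\lesssim\|f\|_{\dot{B}^0_{2,2\beta}(\sqrt{H_{A,0}})}$ with $2\beta=\frac{8}{3}$. From the dispersive bound $\|e^{-itH_{A,a}}\|_{1\to\infty}\lesssim|t|^{-1}$ of \cite{FFFP} and the Keel-Tao theorem (\cite{KT}, applied with $\phi\equiv 1$ and $\sigma=1$, cf. Remark \ref{2405050957}) one also has $\|e^{-itH_{A,a}}f\|_{L^\infty_tL^2_x}+\|e^{-itH_{A,a}}f\|_{L^4_{t,x}}\lesssim\|f\|_2$ and the inhomogeneous estimate $\big\|\int_0^t e^{-i(t-s)H_{A,a}}F(s)\,ds\big\|_{L^\infty_tL^2_x\cap L^4_{t,x}}\lesssim\|F\|_{L^{4/3}_{t,x}}$. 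Finally, $\|f\|_{\dot{B}^0_{2,2}(\sqrt{H_{A,0}})}\approx\|f\|_2$ by the spectral theorem (the spectrum of $H_{A,0}$ is $[0,\infty)$ with no eigenvalue at $0$), and the elementary inequality $\sum_j\|\phi_j(\sqrt{H_{A,0}})f\|_2^{8/3}\le\big(\sup_j\|\phi_j(\sqrt{H_{A,0}})f\|_2\big)^{2/3}\sum_j\|\phi_j(\sqrt{H_{A,0}})f\|_2^2$ gives $\|f\|_{\dot{B}^0_{2,8/3}(\sqrt{H_{A,0}})}\lesssim\|f\|_{\dot{B}^0_{2,\infty}(\sqrt{H_{A,0}})}^{1/4}\|f\|_2^{3/4}$. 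Hence $\delta:=\|e^{-itH_{A,a}}u_0\|_{L^4_{t,x}}\lesssim M^{3/4}\eta^{1/4}$, which is as small as desired once $\eta=\eta(M)$ is chosen small.

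Given this, the fixed-point step is routine. On the ball $\{u\in L^4_{t,x}:\|u\|_{L^4_{t,x}}\le 2C\delta\}$, with $C$ the constant from the estimates above, I would show that $\Phi(u)(t):=e^{-itH_{A,a}}u_0-i\lambda\int_0^t e^{-i(t-s)H_{A,a}}|u|^2u(s)\,ds$ is a contraction, using $\||u|^2u\|_{L^{4/3}_{t,x}}=\|u\|_{L^4_{t,x}}^3$, the inhomogeneous estimate, and $\big||u|^2u-|v|^2v\big|\lesssim(|u|^2+|v|^2)|u-v|$; both the self-mapping and the contraction property hold once $\delta$ is small enough (depending only on $|\lambda|$ and $C$). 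The unique fixed point $u$ then satisfies $\|u\|_{L^\infty_tL^2_x}\lesssim\|u_0\|_2+\|u\|_{L^4_{t,x}}^3<\infty$, belongs to $C(\R;L^2_x)$ by the usual continuity argument (strong continuity of $e^{-itH_{A,a}}$ plus dominated convergence in the Duhamel term), and is the unique solution in $C(\R;L^2_x)\cap L^4_{t,x}$ by a short local uniqueness argument.

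For scattering, the identity $e^{itH_{A,a}}u(t)=u_0-i\lambda\int_0^t e^{isH_{A,a}}|u|^2u(s)\,ds$ together with the dual homogeneous estimate $\big\|\int_{t_1}^{t_2}e^{isH_{A,a}}|u|^2u\,ds\big\|_2\lesssim\|u\|_{L^4([t_1,t_2]\times\R^2)}^3$ shows that $e^{itH_{A,a}}u(t)$ is Cauchy in $L^2$ as $t\to\pm\infty$ (its $L^4$ norm over $\{|s|\ge T\}$ tends to $0$); call the limits $v_\pm$, so $\|u(t)-e^{-itH_{A,a}}v_\pm\|_2\to 0$. To pass to the $H_{A,0}$-dynamics I would invoke the wave operators $W_\pm=\slim_{t\to\pm\infty}e^{itH_{A,a}}e^{-itH_{A,0}}$, shown to exist and to be complete in the proof of Corollary \ref{2405081750}; since $H_{A,a}$ has purely absolutely continuous spectrum (no eigenvalue, because its dilation homogeneity of degree $-2$ would otherwise force a continuum of eigenvalues, and $\langle H_{A,a}u,u\rangle\approx\langle H_{A,0}u,u\rangle>0$ for $u\ne 0$ via the magnetic Hardy inequality rules out $0$; no singular continuous part for these scaling-critical operators), the completeness of $W_\pm$ provides the inverse wave operators $\Omega_\pm:=\slim_{t\to\pm\infty}e^{itH_{A,0}}e^{-itH_{A,a}}$ on all of $L^2$. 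Setting $u_\pm:=\Omega_\pm v_\pm$ and writing $e^{itH_{A,0}}u(t)=e^{itH_{A,0}}e^{-itH_{A,a}}\big(e^{itH_{A,a}}u(t)\big)$, the strong convergence $e^{itH_{A,a}}u(t)\to v_\pm$ together with the strong convergence $e^{itH_{A,0}}e^{-itH_{A,a}}\to\Omega_\pm$ of uniformly bounded operators gives $e^{itH_{A,0}}u(t)\to u_\pm$, i.e. $\|u(t)-e^{-itH_{A,0}}u_\pm\|_2\to 0$. The one genuinely delicate point is the calibration in the second paragraph: that the refined estimate of Corollary \ref{2405081750} — with its Besov space built from $H_{A,0}$ (rather than $H_{A,a}$ or $-\Delta$) and with precisely the index $2\beta=\frac{8}{3}$ forced by the pair $(4,4)$ — feeds a small quantity into the iteration; once this is in place everything else is the standard Cazenave-type small-data theory, with the only other subtlety being the wave-operator transfer needed because the target dynamics is $e^{-itH_{A,0}}$ rather than $e^{-itH_{A,a}}$.
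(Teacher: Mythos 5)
Your proposal is correct and follows essentially the same route as the paper's proof: the same $(4,4)$ refined Strichartz estimate, the same $\ell^p$-interpolation to convert $\dot{B}^0_{2,\infty}$-smallness into $\dot{B}^0_{2,8/3}$-smallness, the same contraction mapping on a small $L^4_{t,x}$-ball, and the same wave-operator transfer from $e^{-itH_{A,a}}$-scattering to $e^{-itH_{A,0}}$-scattering. Your short digression on the absence of singular continuous spectrum of $H_{A,a}$ is unnecessary: in the paper, ``completeness'' of $W_\pm$ is obtained by directly proving the existence of $\slim_{t\to\pm\infty}e^{itH_{A,0}}e^{-itH_{A,a}}$ on all of $L^2$ by the same Kato-smoothing Cauchy argument, so one never needs to know the spectral type of $H_{A,a}$.
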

\begin{proof}
The proof is based on a standard contraction mapping argument. We set
\begin{align*}
X := \left\{ u \in L^{\infty} (\R; L^2 _x) \cap L^4 (\R_t \times \R^2 _x) \mid \|u\|_{L^4 (\R_t \times \R^2 _x)} \le K \eta^{\frac{1}{4}} \right\}.
\end{align*}
where $K, \eta >0$ are specified later.
Then $X$ is a complete metric space with respect to $d (u_1, u_2) = \|u_1 - u_2\|_{L^{\infty} (\R; L^2 _x) \cap L^4 (\R_t \times \R^2 _x)}$. We prove that
\begin{align*}
\Phi : u \mapsto  e^{-itH_{A, a}} u_0 + i\int_{0}^{t} e^{-i(t-s)H_{A, a}} (|u|^{2} u) (s) ds
\end{align*}
is a contraction on $X$ if $\eta >0$ is small enough. We take $u \in X$. Then
\begin{align*}
\|\Phi (u)\|_{L^4 (\R_t \times \R^2 _x)} &\le \|e^{-itH_{A, a}} u_0\|_{L^4 (\R_t \times \R^2 _x)} + \left\| \int_{0}^{t} e^{-i(t-s)H_{A, a}} (|u|^{2} u) (s) ds \right\|_{L^4 (\R_t \times \R^2 _x)} \\
& \le \|e^{-itH_{A, a}} u_0\|_{L^4 (\R_t \times \R^2 _x)} + C_0 \||u|^2 u\|_{L^{\frac{4}{3}} (\R_t \times \R^2 _x)} \\
& =  \|e^{-itH_{A, a}} u_0\|_{L^4 (\R_t \times \R^2 _x)} + C_0 \|u\|^3 _{L^4 (\R_t \times \R^2 _x)} \\
& \le C_1 \|u_0\|_{\dot{B}^0 _{2, \frac{8}{3}} (\sqrt{H_{A, 0}})} + C_0 (K \eta^{\frac{1}{4}})^3 \\
& \le C_1 \|u_0\|^{\frac{1}{4}} _{\dot{B}^{0} _{2, \infty} (\sqrt{H_{A, 0}})} \|u_0\|^{\frac{3}{4}} _2 + C_0 (K \eta^{\frac{1}{4}})^3 \\
& \le (C_1 M^{\frac{3}{4}} + C_0 K^3 \eta^{\frac{1}{2}}) \eta^{\frac{1}{4}}
\end{align*}
holds, where in the second line we have used the inhomogeneous Strichartz estimate, in the fourth line we have used Corollary \ref{2405081750} and in the last line $\|u_0\|_{\dot{B}^{0} _{2, \infty} (\sqrt{H_{A, 0}})} < \eta$ and $\|u_0\|_2 <M$ have been used. By a similar calculation we obtain
\begin{align*}
\|\Phi (u)\|_{L^{\infty} (\R; L^2 _x)} \le \|u_0\|_2 + C_2 \|u\|^3 _{L^4 (\R_t \times \R^2 _x)} < \infty
\end{align*}
Then by setting $K= 2C_1 M^{\frac{3}{4}}$ and $\eta >0$ sufficiently small, we obtain $\Phi : X \to X$. Concerning a difference we have 
\begin{align*}
\|\Phi (u) - \Phi (v)\|_{L^{\infty} (\R; L^2 _x) \cap L^4 (\R_t \times \R^2 _x)} &= \left\| \int_{0}^{t} e^{-i(t-s)H_{A, a}} (|u|^2 u - |v|^2 v) (s) ds \right\|_{L^{\infty} (\R; L^2 _x) \cap L^4 (\R_t \times \R^2 _x)} \\
& \le C_3 \||u|^2 u - |v|^2 v\|_{L^{\frac{4}{3}} (\R_t \times \R^2 _x)} \\
& \le C_4 (\|u\|^2 _{L^4 (\R_t \times \R^2 _x)} + \|v\|^2 _{L^4 (\R_t \times \R^2 _x)}) \|u-v\|_{L^4 (\R_t \times \R^2 _x)} \\
& \le 2C_4 K^2 \eta^{\frac{1}{2}} \|u-v\|_{L^{\infty} (\R; L^2 _x) \cap L^4 (\R_t \times \R^2 _x)}.
\end{align*}
Therefore by taking $\eta < \min \{ (\frac{C_1 M^{\frac{3}{4}}}{C_0 K^3})^2, \frac{1}{(4C_4 K^2)^2}\}$, $\Phi :X \to X$ is a contraction. This implies the unique existence of a global solution to (\ref{2405082113}). Next we prove that the soluion $u$ scatters. By the inhomogeneous Strichartz estimate 
\begin{align*}
\|e^{itH_{A, a}} u(t) - e^{isH_{A, a}} u(s)\|_2 &\le \|u(r) - e^{-i(r-s)H_{A, a}} u(s)\|_{L^{\infty} ([s, t]; L^2 _x)} \\
& = \left\| \int_{s}^{r} e^{-i(r-k)H_{A, a}} (|u|^2 u) (k) dk\right\|_{L^{\infty} ([s, t]; L^2 _x)} \\
& \lesssim \||u|^2 u\|_{L^{\frac{4}{3}} ([s, t]; L^{\frac{4}{3}} _x)} = \|u\|^3 _{L^4 ([s, t]; L^4 _x)}
\end{align*}
holds. Since $u \in L^4 (\R_t \times \R^2 _x)$, this implies the existence of $\widetilde{u_{\pm}} = \lim_{t \to \pm \infty} e^{itH_{A, a}} u(t)$. Furthermore, by the completeness of wave operators $W_{\pm} = \slim_{t \to \pm \infty} e^{itH_{A, a}} e^{-itH_{A, 0}}$ (see the proof of Corollary \ref{2405081750}), there exist $u_{\pm}$ such that $u_{\pm} = \lim_{t \to \pm \infty} e^{itH_{A, 0}} e^{-itH_{A, a}} \widetilde{u_{\pm}}$. Then
\begin{align*}
\|u(t) - e^{-itH_{A, 0}} u_{\pm}\|_2 &\le \|u(t) - e^{-itH_{A, a}} \widetilde{u_{\pm}}\|_2 + \|e^{-itH_{A, a}} \widetilde{u_{\pm}} - e^{-itH_{A, 0}} u_{\pm}\|_2 \\
& \longrightarrow 0
\end{align*}
holds as $t \to \pm \infty$.
\end{proof}
\section{Uniform resolvent estimate for repulsive Hamiltonian}\label{2405081322}
In this section we consider $H=H_0 +V$, $H_0 = -\Delta -x^2$ and $V \in C^{\infty} (\R^d; \R)$ on $L^2 (\R^d)$, where $d \ge 1$. First we introduce Fourier integral operators (FIOs) that make our analysis simpler. See \cite{Dy}, \cite{Ma} and \cite{Hor} for Lagrangian distributions and FIOs. For a canonical transform $\kappa$ on $T^* \R^d$, we define $I(\kappa)$ as the set of all the FIOs associated with $\kappa$. Since the symbol of $H_0$ is $\xi^2 -x^2$, we consider the canonical transform 
\begin{align}
\kappa: (x, \xi) \mapsto (y, \eta) = \left(\frac{x + \xi}{\sqrt{2}}, \frac{\xi -x}{\sqrt{2}} \right).\label{2405110145}
\end{align}
By this transform, $\xi^2 -x^2$ changes into $2y\eta$. To look at $\kappa$ more closely, we define
\begin{align*}
&\mathcal{F}_h :u \longmapsto \frac{1}{(2\pi h)^{\frac{d}{2}}} \int_{\R^d} e^{-ix\xi /h} u(x) dx \\
&J_B : u \longmapsto |\det B|^{\frac{1}{2}} u \circ B \\
&K_C : u \longmapsto e^{-i\langle Cx, x \rangle /2h} u 
\end{align*}
where $B$ is an invertible matrix and $C$ is a non-degenerate symmetric matrix. Then it is well-known that $\mathcal{F}_h \in I(\ell)$, $J_B \in I(j_B)$ and $K_C \in I(k_C)$ holds, where $\ell : (x, \xi) \mapsto (-\xi, x)$, $j_B : (x, \xi) \mapsto (Bx, {}^t B^{-1} \xi)$ and $k_C : (x, \xi) \mapsto (x, \xi + Cx)$ are corresponding canonical transforms. Since we can decompose $\kappa : (x, \xi) \mapsto (x, x+\xi) \mapsto (-(x+\xi), x) \mapsto \left(\frac{x+\xi}{\sqrt{2}}, -\sqrt{2} x \right) \mapsto \left(\frac{x+\xi}{\sqrt{2}}, \frac{\xi -x}{\sqrt{2}} \right)$, we define
\begin{align*}
U_h :u \longmapsto K_{id} J_{-\sqrt{2} id}  \mathcal{F}_h K_{id} u = \frac{1}{(\sqrt{2} \pi h)^{\frac{d}{2}}} e^{-ix^2 /2h} \int_{\R^d} e^{i\sqrt{2} xy/h} e^{-iy^2 /2h} u(y) dy
\end{align*}
where $id$ denotes the $d \times d$ identity matrix. Then $U_h$ is a FIO associated with $\kappa$ since
\begin{align*}
\Lambda_{\Phi} &:= \{(x, \partial _x \Phi, y, \partial _y \Phi ) \mid (x, y) \in \R^{2d}\} \\
&= \{(x, -x +\sqrt{2} y, y, -y+\sqrt{2} x) \mid (x, y) \in \R^{2d}\} \\
&= \left\{\left(x, \xi, \frac{x+\xi}{\sqrt{2}}, \frac{x-\xi}{\sqrt{2}}\right) \mid (x, \xi) \in T^* \R^d \right\} \\
& = \{(x, \xi, y, -\eta) \mid (y, \eta) = \kappa (x, \xi)\}
\end{align*}
holds, where $\Phi (x, y) = -\frac{x^2}{2} -\frac{y^2}{2} + \sqrt{2} xy$ is a phase function of the Schwartz kernel of $U_h$. By the unitarity of $\mathcal{F}_h, J_B$ and $K_C$, $U_h$ is also a unitary operator on $L^2 (\R^d)$. Furthermore $U_h : \mathcal{S} \to \mathcal{S}$ and $\mathcal{S}' \to \mathcal{S}'$ is a homeomorphism. We set $U := U_1$. Then for all $u \in \mathcal{S} (\R^d)$ and $V \in C^{\infty} _0 (\R^d)$, we have
\begin{align*}
U^* VUu (z) &= \frac{1}{(\sqrt{2} \pi)^d} \int e^{iz^2 /2} e^{-i\sqrt{2} xz} e^{ix^2 /2} V(x) \left( e^{-ix^2 /2} \int e^{i\sqrt{2} xy} e^{-iy^2 /2} u(y) dy \right) dx \\
& = \frac{1}{(\sqrt{2} \pi)^d} \int \int V(x) e^{-i\sqrt{2} x(z-y)} e^{i(z^2 -y^2)/2} u(y)dydx \\
& = \frac{1}{(2\pi)^d} \int \int e^{i(z-y) \zeta} V\left( \frac{z+y}{2\sqrt{2}} -\frac{\zeta}{\sqrt{2}} \right) u(y)dyd \zeta = V^w \left( \frac{x -D_x}{\sqrt{2}}\right) u(z)
\end{align*}
where, for a symbol $a \in S(m, g)$ with slowly varying metrics $g$ and $g$-continuous weight functions $m$, $a^w (x, D_x)$ denotes the Weyl quantization:
\begin{align*}
a^w (x, D_x)u(x) = \frac{1}{(2\pi)^d} \int \int e^{i(x-y) \xi} a\left(\frac{x+y}{2}, \xi \right) u(y)dyd\xi
\end{align*} 
See \cite{Hor} and \cite{Ma} for pseudodifferential operators. We take $V \in S^0 (\R^d) = \{V \in C^{\infty} (\R^d; \C) \mid \forall \alpha \in \N^d _0, |\partial^{\alpha} _x V(x)| \lesssim 1\}$ and a smooth cut off function $\chi$ near the origin. By the above argument, we have
\begin{align*}
U^* V_n U = V_n ^w \left( \frac{x -D_x}{\sqrt{2}}\right) 
\end{align*}
where $V_n = V \chi (\frac{\cdot}{n})$. Concerning the left hand side we have $\langle U^* V_n U u, v \rangle \to \langle U^* V Uu, v \rangle$ as $n \to \infty$ for all $u, v \in \mathcal{S} (\R^d)$. On the other hand we have
\begin{align*}
V_n ^w \left( \frac{x -D_x}{\sqrt{2}}\right) u (x) &= \frac{1}{(2\pi)^d} \int \int e^{i(x-y) \xi} V_n \left( \frac{x+y}{2\sqrt{2}} -\frac{\xi}{\sqrt{2}} \right) u(y)dyd \xi \\
& = \frac{1}{(2\pi)^d} \int \int e^{ix\xi} \frac{(1+D^2 _{\xi})^N}{\langle x \rangle^{2N}} \left\{ e^{-iy\xi} \frac{(1+D^2 _y)^N}{\langle \xi \rangle^{2N}} V_n \left( \frac{x+y}{2\sqrt{2}} -\frac{\xi}{\sqrt{2}} \right) u(y) \right\} dyd \xi
\end{align*}
for all $u \in \mathcal{S} (\R^d)$. Since the integrand is uniformly estimated from above by a integrable function, $V_n ^w \left( \frac{x -D_x}{\sqrt{2}}\right) u (x) \to V ^w \left( \frac{x -D_x}{\sqrt{2}}\right) u (x)$ holds for all $x \in \R^d$. Furthermore, the uniform bound $\left|V_n ^w \left( \frac{x -D_x}{\sqrt{2}}\right) u (x) \right| \lesssim \langle x \rangle^{-2N}$ implies that $\left\langle V_n ^w \left( \frac{x -D_x}{\sqrt{2}}\right) u, v \right\rangle \to \left\langle V ^w \left( \frac{x -D_x}{\sqrt{2}}\right) u, v \right\rangle$ holds for all $u, v \in \mathcal{S} (\R^d)$. Therefore, for $V \in S^0 (\R^d)$, we have
\begin{align*}
U^* V U = V ^w \left( \frac{x -D_x}{\sqrt{2}}\right). 
\end{align*}
Notice that this is an analogue of the well-known formula $\mathcal{F} a^w (x, D_x) \mathcal{F}^* = a^w (-D_{\xi}, \xi)$.
\subsection{\textbf{Kato-Yajima estimate for the repulsive Hamiltonian}}\label{2405091758}
In this subsection we prove the Kato-Yajima type uniform resolvent estimate for $H$ under weaker assumptions on the decay order of $V$ than \cite{KaYo}. Their result is based on the LAP bound by \cite{BCHM} in the middle energy regime. On the other hand, in the high or low energy regime, they use the dispersive estimate for $e^{-itH_0}$ and H\"older's inequality to obtain a uniform bound. Due to the use of H\"older's inequality, their weight functions and perturbations are assumed to be decaying with polinomial order. In order to gain a uniform resolvent estimate with logarithmic decaying weight functions and perturbations, we employ the weakly conjugate operator method, which is a version of the Mourre theory. It has been used for the Schr\"odinger operator with inverse square type potentials (\cite{BM}), for the fractional Schr\"odinger operator with the Hardy type potentials (\cite{MY1}) and for the massless Klein-Gordon operator (\cite{M5}). See also \cite{BoMa} for more abstract result. However there seems to be no result on the Schr\"odinger operator with growing potentials. In this subsection, as a conjugate operator, we use
\begin{align*}
A=a^w (x, D_x), \quad a(x, \xi)= \log \langle x+\xi \rangle - \log \langle x-\xi \rangle \in S(\langle \log \langle x \rangle \rangle, g_0)
\end{align*}   
where $g_0 = dx^2 +d\xi^2$. This conjugate operator is used in \cite{BCHM} to gain an energy-localized Mourre inequality and a LAP bound in the middle energy regime but no energy-global resolvent estimate is proved. We notice that by a direct calculation
\begin{align}
A=U (\log \langle \sqrt{2} x\rangle - \log \langle \sqrt{2} D_x\rangle)U^* \label{2405121442}
\end{align}
holds. This formula and Nelson's commutator theorem with the harmonic oscillator ensures the essential self-adjointness of $A$ on $\mathcal{S} (\R^d)$. The following lemma is important in order to apply the Mourre theory.
\begin{lemma}\label{2405111618}
For any $V \in \tilde{S}^0 (\R^d):= \{V \in C^{\infty} (\R^d; \R) \mid \forall \alpha \in \N^d _0, |\partial^{\alpha} _x V(x)| \lesssim \langle x \rangle^{-|\alpha|}\}$ there exists $\epsilon_0 >0$ such that 
\begin{align}
&\langle [H_0, iA]u, u \rangle \lesssim \langle [H, iA]u, u \rangle \label{2405111853} \\ 
&|\langle [[H, iA], iA]u, u\rangle | \lesssim \langle [H_0, iA]u, u\rangle \label{2405111854}
\end{align}
holds for $H=H_0 + \epsilon V$, $\epsilon \in [0, \epsilon_0)$ and all $u \in C^{\infty} _0 (\R^d)$.
\end{lemma}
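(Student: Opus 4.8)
The plan is to remove the repulsive potential by the metaplectic conjugation introduced above and reduce both bounds to an \emph{exact} commutator identity together with a uniform positivity statement for the transformed operators.

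Since $U=U_1$ is metaplectic (a composition of partial Fourier transforms and quadratic–phase multiplications) and $\kappa$ is linear symplectic, Weyl quantization transforms covariantly, so that $U^*H_0U$ is the Weyl quantization $L_0$ of $2x\cdot\xi$, $U^*AU=\log\langle\sqrt2 x\rangle-\log\langle\sqrt2 D_x\rangle=:B$ by (\ref{2405121442}), and $U^*VU=V^w\!\left(\tfrac{x-D_x}{\sqrt2}\right)=:\tilde V$ as recorded above, all of these being exact identities. Because $U\colon\mathcal S\to\mathcal S$ is a homeomorphism, (\ref{2405111853}) and (\ref{2405111854}) for $u\in C_0^\infty(\R^d)\subset\mathcal S$ are equivalent to the same inequalities written for $\tilde H:=L_0+\epsilon\tilde V$, $\tilde H_0:=L_0$, $B$ and $v:=U^*u\in\mathcal S$; I work with these from now on.

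First I would compute $[L_0,iB]$ directly: from $[L_0,if(x)]=2x\cdot\nabla f(x)$ (valid for any smooth $f$, since $L_0=2x\cdot D_x$ modulo an additive constant) and a Fourier conjugation for the $D_x$–part one gets the exact identity
\[
[L_0,iB]=g(x)+g(D_x),\qquad g(t)=\frac{4|t|^2}{\langle\sqrt2 t\rangle^2}=\frac{4|t|^2}{1+2|t|^2}\in[0,2).
\]
In particular $[H_0,iA]=U\bigl(g(x)+g(D_x)\bigr)U^*$ is a \emph{bounded} nonnegative operator, in contrast with the dilation–generator commutator $-2\Delta$ in the usual $-\Delta$ setting. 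The heart of the matter is that it is even uniformly positive: $g(x)+g(D_x)\ge cI$ for some $c>0$. Indeed $g(t)\ge\tfrac43$ for $|t|\ge1$, hence $g(x)+g(D_x)\ge\tfrac43\bigl(2I-P-Q\bigr)$ with $P:=\mathbf 1_{\{|x|\le1\}}$ and $Q:=\mathbf 1_{\{|\xi|\le1\}}(D_x)$, so it suffices to show $\|P+Q\|<2$, i.e.\ that $2I-P-Q$ is bounded below. If it were not, there would be $v_n$ with $\|v_n\|=1$, $\|(I-P)v_n\|\to0$ and $\|(I-Q)v_n\|\to0$; then $w_n:=Pv_n$ is supported in the closed unit ball, $\|w_n\|\to1$, and $\|(I-Q)w_n\|\to0$, so $Qw_n$ is band–limited to the unit ball and therefore bounded in every $H^k(\R^d)$. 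By Rellich compactness a subsequence of $w_n$ converges in $L^2$ to some $w$ supported in the closed unit ball with $(I-Q)w=0$, i.e.\ $w$ band–limited, and then the Paley–Wiener theorem forces $w=0$, contradicting $\|w\|=1$. This uniform positivity — which reflects the unbounded escape of the classical flow of $H_0$ — is the step I expect to be the main obstacle; for $-\Delta$ the analogous commutator is not bounded below and only a spectrally localized Mourre estimate is available.

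It remains to control the perturbative and second–commutator terms, and here mere operator boundedness suffices. By the Weyl calculus $[\tilde V,iB]$ has principal symbol $\sim(\nabla V)\!\left(\tfrac{x-\xi}{\sqrt2}\right)\cdot\Bigl(\tfrac{x}{\langle\sqrt2 x\rangle^2}+\tfrac{\xi}{\langle\sqrt2\xi\rangle^2}\Bigr)$ plus lower–order remainders; since $V\in\tilde S^0(\R^d)$ this symbol is bounded together with all its derivatives (indeed it decays), so $[\tilde V,iB]$ is a bounded operator, and iterating the same computation, so is $[[\tilde V,iB],iB]$. Likewise $[[L_0,iB],iB]=[g(D_x),i\log\langle\sqrt2 x\rangle]-[g(x),i\log\langle\sqrt2 D_x\rangle]$, and the kernel of the first term is $-2\,\mathcal F^{-1}\!\bigl[(1+2|\cdot|^2)^{-1}\bigr](x-y)\,\bigl(\log\langle\sqrt2 y\rangle-\log\langle\sqrt2 x\rangle\bigr)$, which in absolute value is $\lesssim|x-y|\,\bigl|\mathcal F^{-1}\!\bigl[(1+2|\cdot|^2)^{-1}\bigr](x-y)\bigr|$ and hence integrable in $x$ and in $y$ (using $|\nabla\log\langle\sqrt2\,\cdot\,\rangle|\lesssim1$ for the difference quotient), so by Schur's test $[[L_0,iB],iB]$ is bounded (the second term is handled by a Fourier conjugation). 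Writing $N:=g(x)+g(D_x)$ and using $N\ge cI$ together with the boundedness above, for $\epsilon$ below a threshold $\epsilon_0>0$
\[
\langle[\tilde H,iB]v,v\rangle=\langle Nv,v\rangle+\epsilon\langle[\tilde V,iB]v,v\rangle\ge\bigl(1-\epsilon c^{-1}\|[\tilde V,iB]\|\bigr)\langle Nv,v\rangle\ge\tfrac12\langle Nv,v\rangle,
\]
which yields (\ref{2405111853}), and
\[
\bigl|\langle[[\tilde H,iB],iB]v,v\rangle\bigr|\le\bigl(\|[[L_0,iB],iB]\|+\epsilon\|[[\tilde V,iB],iB]\|\bigr)\|v\|^2\le\mathrm{const}\cdot c^{-1}\langle Nv,v\rangle,
\]
which yields (\ref{2405111854}). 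The only remaining work is the routine pseudodifferential bookkeeping: writing out the symbol expansions with their remainder estimates and recording the relevant Hörmander classes for $B$ (logarithmic weight) and for $\tilde V$.
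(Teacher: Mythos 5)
Your overall scheme -- conjugate by $U$, establish uniform positivity of $N=g(x)+g(D_x)$, then absorb the perturbative and second-commutator terms once their $L^2$-boundedness is known -- is the same skeleton as the paper's. Your positivity argument (domination by $\tfrac43(2I-P-Q)$, then Rellich plus Paley--Wiener to exclude $\|P+Q\|=2$) is correct and arguably cleaner than the paper's cut-off/Poincar\'e reduction, and your Schur-test treatment of $[[L_0,iB],iB]$ via the explicit Bessel-type kernel (the $\delta$-part of $\mathcal F^{-1}g$ commutes away, and $|\log\langle\sqrt2y\rangle-\log\langle\sqrt2x\rangle|\lesssim|x-y|$) is a valid, more elementary alternative to the paper's symbolic calculus for that piece.

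The gap is in the sentence ``By the Weyl calculus $[\tilde V,iB]$ has principal symbol $\sim(\nabla V)(\tfrac{x-\xi}{\sqrt2})\cdot(\ldots)$ plus lower-order remainders; \ldots so $[\tilde V,iB]$ is a bounded operator, and iterating, so is $[[\tilde V,iB],iB]$.'' This is precisely what does \emph{not} follow from routine pseudodifferential bookkeeping, and the paper flags it explicitly (``this is not a consequence of general theories of pseudodifferential operators since $V(\tfrac{x-\xi}{\sqrt2})$ does not have a symbol type decay estimate''). The symbol $V(\tfrac{x-\xi}{\sqrt2})$ sits only in $S(1,dx^2+d\xi^2)$, a metric with no gain; $\log\langle\sqrt2x\rangle$ gains only in $x$. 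Consequently the formal Moyal expansion of $[\tilde V,iB]$ has a remainder that a priori lives in $S(\langle\log\langle x\rangle\rangle,dx^2+d\xi^2)$ -- i.e.\ \emph{unbounded} -- and the cancellation of the logarithmic weight in the commutator must be proved by hand. This is the paper's main technical effort: a stationary-phase analysis of the exact composition integrals with scale-adapted cutoffs $\chi(y/\epsilon_0\langle x\rangle)\chi(\eta/\epsilon_1\langle x\rangle^\rho)$, integration by parts in the non-stationary regions, and explicit remainder bounds, which not only gives boundedness but the \emph{decay} $[\tilde V,\log\langle\sqrt2x\rangle]\in OPS(\langle x\rangle^{-1},g_0)$ that is then iterated for the double commutator. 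Your proposal asserts the conclusion of this analysis without providing it, so as written the proof is incomplete. If you want to keep your streamlined structure, the piece you must actually supply is the proof that $[\tilde V,iB]$ and $[[\tilde V,iB],iB]$ are bounded; everything else in your outline is sound.
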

Note that the restriction $\epsilon \in [0, \epsilon_0)$ is used only in the proof of (\ref{2405111853}).
\begin{proof}
(i) We prove (\ref{2405111853}). By (\ref{2405121442}) and $H_0 = U(xD_x +D_x x -1)U^*$, it is easy to see that
\begin{align*}
[H_0, iA] = U \left( \frac{4x^2}{\langle \sqrt{2} x \rangle^2} + \frac{4D^2 _x}{\langle \sqrt{2} D_x \rangle^2} \right)U^* \ge 0
\end{align*}
holds. Since we have
\begin{align*}
[V, iA] &= U[U^* VU, i(\log \langle \sqrt{2} x\rangle - \log \langle \sqrt{2} D_x\rangle)]U^* \\
&= U\left[V^w \left( \frac{x -D_x}{\sqrt{2}}\right), i(\log \langle \sqrt{2} x\rangle - \log \langle \sqrt{2} D_x\rangle)\right]U^*
\end{align*}
it suffices to show that
\begin{align*}
&\left| \left\langle \left[V^w \left( \frac{x -D_x}{\sqrt{2}}\right), i\log \langle \sqrt{2} x \rangle \right]u, u \right\rangle \right| \lesssim \left\langle \left(\frac{4x^2}{\langle \sqrt{2} x \rangle^2} + \frac{4D^2 _x}{\langle \sqrt{2} D_x \rangle^2}\right)u, u \right\rangle, \\
&\left| \left\langle \left[V^w \left( \frac{x -D_x}{\sqrt{2}}\right), i\log \langle \sqrt{2} D_x \rangle \right]u, u \right\rangle \right| \lesssim \left\langle \left(\frac{4x^2}{\langle \sqrt{2} x \rangle^2} + \frac{4D^2 _x}{\langle \sqrt{2} D_x \rangle^2}\right)u, u \right\rangle
\end{align*}
holds for all $u \in \mathcal{S}$. We only prove the former estimate since the latter can be proved similarly. First we prove that the commutator in the left hand side is bounded. Notice that this is not a consequence of general theories of pseudodifferential operators since $V \left( \frac{x -\xi}{\sqrt{2}}\right)$ does not have a symbol type decay estimate. However we can use $ \log \langle \sqrt{2} x \rangle \in S(\langle \log \langle x \rangle \rangle, g_1 )$ in our estimate, where $g_1 = \frac{dx^2}{\langle x \rangle^2} + d\xi^2$. By the composition formula for the Weyl calculus (see \cite{Hor} and \cite{Ma}), we have
\begin{align*}
V^w \left( \frac{x -D_x}{\sqrt{2}}\right) \log \langle \sqrt{2} x \rangle = b^w (x, D_x), \quad  \log \langle \sqrt{2} x \rangle V^w \left( \frac{x -D_x}{\sqrt{2}}\right) = c^w (x, D_x)
\end{align*}
where $b, c \in S(\langle \log \langle x \rangle \rangle, g_0)$ have the following formulas:
\begin{align*}
&b (x, \xi) = (2\pi)^{-2d} \int_{\R^{4d}} e^{-i(y\eta -z\zeta)} V\left( \frac{x+\frac{z}{2} -(\xi + \eta)}{\sqrt{2}}\right) \log \left\langle \sqrt{2} \left(\frac{y}{2} +x \right)\right\rangle dydzd\zeta d\eta, \\
&c(x, \xi) = (2\pi)^{-2d} \int_{\R^{4d}} e^{-i(y\eta -z\zeta)}  \log \left\langle \sqrt{2} \left(x+\frac{z}{2}\right)\right\rangle V \left(\frac{\frac{y}{2} +x-(\xi + \zeta)}{\sqrt{2}}\right)  dydzd\zeta d\eta
\end{align*}
By the Fourier inversion formula, we obtain
\begin{align*}
&b(x, \xi) = (2\pi)^{-d} \int_{\R^{2d}} e^{-iy\eta} V\left( \frac{x -(\xi + \eta)}{\sqrt{2}}\right) \log \left\langle \sqrt{2} \left(\frac{y}{2} +x \right)\right\rangle dyd\eta, \\
&c(x, \xi) = (2\pi)^{-d} \int_{\R^{2d}} e^{iz\zeta} \log \left\langle \sqrt{2} \left(x+\frac{z}{2}\right)\right\rangle V\left( \frac{x -(\xi + \zeta)}{\sqrt{2}}\right) dzd\zeta.
\end{align*}
Now we use the stationary phase theorem:
\begin{align}
&\int_{\R^d} e^{ixQx/2h} a(x)dx = \sum_{k=0}^{N-1} \frac{(2\pi)^{d/2} h^{k+d/2} e^{i\pi \sgn Q/4} }{|\det Q|^{1/2} (2i)^k k!} ((D_x Q^{-1} D_x)^k a)(0) + R_N, \notag \\
& |R_N| \lesssim \frac{h^{N+d/2}}{|\det Q|^{1/2} 2^N N!} \sum_{|\alpha| \le d+1} \|\partial^{\alpha} _x (D_x Q^{-1} D_x)^N a\|_1. \label{2405121915}
\end{align}
for either of the following non-degenerate $2d \times 2d$ symmetric matrix $Q$: 
\begin{align*}
Q = -
\begin{pmatrix}
0 & I \\
I & 0
\end{pmatrix}
\quad or \quad Q=
\begin{pmatrix}
0 & I \\
I & 0
\end{pmatrix}
\end{align*}
Since $\sgn Q =0$, we have
\begin{align*}
b(x, \xi) &= (2\pi)^{-d/2} \sum_{k=0}^{N-1} \frac{1}{(2i)^k k!} (-2D_y D_{\eta})^k \left\{ V\left( \frac{x -(\xi + \eta)}{\sqrt{2}}\right) \log \left\langle \sqrt{2} \left(\frac{y}{2} +x \right)\right\rangle \right\} \Biggm\vert_{y=\eta=0} \\
&  \quad \quad \quad \quad \quad \quad \quad \quad \quad \quad \quad \quad \quad \quad \quad + R_{N, 1} (x, \xi)
\end{align*}
\begin{align*}
c(x, \xi) &= (2\pi)^{-d/2} \sum_{k=0}^{N-1} \frac{1}{(2i)^k k!} (2D_z D_{\zeta})^k \left\{ \log \left\langle \sqrt{2} \left(x+\frac{z}{2}\right)\right\rangle V\left( \frac{x -(\xi + \zeta)}{\sqrt{2}}\right) \right\} \Biggm\vert_{z=\zeta =0} \\
& \quad \quad \quad \quad \quad \quad \quad \quad \quad \quad \quad \quad \quad \quad \quad + R_{N, 2} (x, \xi).
\end{align*}
To prove the $L^2$-boundedness of $R^w _{N, 1} (x, D_x)$ and $R^w _{N, 2} (x, D_x)$ for large $N \in \N$, it suffices to show that $|\partial^{\alpha} _x \partial^{\beta} _{\xi} R_{N, 1} (x, \xi)| \lesssim 1$ and $|\partial^{\alpha} _x \partial^{\beta} _{\xi} R_{N, 2} (x, \xi)| \lesssim 1$ hold. However, by changing the order of differentiation and integration, we only need to prove these estimate for $\alpha = \beta =0$. By (\ref{2405121915}), we have
\begin{align*}
|R_{N, 1} (x, \xi)| &\lesssim \frac{1}{2^N N!} \sum_{|\alpha| + |\beta| \le 2d+1} \left\|\partial^{\alpha} _y \partial^{\beta} _{\eta} (-2D_y D_{\eta})^N \left\{ V\left( \frac{x -(\xi + \eta)}{\sqrt{2}}\right) \log \left\langle \sqrt{2} \left(\frac{y}{2} +x \right)\right\rangle \right\} \right\|_{L^1 (\R^d _y \times \R^d _{\eta})} \\
& \lesssim 1
\end{align*}
for large $N \in \N$ since we assume $|\partial^{\alpha} _x V(x)| \lesssim \langle x \rangle^{-|\alpha|}$. Notice that $R_{N, 2} (x, \xi)$ can be estimated similarly. We also have the $L^2$-boundedness for $\Psi$DOs defined by the $k$-th terms $(k \ge 1)$ in the above summation. Since the 0th terms in the above summations cancel each other, we have
\begin{align*}
\left[V^w \left( \frac{x -D_x}{\sqrt{2}}\right), i\log \langle \sqrt{2} x \rangle \right] =i(b^w (x, D_x) -c^w (x, D_x)) \in OPS(1, g_0)
\end{align*}
and hence its $L^2$-boundedness. We take $\chi \in C^{\infty} _0 (\R^d; [0, 1])$ such that $\chi =1$ on $\{|x| \le 1\}$ and $\chi =0$ on $\{|x| \ge 2\}$. We set $\bar{\chi} = 1-\chi$. Then
\begin{align*}
\left |\left\langle \bar{\chi} \left[V^w \left( \frac{x -D_x}{\sqrt{2}}\right), i\log \langle \sqrt{2} x \rangle \right] \bar{\chi} u, u \right\rangle \right | \lesssim \|\bar{\chi} u\|^2 _2 &\lesssim \left\|\bar{\chi} \frac{2x}{\langle \sqrt{2} x \rangle} u \right\|^2 _2 \\
& \lesssim \left\langle \frac{4x^2}{\langle \sqrt{2} x \rangle^2} u, u \right\rangle
\end{align*}
holds. Furthermore, by a support property of $\chi$, we have
\begin{align*}
\left |\left\langle \chi \left[V^w \left( \frac{x -D_x}{\sqrt{2}}\right), i\log \langle \sqrt{2} x \rangle \right] \bar{\chi} u, u \right\rangle \right | \lesssim \|\bar{\chi} u\|_2 \|\chi u\|_2 \lesssim \|\bar{\chi} u\|^2 _2 + \|\chi u\|^2 _2
\end{align*}
and
\begin{align*}
\|\chi u\|_2 \lesssim \|\chi (x)\chi (D_x)u\|_2 + \|\bar{\chi} (D_x) u\|_2 &\lesssim \|\chi (x) \langle \sqrt{2} D_x \rangle^{-1} \langle \sqrt{2} D_x \rangle \chi(D)u \|_2 + \left\|\frac{2D_x}{\langle \sqrt{2} D_x \rangle} u \right\|_2 \\
& \lesssim \left\| \frac{2D_x}{\langle \sqrt{2} D_x \rangle}  \langle \sqrt{2} D_x \rangle \chi(D)u \right\|_2 + \left\|\frac{2D_x}{\langle \sqrt{2} D_x \rangle} u \right\|_2 \\
& \lesssim  \left(\left\langle \frac{4D_x ^2}{\langle \sqrt{2} D_x \rangle^2} u, u \right\rangle \right)^{1/2}
\end{align*}
where in the latter estimate we have used the Poincar\'e inequality. Therefore
\begin{align*}
\left |\left\langle \chi \left[V^w \left( \frac{x -D_x}{\sqrt{2}}\right), i\log \langle \sqrt{2} x \rangle \right] \bar{\chi} u, u \right\rangle \right | \lesssim \left\langle \left(\frac{4x^2}{\langle \sqrt{2} x \rangle^2} + \frac{4D^2 _x}{\langle \sqrt{2} D_x \rangle^2}\right)u, u \right\rangle
\end{align*}
holds. Finally we have
\begin{align*}
\left |\left\langle \chi \left[V^w \left( \frac{x -D_x}{\sqrt{2}}\right), i\log \langle \sqrt{2} x \rangle \right] \chi u, u \right\rangle \right | \lesssim \|\chi u\|^2 _2 \lesssim \left\langle \frac{4D_x ^2}{\langle \sqrt{2} D_x \rangle^2} u, u \right\rangle.
\end{align*}
These estimates imply
\begin{align*}
\left| \left\langle \left[V^w \left( \frac{x -D_x}{\sqrt{2}}\right), i\log \langle \sqrt{2} x \rangle \right]u, u \right\rangle \right| \lesssim \left\langle \left(\frac{4x^2}{\langle \sqrt{2} x \rangle^2} + \frac{4D^2 _x}{\langle \sqrt{2} D_x \rangle^2}\right)u, u \right\rangle
\end{align*}
and we are done. (ii) Next we prove (\ref{2405111854}). By a direct calculation,
\begin{align*}
[[H_0, iA], iA] &= U\left[\frac{4x^2}{\langle \sqrt{2} x \rangle^2} + \frac{4D^2 _x}{\langle \sqrt{2} D_x \rangle^2}, i(\log \langle \sqrt{2} x\rangle - \log \langle \sqrt{2} D_x\rangle )\right]U^* \\
& = -U\left[\frac{4x^2}{\langle \sqrt{2} x \rangle^2}, i \log \langle \sqrt{2} D_x\rangle \right]U^* + U\left[\frac{4D_x ^2}{\langle \sqrt{2} D_x \rangle^2}, i \log \langle \sqrt{2} x\rangle \right]U^*
\end{align*}
holds. For both terms, by the composition formula, we have
\begin{align*}
\left[\frac{4x^2}{\langle \sqrt{2} x \rangle^2}, i \log \langle \sqrt{2} D_x\rangle \right] \in OPS(1, g_2), \quad \left[\frac{4D_x ^2}{\langle \sqrt{2} D_x \rangle^2}, i \log \langle \sqrt{2} x\rangle \right] \in OPS(1, g_2)
\end{align*}
where $g_2 = \frac{dx^2}{\langle x \rangle^2} + \frac{d\xi^2}{\langle \xi \rangle^2}$. This yields the $L^2$-boundedness of $[[H_0, iA], iA]$. Since we have
\begin{align*}
\|u\|^2 _2 \lesssim \left\langle \left(\frac{4x^2}{\langle \sqrt{2} x \rangle^2} + \frac{4D^2 _x}{\langle \sqrt{2} D_x \rangle^2}\right)u, u \right\rangle
\end{align*}
in the process of (i), we obtain $| \langle [[H_0, iA], iA]u, u \rangle | \lesssim \langle [H_0, iA]u, u\rangle$. Therefore it suffices to show that $[[V, iA], iA]$ is bounded on $L^2 (\R^d)$. We prove this as a consequence of
\begin{align*}
U^* [[V, iA], iA] U &= \left[\left[V^w \left( \frac{x -D_x}{\sqrt{2}}\right), i(\log \langle \sqrt{2} x\rangle - \log \langle \sqrt{2} D_x\rangle)\right], i(\log \langle \sqrt{2} x\rangle - \log \langle \sqrt{2} D_x\rangle) \right] \\
& \in OPS(1, g_0).
\end{align*}
By the Jacobi identity, this is reduced to $\left[\left[V^w \left( \frac{x -D_x}{\sqrt{2}}\right), \log \langle \sqrt{2} \mathcal{A} \rangle\right], \log \langle \sqrt{2} \mathcal{B} \rangle\right] \in OPS(1, g_0)$, where $(\mathcal{A}, \mathcal{B}) = (x, x), (x, D_x)$ and $(D_x, D_x)$. Since the case $(\mathcal{A}, \mathcal{B}) =(D_x, D_x)$ can be proved almost similarly to the case $(\mathcal{A}, \mathcal{B}) =(x, x)$, we only prove the other two cases. First we assume $(\mathcal{A}, \mathcal{B}) = (x, x)$. We need to estimate $b(x, \xi)$ and $c(x, \xi)$ more precisely. Let $\rho \in (0, 1)$ and $0 < \epsilon_0, \epsilon_1 \ll 1$. We set
\begin{align*}
&\chi_{1} (x, y, \eta) = \chi \left(\frac{y}{\epsilon_0 \langle x \rangle}\right) \chi \left( \frac{\eta}{\epsilon_1 \langle x \rangle^{\rho}}\right), \\
&\chi_{2} (x, y, \eta) = \left\{1-\chi \left(\frac{y}{\epsilon_0 \langle x \rangle}\right) \right\} \chi \left( \frac{\eta}{\epsilon_1 \langle x \rangle^{\rho}}\right), \\
&\chi_{3} (x, y, \eta) = 1- \chi \left(\frac{\eta}{\epsilon_1 \langle x \rangle^{\rho}}\right)
\end{align*} 
and
\begin{align*}
I_j = (2\pi)^{-d} \int_{\R^{2d}} e^{-iy\eta} \chi_j (x, y, \eta) V\left( \frac{x -(\xi + \eta)}{\sqrt{2}}\right) \log \left\langle \sqrt{2} \left(\frac{y}{2} +x \right)\right\rangle dyd\eta
\end{align*}
for $j=1,2,3$. For $I_1$, by changing variables, we have
\begin{align*}
I_1 &= (2\pi)^{-d} \langle x \rangle^{(1+ \rho)d} \int_{\R^{2d}} e^{-i\langle x \rangle^{(1+ \rho)} y\eta} \chi \left(\frac{y}{\epsilon_0}\right) \chi \left( \frac{\eta}{\epsilon_1}\right) \\
& \quad \quad \quad \quad \quad \quad \quad \quad \quad \quad \quad \cdot V\left( \frac{x -(\xi + \langle x \rangle^{\rho} \eta)}{\sqrt{2}}\right) \log \left\langle \sqrt{2} \left(\frac{\langle x \rangle y}{2} +x \right)\right\rangle dyd\eta.
\end{align*}
Then by the stationary phase theorem we obtain
\begin{align*}
I_1 &= \sum_{k=0}^{N-1} (2\pi)^{-d/2} \langle x \rangle^{(1+ \rho)d} \frac{\left(\frac{1}{\langle x \rangle^{1+ \rho}}\right)^{k+d}}{(2i)^k k!} \\
& \quad \cdot (-2D_y D_{\eta})^k \left\{ \chi \left(\frac{y}{\epsilon_0}\right) \chi \left( \frac{\eta}{\epsilon_1}\right) V\left( \frac{x -(\xi + \langle x \rangle^{\rho} \eta)}{\sqrt{2}}\right) \log \left\langle \sqrt{2} \left(\frac{\langle x \rangle y}{2} +x \right)\right\rangle \right\} \Biggm\vert_{y=\eta =0} \\
& \quad \quad \quad \quad + \tilde{R}_{N} \\
& = (2\pi)^{-d/2} \sum_{k=0}^{N-1} \frac{1}{(2i)^k k!} (-2D_y D_{\eta})^k \left\{ V\left( \frac{x -(\xi + \eta)}{\sqrt{2}}\right) \log \left\langle \sqrt{2} \left(\frac{y}{2} +x \right)\right\rangle \right\} \Biggm\vert_{y=\eta=0} + \tilde{R}_{N}
\end{align*}
where $\tilde{R}_{N}$ satisfies
\begin{align*}
|\tilde{R}_{N} (x, \xi)| \lesssim \langle x \rangle^{-N + (2d+1)\rho}.
\end{align*}
Concerning differentials of $\tilde{R}_{N}$, we also have $|\partial^{\alpha} _x \partial^{\beta} _{\xi} \tilde{R}_{N} (x, \xi)| \lesssim \langle x \rangle^{-N + (2d+1)\rho}$ for all $\alpha, \beta \in \N_0$. This is proved by changing the order of differentiation and integration as follows. We compute as
\begin{align*}
\partial^{\alpha} _x \partial^{\beta} _{\xi} \tilde{R}_{N} (x, \xi) &= \partial^{\alpha} _x \partial^{\beta} _{\xi} I_1 \\
&\quad- \underbrace{\partial^{\alpha} _x \partial^{\beta} _{\xi}  \sum_{k=0}^{N-1} \frac{(2\pi)^{-\frac{d}{2}}}{(2i)^k k!} (-2D_y D_{\eta})^k \left\{ V\left( \frac{x -(\xi + \eta)}{\sqrt{2}}\right) \log \left\langle \sqrt{2} \left(\frac{y}{2} +x \right)\right\rangle \right\} \Biggm\vert_{y=\eta=0}}_{=:J(x, \xi, N)} \\
&= (2\pi)^{-d} \int_{\R^{2d}} e^{-iy\eta} \chi_1 (x, y, \eta) \partial^{\alpha} _x \partial^{\beta} _{\xi} \left\{V\left( \frac{x -(\xi + \eta)}{\sqrt{2}}\right) \log \left\langle \sqrt{2} \left(\frac{y}{2} +x \right)\right\rangle \right\}dyd\eta \\
&  \quad \quad \quad \quad \quad \quad \quad-J(x, \xi, N) \\
& \quad \quad \quad \quad + \sum_{\substack{\gamma + a =\alpha, \delta +b=\beta \\ |\gamma| + |\delta| \ne 0}} \frac{1}{(2\pi)^{d}} \int_{\R^{2d}} e^{-iy\eta} \partial^{\gamma} _x \partial^{\delta} _{\xi} \{\chi_1 (x, y, \eta) \} \\
& \quad \quad \quad \quad \quad \quad \quad \quad \quad \cdot \partial^{a} _x \partial^{b} _{\xi} \left\{V\left( \frac{x -(\xi + \eta)}{\sqrt{2}}\right) \log \left\langle \sqrt{2} \left(\frac{y}{2} +x \right)\right\rangle \right\}dyd\eta.
\end{align*}
Then we apply the stationary phase theorem to the first term as above. However the $0 \sim (N-1)$-th terms appearing from the expansion cancel with $J(x, \xi, N)$ and only the remainder term $R_{N, 3}$ is left. Concerning $R_{N, 3}$, we have $|R_{N, 3}| \lesssim \langle x \rangle^{-N + (2d+1)\rho}$ as before. Therefore it suffices to show that
\begin{align*}
\left|\int_{\R^{2d}} e^{-iy\eta} \partial^{\gamma} _x \partial^{\delta} _{\xi} \{\chi_1 (x, y, \eta) \}
\partial^{a} _x \partial^{b} _{\xi} \left\{V\left( \frac{x -(\xi + \eta)}{\sqrt{2}}\right) \log \left\langle \sqrt{2} \left(\frac{y}{2} +x \right)\right\rangle \right\}dyd\eta \right| \lesssim \langle x \rangle^{-N + (2d+1)\rho}
\end{align*}  
holds for $\gamma, \delta, a, b$ as in the above sum. We again apply the stationary phase theorem to the integral. By using $|\gamma| + |\delta| \ne 0$ and a support property of $\chi_1$, each term in the expansion vanishes except for the remainder term $R_{N, 4}$. Since $|R_{N, 4}| \lesssim \langle x \rangle^{-N + (2d+1)\rho}$ holds, we obtain the desired estimate.
For $I_2$, by taking care of the fact that
\begin{align*}
&L_1 =\langle \langle x \rangle^{-\rho} \eta \rangle^{-2} (1+ \langle x \rangle^{-2\rho} \eta D_y) \quad satisfies \quad L_1 e^{-iy \eta} = e^{-iy \eta}, \\
&L_2 = -|y|^{-2} yD_{\eta} \quad satisfies \quad L_2 e^{-iy \eta} = e^{-iy \eta}
\end{align*}
and a support property of $\chi_j$, if we repeat integration by parts sufficiently many times, we obtain $|I_2 (x, \xi)| \lesssim \langle x \rangle^{-N}$ for any $N \in \N$. Again by changing the order of differentiation and integration, for any $\alpha, \beta \in \N^d _0$, we obtain $|\partial^{\alpha} _x \partial^{\beta} _{\xi}I_2 (x, \xi)| \lesssim \langle x \rangle^{-N}$ for all $N \in \N$. For $I_3$, we notice that $\epsilon_1 \langle x \rangle^{\rho} \le |\eta| $ holds in the support of $\chi_3$. Then we use the fact
\begin{align*}
&L_3 = -|\eta|^{-2} \eta D_{y} \quad satisfies \quad L_3 e^{-iy \eta} = e^{-iy \eta}, \\
&L_4 = \langle y \rangle^{-2} (1-yD_{\eta}) \quad satisfies \quad L_4 e^{-iy \eta} = e^{-iy \eta}
\end{align*}
and repeat integration by parts. As a result we get $|I_3 (x, \xi)| \lesssim \langle x \rangle^{-N}$ for any $N \in \N$. As above, for any $\alpha, \beta \in \N^d _0$, $|\partial^{\alpha} _x \partial^{\beta} _{\xi}I_3 (x, \xi)| \lesssim \langle x \rangle^{-N}$ holds for all $N \in \N$. We also calculate $c(x, \xi)$ in the same manner. Then for any $\alpha, \beta \in \N^d _0$, we obtain
\begin{align*}
|\partial^{\alpha} _x \partial^{\beta} _{\xi} (b(x, \xi) -c(x, \xi))| \lesssim \langle x \rangle^{-1}
\end{align*}  
if we take $N \in \N$ in the remainder term large enough. This is because the $0$th term in the sum in $I_1$ cancel with that appearing from $c(x, \xi)$. This implies
\begin{align*}
\left[V^w \left( \frac{x -D_x}{\sqrt{2}}\right), \log \langle \sqrt{2} x \rangle\right] \in OPS(\langle x \rangle^{-1}, g_0).
\end{align*} 
Then by a symbolic calculus in $S(m, g_0)$, we obtain
\begin{align*}
\left[\left[V^w \left( \frac{x -D_x}{\sqrt{2}}\right), \log \langle \sqrt{2} x \rangle\right], \log \langle \sqrt{2} x \rangle \right] \in OPS(1, g_0).
\end{align*}
Next we consider the case where $(\mathcal{A}, \mathcal{B}) = (x, D_x)$. It suffices to show that 
\begin{align*}
[f^w (x, D_x), \log \langle \sqrt{2} D_x \rangle] \in OPS(1, g_0)
\end{align*}
holds for any $f \in S(\langle x \rangle^{-1}, g_0)$. However calculations are similar to the above one and we only give an outline. The symbol of $f^w (x, D_x) \log \langle \sqrt{2} D_x \rangle$ is given by
\begin{align*}
&(2\pi)^{-2d} \int_{\R^{4d}} e^{-i(y\eta -z\zeta)} c(x+z/2, \xi + \eta) \log \left\langle \sqrt{2} (\xi + \zeta)\right\rangle dydzd\zeta d\eta \\
&= (2\pi)^{-d} \int_{\R^{2d}} e^{iz\zeta} c(x+z/2, \xi) \log \left\langle \sqrt{2} (\xi + \zeta)\right\rangle dzd\zeta.
\end{align*}
Instead of the cut-off functions used above, we set
\begin{align*}
&\chi_1 (\xi, z, \zeta) = \chi \left( \frac{z}{\epsilon_0 \langle \xi \rangle^{\rho}}\right) \chi \left( \frac{\zeta}{\epsilon_1 \langle \xi \rangle}\right), \\
&\chi_2 (\xi, z, \zeta ) = \left\{1- \chi \left( \frac{z}{\epsilon_0 \langle \xi \rangle^{\rho}}\right) \right\}\chi \left( \frac{\zeta}{\epsilon_1 \langle \xi \rangle}\right), \\
&\chi_3 (\xi, z, \zeta ) = 1- \chi \left( \frac{\zeta}{\epsilon_1 \langle \xi \rangle}\right)
\end{align*}
and
\begin{align*}
\tilde{I}_j = (2\pi)^{-d} \int_{\R^{2d}} e^{iz\zeta} \chi_j (\xi, z, \zeta) c(x+z/2, \xi) \log \left\langle \sqrt{2} (\xi + \zeta)\right\rangle dzd\zeta.
\end{align*}
Then we can estimate $\tilde{I}_1$ by the stationary phase theorem and both $\tilde{I}_2$ and $\tilde{I}_3$ are estimated by using integration by parts as above. Since the symbol of $\log \langle \sqrt{2} D_x \rangle f^w (x, D_x)$ are similarly estimated, we have
\begin{align*}
|\partial^{\alpha} _x \partial^{\beta} _{\xi} \sigma (f^w (x, D_x) \log \langle \sqrt{2} D_x \rangle - \log \langle \sqrt{2} D_x \rangle f^w (x, D_x) )| \lesssim \langle \xi \rangle^{-1}
\end{align*}
for any $\alpha$ and $\beta \in \N^d _0$, where $\sigma (A)$ denotes the symbol of $A$. This implies  that 
\begin{align*}
[f^w (x, D_x), \log \langle \sqrt{2} D_x \rangle] \in OPS(\langle \xi \rangle^{-1}, g_0)
\end{align*}
holds and hence we are done.
\end{proof}
Now we are ready to prove Theorem \ref{2405200111}. Our argument is based on \cite{BM}, \cite{MY1} and \cite{M5} (see also \cite{ABG}).
\begin{proof}[Proof of Theorem \ref{2405200111}]
We set $M_1 = [H, iA]$, $M_2 = [[H, iA], iA]$ and $S=[H_0, iA]^{\frac{1}{2}}$. Note that $M_1, M_2$ and $S$ are bounded operators on $L^2 (\R^d)$ by Lemma \ref{2405111618}.

(Step1) For $(\epsilon, z) \in (0, 1) \times \C^{+} \cup (-1, 0) \times \C^{-}$, by using the Mourre inequality $M_1 \gtrsim S^2$, we have
\begin{align*}
\mp \im (H-z-i\epsilon M_1) = \pm \im z \pm \epsilon M_1 \ge \pm \im z
\end{align*}
where $\mp$ corresponds to $\epsilon \in (0, 1) \cup (-1, 0)$. This yields 
\begin{align*}
|\im z| \|u\|^2 _2 \lesssim \langle \mp \im (H-z-i\epsilon M_1)u, u \rangle \lesssim \|(H-z-i\epsilon M_1)u\|_2 \|u\|_2
\end{align*}
and we obtain $|\im z| \|u\|_2 \lesssim \|(H-z-i\epsilon M_1)u\|_2$ for all $u \in D(H)$. Similarly we have $\|(H-z-i\epsilon M_1)^* u\|_2 \gtrsim |\im z| \|u\|_2$. Since $(H-z-i\epsilon M_1)^*$ is densely defined and $L^2 (\R^d) = \Ker (H-z-i\epsilon M_1)^* \oplus \overline{\im (H-z-i\epsilon M_1)}$ holds, $\im (H-z-i\epsilon M_1)$ is dense in $L^2 (\R^d)$. Therefore $R_{\epsilon} (z):= (H-z-i\epsilon M_1)^{-1} : L^2 (\R^d) \to D(H)$ exists and $\|R_{\epsilon} (z)\|_{\mathcal{B} (L^2 (\R^d))} \lesssim |\im z|^{-1}$ holds.

(Step 2) We set $W=S(A+ik)^{-1}$ and $F_{\epsilon} (z) = WR_{\epsilon} (z) W^*$ for sufficiently large $k >0$. Then we have
\begin{align*}
\|Se^{i\lambda A} u\|^2 _2 &= \|Su\|^2 _2 + \int_{0}^{\lambda} \frac{d}{d\mu} \langle S^2 e^{i\mu A}u, e^{i\mu A}u \rangle d\mu \\
& = \|Su\|^2 _2 + \int_{0}^{\lambda} \langle [S^2, iA] e^{i\mu A}u, e^{i\mu A}u \rangle d \mu \\
& = \|Su\|^2 _2 + \int_{0}^{\lambda} \langle [[H_0, iA], iA] e^{i\mu A}u, e^{i\mu A}u \rangle d \mu \\
& \le \|Su\|^2 _2 + c \int_{0}^{\lambda} \|Se^{i\mu A} u\|^2 _2 d\mu
\end{align*}
for some $c>0$ and all $u \in \mathcal{S} (\R^d)$, where we have used $\langle [[H_0, iA], iA]u, u\rangle \le c \langle [H_0, iA]u, u \rangle$, which is found in the proof of Lemma \ref{2405111618}. By the Gronwall inequality and density argument, $\|Se^{i\lambda A} u\|_2 \le e^{\frac{\lambda c}{2}} \|Su\|_2$ holds for all $\lambda >0$ and $u \in L^2 (\R^d)$. This yields
\begin{align*}
\|S(A+ik)^{-1} u\|_2 \le \int_{0}^{\infty} \|Se^{i\lambda (A+ik)} u\|_2 d\lambda \le \int_{0}^{\infty} e^{(-k+\frac{c}{2})\lambda} \|Su\|_2 d\lambda \lesssim \|Su\|_2
\end{align*}
if $k> \frac{c}{2}$. By the definition of $F_{\epsilon} (z)$, we obtain $\|F_{\epsilon} (z)\|_{\mathcal{B} (L^2 (\R^d))} \lesssim |\im z|^{-1}$. Now we show
\begin{align}
\partial_{\epsilon} F_{\epsilon} (z) = 2kiF_{\epsilon} (z) -SR_{\epsilon} (z) W^* -WR_{\epsilon} (z) S- \epsilon WR_{\epsilon} (z) M_2 R_{\epsilon} (z)W^*. \label{2405211724}
\end{align}
Note that $R_{\epsilon} (z) (D(A)) \subset D(A)$ holds. This inclusion is important in the Mourre theory and proofs may be well-known. However we cannot find a reference completely including our setting and we give a proof in Appendix \ref{202405222307}.
Hence we obtain
\begin{align*}
\frac{d}{d\lambda} \langle We^{-i\lambda A} R_{\epsilon} (z) e^{i\lambda A} W^* u, v \rangle \Biggm\vert_{\lambda =0} &= \langle [R_{\epsilon} (z), iA]W^* u, W^* v \rangle \\
& = \langle R_{\epsilon} (z)[iA, H-z-i\epsilon M_1] R_{\epsilon} (z) W^* u, W^* v\rangle \\
& = \langle WR_{\epsilon} (z) (-M_1 +i\epsilon M_2)R_{\epsilon} (z) W^* u, v \rangle
\end{align*}
for all $u, v \in \mathcal{S} (\R^d)$. On the other hand $\frac{d}{d\lambda} e^{i\lambda A} W^* u \vert_{\lambda =0}= iAW^* u = (iS-kW^*)u$ yields
\begin{align*}
\frac{d}{d\lambda} \langle R_{\epsilon} (z) e^{i\lambda A} W^* u, e^{i\lambda A} W^* v \rangle \Biggm\vert_{\lambda =0} &= \langle R_{\epsilon} (z) (iS-kW^*)u, W^* v \rangle + \langle R_{\epsilon} (z) W^* u, (iS-kW^*)v \rangle \\
& = -2k \langle F_{\epsilon} (z)u, v \rangle +i \langle WR_{\epsilon} (z)Su, v \rangle -i \langle SR_{\epsilon} (z)W^* u, v \rangle
\end{align*} 
since $S$ is self-adjoint. Therefore
\begin{align*}
WR_{\epsilon} (z) (-M_1 +i\epsilon M_2)R_{\epsilon} (z) W^* = -2kF_{\epsilon} (z) + iWR_{\epsilon} (z)S-iSR_{\epsilon} (z)W^*
\end{align*}
holds. By using $\partial_{\epsilon} F_{\epsilon} (z) =W\partial_{\epsilon} R_{\epsilon} (z)W^* = iWR_{\epsilon} (z) M_1 R_{\epsilon} (z) W^*$ we obtain (\ref{2405211724}).

(Step 3) By Step 2, for $(\epsilon, z) \in (0, 1) \times \C^{+}$, we obtain
\begin{align*}
\|F_{\epsilon} (z) u\|^2 _2 \lesssim \|SR_{\epsilon} (z)W^* u\|^2 _2 &= \langle S^2 R_{\epsilon} (z) W^* u, R_{\epsilon} (z) W^* u\rangle \\
& \lesssim \langle M_1 R_{\epsilon} (z) W^* u, R_{\epsilon} (z) W^* u \rangle \\
& \le \frac{1}{\epsilon} \{ \epsilon \langle M_1 R_{\epsilon} (z) W^* u, R_{\epsilon} (z) W^* u \rangle + \im z \|R_{\epsilon} (z)W^* u\|^2 _2 \} \\
& = - \frac{1}{\epsilon} \im \langle (H-z-i\epsilon M_1)R_{\epsilon} (z)W^* u, R_{\epsilon} (z)W^* u \rangle \\
& = - \frac{1}{\epsilon} \im \langle u, F_{\epsilon} (z)u \rangle \lesssim \frac{1}{|\epsilon|} \|u\|_2 \|F_{\epsilon} (z)u\|_2
\end{align*}
This yields $\|F_{\epsilon} (z)\|_{\mathcal{B} (L^2 (\R^d))} \lesssim \frac{1}{|\epsilon|}$ and $\|SR_{\epsilon} (z) W^* \|_{\mathcal{B} (L^2 (\R^d))} \lesssim \frac{1}{|\epsilon|^{1/2}} \|F_{\epsilon} (z)\|^{1/2} _{\mathcal{B} (L^2 (\R^d))} $. By a similar calculation, this is true for $(\epsilon, z) \in (-1, 0) \times \C^{-}$. Then
\begin{align*}
\|WR_{\epsilon} (z) S\|_{\mathcal{B} (L^2 (\R^d))} = \|SR_{-\epsilon} (\bar{z}) W^*\|_{\mathcal{B} (L^2 (\R^d))} \lesssim \frac{1}{|\epsilon|^{1/2}} \|F_{-\epsilon} (\bar{z})\|^{1/2} _{\mathcal{B} (L^2 (\R^d))} &= \frac{1}{|\epsilon|^{1/2}} \|F_{\epsilon} (z)^*\|^{1/2} _{\mathcal{B} (L^2 (\R^d))} \\
& = \frac{1}{|\epsilon|^{1/2}} \|F_{\epsilon} (z)\|^{1/2} _{\mathcal{B} (L^2 (\R^d))} 
\end{align*}
holds for $(\epsilon, z) \in (0, 1) \times \C^{+}$. By these estimates, we have
\begin{align*}
|\langle WR_{\epsilon} (z) M_2 R_{\epsilon} (z) W^* u, u \rangle| &= |\langle M_2 R_{\epsilon} (z) W^* u, R_{\epsilon} (z)^* W^* u \rangle| \\
& \lesssim \|SR_{\epsilon} (z) W^* u\|_2 \|SR_{\epsilon} (z)^* W^* u\|_2 \lesssim \frac{1}{|\epsilon|} \|F_{\epsilon} (z)\|_{\mathcal{B} (L^2 (\R^d))}  \|u\|_2 \|v\|_2
\end{align*}
where we have used $| \langle M_2 u, u \rangle | \lesssim \langle [H_0, iA]u, u\rangle$. Hence by (\ref{2405211724})
\begin{align*}
\|\partial_{\epsilon} F_{\epsilon} (z)\|_{\mathcal{B} (L^2 (\R^d))} \lesssim \|F_{\epsilon} (z)\|_{\mathcal{B} (L^2 (\R^d))} + \frac{1}{\epsilon ^{1/2}} \|F_{\epsilon} (z)\|^{1/2} _{\mathcal{B} (L^2 (\R^d))} 
\end{align*}
holds for $(\epsilon, z) \in (0, 1) \times \C^{+}$. By integrating in $(\epsilon, 1)$ we obtain
\begin{align*}
\|F_{\epsilon} (z)\|_{\mathcal{B} (L^2 (\R^d))} &\lesssim \|F_{1} (z)\|_{\mathcal{B} (L^2 (\R^d))} + \int_{\epsilon}^{1} \left( \|F_{t} (z)\|_{\mathcal{B} (L^2 (\R^d))} + t^{-1/2} \|F_{t} (z)\|^{1/2} _{\mathcal{B} (L^2 (\R^d))} \right)dt \\
& \lesssim 1 + \int_{\epsilon}^{1} \frac{1}{t} dt \lesssim 1- \log \epsilon.
\end{align*}
By using this inequality and again integrating in $(\epsilon, 1)$ we obtain $\|F_{\epsilon} (z)\|_{\mathcal{B} (L^2 (\R^d))} \lesssim 1$.

(Step 4) By Step 3, we have $\|S(A+ik)^{-1} R_{\epsilon} (z) (A-ik)^{-1} S\|_{\mathcal{B} (L^2 (\R^d))} \lesssim 1$. By taking $\epsilon \to 0$ we obtain $\|S(A+ik)^{-1} (H-z)^{-1} (A-ik)^{-1} S\|_{\mathcal{B} (L^2 (\R^d))} \lesssim 1$. We transform
\begin{align*}
&\langle \log \langle x \rangle \rangle^{-1} (H-z)^{-1} \langle \log \langle x \rangle \rangle^{-1} \\
&= \langle \log \langle x \rangle \rangle^{-1} (A+ik) S^{-1} \cdot S(A+ik)^{-1} (H-z)^{-1} (A-ik)^{-1} S \cdot S^{-1} (A-ik) \langle \log \langle x \rangle \rangle^{-1}.
\end{align*}
We recall that $\|u\|^2 _2 \lesssim \left\langle \left(\frac{4x^2}{\langle \sqrt{2} x \rangle^2} + \frac{4D^2 _x}{\langle \sqrt{2} D_x \rangle^2}\right)u, u \right\rangle$ and $S^2 = [H_0, iA] = U\left(\frac{4x^2}{\langle \sqrt{2} x \rangle^2} + \frac{4D^2 _x}{\langle \sqrt{2} D_x \rangle^2}\right)U^*$ yields $\|u\|_2 \lesssim \|Su\|_2$ and hence $S^{-1}$ is bounded on $L^2 (\R^d)$. Since $A \in OPS(\langle \log \langle x \rangle \rangle, g_0)$,
\begin{align*}
&\|\langle \log \langle x \rangle \rangle^{-1} (A+ik) S^{-1} \|_{\mathcal{B} (L^2 (\R^d))} \lesssim \|\langle \log \langle x \rangle \rangle^{-1} (A+ik) \|_{\mathcal{B} (L^2 (\R^d))} \lesssim 1, \\
& \|S^{-1} (A-ik) \langle \log \langle x \rangle \rangle^{-1}\|_{\mathcal{B} (L^2 (\R^d))} \lesssim \|(A-ik) \langle \log \langle x \rangle \rangle^{-1}\|_{\mathcal{B} (L^2 (\R^d))} \lesssim 1
\end{align*}
holds. Therefore we obtain the desired estimate.
\end{proof}
\begin{remark}\label{2405230157}
By the ordinary way it is possible to prove that 
\begin{align*}
\C^{\pm} \ni z \mapsto \langle \log \langle x \rangle \rangle^{-\rho} (H-z)^{-1} \langle \log \langle x \rangle \rangle^{-\rho} \in \mathcal{B} (L^2 (\R^d))
\end{align*}
is globally H\"older continuous for any $\rho >1$. However we omit its proof since we do not use this result in the remaining part of this paper.
\end{remark}
\subsection{\textbf{Orthonormal Strichartz estimate for the repulsive Hamiltonian}}\label{2405211228}
In this subsection we observe the orthonormal Strichartz estimate for $H$ as in Subsection \ref{2405091758}. For $\phi \in \mathcal{S} (\R^d)$, by the Mehler formula, 
\begin{align*}
e^{-itH_0} \phi (x) = \frac{1}{(2\pi i \sinh 2t)^{d/2}} \int_{\R^d} e^{i\{(x^2 +y^2) \cosh 2t -2xy\} /2\sinh 2t} \phi (y)dy
\end{align*}
holds. By this formula, it is proved in \cite{KaYo} that $\|e^{-itH_0} u\|_{\infty} \lesssim |t|^{-k} \|u\|_1$ holds for any $k \ge \frac{d}{2}$. Then by Theorem \ref{2405021538}, for any $(q, r, k, \beta)$ satisfying $\frac{2}{q} = 2k\left( \frac{1}{2} - \frac{1}{r} \right), (q, r, k) \ne (2, \infty, 1), k \ge \frac{d}{2}$ and assumptions in Theorem \ref{2405021538}, we obtain
\begin{align}
\left\| \sum_{j=0}^{\infty} \nu_j |e^{-itH_0} f_j|^2 \right\|_{L^{\frac{q}{2}} _t  L^{\frac{r}{2}} _x} \lesssim \|\nu\|_{l^{\beta}}. \label{2405231303}
\end{align}
Furthermore, by the Keel-Tao theorem (\cite{KT}), we have
\begin{align*}
&\|e^{-itH_0} u\|_{L^q _t L^{r, 2} _x} \lesssim \|u_0\|_2, \\
&\left\| \int_{0}^{t} e^{-i(t-s)H_0} F(s) ds \right\|_{L^q _t L^{r, 2} _x} \lesssim \|F\|_{L^{\tilde{q}'} _t L^{\tilde{r}', 2} _x}
\end{align*}
for any $k$-admissible pair $(q, r)$ and $(\tilde{q}, \tilde{r})$. First we give a proof of Theorem \ref{2405231314} based on the perturbation argument proved in \cite{H1}.
\begin{proof}[Proof of Theorem \ref{2405231314}]
We split $H=H_0 +V = H_0 +v_1 ^* v_2$, where $v_1 = |V|^{1/2}$ and $v_2 = |V|^{1/2} \sgn V$. Since $|v_1 (x)| \lesssim \langle \log \langle x \rangle \rangle^{-1}$ and $|v_2 (x)| \lesssim \langle \log \langle x \rangle \rangle^{-1}$ hold, by Theorem \ref{2405200111}, $v_1$ is $H_0$-smooth and $v_2$ is $H$-smooth in the sense of Kato (see \cite{KY}, \cite{KatoYajima}). Then we can adopt Theorem 2.3 in \cite{H1} with (\ref{2405231303}) and obtain the desired estimates.
\end{proof}
As a corollary of the Strichartz estimates, we consider the $L^p$-mapping property of resolvents called the uniform Sobolev estimates. 
\begin{lemma}\label{2406200852}
Suppose $r \in (2, \infty)$ if $d=1$ or $2$ and $r \in (2, \frac{2d}{d-2})$ if $d \ge 3$. Then 
\begin{align*}
\|(H_0 -z)^{-1} f\|_{r, 2} \lesssim \|f\|_{r', 2}
\end{align*}
holds for any $f \in \mathcal{S}$ and $z \in \C^{\pm}$.
\end{lemma}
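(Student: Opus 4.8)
The plan is to derive the uniform Sobolev estimate for $H_0 = -\Delta - x^2$ from the Strichartz estimates for $e^{-itH_0}$ via the Kato--Yajima/Duyckaerts argument, exactly as sketched in the last bullet of Remark \ref{2405080047} for the Stark Hamiltonian. Concretely, for a $k$-admissible pair $(q,r)$ with $k \ge d/2$, I have from the Keel--Tao theorem (see Subsection \ref{2405211228}) the homogeneous and inhomogeneous Strichartz estimates with $L^{r,2}_x$ on the spatial side:
\begin{align*}
\|e^{-itH_0} u\|_{L^q_t L^{r,2}_x} \lesssim \|u\|_2, \qquad \left\| \int_0^t e^{-i(t-s)H_0} F(s)\, ds \right\|_{L^q_t L^{r,2}_x} \lesssim \|F\|_{L^{q'}_t L^{r',2}_x},
\end{align*}
where for the free particle $2 = \frac{2k}{k+1}$ forces a specific $(q,r)$; since $H_0$ enjoys the full range of dispersive estimates $\|e^{-itH_0}\|_{1\to\infty}\lesssim |t|^{-k}$ for every $k\ge d/2$, I can in fact take any $k\in[d/2,\infty)$ and obtain the double-endpoint inhomogeneous estimate with $q = \tilde q = 2$, i.e. $r = \frac{2k}{k-1}$. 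Choosing $k$ appropriately, $r$ ranges over $(2,\infty)$ when $d=1,2$ and over $(2, \frac{2d}{d-2})$ when $d\ge 3$, which is precisely the stated range.

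Next I would run the Duyckaerts argument. Writing $R_0(z) = (H_0-z)^{-1}$ for $z\in\C^+$, one represents the resolvent through the half-line propagator: for $\im z >0$,
\begin{align*}
R_0(z) f = i \int_0^\infty e^{i t z} e^{-itH_0} f\, dt
\end{align*}
(with the analogous formula using $\int_{-\infty}^0$ for $z\in\C^-$). For $F$ supported on $t\ge 0$ one then has the identity relating the space-time operator $F \mapsto \int_0^t e^{-i(t-s)H_0}F(s)\,ds$ to $R_0(z)$ by taking $F(s) = e^{-isz}g$; more precisely, following Proposition 5.1 in \cite{BM} (proved there for $-\Delta$ but whose argument is abstract, using only the two Strichartz bounds above and unitarity of $e^{-itH_0}$), the inhomogeneous estimate with $q=\tilde q = 2$ and spatial exponents $(r,2)$, $(r',2)$ self-improves to the frequency-uniform bound $\|R_0(z) f\|_{L^{r,2}_x} \lesssim \|f\|_{L^{r',2}_x}$ with constant independent of $z$. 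The key point making this work is that the inhomogeneous Strichartz estimate at the time-endpoint $q=\tilde q=2$ is available, which for $H_0$ is guaranteed by the arbitrarily fast dispersive decay; this is the step I would flag as the one requiring care, since one must verify that the abstract mechanism in \cite{BM} does not secretly use the explicit kernel of $e^{it\Delta}$ — it does not, it only uses the $TT^*$ structure and the duality $L^{q'}_t \to L^q_t$ together with a Christ--Kiselev-type or direct truncation argument to pass from the retarded operator to the multiplication-by-$e^{-isz}$ substitution.

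Alternatively, and perhaps more cleanly, I would bypass the dynamical argument entirely: conjugate by the unitary FIO $U$ of Subsection \ref{2405081322}, under which $H_0 = U(xD_x + D_x x - 1)U^*$, so that $R_0(z)$ is unitarily equivalent to the resolvent of the generator of dilations (shifted), whose integral kernel is explicit; the $L^{r',2}\to L^{r,2}$ bound then follows from a direct Schur/interpolation test on that kernel. I expect the dynamical route to be the intended one given the emphasis on Strichartz estimates, so I would present that, citing \cite{BM} for the abstract Duyckaerts lemma and Subsection \ref{2405211228} for the required Strichartz input, and only remark on the FIO approach as an alternative. The main obstacle is therefore not any deep new estimate but rather correctly invoking the endpoint inhomogeneous Strichartz estimate over the full needed range of $r$ and checking that the constant in the Duyckaerts argument is genuinely $z$-independent; once those are in place the conclusion for $z\in\C^-$ follows by taking complex conjugates.
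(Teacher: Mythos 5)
Your proposal follows the same route as the paper: the Duyckaerts argument of \cite{BM}, fed by the $q=\tilde q=2$ Strichartz estimates for $H_0$ which are available for every $r$ in the stated range thanks to the dispersive decay $\|e^{-itH_0}\|_{1\to\infty}\lesssim|t|^{-k}$, $k\geq d/2$. The paper's implementation is slightly more direct than your half-line-propagator/Christ--Kiselev version — it applies Strichartz on $[-T,T]$ to the specific function $u(t)=e^{izt}(H_0-z)^{-1}f$, divides through by $\|e^{izt}\|_{L^2([-T,T])}\geq\sqrt{T}$, and lets $T\to\infty$ to kill the $L^2$ term — so the truncation step you flag as delicate does not actually arise.
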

\begin{proof}
We mimick the argument due to Duyckaerts (see \cite{BM}). We set $u(t)=e^{izt} (H_0 -z)^{-1} f$ for $f \in \mathcal{S}$. Since $u$ satisfies
\begin{align*}
\left\{
\begin{array}{l}
i\partial_{t} u= -H_0 u + e^{izt} f \\
u(0)=(H_0 -z)^{-1} f,
\end{array}
\right.
\end{align*} 
by the Strichartz estimate, we obtain
\begin{align*}
\|u\|_{L^2 ([-T, T]; L^{r, 2} _x)} \lesssim \|(H_0 -z)^{-1} f\|_2 + \|e^{izt} f\|_{L^2 ([-T, T]; L^{r', 2} _x)}
\end{align*}
where the implicit constant is independent of $T >0$. By the definition of $u$, 
\begin{align*}
\|(H_0 -z)^{-1} f\|_{r, 2} \lesssim \|f\|_{r', 2} + \frac{1}{\|e^{izt}\|_{L^2 ([-T, T])}} \|(H_0 -z)^{-1} f\|_2
\end{align*}
holds. By using $\|e^{izt}\|_{L^2 ([-T, T])} \ge \sqrt{T}$ and taking $T \to \infty$, we have the desired result.
\end{proof}
We extend the above lemma to $H$. The proof is based on the perturbation method in \cite{BM} and our Kato-Yajima type estimates.
\begin{thm}\label{2406201017}
Let $r$ be as in Lemma \ref{2406200852} and $V$ be as in Theorem \ref{2405231314}. If $V \in L^{\frac{r}{r-2}, \infty}$ holds, we obtain
\begin{align}
\|(H -z)^{-1} f\|_{r, 2} \lesssim \|f\|_{r', 2} \label{2406201112}
\end{align}
for any $f \in \mathcal{S}$ and $z \in \C^{\pm}$. 
\end{thm}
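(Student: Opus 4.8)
The plan is to run the standard two-step resolvent perturbation argument of \cite{BM}, now driven by the Kato-Yajima estimate of Theorem \ref{2405200111} for $H$ itself rather than for $H_0$. Write $V = v_1 v_2$ with $v_1 = |V|^{1/2}$ and $v_2 = |V|^{1/2}\sgn V$; both are bounded multiplication operators (since $|V|\lesssim 1$) and satisfy $|v_i(x)|\lesssim\langle\log\langle x\rangle\rangle^{-1}$ by the hypothesis $|V(x)|\lesssim\langle\log\langle x\rangle\rangle^{-2}$ carried over from Theorem \ref{2405231314}. Because $V\in L^{\frac{r}{r-2},\infty}$, we have $v_1, v_2\in L^{\frac{2r}{r-2},\infty}$, so by H\"older's inequality in Lorentz spaces multiplication by $v_1$ maps $L^2$ boundedly into $L^{r',2}$ and multiplication by $v_2$ maps $L^{r,2}$ boundedly into $L^2$, with norms independent of $z$ (the relevant exponent identities being $\tfrac1{r'}=\tfrac12+\tfrac{r-2}{2r}$ and $\tfrac12=\tfrac1r+\tfrac{r-2}{2r}$). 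Fix $f\in\mathcal S$ and $z\in\C^\pm$; note $f\in L^{r',2}$ since $r'<2$.

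First I would use the resolvent identity in the form $(H-z)^{-1}f = (H_0-z)^{-1}f - (H_0-z)^{-1}v_1\bigl(v_2(H-z)^{-1}f\bigr)$, which is legitimate in $L^2$ since $V$ is bounded. Measuring in $L^{r,2}$, Lemma \ref{2406200852} bounds the first term by $\|f\|_{r',2}$, while $(H_0-z)^{-1}v_1$ is bounded from $L^2$ to $L^{r,2}$ uniformly in $z$ (factor it as $v_1:L^2\to L^{r',2}$ followed by Lemma \ref{2406200852}), so the second term is $\lesssim\|v_2(H-z)^{-1}f\|_2$. Hence everything reduces to the uniform estimate $\|v_2(H-z)^{-1}f\|_2\lesssim\|f\|_{r',2}$.

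For this I would multiply the resolvent identity in the complementary form $(H-z)^{-1} = (H_0-z)^{-1} - (H-z)^{-1}V(H_0-z)^{-1}$ on the left by $v_2$, obtaining $v_2(H-z)^{-1} = \bigl(I - v_2(H-z)^{-1}v_1\bigr)v_2(H_0-z)^{-1}$; in this way no operator has to be inverted, and in particular the usual Fredholm-type step for $I + v_2(H_0-z)^{-1}v_1$ is replaced by the algebraic identity $\bigl(I + v_2(H_0-z)^{-1}v_1\bigr)^{-1} = I - v_2(H-z)^{-1}v_1$. Here $v_2(H_0-z)^{-1}:L^{r',2}\to L^2$ is bounded uniformly in $z$ (Lemma \ref{2406200852} followed by $v_2:L^{r,2}\to L^2$), while $v_2(H-z)^{-1}v_1$ is bounded on $L^2$ uniformly in $z$ because $|v_i(x)|\lesssim\langle\log\langle x\rangle\rangle^{-1}$ lets us absorb the weights into Theorem \ref{2405200111}: $\|v_2(H-z)^{-1}v_1\|_{2\to 2}\lesssim\|\langle\log\langle x\rangle\rangle^{-1}(H-z)^{-1}\langle\log\langle x\rangle\rangle^{-1}\|_{2\to 2}\lesssim 1$. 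Combining these gives $\|v_2(H-z)^{-1}f\|_2\lesssim\|f\|_{r',2}$, and substituting back into the first step yields (\ref{2406201112}).

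There is no serious obstacle here beyond bookkeeping; the point that genuinely matters is that the Kato-Yajima estimate be available for $H$, not merely for $H_0$, which is exactly what Theorem \ref{2405200111} supplies under the hypotheses inherited from Theorem \ref{2405231314} (namely $V\in\tilde{S}^0(\R^d)$ and $\langle[H_0,iA]u,u\rangle\lesssim\langle[H,iA]u,u\rangle$). Everything else --- the elementary Lorentz-space H\"older exponents, the density extension of Lemma \ref{2406200852} from $\mathcal S$ to $L^{r',2}\cap L^2$, and the fact that all operator products occurring above are well defined when applied to $f\in\mathcal S$ (each composition meets a bounded operator on $L^2$ at the innermost step) --- is routine and can be dispatched quickly.
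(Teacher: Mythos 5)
Your argument is correct and is essentially the same as the paper's: both proofs split $V=v_1v_2$ with $v_1=|V|^{1/2}$, $v_2=|V|^{1/2}\sgn V$, use the second resolvent identity twice together with Lemma \ref{2406200852} and Lorentz–H\"older to reduce everything to the uniform bound $\|v_2(H-z)^{-1}v_1\|_{2\to2}\lesssim1$, which comes from Theorem \ref{2405200111} via $|v_i(x)|\lesssim\langle\log\langle x\rangle\rangle^{-1}$. The only difference is cosmetic — you phrase the second step as the operator identity $v_2(H-z)^{-1}=(I-v_2(H-z)^{-1}v_1)v_2(H_0-z)^{-1}$, whereas the paper writes the equivalent resolvent-identity manipulation at the level of sesquilinear forms $\langle|V|^{1/2}(H-z)^{-1}f,g\rangle$ and closes by duality.
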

\begin{proof}
By the resolvent identity and Cauchy-Schwarz inequality,
\begin{align*}
|\langle (H-z)^{-1} f, g \rangle| \lesssim |\langle (H_0 -z)^{-1} f, g \rangle| + \||V|^{\frac{1}{2}} (H-z)^{-1} f\|_2 \||V|^{\frac{1}{2}} (H_0 -z)^{-1} g\|_2 
\end{align*}
holds for all $f, g \in \mathcal{S}$. By Theorem \ref{2405200111}, $|V(x)| \lesssim \langle \log \langle x \rangle \rangle^{-2}$ and resolvent identity,
\begin{align*}
|\langle |V|^{\frac{1}{2}} (H-z)^{-1} f, g \rangle | &\le |\langle |V|^{\frac{1}{2}} (H_0 -z)^{-1} f, g \rangle | + |\langle |V|^{\frac{1}{2}} (H_0 -z)^{-1} f, |V|^{\frac{1}{2}} (H-z)^{-1} |V|^{\frac{1}{2}} g \rangle | \\
& \lesssim \|g\|_2 \||V|^{\frac{1}{2}} (H_0 -z)^{-1} f\|_2
\end{align*}
holds. This yields $ \||V|^{\frac{1}{2}} (H-z)^{-1} f\|_2 \lesssim  \||V|^{\frac{1}{2}} (H_0 -z)^{-1} f\|_2$. Then H\"older's inequality and Lemma \ref{2406200852} implies
\begin{align*}
\||V|^{\frac{1}{2}} (H_0 -z)^{-1} f\|_2 \lesssim \|(H_0 -z)^{-1} f\|_{r, 2} \lesssim \|f\|_{r', 2}.
\end{align*}
By duality and Lemma \ref{2406200852}, we obtain (\ref{2406201112}).
\end{proof}
Next we prove Theorem \ref{2406161602}. The following lemma implies the local regularity of $(H-z)^{-1} f$ for $f \in \mathcal{S}$. We set $p_0 (x, \xi) = \xi^{2} -x^2$, $p(x, \xi) =p_0 (x, \xi) +V(x)$ and $P = p (x, D) = p^{w} (x, D)$.
\begin{lemma}[Local regularity]\label{2406161703}
Assume $V \in C^{\infty} (\R^d; \R)$ satisfies $|\partial^{\alpha} _x V(x)| \lesssim \langle x \rangle^{2-\epsilon - |\alpha|}$ for some $\epsilon >0$ and any $\alpha \in \N^{d} _{0}$. If $(-\Delta -x^2 +V -z)u =f \in \mathcal{S}$ holds for some $z \in \C^{+}$ and $u \in L^2 (\R^d)$, then we have $u \in C^{\infty} (\R^d)$.
\end{lemma}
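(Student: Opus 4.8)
The plan is to read this as a statement of interior elliptic regularity and nothing more. The operator $P=-\Delta-x^2+V-z$ is a second order differential operator with $C^\infty$ coefficients, and its principal symbol is $|\xi|^2$, which does not vanish for $\xi\ne 0$; thus $P$ is elliptic (the terms $-x^2$, $V$ and $-z$ are all of order zero, so they do not affect the principal part). Consequently $P$ is hypoelliptic, and $\mathrm{sing\,supp}\,u\subseteq\mathrm{sing\,supp}\,Pu=\mathrm{sing\,supp}\,f=\emptyset$, which is exactly $u\in C^\infty(\R^d)$. Note that for this local conclusion one only needs $V\in C^\infty(\R^d)$; the polynomial bounds on $\partial^\alpha_x V$, as well as the hypotheses $z\in\C^{+}$ and $u\in L^2$, are not used here — they are there only to match the situation in which the lemma will be applied.

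To make this concrete I would run a Sobolev bootstrap. Rewriting the equation as $-\Delta u=f+(x^2-V+z)u$ and using that multiplication by the smooth function $x^2-V+z$ preserves $H^s_{\mathrm{loc}}(\R^d)$ for every $s$, one starts from $u\in L^2(\R^d)=H^0_{\mathrm{loc}}(\R^d)$: then the right-hand side lies in $H^0_{\mathrm{loc}}$, so the standard interior estimate for $-\Delta$ (multiply by a cutoff $\chi\in C^\infty_0$ and absorb the commutator $[\Delta,\chi]$, which is first order) gives $u\in H^2_{\mathrm{loc}}(\R^d)$; feeding this back in gives $u\in H^4_{\mathrm{loc}}(\R^d)$, and inductively $u\in H^{2k}_{\mathrm{loc}}(\R^d)$ for all $k\in\N$. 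Since $\bigcap_s H^s_{\mathrm{loc}}(\R^d)=C^\infty(\R^d)$ by Sobolev embedding, this yields $u\in C^\infty(\R^d)$. Equivalently, one can argue microlocally: as a classical pseudodifferential operator of order $2$ the operator $P$ has empty characteristic set, so microlocal elliptic regularity gives $\mathrm{WF}(u)\subseteq\mathrm{WF}(Pu)=\mathrm{WF}(f)=\emptyset$.

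I do not expect any genuine obstacle here. The only thing requiring a little care is the routine localization bookkeeping in the bootstrap — keeping everything in $L^2$-based local Sobolev spaces, choosing nested cutoffs, and checking that the lower-order terms produced by the commutators and by $(x^2-V+z)u$ are absorbed at each stage of the induction — but this is entirely standard and carries no hidden difficulty.
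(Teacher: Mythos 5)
Your proof is correct, and it is essentially the same argument as the paper's: both rest on the ellipticity of $P$ with respect to the ordinary principal symbol $|\xi|^2$, the lower-order terms $-x^2+V-z$ being uniformly bounded on any fixed compact set in $x$ and therefore harmless there. The paper makes this concrete by an explicit parametrix construction in scattering symbol classes, writing $\chi(x)\bar{\chi}_R(D)=\bigl(\sum_{j\le N}a_j(x,D)\bigr)(P-z)+r_N(x,D)$ with $r_N\in S(\langle\xi\rangle^{-4-N}\langle x\rangle^{-\infty},g)$ and letting $N\to\infty$, while your Sobolev bootstrap reaches the same conclusion more compactly; the explicit parametrix form in the paper is evidently there as a warm-up for the subsequent lemmas, where genuine decay gains (not just local regularity) must be tracked through the scattering calculus and the same bookkeeping machinery is reused. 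Your remark that the polynomial bounds on $\partial^\alpha_x V$, the sign of $\im z$, and the assumption $u\in L^2(\R^d)$ are not actually used for this purely local statement is correct: the paper's own proof begins by localizing with a cutoff $\chi(x)$ of fixed compact support, after which only $V\in C^\infty(\R^d)$ and $u\in\mathcal{S}'$ (so that $\bar{\chi}_R(D)u$ makes sense) play any role.
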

\begin{proof}
The proof is based on a standard parametrix construction. Let $\chi \in C^{\infty} _0 (\R^d; [0, 1])$ satisfy $\chi (x) =1$ if $|x| \le M$ and $\chi (x) =0$ if $|x| \ge 2M$ for some fixed $M>0$. We set $\bar{\chi} _{R} (x) = 1- \chi_{R} (x) = 1- \chi (x/R)$. Since $\chi (x)u = \chi (x) \chi_{R} (D)u + \chi (x) \bar{\chi}_{R} (D)u$ and $\chi (x) \chi_{R} (D)u \in \mathcal{S}$ hold, it suffices to show $\chi (x) \bar{\chi}_{R} (D)u \in \mathcal{S}$ for sufficiently large $R>0$. Now we define 
\begin{align*}
&a_0 (x, \xi) = \frac{\chi (x) \bar{\chi}_{R} (\xi)}{p(x, \xi) -z}, \quad a_1 (x, \xi) =i \frac{\nabla _{\xi} a_0 (x, \xi) \cdot (-2x + \nabla _x V(x))}{p(x, \xi) -z} \\
&a_j (x, \xi) =\frac{ \left\{ \sum_{|\alpha| =j} \frac{1}{i^{|\alpha|} \alpha !} \partial^{\alpha} _{\xi} a_0 (x, \xi) \partial^{\alpha} _x (-x^2 +V(x)) + \cdots + \sum_{|\alpha| =1} \frac{1}{i} \partial^{\alpha} _{\xi} a_{j-1} (x, \xi) \partial^{\alpha} _x (-x^2 +V(x))  \right\}}{-(p(x, \xi) -z)}
\end{align*}
for $j \ge 2$. Then by a support property of $\chi$, $a_0 \in S(\langle \xi \rangle^{-2} \langle x \rangle^{-\infty}, g)$ and $a_1 \in S(\langle \xi \rangle^{-5} \langle x \rangle^{-\infty}, g)$ holds, where $g = \frac{dx^2}{\langle x \rangle^2} + \frac{d\xi^2}{\langle \xi \rangle^2}$. Furthermore by an induction argument, we obtain $a_j \in S(\langle \xi \rangle^{-4-j} \langle x \rangle^{-\infty}, g)$ for $j \ge 2$. By our construction, 
\begin{align*}
\chi (x) \bar{\chi}_{R} (D) = \left( \sum_{j=0}^{N} a_j (x, D) \right) (P-z) + r_N (x, D)
\end{align*}
holds for some $r_N \in S(\langle \xi \rangle^{-4-N} \langle x \rangle^{-\infty}, g)$. Therefore
\begin{align*}
\chi (x) \bar{\chi}_{R} (D) u = \left( \sum_{j=0}^{N} a_j (x, D) \right)f + r_N (x, D) (P-z)^{-1} f \in \bigcap_{N \in \N} H^{4+N, \infty} = \mathcal{S}
\end{align*}
holds and we have the desired result.
\end{proof}
The next lemma implies that $(P-z)^{-1} f \in \mathcal{S}$ holds outside the characteristic set of $p_0$.
\begin{lemma}\label{2406161842}
Let $P$ and $z$ be as in Lemma \ref{2406161703} and we assume $(P-z)u =f \in \mathcal{S}$ for some $u \in L^2 (\R^d)$. Then $\chi (x)u, \chi (D)u \in \mathcal{S}$, where $\chi$ is as in the proof of Lemma \ref{2406161703}. Furthermore, for $a \in S(1, g)$ supported in $\Omega (R, \gamma, 1) \cup \Omega (R, \gamma, 2) := \{(x, \xi) \in \R^{2d} \mid |x| >R, |\xi| >R, |\xi|>\gamma |x|\} \cup \{(x, \xi) \in \R^{2d} \mid |x| >R, |\xi| >R, |x|>\gamma |\xi|\}$ for some $\gamma >1$ and sufficiently large $R>0$, we have $a(x, D)u \in \mathcal{S}$.
\end{lemma}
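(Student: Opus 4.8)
The plan is to reduce the lemma to a single \emph{elliptic parametrix construction}, handling the two localized statements with Lemma \ref{2406161703} and doing the actual work on the statement for $a(x,D)$. For $\chi(x)u$ there is nothing to do: $u\in C^{\infty}(\R^{d})$ by Lemma \ref{2406161703} and $\chi(x)u$ has compact support, so $\chi(x)u\in C^{\infty}_{0}(\R^{d})\subset\mathcal{S}$. For $\chi(D)u$ I would pass to the Fourier side: since $\widehat{\chi(D)u}=\chi(\xi)\widehat{u}(\xi)$ is compactly supported it suffices to show $\widehat{u}\in C^{\infty}(\R^{d})$, and $\widehat{u}$ solves $(\mathcal{F}P\mathcal{F}^{-1}-z)\widehat{u}=\widehat{f}\in\mathcal{S}$ with $\widehat{u}\in L^{2}$. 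But $\mathcal{F}(-\Delta-|x|^{2})\mathcal{F}^{-1}=|\xi|^{2}+\Delta_{\xi}=-(-\Delta_{\xi}-|\xi|^{2})$, while $\mathcal{F}V(x)\mathcal{F}^{-1}$ is a Fourier multiplier of order $\le 2-\epsilon$; hence the conjugated equation is again of repulsive type in the dual variable (up to a sign) with a lower-order perturbation, and the parametrix argument of Lemma \ref{2406161703}, which uses only that the potential is of lower order than the quadratic part, applies verbatim and yields $\widehat{u}\in C^{\infty}$. The substance of the lemma is therefore the claim $a(x,D)u\in\mathcal{S}$ for $a\in S(1,g)$ supported in $\Omega(R,\gamma,1)\cup\Omega(R,\gamma,2)$, with $g=\frac{dx^{2}}{\langle x\rangle^{2}}+\frac{d\xi^{2}}{\langle\xi\rangle^{2}}$.

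For this I would first check that $p-z$ is \emph{elliptic} on $\Omega(R,\gamma,j)$ once $R$ is large. On $\Omega(R,\gamma,1)$ one has $|\xi|>\gamma|x|$, so $\langle x\rangle\lesssim_{\gamma}\langle\xi\rangle$ and $\xi^{2}-x^{2}\ge(1-\gamma^{-2})\xi^{2}$, whence $V(x)=O(\langle x\rangle^{2-\epsilon})=O(\langle\xi\rangle^{2-\epsilon})$ and the fixed constant $z$ are of strictly lower order and
\[
|p(x,\xi)-z|\ \ge\ \tfrac12(1-\gamma^{-2})\,\xi^{2}\ \gtrsim\ \langle x\rangle^{2}+\langle\xi\rangle^{2}\qquad\text{on }\Omega(R,\gamma,1),
\]
and symmetrically, with $x$ and $\xi$ interchanged, $|p-z|\gtrsim\langle x\rangle^{2}+\langle\xi\rangle^{2}$ on $\Omega(R,\gamma,2)$. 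Since $p-z\in S(\langle x\rangle^{2}+\langle\xi\rangle^{2},g)$ (indeed $\partial_{x}p=-2x+\nabla V\in S(\langle x\rangle,g)$ and $\partial_{\xi}p=2\xi\in S(\langle\xi\rangle,g)$), on a neighborhood of $\supp a$ the function $(p-z)^{-1}$ is a symbol of class $S\big((\langle x\rangle^{2}+\langle\xi\rangle^{2})^{-1},g\big)$. I would then put $b_{0}=\psi\,a/(p-z)$ for a cutoff $\psi$ equal to $1$ on $\supp a$ and supported in the elliptic region, and run the standard iteration — at step $k+1$ dividing the Weyl-composition remainder of the previous stage by $p-z$ and multiplying by $\psi$ — to produce an asymptotic sum $b\sim\sum_{k\ge0}b_{k}$ with
\[
b(x,D)\,(P-z)=a(x,D)+r(x,D),\qquad r\in\bigcap_{N\in\N}S(\langle x\rangle^{-N}\langle\xi\rangle^{-N},g),
\]
so that $r(x,D)$ has Schwartz kernel. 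Feeding in $(P-z)u=f$ gives $a(x,D)u=b(x,D)f-r(x,D)u\in\mathcal{S}$, because $b$ has negative order so $b(x,D)f\in\mathcal{S}$ for $f\in\mathcal{S}$, and $r(x,D)u\in\mathcal{S}$ for $u\in L^{2}\subset\mathcal{S}'$.

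The delicate point — the only one — is conceptual rather than computational: $p-z=\xi^{2}-x^{2}+V(x)-z$ is not elliptic in any homogeneous sense (its characteristic variety $\{|\xi|=|x|\}$ is non-compact) and does not belong to a H\"ormander class with a single product weight $\langle x\rangle^{m_{1}}\langle\xi\rangle^{m_{2}}$, so one must work with the ``sum'' weight $\langle x\rangle^{2}+\langle\xi\rangle^{2}$ in the two-scale ($SG$/scattering) calculus and verify ellipticity, as well as the improvement at each step of the symbolic expansion, region by region. The mechanism that makes everything work is precisely that $\langle x\rangle\lesssim\langle\xi\rangle$ on $\Omega(R,\gamma,1)$ (resp.\ $\langle\xi\rangle\lesssim\langle x\rangle$ on $\Omega(R,\gamma,2)$): this is what renders $V$ and $z$ lower order there, and it is what forces each term of the composition expansion to gain a full factor $\langle x\rangle^{-1}\langle\xi\rangle^{-1}$, so that the iteration converges to a symbol $r\in\mathcal{S}(\R^{2d})$. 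The complementary region $\{\gamma^{-1}|x|\le|\xi|\le\gamma|x|,\ |x|\text{ large}\}$, a conic neighborhood of the characteristic set, is deliberately left untouched here; controlling $(P-z)^{-1}f$ on it is the heart of the subsequent lemmas and requires the propagation analysis near the radial source/sink together with the Mourre-type bounds of Subsection \ref{2405091758}.
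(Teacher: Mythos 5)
Your argument is correct and follows essentially the same path as the paper's proof: localized statements reduce to Lemma \ref{2406161703} (directly for $\chi(x)u$, via the Fourier conjugation $U^{*}H_0U$ picture / swapping the roles of $x$ and $\xi$ for $\chi(D)u$), and the substantive part is the same elliptic parametrix construction on $\Omega(R,\gamma,1)\cup\Omega(R,\gamma,2)$ using the lower bound $|p-z|\gtrsim\langle x\rangle^{2}+\langle\xi\rangle^{2}$ that holds there because $\langle x\rangle\lesssim\langle\xi\rangle$ (or vice versa) makes $V$ and $z$ lower order. The only cosmetic difference is bookkeeping: the paper writes a finite right-parametrix $a(x,D)=(\sum_{j\le N}a_j)(P-z)+r_N$ with $a_j\in S(\langle x\rangle^{-j/2}\langle\xi\rangle^{-j/2},g)$ and lets $N\to\infty$ to conclude $a(x,D)u\in\bigcap_N H^{N,N}=\mathcal{S}$, whereas you Borel-sum a full asymptotic series $b\sim\sum b_k$; both organizations, and your sharper per-step gain $\langle x\rangle^{-1}\langle\xi\rangle^{-1}$ versus the paper's $\langle x\rangle^{-1/2}\langle\xi\rangle^{-1/2}$, give the same conclusion.
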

\begin{proof}
$\chi (x)u \in \mathcal{S}$ is a consequence of Lemma \ref{2406161703}. $\chi (D)u \in \mathcal{S}$ is proved similarly. For $(x, \xi) \in \Omega (R, \gamma, 1) \cup \Omega (R, \gamma, 2)$, we have
\begin{align*}
\left(|\xi^2 -x^2 +V(x)| \gtrsim |\xi|^2 \quad and \quad |x| \lesssim |\xi| \right) \quad or \quad \left(|\xi^2 -x^2 +V(x)| \gtrsim |x|^2 \quad and \quad |\xi| \lesssim |x| \right)
\end{align*}
respectively if $R>0$ is large enough. Then by a similar parametrix construction as in Lemma \ref{2406161703}, we have $a_j \in S(\langle x \rangle^{-j/2} \langle \xi \rangle^{-j/2}, g)$ and $r_N \in S(\langle x \rangle^{-N/2} \langle \xi \rangle^{-N/2}, g)$ such that
\begin{align*}
a(x, D)u = \left( \sum_{j=0}^{N} a_j (x, D) \right)f + r_N (x, D) (P-z)^{-1} f
\end{align*}
holds. This implies $a(x, D)u \in \bigcap_{N \in \N} H^{N, N} = \mathcal{S}$.
\end{proof}
Note that Lemma \ref{2406161842} holds if we replace $a(x, D)$ with $a^w (x, D)$ by the change of quantization formula. The next lemma is used to deduce a slight decay of $(H-z)^{-1} f$, which is important in the proof of  Theorem \ref{2406181741}. In the case of super quadratic repulsive potentials, we may use the result in \cite{Ta4}. However its method does not apply to the present case. We also remark that weighted commutator method in \cite{I}, \cite{I2} is difficult to apply since $\im z \ne 0$. We define $N= -\Delta + x^2$.
\begin{lemma}\label{2406162127}
For any $z \in \C^{+}$, there exists $\alpha >0$ such that $(H_0 -z)^{-1} : D(N^{\alpha}) \to D(N^{\alpha})$. Furthermore, for any $z \in \C^{+}$ and $V \in \tilde{S}^0 (\R^d)$, there exist $c_0 >0$ and $\alpha >0$ such that $(H_0 +cV -z)^{-1} : D(N^{\alpha}) \to D(N^{\alpha})$ if $c \in (0, c_0)$.
\end{lemma}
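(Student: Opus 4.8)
The plan is to transport the problem through the unitary $U$ introduced above, exploiting two facts about it: it conjugates the quadratic part of $H_{0}$ to the dilation generator, and it commutes with $N$. For the second point, the Weyl symbol $|x|^{2}+|\xi|^{2}$ of $N$ is invariant under the symplectic map $\kappa$ of (\ref{2405110145}), and conjugation of the Weyl quantization of a quadratic polynomial by a metaplectic operator is exact; hence $U^{*}NU=N$ (equivalently, $U$ is, up to a constant, a fractional Fourier transform, i.e.\ a function of $N$), so $U$ commutes with every $N^{\alpha}$ and restricts to an isometry of $D(N^{\alpha})$ in the graph norm. Using $H_{0}=U(L_{0}+b)U^{*}$ with $L_{0}:=x\cdot D_{x}+D_{x}\cdot x$ and a real constant $b$ (the identity recalled above; its precise value is irrelevant), we get $(H_{0}-z)^{-1}=U(L_{0}-w)^{-1}U^{*}$ with $w=z-b$, $\im w=\im z>0$, so it suffices to prove $(L_{0}-w)^{-1}\colon D(N^{\alpha})\to D(N^{\alpha})$ for some small $\alpha>0$.

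For this I would use that $e^{itL_{0}}$ is the unitary dilation $\big(e^{itL_{0}}u\big)(x)=e^{td}u(e^{2t}x)$, whence a direct computation gives $e^{-itL_{0}}Ne^{itL_{0}}=e^{4t}(-\Delta)+e^{-4t}|x|^{2}=:M_{t}$ and therefore $0\le M_{t}\le e^{4|t|}N$ as quadratic forms. The Löwner--Heinz inequality upgrades this to $M_{t}^{2\alpha}\le e^{8\alpha|t|}N^{2\alpha}$ for $2\alpha\in[0,1]$, so $\|N^{\alpha}e^{itL_{0}}u\|_{2}=\|M_{t}^{\alpha}u\|_{2}\le e^{4\alpha|t|}\|N^{\alpha}u\|_{2}$; equivalently $e^{itL_{0}}$ is a $C_{0}$-group on $D(N^{\alpha})$ with $\|e^{itL_{0}}\|_{\mathcal{B}(D(N^{\alpha}))}\lesssim e^{4\alpha|t|}$ for $\alpha\in(0,\tfrac12]$. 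Fixing $\alpha\in\big(0,\min\{\tfrac12,\tfrac{\im z}{4}\}\big)$ and writing $(L_{0}-w)^{-1}=i\int_{0}^{\infty}e^{itw}e^{-itL_{0}}\,dt$, the integrand is $O\big(e^{-t(\im z-4\alpha)}\big)$ in $\mathcal{B}(D(N^{\alpha}))$, so the integral converges there; as it also converges to $(L_{0}-w)^{-1}$ in $\mathcal{B}(L^{2})$, the operator it defines on $D(N^{\alpha})$ is the restriction of $(L_{0}-w)^{-1}$. (The bookkeeping needed is that $N^{\alpha}$ is closed, $t\mapsto N^{\alpha}e^{itL_{0}}u$ is $L^{2}$-continuous, and $M_{t}\to N$ in the strong resolvent sense as $t\to0$, which justify commuting $N^{\alpha}$ through the integral.) This gives the first assertion.

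For the perturbed resolvent I would run a Neumann series in $\mathcal{B}(D(N^{\alpha}))$ with the same $\alpha$. Multiplication by $V\in\tilde{S}^{0}(\R^{d})$ is bounded on $D(N^{\alpha})$: either identify $D(N^{\alpha})=H^{2\alpha}(\R^{d})\cap L^{2}(\langle x\rangle^{4\alpha}dx)$ with equivalent norms for $\alpha\in(0,1]$, so that the inclusion $\tilde{S}^{0}\subset C^{\infty}_{b}$ makes $V$ a multiplier of $H^{2\alpha}$ while $|V|\lesssim 1$ handles the weighted factor; or invoke that $N^{\alpha}$ is elliptic in the isotropic (Shubin) class, so $N^{\alpha}VN^{-\alpha}=V+[N^{\alpha},V]N^{-\alpha}$ with $[N^{\alpha},V]N^{-\alpha}$ of order $-1$, hence $L^{2}$-bounded. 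Then from $(H_{0}+cV-z)^{-1}=(H_{0}-z)^{-1}\big(I+cV(H_{0}-z)^{-1}\big)^{-1}$ the second factor is given by a Neumann series converging in $\mathcal{B}(D(N^{\alpha}))$ once $c<c_{0}:=\big(\|V\|_{\mathcal{B}(D(N^{\alpha}))}\,\|(H_{0}-z)^{-1}\|_{\mathcal{B}(D(N^{\alpha}))}\big)^{-1}$; shrinking $c_{0}$ if necessary so that the series also converges in $\mathcal{B}(L^{2})$ forces it to represent the genuine resolvent. Combined with $(H_{0}-z)^{-1}\in\mathcal{B}(D(N^{\alpha}))$ from the first part, this gives $(H_{0}+cV-z)^{-1}\in\mathcal{B}(D(N^{\alpha}))$ for $c\in(0,c_{0})$.

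The hard part will be the first step, namely the exponential-in-$t$ bound for $e^{itL_{0}}$ in the $D(N^{\alpha})$ norm and, above all, verifying that the Laplace-transform representation of $(L_{0}-w)^{-1}$ actually maps into $D(N^{\alpha})$ rather than only defining a formally bounded operator. What makes this tractable is that the conjugation identity $e^{-itL_{0}}Ne^{itL_{0}}=e^{4t}(-\Delta)+e^{-4t}|x|^{2}$ is exact (no pseudodifferential remainder, since $L_{0}$ and $N$ are quadratic), so operator monotonicity passes it to fractional powers at once; the constraint $\alpha<\tfrac{\im z}{4}$ is harmless since only the existence of some $\alpha>0$ is asserted. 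The perturbative step is then routine, given the standard fact that $\tilde{S}^{0}$-symbols multiply $D(N^{\alpha})$.
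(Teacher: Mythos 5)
Your argument is correct but takes a genuinely different route for the first assertion. The paper works directly with $e^{-itH_{0}}$ via the Mehler/MDFM factorisation $e^{-itH_{0}}=M(t)D(t)\mathcal{F}M(t)$, from which it reads off $\|x^{2}e^{-itH_{0}}u\|_{2}\lesssim e^{4|t|}\|Nu\|_{2}$ and $\|Ne^{-itH_{0}}u\|_{2}\lesssim e^{4|t|}\|Nu\|_{2}$, then uses complex interpolation to get $\|N^{\alpha}e^{-itH_{0}}u\|_{2}\lesssim e^{4\alpha|t|}\|N^{\alpha}u\|_{2}$ for all $\alpha\in[0,1]$, and finally Laplace-transforms exactly as you do. You instead conjugate by the metaplectic $U$: since the Weyl symbol $|x|^{2}+|\xi|^{2}$ is $\kappa$-invariant and Egorov is exact for quadratics under metaplectic operators, $U$ commutes with $N$ and you may work with the dilation group $e^{itL_{0}}$, where the identity $e^{-itL_{0}}Ne^{itL_{0}}=e^{4t}(-\Delta)+e^{-4t}|x|^{2}\le e^{4|t|}N$ is transparent, and L\"owner--Heinz upgrades it to $N^{2\alpha}$ for $2\alpha\in[0,1]$. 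Both routes land on the same key growth estimate for the propagator; yours is arguably more conceptual (no computation with the oscillatory phase in $M(t)$, and the sharp constant is immediate), while the paper's avoids any appeal to operator monotonicity and reaches the full range $\alpha\in[0,1]$ (your L\"owner--Heinz step caps $\alpha\le 1/2$, which is harmless here since only a small $\alpha$ is required). For the perturbed resolvent your Neumann-series scheme coincides with the paper's; the paper establishes $V\in\mathcal{B}(D(N^{\alpha}))$ by interpolating between the trivial $L^{2}$ bound and the explicit commutator computation giving $\|NVu\|_{2}\lesssim\|Nu\|_{2}$, whereas you invoke the isotropic pseudodifferential calculus or a Sobolev-weighted characterisation of $D(N^{\alpha})$ — both are standard and equivalent in force.
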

\begin{proof}
By the MDFM decomposition of $e^{-itH_0} = M(t)D(t) \mathcal{F} M(t)$, where $M(t) f(x) =e^{i \frac{x^2}{2\tanh 2t}} f(x)$ and $D(t)f(x) = \frac{1}{(i \sinh 2t )^{d/2}} f(\frac{x}{\sinh 2t})$, we have
\begin{align*}
&\|x^2 e^{-itH_0} u\|_2 = |\sinh 2t|^2 \|D^2 _x M(t)u\|_2 \lesssim e^{4|t|} \|Nu\|_2, \\
&\|Ne^{-itH_0} u\|_2 \lesssim \|H_0 e^{-itH_0} u\|_2 + \|x^2 e^{-itH_0} u\|_2 \lesssim e^{4|t|} \|Nu\|_2
\end{align*}
for any $f \in D(N)$. By the complex interpolation, we obtain
\begin{align*}
\|N^{\alpha} e^{-itH_0} u\|_2 \lesssim e^{4\alpha |t|} \|N^{\alpha} u\|_2
\end{align*}
for any $\alpha \in [0, 1]$ and $u \in D(N^{\alpha})$, where the implicit constant is independent of $t$ and $\alpha$. Then by taking $\alpha \in \left(0, \min \{ \frac{\im z}{4}, 1 \} \right)$, 
\begin{align*}
\|N^{\alpha} (H_0 -z)^{-1} u\|_2 \lesssim \int_{0}^{\infty} e^{(4\alpha - \im z ) |t|} dt \cdot  \|N^{\alpha} u\|_2 \lesssim \|N^{\alpha} u\|_2
\end{align*}
holds and we obtain $(H_0 -z)^{-1} : D(N^{\alpha}) \to D(N^{\alpha})$. Since we have
\begin{align*}
\|N^{\alpha} (H_0 -z)^{-1} Vu\|_2 \lesssim \|N^{\alpha} Vu\|_2 \lesssim \|N^{\alpha} u\|_2
\end{align*}
where the last inequality follows from $\|NVu\|_2 \lesssim \|Nu\|_2$ and interpolation, there exists $c_0 >0$ such that $I + c (H_0 -z)^{-1} V$ is invertible on $D(N^{\alpha})$ for any $c \in (0, c_0)$. Then $(H_0 +cV -z)^{-1} = (I + c (H_0 -z)^{-1} V)^{-1} (H_0 -z)^{-1}$ yields
\begin{align*}
\|N^{\alpha} (H_0 +cV -z)^{-1} u\|_2 \lesssim \|N^{\alpha} u\|_2
\end{align*}
and $(H_0 +cV -z)^{-1} : D(N^{\alpha}) \to D(N^{\alpha})$ follows.
\end{proof}
We set $\beta (x, \xi) = \cos (x, \xi) = \frac{x \cdot \xi}{|x| |\xi|}$ and $b(x, \xi) = \tilde{\chi} \left( \tilde{R} \cdot \frac{|\xi|^2 - |x|^2}{|\xi|^2 + |x|^2} \right) \in S^{0, 0}$ for large $\tilde{R} >0$, where $\tilde{\chi} \in C^{\infty} _0 (\R)$ satisfies $\tilde{\chi} (t) =1$ if $|t| \le 1/4$ and $\tilde{\chi} (t) =0$ if $|t| \ge 1/2$. Note that multiplying $b(x, \xi)$ (or $b^w (x, D)$) implies the microlocalization onto $\{(x, \xi) \in T^* \R^d \mid |x| \sim |\xi|\}$ in the phase space. By Lemma \ref{2406161842}, we focus on the behavior on $\supp b$. Our argument below is based on the propagation of singularities and radial source estimates. We employ the argument in \cite{Ta4}, where the Klein-Gordon operator is considered. See also Appendix E in \cite{DZ}. For $k, l, M, N \ge 0$ and $\delta \in [0, 1)$, we define
\begin{align*}
&\lambda (x, \xi) = \lambda_{k, l, M, N, \delta} (x, \xi) = \langle x \rangle^{k} \langle \xi \rangle^{l} \langle \delta x \rangle^{-N} \langle \delta \xi \rangle^{-M}, \\
& \tilde{\lambda} (x, \xi) = \lambda_{k, l} (x, \xi) = \langle x \rangle^{k} \langle \xi \rangle^{l}.
\end{align*}
For $q \in C^{\infty} (T^* \R^d)$, $H_q = \partial_{\xi} q \frac{\partial}{\partial x} - \partial_{x} q \frac{\partial}{\partial \xi}$ denotes the Hamilton vector field generated by $q$.
\begin{lemma}\label{2406172116}
$(i)$ On the support of $b$, we have $H_{p_0} \lambda^2 \lesssim \lambda^2$.

\noindent $(ii)$ On $\supp b \cap \Omega _{\epsilon, r, R, in}$, $H_{p_0} \lambda^2 \lesssim -\lambda^2$ holds if $k>N$ and $l>M$ hold.
\end{lemma}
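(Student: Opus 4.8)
The plan is to compute $H_{p_0}\lambda^2$ explicitly using $p_0(x,\xi)=\xi^2-x^2$, whose Hamilton vector field is $H_{p_0} = 2\xi\cdot\partial_x + 2x\cdot\partial_\xi$, i.e. the generator of the classical flow $\dot y = 2\eta$, $\dot\eta = 2y$ recalled in the introduction. First I would record the elementary derivatives: $H_{p_0}\langle x\rangle^2 = 2\,H_{p_0}\cdot$(quadratic) gives $H_{p_0}|x|^2 = 2\xi\cdot\nabla_x|x|^2 = 4\,x\cdot\xi$, and similarly $H_{p_0}|\xi|^2 = 4\,x\cdot\xi$, while $H_{p_0}\langle\delta x\rangle^2 = 4\delta^2 x\cdot\xi$ and $H_{p_0}\langle\delta\xi\rangle^2 = 4\delta^2 x\cdot\xi$. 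Writing $\lambda^2 = \langle x\rangle^{2k}\langle\xi\rangle^{2l}\langle\delta x\rangle^{-2N}\langle\delta\xi\rangle^{-2M}$ and using the logarithmic derivative (Leibniz rule for $H_{p_0}$ acting on a product), we get
\[
\frac{H_{p_0}\lambda^2}{\lambda^2} = 2k\,\frac{2x\cdot\xi}{\langle x\rangle^2} + 2l\,\frac{2x\cdot\xi}{\langle\xi\rangle^2} - 2N\,\frac{2\delta^2 x\cdot\xi}{\langle\delta x\rangle^2} - 2M\,\frac{2\delta^2 x\cdot\xi}{\langle\delta\xi\rangle^2}.
\]
Each of the four terms is a bounded function times $x\cdot\xi$: indeed $|x\cdot\xi|/\langle x\rangle^2 \le |\xi|$ and, more to the point, on $\mathrm{supp}\,b$ one has $|x|\sim|\xi|$, so $|x\cdot\xi|\le|x||\xi|\lesssim \langle x\rangle^2$ and $|x\cdot\xi|\lesssim\langle\xi\rangle^2$; likewise $\delta^2|x\cdot\xi|/\langle\delta x\rangle^2\le\delta|\xi|\cdot\delta|x|/\langle\delta x\rangle^2\lesssim 1$ and similarly for the $\langle\delta\xi\rangle$ factor. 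Hence each ratio in the display is bounded by a constant (uniform in $\delta\in[0,1)$ thanks to the $\langle\delta\cdot\rangle$ normalizations), which gives $H_{p_0}\lambda^2\lesssim\lambda^2$ on $\mathrm{supp}\,b$. This proves $(i)$.

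For $(ii)$ the point is to exploit the sign of $x\cdot\xi$ in the incoming region. On $\Omega_{\epsilon,r,R,in}$ we have $\cos(x,\xi)<-1+\epsilon$, so $x\cdot\xi = |x||\xi|\cos(x,\xi)$ is negative and, for $\epsilon$ small, comparable to $-|x||\xi|$; combined with $|x|\sim|\xi|$ on $\mathrm{supp}\,b$ this gives $x\cdot\xi \lesssim -|x||\xi|$ and, since $|x|,|\xi|\gtrsim R$ are large there, $|x\cdot\xi|\sim\langle x\rangle^2\sim\langle\xi\rangle^2$. Plugging into the displayed identity: the first two terms contribute $-(2k\cdot c_1 + 2l\cdot c_2)$ with $c_1,c_2$ bounded below by positive constants (using $|x\cdot\xi|/\langle x\rangle^2\gtrsim 1$ on this region), i.e. a strictly negative leading contribution of size $\gtrsim -(k+l)$; the last two terms are $+(2N\cdot c_3 + 2M\cdot c_4)\delta^2$-type contributions with $c_3,c_4$ bounded, of size $\lesssim N+M$, but crucially they carry the damping factors $\delta^2 x\cdot\xi/\langle\delta x\rangle^2$ which are bounded by $\min(1,\,|x\cdot\xi|/(\delta^{-2}))$ and in any case are dominated by the corresponding undamped term: $N\,\delta^2 x\cdot\xi/\langle\delta x\rangle^2 \le N\, x\cdot\xi/\langle x\rangle^2$ in absolute value is the wrong direction, so instead I would argue that under $k>N$ and $l>M$ the negative term $-2k\,x\cdot\xi/\langle x\rangle^2$ strictly dominates $+2N\,\delta^2 x\cdot\xi/\langle\delta x\rangle^2$ pointwise because $|\delta^2/\langle\delta x\rangle^2|\le|1/\langle x\rangle^2|$ when... — here I need to be careful, and this is the main obstacle: I must check that $\delta^2\langle x\rangle^2 \le \langle\delta x\rangle^2$ fails in general, so the correct comparison is $\delta^2/\langle\delta x\rangle^2 \le 1/\langle x\rangle^2$, which does hold since it rearranges to $\delta^2\langle x\rangle^2\le\langle\delta x\rangle^2 = 1+\delta^2|x|^2$, true because $\delta^2\langle x\rangle^2 = \delta^2+\delta^2|x|^2\le 1+\delta^2|x|^2$. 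Good — so with this inequality, $|2N\,\delta^2 x\cdot\xi/\langle\delta x\rangle^2|\le 2N\,|x\cdot\xi|/\langle x\rangle^2 < 2k\,|x\cdot\xi|/\langle x\rangle^2$, and similarly for the $M,l$ pair; since on $\Omega_{\epsilon,r,R,in}\cap\mathrm{supp}\,b$ the quantity $|x\cdot\xi|/\langle x\rangle^2$ is bounded below by a positive constant, we conclude $H_{p_0}\lambda^2/\lambda^2 \le -c(k-N) - c(l-M) < 0$, i.e. $H_{p_0}\lambda^2\lesssim-\lambda^2$.

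The main technical care, beyond the elementary computation, is uniformity: I want the implicit constants in both $(i)$ and $(ii)$ to be independent of $\delta\in[0,1)$ (and ideally of $R$ large), so I would phrase every bound in terms of the scale-invariant ratios $\tfrac{\delta^2 x\cdot\xi}{\langle\delta x\rangle^2}$ etc., all of which are uniformly controlled by the two observations $|x\cdot\xi|\le|x||\xi|$ and $\delta^2\langle x\rangle^2\le\langle\delta x\rangle^2$, and use $|x|\sim|\xi|$ on $\mathrm{supp}\,b$ to pass freely between $\langle x\rangle$ and $\langle\xi\rangle$ weights. No deeper input is needed; this lemma is purely a computation with the explicit repulsive symbol, and it feeds into the positive-commutator / radial-source propagation estimates in the subsequent argument (as in \cite{Ta4} and Appendix E of \cite{DZ}).
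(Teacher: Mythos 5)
Your proposal is correct and is essentially the same computation the paper carries out: the paper writes the log-derivative in the combined form $H_{p_0}\lambda^2 = 4\lambda^2\beta(x,\xi)|x||\xi|\bigl\{\tfrac{k(1+\delta^2x^2)-N(\delta^2+\delta^2x^2)}{\langle x\rangle^2\langle\delta x\rangle^2}+\tfrac{l(1+\delta^2\xi^2)-M(\delta^2+\delta^2\xi^2)}{\langle\xi\rangle^2\langle\delta\xi\rangle^2}\bigr\}$ and reads off the sign from $k>N,\ l>M,\ \delta<1$, which is exactly your pointwise inequality $\delta^2/\langle\delta x\rangle^2\le 1/\langle x\rangle^2$ applied term by term, together with $|x|\sim|\xi|$ on $\supp b$ and $\beta\sim -1$ in the incoming region. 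Only a minor stylistic remark: the detour in part $(ii)$ where you first suspect the comparison goes the wrong way before verifying $\delta^2\langle x\rangle^2\le\langle\delta x\rangle^2$ could be removed — the inequality is immediate from $\delta^2\le 1$ — but the final argument is sound and uniform in $\delta\in[0,1)$.
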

\begin{proof}
By direct computations, we obtain
\begin{align*}
\{p_0, \lambda^2 \} &= 2\lambda (2\xi \cdot \partial _x \lambda + 2x \cdot \partial _{\xi} \lambda) \\
& = 4 \lambda^2 \beta (x, \xi) |x||\xi| \left\{ \frac{k(1+ \delta^2 x^2) -N(\delta^2 + \delta^2 x^2)}{\langle x \rangle^2 \langle \delta x \rangle^2} +  \frac{l(1+ \delta^2 \xi ^2) -M(\delta^2 + \delta^2 \xi ^2)}{\langle \xi \rangle^2 \langle \delta \xi \rangle^2} \right\}.
\end{align*}
Since $|x| \sim |\xi|$ holds on $\supp b$, we obtain $(i)$. On $\supp b \cap \Omega _{\epsilon, r, R, in}$, $\beta (x, \xi) \sim -1$ and $k>N, l>M$ yields $(ii)$.
\end{proof}
We remark that $H_{p_0} \beta \gtrsim 1$ holds on $\supp b \cap \{(x, \xi) \in T^* \R^d \mid \beta (x, \xi) \in [-1+\epsilon_0, 1+\epsilon_0] \}$ for any sufficiently small $\epsilon_0 >0$. This follows from $H_{p_0} \beta = 2\left( \frac{|x|}{|\xi|} + \frac{|\xi|}{|x|} \right) (1-\beta^2)$. For simplicity we write $S^{k, l} = S \left(\langle \xi \rangle^{k} \langle x \rangle^{l}, \frac{dx^2}{\langle x \rangle^2} + \frac{d\xi^2}{\langle \xi \rangle^2} \right)$. The next lemma gives estimates for cut-off functions. We say that $a \in S^{k, l}$ is microlocally negligible outside $\Omega \subset T^* \R^d$ if there exists $r \in S^{-\infty, -\infty}$ such that $\supp (a+r) \subset \Omega$. We say that $a \in S^{k, l}$ is microlocally contained in $(\Omega_1, \Omega_2)$ if $a$ is elliptic in $\Omega_1$ and microlocally negligible outside $\Omega_2$. 
\begin{lemma}\label{2406180906}
$(i)$ Let $-1 < \beta _1 < \beta_2 <1$. For any $L>1$ and sufficiently small $\epsilon >0$, there exist $a, b_1 \in S^{0, 0}$, $b_2 \in S^{0, -\infty}$ and $e \in S^{-\infty, 0}$ such that $a$ is microlocally contained in $(\Omega_{\beta_1 -\epsilon, \beta_2 +\epsilon, 2r, 2R, mid}, \Omega_{\beta_1 -2\epsilon, \beta_2 +2\epsilon, r, R, mid})$, $b_1$ is microlocally negligible outside $\Omega_{\beta_1-2\epsilon, \beta_1-\epsilon, r, R, mid}$ and $H_{p_0} a^2 \le -La^2 +b^2 _1 +b^2 _2 +e^2$ holds on $\supp b$.

\noindent $(ii)$ For any $\epsilon \in (0, \frac{1}{10})$, there exist $a \in S^{0, 0}$ and $e \in S^{-\infty, 0}$ such that $a$ is microlocally contained in $(\Omega_{\epsilon, 2r, 2R, in}, \Omega_{2\epsilon, r, R, in})$ and $H_{p_0} a^2 \le e^2$ holds on $\supp b$.
\end{lemma}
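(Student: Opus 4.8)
The plan is to build, by hand, escape-function symbols adapted to the $p_0$-flow, using the single monotone observable $\beta(x,\xi)=\cos(x,\xi)$. The one structural fact used throughout is the identity $H_{p_0}\beta = 2\bigl(\tfrac{|x|}{|\xi|}+\tfrac{|\xi|}{|x|}\bigr)(1-\beta^2)$ recorded right after Lemma \ref{2406172116}: it is $\geq 0$ everywhere, and on $\supp b\cap\{|\beta|\leq 1-c\}$ it is bounded below by a positive constant $c_1=c_1(c)$, since there $|x|\sim|\xi|$ (so the bracketed factor is $\geq 4$) and $1-\beta^2\geq c(2-c)$. A second elementary fact: for a radial cutoff $\phi_0(x,\xi)=\phi_1(|x|)\phi_2(|\xi|)$, the function $H_{p_0}\phi_0$ is supported where $\phi_1'(|x|)\neq 0$ or $\phi_2'(|\xi|)\neq 0$, i.e. where $|x|\in[R,2R]$ or $|\xi|\in[r,2r]$; restricted to $\supp b$ this forces $|x|\sim|\xi|$ to be bounded, so $H_{p_0}\phi_0$ and $H_{p_0}\phi_0^2$, restricted to $\supp b$, lie in $C^\infty_c$. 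This is precisely where the hypothesis ``on $\supp b$'' is used: off $\supp b$ the term $2\xi\cdot\nabla_x\phi_0$ grows like $|\xi|$ and lies in no good symbol class.

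I would do part $(ii)$ first, as it is the short one. Take $a=\phi_0(x,\xi)\,\psi(\beta(x,\xi))$, where $\phi_0=\phi_1(|x|)\phi_2(|\xi|)$ equals $1$ on $\{|x|>2R,\,|\xi|>2r\}$ and is supported in $\{|x|>R,\,|\xi|>r\}$, and $\psi\in C^\infty(\R)$ is decreasing with $\psi\equiv 1$ on $(-\infty,-1+\epsilon]$, $\psi\equiv 0$ on $[-1+2\epsilon,\infty)$. Then $a\in S^{0,0}$ and $a$ is microlocally contained in $(\Omega_{\epsilon,2r,2R,in},\Omega_{2\epsilon,r,R,in})$. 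Leibniz gives $H_{p_0}a^2=[H_{p_0}\phi_0^2]\,\psi(\beta)^2+\phi_0^2\cdot 2\psi(\beta)\psi'(\beta)\,H_{p_0}\beta$; the second term is $\leq 0$ because $\psi\geq 0$, $\psi'\leq 0$, $H_{p_0}\beta\geq 0$, and the first term, restricted to $\supp b$, lies in $C^\infty_c$ and is therefore $\leq C_0\,\kappa_0^2=:e^2$ with $\kappa_0\in C^\infty_c$ equal to $1$ on the relevant compact set and $C_0$ a large constant. Since $e\in C^\infty_c\subset S^{-\infty,0}$, this proves $(ii)$.

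For part $(i)$ I would use the same type of radial cutoff $\phi_0$ and set $a^2=\phi_0^2\,e^{-\Lambda\beta}\,\chi_\ell(\beta)^2\,\chi_r(\beta)^2$, where $\chi_\ell$ increases from $0$ at $\beta_1-2\epsilon$ to $1$ at $\beta_1-\epsilon$, $\chi_r$ decreases from $1$ at $\beta_2+\epsilon$ to $0$ at $\beta_2+2\epsilon$, and $\Lambda>0$ will be chosen large. For $\epsilon$ small enough that $\beta_1-2\epsilon>-1$ and $\beta_2+2\epsilon<1$, the symbol $a=\phi_0\,e^{-\Lambda\beta/2}\chi_\ell(\beta)\chi_r(\beta)$ is a genuine element of $S^{0,0}$, elliptic on $\Omega_{\beta_1-\epsilon,\beta_2+\epsilon,2r,2R,mid}$ and supported in $\Omega_{\beta_1-2\epsilon,\beta_2+2\epsilon,r,R,mid}$. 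Expanding $H_{p_0}a^2$ by Leibniz (differentiating each of $\phi_0^2$, $e^{-\Lambda\beta}$, $\chi_\ell^2$, $\chi_r^2$ in turn) produces four groups of terms: differentiating $e^{-\Lambda\beta}$ yields $-\Lambda\,H_{p_0}\beta\cdot a^2\leq -\Lambda c_1\,a^2\leq -La^2$ once $\Lambda\geq L/c_1$, with $c_1>0$ a lower bound for $H_{p_0}\beta$ on $\{\beta\in[\beta_1-2\epsilon,\beta_2+2\epsilon]\}$; differentiating $\chi_r^2$ gives $2\chi_r\chi_r'\,H_{p_0}\beta\leq 0$ since $\chi_r'\leq 0$; differentiating $\chi_\ell^2$ gives a nonnegative term supported where $\beta\in[\beta_1-2\epsilon,\beta_1-\epsilon]$, to be absorbed into $b_1^2$; and differentiating $\phi_0^2$ gives a term which, on $\supp b$, lies in $C^\infty_c$ and splits as $\leq b_2^2+e^2$ with $b_2$ carried by the $|x|$-transition (so $b_2\in S^{0,-\infty}$, indeed $C^\infty_c$) and $e$ carried by the $|\xi|$-transition (so $e\in S^{-\infty,0}$, indeed $C^\infty_c$). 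Summing gives $H_{p_0}a^2\leq -La^2+b_1^2+b_2^2+e^2$ on $\supp b$.

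The only point requiring care, and the natural place for a slip, is to make the cutoff-derivative errors in $(i)$ honest squares of smooth symbols in the stated classes rather than merely nonnegative functions. This is the standard device of choosing the cutoffs so that their derivatives are perfect squares: fix a smooth bump $\rho_\ell$ with $\supp\rho_\ell\subset[\beta_1-2\epsilon,\beta_1-\epsilon]$ and set $\chi_\ell(\beta):=\int_{-\infty}^\beta\rho_\ell(t)^2\,dt$, so $\chi_\ell'=\rho_\ell^2$; since $e^{-\Lambda\beta}H_{p_0}\beta$ is smooth and positive on $\supp\rho_\ell$ (there $|\beta|<1$, and $\phi_0$ keeps us away from $x=0$, $\xi=0$), the $\chi_\ell$-term is $\leq\bigl(\sqrt{2}\,\phi_0\,\rho_\ell(\beta)\,(e^{-\Lambda\beta}H_{p_0}\beta)^{1/2}\chi_r(\beta)\bigr)^2=:b_1^2$ with $b_1\in S^{0,0}$ supported in $\Omega_{\beta_1-2\epsilon,\beta_1-\epsilon,r,R,mid}$, as required; the $\phi_0$-term is handled the same way, or simply dominated on $\supp b$ by a constant times fixed $C^\infty_c$ cutoffs adapted to the two radial transitions. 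Apart from this bookkeeping and the elementary choices of $\Lambda$ and $\epsilon$, there is no real obstacle: all the content sits in the formula for $H_{p_0}\beta$ and the monotonicity of $\cos(x,\xi)$ along the flow that it encodes, and the lemma is just the symbol-level input to the positive-commutator and radial-point estimates that follow.
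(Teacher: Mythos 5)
Your construction follows essentially the same route as the paper's: in both parts, $a$ is a cutoff in $\beta=\cos(x,\xi)$ times a radial cutoff localizing to $\{|x|>R,\ |\xi|>r\}$, the sign of $H_{p_0}\beta\geq 0$ together with monotonicity of the angular cutoffs kills the ``good'' transition terms, and in $(i)$ the exponential weight $e^{-\text{const}\cdot\beta}$ produces the $-La^2$ via the positive lower bound on $H_{p_0}\beta$ over $\{|\beta|\leq 1-c\}$. Whether one takes a single $\rho_{mid}$ with one-sided monotonicity (paper) or a product $\chi_\ell\chi_r$ (yours) is cosmetic.

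There is, however, one genuine slip, precisely at the point you flag as ``the natural place for a slip''. You set $b_1=\sqrt 2\,\phi_0\,\rho_\ell(\beta)\,(e^{-\Lambda\beta}H_{p_0}\beta)^{1/2}\chi_r(\beta)$ and assert $b_1\in S^{0,0}$. This is false: by the identity $H_{p_0}\beta=2\bigl(\tfrac{|x|}{|\xi|}+\tfrac{|\xi|}{|x|}\bigr)(1-\beta^2)$, the factor $(H_{p_0}\beta)^{1/2}$ grows like $\max(|x|/|\xi|,|\xi|/|x|)^{1/2}$, which is unbounded on $\supp\phi_0\cap\supp\rho_\ell$ (take $|x|\to\infty$ with $|\xi|$ fixed just above $r$ and the angle fixed inside the $\rho_\ell$ window). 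The constraint that $b_1$ be supported in $\Omega_{\beta_1-2\epsilon,\beta_1-\epsilon,r,R,mid}$ does nothing to control $|x|/|\xi|$, so this is not a vacuous concern, and a symbol in $S^{0,0}=S(1,\tfrac{dx^2}{\langle x\rangle^2}+\tfrac{d\xi^2}{\langle\xi\rangle^2})$ must in particular be bounded. The paper avoids this by bounding $H_{p_0}\rho_{mid}^2$ by $C\rho_1^2$ only \emph{on} $\supp b$ (where $|x|\sim|\xi|$, so $H_{p_0}\beta$ is uniformly bounded) and then tacking the factor $\tilde\chi\bigl(\tfrac{\tilde R}{2}\cdot\tfrac{|\xi|^2-|x|^2}{|\xi|^2+|x|^2}\bigr)$ onto $b_1$, pinning its support to a neighbourhood of $\supp b$. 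Your own fallback for the $\phi_0$-term --- dominate on $\supp b$ by a constant times a fixed cutoff --- is the correct fix here too, except note that $b_1$ cannot be taken in $C^\infty_c$ (its support must run to infinity in the ``diagonal'' direction $|x|\sim|\xi|\to\infty$); replace $(H_{p_0}\beta)^{1/2}$ by a constant and include a cutoff in $(|\xi|^2-|x|^2)/(|\xi|^2+|x|^2)$ supported near $\supp b$. With that amendment your argument coincides in substance with the paper's.
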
 
\begin{proof}
The construction is essentially the same as in \cite{Ta3}. We fix $\chi \in C^{\infty} (\R; [0, 1])$ such that $\chi (t) =1$ if $t \le 1$, $\chi (t) =0$ if $t \ge 2$ and $\chi ' \le 0$. We set $\bar{\chi}_{R} (x) = 1-\chi_{R} (x) = 1-\chi (|x|/R)$. 

$(i)$ We fix $\rho_{mid} \in C^{\infty} (\R; [0, 1])$ such that $\rho_{mid} (t) =1$ on $\{t \in [\beta_1 -\epsilon, \beta_2 +\epsilon]\}$, $\rho_{mid} (t) =0$ on $\{t \le \beta_1 - 2\epsilon \} \cup \{t \ge \beta_2 + 2\epsilon\}$ and $\rho' _{mid} (t) \le 0$ on $\{t \in [\beta_2 + \epsilon, \beta_2 +2\epsilon]\}$. Then
\begin{align*}
H_{p_0} e^{-2M\beta} \le -Le^{-2M\beta} \quad on \quad \supp b \cap \Omega_{\beta_1 -2\epsilon, \beta_2 +2\epsilon, r, R, mid}
\end{align*}
holds for sufficiently large $M>0$. We set $a(x, \xi) = e^{-M\beta} \rho_{mid} (\beta) \bar{\chi}_{R} (x) \bar{\chi}_r (\xi) \in S^{0, 0}$. Then we have
\begin{align*}
H_{p_0} a^2 \le -La^2 + H_{p_0} (\rho^2 _{mid} (\beta) \bar{\chi}^2 _{R} (x) \bar{\chi}^2 _r (\xi)) e^{-2M\beta}
\end{align*}
on $\supp b$. Since $H_{p_0} \rho^2 _{mid} (\beta) \le C \rho^2 _{1} (\beta)$ holds on $\supp b$ for some $C>0$ and $\rho_1 \in C^{\infty} (\R; [0, 1])$ supported in $[\beta_1 -3\epsilon, \beta_1 - \frac{\epsilon}{2} ]$, we define
\begin{align*}
b_1 (x, \xi) = \sqrt{C} e^{-M\beta} \rho_1 (\beta) \bar{\chi}_{R} (x) \bar{\chi}_r (\xi) \tilde{\chi} \left( \frac{\tilde{R}}{2} \cdot \frac{|\xi|^2 - |x|^2}{|\xi|^2 + |x|^2} \right) \in S^{0, 0}.
\end{align*}
$b_2$ and $e$ are similarly constructed from $H_{p_0} \bar{\chi}^2 _{R}$ and $H_{p_0} \bar{\chi}^2 _r$ respectively. Then we have $H_{p_0} a^2 \le -La^2 +b^2 _1 +b^2 _2 +e^2$ on $\supp b$. Note that $b_2$ is compactly supported with respect to $x$ variable and $e$ is compactly supported with respect to $\xi$ variable.

$(ii)$ We fix $\rho_{in} \in C^{\infty} (\R; [0, 1])$ such that $\rho_{in} (t) =1$ if $t \le -1+\epsilon$, $\rho_{in} (t) =0$ if $t \ge -1+2\epsilon$ and $\rho' _{in} \le 0$. We set $a(x, \xi) = \rho_{in} (\beta) \bar{\chi}_{R} (x) \bar{\chi}_r (\xi)$. Then
\begin{align*}
H_{p_0} \rho_{in} (\beta) \le 0 \quad and \quad H_{p_0} \bar{\chi}_{R} = -\frac{2}{R} |\xi| \beta \chi' (\frac{|x|}{R}) \le 0
\end{align*}
holds on $\supp b \cap \supp \rho_{in} (\beta)$ since $H_{p_0} \beta \gtrsim 1$ holds on $\supp b \cap \{ \beta \in [-1+\epsilon, -1+2\epsilon ] \}$. This yields
\begin{align*}
H_{p_0} a^2 \le \rho^2 _{in} (\beta) \bar{\chi}^2 _{R} (x) H_{p_0} \bar{\chi}^2 _{r} (\xi) \le \rho^2 _{in} (\beta) \bar{\chi}^2 _{R} (x) \tilde{e}^2 (x, \xi)
\end{align*}
on $\supp b$, where $\tilde{e} \in S^{-\infty, 1}$ is compactly supported with respect to $\xi$ variable. By setting
\begin{align*}
e(x, \xi) = \rho_{in} (\beta) \bar{\chi}_{R} (x) \tilde{e} (x, \xi) \tilde{\chi} \left( \frac{\tilde{R}}{2} \cdot \frac{|\xi|^2 - |x|^2}{|\xi|^2 + |x|^2} \right) \in S^{-\infty, 0}
\end{align*}
we obtain the desired estimate. Note that $e$ is also compactly supported with respect to $\xi$ variable.
\end{proof}
We are ready to prove the propagation of regularity and decay $(i)$ and radial source estimates $(ii)$.
In the following theorem we fix $z \in \C^{+}$. We set $H=H_0 + cV (= P_0 +cV)$ for $c \in (0, c_0)$, where $V \in \tilde{S}^0 (\R^d)$ and $c_0$ is as in Lemma \ref{2406162127}. We say $u \in H^{l, k}$ microlocally on $\Omega$ if there exists $a \in S^{0, 0}$ such that $a$ is elliptic in $\Omega$ and $a^w (x, D)u \in H^{l, k}$, where $H^{l, k} := \langle x \rangle^{-k} \langle D \rangle^{-l} L^2 (\R^d)$ is the weighted Sobolev space.
\begin{thm}\label{2406181741}
$(i)$ Let $k, l>0$ and $-1 < \beta_1 < \beta_2 <1$. We assume $u, Bu \in L^2 \cap C^{\infty} (\R^d)$ and $(H-z)u, (H-z)Bu \in \mathcal{S}$, where $B=b^w (x, D)$. If $Bu \in H^{l, k}$ microlocally on $\Omega_{r, R} (\beta_1)$, we have $Bu \in H^{l, k}$ microlocally on $\Omega_{r', R'} (\beta_2)$ for some $r', R' >0$.

\noindent $(ii)$ We assume $u, Bu \in L^2 \cap C^{\infty} (\R^d)$ and $(H-z)u, (H-z)Bu \in \mathcal{S}$. Then, for any $k, l>0$, we have $Bu \in H^{l, k}$ microlocally on $\Omega_{r', R'} (-1)$ for some $r', R'>0$.
\end{thm}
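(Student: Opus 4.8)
The plan is to run a positive-commutator argument in phase space, in the spirit of \cite{Ta3} and Appendix E of \cite{DZ}, adapted to the non-elliptic, unbounded symbol $p_0(x,\xi)=\xi^2-x^2$. On $\supp b$ the Hamilton flow of $p_0$ runs from the radial source $\Omega_{r,R}(-1)$ to the radial sink $\Omega_{r,R}(1)$, with $\beta=\cos(x,\xi)$ strictly increasing ($H_{p_0}\beta\gtrsim1$) and with $|x|\sim|\xi|$ preserved to leading order since $H_{p_0}(\xi^2-x^2)=0$; part $(i)$ is propagation of the weighted regularity $H^{l,k}$ forward along the flow from $\Omega_{r,R}(\beta_1)$ to $\Omega_{r,R}(\beta_2)$ (recall $\beta_1<\beta_2$), and part $(ii)$ is the radial source estimate at $\beta=-1$. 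Because $\im z\neq0$ the real-energy commutator methods of \cite{I}, \cite{I2} are not directly applicable, but the term produced by $\im\langle\cdot,(H-z)\cdot\rangle$ behaves like a complex absorbing potential with the favourable sign, which is exactly what drives the argument forward (and also explains why it stops at the sink). Write $v:=Bu$; the elliptic and local regularity Lemmas \ref{2406161703} and \ref{2406161842} reduce matters to the conic region inside $\supp b$ with $|x|,|\xi|$ large (on $\{|x|\le R\}$ and $\{|\xi|\le R\}$ one has $\chi(x)u,\chi(D)u\in\mathcal{S}$). Finally, since $z\in\C^{+}$ one may write $u=(H-z)^{-1}\big((H-z)u\big)$, so Lemma \ref{2406162127} gives $u\in D(N^\alpha)$ for some $\alpha>0$, whence $v=Bu\in H^{\alpha,\alpha}$ microlocally on $\supp b$ --- the strictly positive a priori order that makes the regularization below work.

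\textbf{Part $(ii)$.} Fix $k,l>0$, not both $\le\alpha$ (otherwise the conclusion follows from $v\in H^{\alpha,\alpha}$). One would take the commutant $G_\delta=g_\delta^w(x,D)$, $g_\delta=a\,\lambda_{k,l,M,N,\delta}$, with $a\in S^{0,0}$ as in Lemma \ref{2406180906}$(ii)$ --- microlocally contained in $(\Omega_{\epsilon,2r,2R,in},\Omega_{2\epsilon,r,R,in})$ --- and $M,N\ge0$ chosen with $N<k$, $M<l$ and $k-N\le\alpha$, $l-M\le\alpha$; this is possible precisely because $\alpha>0$. The conditions $N<k$, $M<l$ let Lemma \ref{2406172116}$(ii)$ apply, giving $H_{p_0}\lambda^2\lesssim-\lambda^2$ on $\supp b\cap\Omega_{2\epsilon,r,R,in}$, while $k-N\le\alpha$, $l-M\le\alpha$ make $G_\delta v\in L^2$ a priori (for $\delta>0$). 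With $h:=(H-z)v\in\mathcal{S}$ one uses the elementary identity
\[
\langle i[H,G_\delta^*G_\delta]v,v\rangle \;=\; -2\,\im\langle G_\delta^*G_\delta v,\,h\rangle \;+\; 2\,(\im z)\,\|G_\delta v\|_2^2 .
\]
The principal symbol of $i[H,G_\delta^*G_\delta]$ equals $H_{p_0}(a^2\lambda^2)+H_{cV}(a^2\lambda^2)$ modulo a symbol one order lower, $H_{cV}(a^2\lambda^2)$ being genuinely lower order because $|\partial_xV|\lesssim\langle x\rangle^{-1}$; on $\supp b\cap\Omega_{2\epsilon,r,R,in}$, Lemmas \ref{2406180906}$(ii)$ and \ref{2406172116}$(ii)$ give $H_{p_0}(a^2\lambda^2)\le-c\,a^2\lambda^2+e^2\lambda^2$ with $e\in S^{-\infty,0}$ which, once $R$ is taken large relative to $r$, is supported outside $\supp b$, so $(e\lambda)^w v\in\mathcal{S}$. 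The sharp G\r{a}rding inequality, after absorbing the lower-order remainders, bounds $\langle i[H,G_\delta^*G_\delta]v,v\rangle$ above by $-c'\|G_\delta v\|_2^2$ plus Schwartz and genuinely lower-order terms; inserting this into the identity, the term $2(\im z)\|G_\delta v\|_2^2$ (with $\im z>0$) moves to the coercive side, and Cauchy--Schwarz absorbs half of $\|G_\delta v\|_2^2$ out of $-2\im\langle G_\delta^*G_\delta v,h\rangle=-2\im\langle G_\delta v,G_\delta h\rangle$, yielding $\|G_\delta v\|_2\lesssim1$ uniformly in $\delta\in(0,1]$. Letting $\delta\to0$ (weak-$*$ compactness) gives $v\in H^{l,k}$ microlocally near $\beta=-1$, which is $(ii)$.

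\textbf{Part $(i)$.} This is the same argument with $a\in S^{0,0}$ now taken from Lemma \ref{2406180906}$(i)$, microlocally contained in $(\Omega_{\beta_1-\epsilon,\beta_2+\epsilon,2r,2R,mid},\Omega_{\beta_1-2\epsilon,\beta_2+2\epsilon,r,R,mid})$, so that $H_{p_0}a^2\le-La^2+b_1^2+b_2^2+e^2$ on $\supp b$; together with the unconditional bound $H_{p_0}\lambda^2\lesssim\lambda^2$ of Lemma \ref{2406172116}$(i)$ and $L$ chosen large, this gives $H_{p_0}(a^2\lambda^2)\le-a^2\lambda^2+(b_1^2+b_2^2+e^2)\lambda^2$. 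Here $b_1$ is microlocalized just below $\beta_1$, so $\|(b_1\lambda)^w v\|_2$ is controlled by the hypothesis that $Bu\in H^{l,k}$ microlocally on $\Omega_{r,R}(\beta_1)$ (an ellipticity there extends to a neighbourhood, hence to $\supp b_1$ for $\epsilon$ small); $b_2$ is compactly supported in $x$ and $e$ in $\xi$, so both are controlled via $\chi(x)u,\chi(D)u\in\mathcal{S}$ (Lemma \ref{2406161842}). The same regularization in $\delta$, sharp G\r{a}rding, absorption (again with the absorbing-potential term on the good side) and $\delta\to0$ then give $Bu\in H^{l,k}$ microlocally on $\Omega_{r',R'}(\beta_2)$ for suitable $r',R'>0$.

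\textbf{Main obstacle.} The crux is the a priori finiteness of $\langle G_\delta^*G_\delta v,h\rangle$ and $\|G_\delta v\|_2$ in the presence of the threshold conditions $k>N$, $l>M$ of Lemma \ref{2406172116}$(ii)$: these forbid regularizing strongly enough to reduce the commutant to nonpositive order, so $G_\delta v\in L^2$ (for $\delta>0$) is available only because $v=Bu$ is known a priori to have the \emph{strictly positive} order $(\alpha,\alpha)$, supplied by $u\in D(N^\alpha)$ via Lemma \ref{2406162127}; with only $u\in L^2$ the argument would not get off the ground. The rest is bookkeeping forced by the fact that $\xi^2-x^2-z$ is nowhere elliptic on its characteristic variety and is unbounded, so no parametrix is available there: one must check that every Weyl-calculus remainder and the perturbation term $H_{cV}(a^2\lambda^2)$ are of strictly lower order than the sign-definite main term, and match the regions carrying $b_1,b_2,e$ with the elliptic and local regularity Lemmas \ref{2406161703} and \ref{2406161842}.
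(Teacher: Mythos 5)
Your high-level strategy is the same as the paper's: regularize the commutant $G_\delta = (a\,\lambda_{k,l,M,N,\delta})^{w}$, use the a priori decay $Bu\in H^{\alpha,\alpha}$ supplied by Lemma \ref{2406162127} to make $G_\delta Bu\in L^2$ under the threshold constraints $N<k$, $M<l$ forced by Lemma \ref{2406172116}$(ii)$, exploit the identity $\langle i[H,G_\delta^*G_\delta]Bu,Bu\rangle = -2\im\langle G_\delta^*G_\delta Bu,(H-z)Bu\rangle + 2(\im z)\|G_\delta Bu\|_2^2$ with $\im z>0$ on the good side, close with sharp G\aa rding, control the cut-off error terms $b_1,b_2,e$ via the hypothesis and Lemma \ref{2406161842}, and finally let $\delta\to0$. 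Part $(i)$ of your proposal is essentially correct modulo routine bookkeeping: the sharp G\aa rding remainder there, of order $(l-\tfrac12,k-\tfrac12)$, supported on a slight enlargement of $\supp a\cup\supp b_1\cup\supp b_2\cup\supp e$, can indeed be bootstrapped away because at every iteration the $b_1$-piece stays inside the ellipticity set of the hypothesis near $\Omega_{r,R}(\beta_1)$, and the support enlarges only by $O(\epsilon)$ per step; over finitely many iterations the remainder order drops below any $-L$.

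The genuine gap is in $(ii)$. You write that the sharp G\aa rding remainder is ``lower-order and absorbed,'' but this is exactly the step that cannot be dispatched that way. The remainder is $\|\Lambda_\delta\tilde A Bu\|^2_{H^{l-1/2,k-1/2}}$ with $\tilde A$ elliptic on a neighborhood of $\supp a\cup\supp e$ that unavoidably spills \emph{forward} into the collar $\Omega_{-1+\epsilon/2,\,-1+3\epsilon,\,\tilde r,\tilde R,mid}$, where $a$ is no longer elliptic. Three things block an absorption argument: the order drop is only $\tfrac12$; you cannot over-regularize because Lemma \ref{2406172116}$(ii)$ forbids $N\ge k$ or $M\ge l$ (this is precisely why the $D(N^\alpha)$ input is needed in the first place); and iterating the radial source estimate on its own pushes the excess support forward toward $\beta=+1$ rather than shrinking it. In the paper's proof of $(ii)$, the collar piece $A_2$ supported in $\Omega_{-1+\epsilon/2,-1+3\epsilon,mid}$ is handled by an invocation of the \emph{propagation} estimate from part $(i)$: one propagates the control you already have from $\Omega_{-1+\epsilon/4,-1+\epsilon/2,mid}$ (inside the ellipticity of $a$, hence controlled by the radial source estimate being proved) forward onto $\supp\sigma(A_2)$, and only then absorbs by interpolation in the order. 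Thus $(ii)$ genuinely depends on $(i)$; your proposal treats them as independent instances of the same commutator estimate, and as written the argument for $(ii)$ does not close. If you add one paragraph showing how the propagation estimate $(i)$ feeds back to bound the collar remainder (mirroring the paper's $A_2,A_3,A_4,A_5$ chain), the rest of your sketch goes through.
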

Note that Theorem \ref{2406181741} $(i)$ implies propagation of regularity and decay along bicharacteristics. In our situation, the direction of propagation is affected by the sign of $\im z$ since $p_0 (x, \xi) =0$ occurs if $|x| = |\xi|$. The direction is consistent with ordinary propagation of singularities theorem, e.g. \cite{Hor}.
\begin{proof}
By Lemma \ref{2406172116} and \ref{2406180906}, we obtain
\begin{align*}
(b(H_{p_0} \lambda^2 ) b) (x, \xi) \lesssim (b(-a^2 +b^2 _1 +b^2 _2 +e^2) \lambda^2 b) (x, \xi)
\end{align*} 
for all $(x, \xi) \in T^* \R^d$. In the case of $(ii)$, we set $b_1 =b_2 =0$. Now we define
\begin{align*}
A = a^w (x, D), \Lambda = \tilde{\lambda}^w (x, D), \Lambda_{\delta} = Op^w (\langle \delta x \rangle^{-N} \langle \delta \xi \rangle^{-M}), B_j =b^w _j (x, D), E=e^w (x, D).
\end{align*}
Then by the sharp G\aa rding inequality, we obtain
\begin{align*}
B[H, iA \Lambda_{\delta} \Lambda^2 \Lambda_{\delta} A]B &\lesssim -BA\Lambda_{\delta} \Lambda^2 \Lambda_{\delta} AB + \sum_{j=1}^{2} BB_j \Lambda_{\delta} \Lambda^2 \Lambda_{\delta} B_j B +BE\Lambda_{\delta} \Lambda^2 \Lambda_{\delta} EB \\
& \quad  \quad + \tilde{B} \tilde{A} \Lambda_{\delta} \langle x \rangle^{-\frac{1}{2}} \langle D \rangle^{-\frac{1}{2}} \Lambda^2 \langle D \rangle^{-\frac{1}{2}} \langle x \rangle^{-\frac{1}{2}} \Lambda_{\delta} \tilde{A} \tilde{B}
\end{align*}
mod $OpS^{-\infty, -\infty}$, where $\tilde{A} = \tilde{a}^{w} (x, D) \in S^{0, 0}$ is elliptic in $ \mathcal{C} := \supp a \cup \supp b_1 \cup \supp _2 \cup \supp e$ but supported in sufficiently close neighborhood of $\mathcal{C}$. $\tilde{B} = \tilde{b}^{w} (x, D) \in S^{0, 0}$ satisfies $\tilde{b} (x, \xi) =1$ in $\supp b$ but supported in sufficiently close neighborhood of $\supp b$. By
\begin{align*}
- \im \langle (H-z)u, AC^* C Au \rangle = -\frac{1}{2} \langle u, [H, iAC^* CA]u \rangle + \im z \|CAu\|^2 _2,
\end{align*}
the Cauchy-Schwarz inequality and $\im z >0$, we have
\begin{align*}
- \langle u, [H, iAC^* CA]u \rangle \le \epsilon \|CAu\|^2 _2 + C_{\epsilon} \|CA(P-z)u\|^2 _2.
\end{align*}
From these estimates we obtain
\begin{align*}
\|\Lambda_{\delta} ABu\|^2 _{H^{l, k}} &\lesssim \|\Lambda_{\delta} A (H-z)Bu\|^2 _{H^{l, k}} + \sum_{j=1}^{2} \|\Lambda_{\delta} B_j Bu\|^2 _{H^{l, k}} + \|\Lambda_{\delta} EBu\|^2 _{H^{l, k}} \\
& \quad \quad + \|\Lambda_{\delta} \tilde{A} (\tilde{B} -B)u\|^2 _{H^{l-1/2, k-1/2}} + \|\Lambda_{\delta} \tilde{A} Bu\|^2 _{H^{l-1/2, k-1/2}} + \|u\|^2 _{H^{-L, -L}}
\end{align*}
for any large $L>0$ and $u \in \mathcal{S}$. In the case of $(i)$, by a bootstrap argument, there exist $A_1, \tilde{B}_j, \tilde{E} \in S^{0, 0}$ whose symbols are supported in a sufficiently close neighborhood of $\supp a, \supp b_j, \supp e$ respectively and satisfy
\begin{align}
\|\Lambda_{\delta} ABu\|^2 _{H^{l, k}} &\lesssim \|\Lambda_{\delta} A_1 (H-z)Bu\|^2 _{H^{l, k}} + \sum_{j=1}^{2} \|\Lambda_{\delta} \tilde{B}_j Bu\|^2 _{H^{l, k}} + \|\Lambda_{\delta} \tilde{E} Bu\|^2 _{H^{l, k}} \notag \\ 
& \quad \quad + \|\Lambda_{\delta} A_1 (\tilde{B} -B)u\|^2 _{H^{l-1/2, k-1/2}} + \|u\|^2 _{H^{-L, -L}} \label{2406191524}
\end{align}
for $u \in \mathcal{S}$. In the case of $(ii)$, by interpolation inequalities, 
\begin{align*}
\|\Lambda_{\delta} \tilde{A} Bu\|^2 _{H^{l-1/2, k-1/2}} &\lesssim \|\Lambda_{\delta} ABu\|^2 _{H^{l-1/2, k-1/2}} + \|\Lambda_{\delta} A_2 Bu\|^2 _{H^{l-1/2, k-1/2}} + \|u\|^2 _{H^{-L, -L}} \\
& \lesssim \tilde{\epsilon} \|\Lambda_{\delta} A Bu\|^2 _{H^{l, k}} + \|\Lambda_{\delta} A_2 Bu\|^2 _{H^{l-1/2, k-1/2}} + C_{\tilde{\epsilon}} \|u\|^2 _{H^{-L, -L}}
\end{align*}
holds for any small $\tilde{\epsilon} >0$, where $\sigma (A_2)$ is supported in $\Omega_{-1 + \frac{\epsilon}{2}, -1+ 3\epsilon, \tilde{r}, \tilde{R}, mid}$. By taking $A_3 \in OpS^{0, 0}$ whose symbol $\sigma (A_3)$ is microlocally contained in $(\Omega_{-1 + \frac{\epsilon}{2}, -1+ 3\epsilon, \tilde{r}, \tilde{R}}, \Omega_{-1 + \frac{\epsilon}{4}, -1+ 4\epsilon, \tilde{r}/2, \tilde{R}/2})$, we obtain
\begin{align*}
\|\Lambda_{\delta} A_2 Bu\|^2 _{H^{l-1/2, k-1/2}} &\underbrace{\lesssim}_{\mathrm{support} \; \mathrm{property}} \|\Lambda_{\delta} A_3 Bu\|^2 _{H^{l-1/2, k-1/2}} + \|u\|^2 _{H^{-L, -L}} \\
& \lesssim \|\Lambda_{\delta} A_4 (H-z)Bu\|^2 _{H^{l-1/2, k-1/2}} + \sum_{j=1}^{2} \|\Lambda_{\delta} \tilde{B}_{j, 1} Bu\|^2 _{H^{l, k}} + \|\Lambda_{\delta} \tilde{E}_1 Bu\|^2 _{H^{l, k}} \notag \\ 
& \quad \quad + \|\Lambda_{\delta} A_4 (\tilde{B} -B)u\|^2 _{H^{l-1/2, k-1/2}} + \|u\|^2 _{H^{-L, -L}} \\
& \lesssim \|\Lambda_{\delta} A_4 (H-z)Bu\|^2 _{H^{l-1/2, k-1/2}} + \|\Lambda_{\delta} ABu\|^2 _{H^{l-1/2, k-1/2}} \\
& \quad + \|\Lambda_{\delta} A_4 (\tilde{B} -B)u\|^2 _{H^{l-1/2, k-1/2}} + \|u\|^2 _{H^{-L, -L}} 
\end{align*}
for some $A_4 \in OpS^{0, 0}$, where in the second line we have used propagation of regularity and decay (\ref{2406191524}), in the third line we have used support properties of symbols of $B, \tilde{B}_{2, 1}, \tilde{E}_1$, $\supp \sigma (\tilde{B}_{1, 1}) \subset \Omega_{-1 +\frac{\epsilon}{4}, -1+\frac{\epsilon}{2}, r', R', mid}$ and $a$ is elliptic on $\Omega_{-1 +\frac{\epsilon}{4}, -1+\frac{\epsilon}{2}, r', R', mid}$. By interpolation inequalities, we obtain
\begin{align*}
\|\Lambda_{\delta} A_2 Bu\|^2 _{H^{l-1/2, k-1/2}} &\lesssim \|\Lambda_{\delta} A_4 (H-z)Bu\|^2 _{H^{l-1/2, k-1/2}} + \epsilon' \|\Lambda_{\delta} ABu\|^2 _{H^{l, k}} \\
&\quad \quad + \|\Lambda_{\delta} A_4 (\tilde{B} -B)u\|^2 _{H^{l-1/2, k-1/2}} + C_{\epsilon'} \|u\|^2 _{H^{-L, -L}}
\end{align*}
for any small $\epsilon' >0$. Hence
\begin{align}
\|\Lambda_{\delta} ABu\|^2 _{H^{l, k}} \lesssim \|\Lambda_{\delta} A_5 (H-z)Bu\|^2 _{H^{l, k}} +\|\Lambda_{\delta} A_5 (\tilde{B} -B)u\|^2 _{H^{l-1/2, k-1/2}} + \|u\|^2 _{H^{-L, -L}} \label{2406191802}
\end{align}
holds for some $A_5 \in OpS^{0, 0}$. In $(i)$, by taking $N \gg k$, $M \gg l$ and standard approximation argument (see pp.653 in \cite{Ta3}), (\ref{2406191524}) holds for any $u$ as in the statement $(i)$. Then by taking $\delta \to 0$, we obtain
\begin{align*}
\|ABu\|^2 _{H^{l, k}} &\lesssim \|A_1 (H-z)Bu\|^2 _{H^{l, k}} + \sum_{j=1}^{2} \|\tilde{B}_j Bu\|^2 _{H^{l, k}} \\ 
& \quad + \|\tilde{E} Bu\|^2 _{H^{l, k}} + \|A_1 (\tilde{B} -B)u\|^2 _{H^{l-1/2, k-1/2}} + \|u\|^2 _{H^{-L, -L}} \\
& < \infty
\end{align*} 
by assumptions in $(i)$. Here we have used $u \in \mathcal{S}$ microlocally on the support of $\sigma (\tilde{B} -B)$, which follows from Lemma \ref{2406161842}. Therefore $Bu \in H^{l, k}$ microlocally on $\Omega_{r', R'} (\beta_2)$ for some $r', R' >0$. Next we consider $(ii)$. We take $N, M >0$ such that $(k-N) + (l-M) < 2\alpha$, where $\alpha$ is from Lemma \ref{2406162127}. Since $u \in H^{\alpha, \alpha}$ (and hence $Bu \in H^{\alpha, \alpha}$), again by a standard approximation argument, we obtain (\ref{2406191802}) for any $u$ as in the statement $(ii)$. By taking $\delta \to 0$,
\begin{align*}
\|ABu\|^2 _{H^{l, k}} \lesssim \|A_5 (H-z)Bu\|^2 _{H^{l, k}} + \|A_5 (\tilde{B} -B)u\|^2 _{H^{l-1/2, k-1/2}} + \|u\|^2 _{H^{-L, -L}} < \infty
\end{align*}
holds by assumptions in $(ii)$ and $(\tilde{B} -B)u \in \mathcal{S}$. Hence $Bu \in H^{l, k}$ microlocally on $\Omega_{r', R'} (-1)$ for some $r', R'>0$.
\end{proof}
Next we prove that $(H_0 -z)^{-1}$ does not maps $\mathcal{S}$ into $\mathcal{S}$. In the proof, we consider $U^* H_0 U$ rather than $H_0$ since resolvents of the former are easy to treat.
\begin{proposition}\label{2406161956}
For any $z \in \C^{+}$, there exists $f \in \mathcal{S}$ such that $(H_0 -z)^{-1} f \notin \mathcal{S}$.
\end{proposition}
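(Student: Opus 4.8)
The plan is to transfer the problem to the model operator $\tilde H_0 := U^* H_0 U$, whose symbol is (up to lower order terms) $2y\eta$, and to exhibit explicitly a Schwartz function $f$ for which $(\tilde H_0 - z)^{-1} f$ fails to be Schwartz; conjugating back by $U$ (a homeomorphism of $\mathcal S$) then gives the claim for $H_0$. Concretely, recall from the computation before Subsection \ref{2405091758} that $U^* H_0 U = U^*(xD_x + D_x x - 1)U^*$-type expression; more usefully, in the $(y,\eta)$ picture the operator $\tilde H_0$ is unitarily a dilation-type generator, so its resolvent acts on a function essentially by an explicit oscillatory integral. The idea is that $\frac{1}{2y\eta - 1 - z}$ is smooth and bounded away from the hyperbola $\{y\eta = (1+z)/2\}$ but, because $z\notin\R$, the denominator never vanishes for real $(y,\eta)$; nevertheless $\frac{1}{2y\eta-1-z}$ does not decay as $|y|\to\infty$ along $\eta$ small (or vice versa), so it is not a symbol in any class giving $\mathcal S\to\mathcal S$ mapping. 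I would make this rigorous by choosing $f$ so that $\mathcal F f$ (in the appropriate variable) is a fixed bump, computing $(\tilde H_0-z)^{-1}f$ as a one–dimensional-type integral in each hyperbolic sector, and showing the result has only polynomial (indeed no) decay in the outgoing radial direction $\{|x|=|\xi|,\ \cos(x,\xi)=1\}$, matching the microlocal obstruction identified in Theorem \ref{2406161602}(ii).

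The key steps, in order, are: (1) Write $\tilde H_0 = U^* H_0 U$ and use the explicit unitary $U$ to reduce $(H_0-z)^{-1}f\in\mathcal S$ to $(\tilde H_0 - z)^{-1} (U^* f)\in\mathcal S$; since $U:\mathcal S\to\mathcal S$ is a homeomorphism it suffices to produce a bad $g=U^*f\in\mathcal S$ for $\tilde H_0$. (2) Diagonalize or at least simplify $\tilde H_0$: after a further change of variables $y\eta$ becomes a product, and $\tilde H_0 - z$ becomes multiplication-like after a partial Fourier/Mellin transform, so its inverse is given by a concrete integral operator with kernel built from $1/(2y\eta - 1 - z)$. (3) Pick $g\in\mathcal S$, e.g. $g = U^*(\,\text{Gaussian}\,)$ or a Gaussian times a suitable oscillating factor, so that the integral defining $(\tilde H_0-z)^{-1}g$ can be evaluated or bounded below. (4) Estimate $(\tilde H_0-z)^{-1}g$ in the region $|x|\sim|\xi|$, $\cos(x,\xi)\approx 1$ and show it does not decay faster than a fixed power (or does not decay at all); by Lemma \ref{2406161842} and Theorem \ref{2406181741}, singularities/decay can only fail on the characteristic set $\{p_0=0\}$, i.e. exactly where $|x|=|\xi|$, so it is consistent and indeed forced that the obstruction sits there. (5) Conclude $(\tilde H_0-z)^{-1}g\notin\mathcal S$, hence $(H_0-z)^{-1}f\notin\mathcal S$ with $f = Ug$.

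The main obstacle I anticipate is step (4): producing a genuine \emph{lower} bound showing the non-decay, rather than merely an upper bound that fails to give Schwartz decay. For this I would exploit the non-stationary/stationary phase structure: along the radial sink the phase of the relevant oscillatory integral becomes stationary (this is the classical statement that bicharacteristics of $p_0 = \xi^2 - x^2$ run off to infinity in the radial directions, $y(t)\sim e^{2t}$), so a stationary phase expansion yields a leading term of size a fixed negative power of $\langle x\rangle$ that does not cancel, whereas away from the sink repeated integration by parts (non-stationary phase) gives rapid decay — exactly the dichotomy in Theorem \ref{2406161602}. A clean way to organize this is: choose $f$ microlocalized near an outgoing point $(x_0,\xi_0)$ with $|x_0|=|\xi_0|$, $\cos(x_0,\xi_0)=1$, use Lemma \ref{2406161703} for local regularity, use Lemma \ref{2406161842} to discard the elliptic region, and then on $\{|x|\sim|\xi|\}$ compare $(H_0-z)^{-1}f$ with the first term of a parametrix for $\tilde H_0 - z$ in the $(y,\eta)$ coordinates, where $\tilde H_0 - z$ is (a multiple of) $yD_y + D_y y - 1 - z$-type and its resolvent is computed exactly by an elementary ODE/Mellin argument, yielding an explicit non-Schwartz profile. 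The bookkeeping between the two hyperbolic sectors $y\eta>0$ and $y\eta<0$, and the sign of $\im z$ selecting which boundary value one is on, is the part requiring care; everything else is routine stationary phase and the already-established mapping properties of $U$.
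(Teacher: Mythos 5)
Your reduction to the dilation generator via $U$ matches the paper exactly, and you correctly anticipate at the very end that the resolvent of $\tilde H_0 - z$ is computable "by an elementary ODE/Mellin argument." But you never carry that computation out, and the bulk of your plan — microlocalizing $f$, parametrix comparison on $\{|x|\sim|\xi|\}$, stationary phase to produce a lower bound — is a detour the paper avoids entirely. Proposition \ref{2406161956} is a statement about global failure of Schwartz decay, not about a microlocal obstruction (that is Theorem \ref{2406161602}(ii), which is proved afterwards by combining part (i) with this proposition), so Lemmas \ref{2406161703} and \ref{2406161842} do not enter here.

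The paper's actual argument is elementary: since $U^* H_0 U = xD_x + D_x x - 1 =: 2B - 1$, the resolvent equation $(2B - z - 1)g = f$ is a first-order ODE along radial lines, solvable by an integrating factor:
\[
g(x) = \frac{i}{2}\int_0^1 s^{-\frac{1+z}{2}i - 1 + \frac{d}{2}}\, f(xs)\, ds,
\]
which lies in $L^2$ because $\im z > 0$, hence $g = (2B - z - 1)^{-1} f$. Specializing to radial $f(x) = h(|x|)$ gives
\[
(2B - z - 1)^{-1} f
= \frac{i}{2}\, |x|^{\frac{1+z}{2}i - \frac{d}{2}} \int_0^{|x|} s^{-\frac{1+z}{2}i - 1 + \frac{d}{2}} h(s)\, ds.
\]
If this were Schwartz for every such $f$, then letting $|x|\to\infty$ forces the Mellin moment $\int_0^\infty s^{-\frac{1+z}{2}i - 1 + \frac{d}{2}} h(s)\, ds = 0$ for all $h \in C_0^\infty((0,\infty))$, contradicting the fundamental lemma of the calculus of variations. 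No lower-bound estimate, stationary phase, or parametrix is needed; the obstruction is a single non-vanishing moment.

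The concrete gap in your proposal is precisely the step you flagged yourself: you treat the resolvent as a hard oscillatory integral requiring a stationary-phase lower bound, when in fact the operator is first order along the radial flow, so the resolvent is given by an explicit closed-form integral whose failure to decay is visible by inspection of the prefactor $|x|^{\frac{1+z}{2}i - \frac{d}{2}}$. Reorganizing your proof around the integrating-factor computation and the contradiction-via-moment argument would turn your outline into the paper's proof; pursuing the stationary-phase lower bound would be substantially harder and is not how the paper closes the argument.
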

\begin{proof}
Since $U^* H_0 U = xD + Dx -1=: 2B-1$ holds, we have $U^* (H_0 -z)^{-1} U = (2B-z-1)^{-1}$. Since $U$ and $U^*$ are homeomorphisms on $\mathcal{S}$, it suffices to show that there exists $f \in \mathcal{S}$ such that $(2B-1-z)^{-1} f \notin \mathcal{S}$. For $f \in \mathcal{S}$, we define
\begin{align*}
g(x) = \frac{i}{2} \int_{0}^{1} s^{-\frac{1+z}{2} i-1+ \frac{d}{2}} f(xs)ds.
\end{align*}  
Since $\im z >0$, $g \in C^{\infty} (\R^d)$. Furthermore, by integration by parts, 
\begin{align*}
2xDg = \int_{0}^{1} s^{-\frac{1+z}{2} i+ \frac{d}{2}} \frac{d}{ds} (f(xs)) ds = f-(-z-1-di)g
\end{align*}
holds, which implies $(2B-z-1)g =f$. Since
\begin{align*}
\|g\|_2 \le \frac{1}{2} \int_{0}^{1} s^{\frac{d}{2} -1 + \im z} \|f(s \cdot)\|_{L^2 _x} ds = \frac{\|f\|_2}{2} \int_{0}^{1} s^{-1 + \im z} ds < \infty ,
\end{align*}
$g \in D(B)$ and $g= (2B-z-1)^{-1} f$ holds. We suppose $(2B-z-1)^{-1} f \in \mathcal{S}$ for all $f \in \mathcal{S}$. Then, for radial $f(x) = h(|x|)$, we have
\begin{align*}
\mathcal{S} \ni (2B-z-1)^{-1} f = \frac{i}{2} |x|^{\frac{1+z}{2} i- \frac{d}{2}} \int_{0}^{|x|} s^{-\frac{1+z}{2} i-1+ \frac{d}{2}} h(s) ds.
\end{align*}
In particular we obtain
\begin{align*}
\int_{0}^{\infty} s^{-\frac{1+z}{2} i-1+ \frac{d}{2}} h(s) ds = \lim_{|x| \to \infty} \left(\frac{2}{i} |x|^{- \frac{1+z}{2} i + \frac{d}{2}} (2B-z-1)^{-1} f(x) \right) = 0
\end{align*}
for any $h \in C^{\infty} _0 ((0, \infty))$. Since $s^{-\frac{1+z}{2} i-1+ \frac{d}{2}} \in L^1 _{loc} ((0, \infty))$, by the fundamental lemma of the calculus of variations, this yields contradiction.
\end{proof}
Now we are ready to prove Theorem \ref{2406161602}. By Lemma \ref{2406161842}, we do not need to consider compact regions in the phase space.
\begin{proof}[Proof of Theorem \ref{2406161602}]
We first note that if $(H-z)u \in \mathcal{S}$ and $u \in L^2 (\R^d)$, then $Bu \in C^{\infty} (\R^d)$ holds. This follows from $(H-z) Bu = (H-z)u - (H-z) (1-B)u \in \mathcal{S}$ and Lemma \ref{2406161703}, where the former follows from $(1-B)u \in \mathcal{S}$ (see Lemma \ref{2406161842}). Then we first apply Theorem \ref{2406181741} $(ii)$ and next apply $(i)$. As a result we obtain Theorem \ref{2406161602} $(i)$. Theorem \ref{2406161602} $(ii)$ follows from $(i)$ and Proposition \ref{2406161956}.  
\end{proof}
\section{Nonlinear Schr\"odinger equations for infinitely many particles under the effect of Aharonov-Bohm potentials}\label{2406230934}
In this section we consider the Hartree equations for infinitely many particles:
\begin{align}
\left\{
\begin{array}{l}
i\partial_t \gamma=[H_{A, a} +w*\rho_{\gamma},\gamma] \\
\gamma(0)=\gamma_0
\end{array}
\right.
\tag{H}\label{2405192052}
\end{align}
Here $w: \R^d \to \R$ is a function of interaction and $\gamma : \R \to \mathcal{B} (\mathcal{H}) = \mathcal{B} (L^2 (\R^2))$ is an operator-valued function. For $A \in \mathcal{B}(\mathcal{H})$, $\rho_A (x) : = k_A (x, x)$ denotes the density function of $A$, where $k_A (x, y)$ is the integral kernel of $A$. We sometimes write $\rho (A)$. Note that (\ref{2405192052}) is an infinitely many particle version of the following $N$ particle system:
\[
\left\{
\begin{array}{l}
i\partial_t u_j=H_{A, a} u_j+w*(\displaystyle \sum_{k=1}^{k=N} |u_k|^2)u_j \\
u_j(0)= u_{0,j}
\end{array}
\right.
\]     
for $j=1,2,\dots,N$.
We refer to \cite{CHP1}, \cite{CHP2}, \cite{LS}, \cite{LS2}, \cite{Ha}, \cite{HH} and references therein for well-posedness and scattering for infinitely many particle systems, though all of these results are considered for the free Hamiltonian. To state our result we recall the definition of the Schatten class.
\begin{defn}[Schatten space]
Let $\mathcal{H}_1$ and $\mathcal{H}_2$ be a Hilbert space. For a compact operator 
$A : \mathcal{H}_1 \rightarrow \mathcal{H}_2$, the singular values $\{ \mu_n \}$ of $A$  is defined as the set of all the eigenvalues of $(A^*A)^{1/2}$. The Schatten space $\mathfrak{S}^{\alpha} (\mathcal{H}_1 \rightarrow \mathcal{H}_2)$ for $\alpha \in [1, \infty]$ is the set of all the compact operators: $\mathcal{H}_1 \rightarrow \mathcal{H}_2$  such that its singular values belong to $\ell^{\alpha}$. Its norm is defined by the $\ell^\alpha$ norm of the singular values. We also use the following notation for simplicity: $\|S\|_{\mathfrak{S}^{\alpha} (\mathcal{H}_0)} := \|S\|_{\mathfrak{S}^{\alpha} (\mathcal{H}_0 \rightarrow \mathcal{H}_0)}$.
\end{defn}
Concerning the global existence for large initial data, we have the following result.
\begin{thm}\label{2405232007}
Suppose $H_{A, a}$ is as in Example \ref{2405072241}. Let $(p, q)$ satisfy $1 \le p \le \infty, 1 \le q <3$ and $\frac{1}{p} + \frac{1}{q} =1$. We assume $w \in L^{q'}$ and $\gamma _0 \in \mathfrak{S}^{2q/q+1}$. Then there exists a unique global solution $\gamma$ to (\ref{2405192052}) satisfying $\gamma \in C(\R; \mathfrak{S}^{2q/q+1})$ and $\rho _{\gamma} \in L^p _{\mathrm{loc}} (\R; L^q _x)$.
\end{thm}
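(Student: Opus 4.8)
The plan is to solve (\ref{2405192052}) by a contraction mapping argument on the Duhamel formulation
\[
\gamma(t)=e^{-itH_{A,a}}\gamma_0 e^{itH_{A,a}}-i\int_0^t e^{-i(t-s)H_{A,a}}[w*\rho_\gamma(s),\gamma(s)]e^{i(t-s)H_{A,a}}\,ds,
\]
and then to promote the local solution to a global one using the conservation of the Schatten norm of $\gamma(t)$. Put $\alpha:=\frac{2q}{q+1}$, so that $\mathfrak{S}^{\alpha}=\mathfrak{S}^{2q/(q+1)}$. The hypotheses $\frac1p+\frac1q=1$ and $1\le q<3$ say precisely that $(2p,2q)$ is a $1$-admissible pair and that $\frac{2\cdot(2q)}{2q+2}=\alpha$ is the associated endpoint exponent $\beta$ in Theorem \ref{2405021538}; since $\|e^{-itH_{A,a}}\|_{1\to\infty}\lesssim|t|^{-1}$ globally, Corollary \ref{2405080015} applies with $\sigma=1$. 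Decomposing $\gamma_0=\sum_j\nu_j|f_j\rangle\langle f_j|$ spectrally, one has $\rho(e^{-itH_{A,a}}\gamma_0 e^{itH_{A,a}})=\sum_j\nu_j|e^{-itH_{A,a}}f_j|^2$, so Corollary \ref{2405080015} is exactly the homogeneous density estimate
\[
\|\rho(e^{-itH_{A,a}}\gamma_0 e^{itH_{A,a}})\|_{L^p_tL^q_x}\lesssim\|\gamma_0\|_{\mathfrak{S}^{\alpha}},
\]
which by time-translation invariance also holds with any initial time $s$ in place of $0$ (equivalently one may quote the duality principle, Lemma \ref{2405031343}).

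For the Duhamel term the crucial step is a retarded version of this bound: if $g\in L^1_{\mathrm{loc}}(\R;\mathfrak{S}^{\alpha})$, then Minkowski's integral inequality in $s$ combined with the homogeneous estimate applied from each time $s$ gives
\[
\Bigl\|\rho\Bigl(\int_0^t e^{-i(t-s)H_{A,a}}g(s)e^{i(t-s)H_{A,a}}\,ds\Bigr)\Bigr\|_{L^p([0,T];L^q_x)}\lesssim\|g\|_{L^1([0,T];\mathfrak{S}^{\alpha})},
\]
with no Christ--Kiselev argument needed because the source sits in $L^1_t$; here one uses that complex coefficients are allowed in the orthonormal Strichartz estimate, so self-adjointness of $g(s)$ is irrelevant. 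Fix $T>0$ and work in $X_T:=\{\gamma\in C([0,T];\mathfrak{S}^{\alpha}):\rho_\gamma\in L^p([0,T];L^q_x)\}$ normed by $\|\gamma\|_{X_T}=\|\gamma\|_{L^\infty_t\mathfrak{S}^{\alpha}}+\|\rho_\gamma\|_{L^p_tL^q_x}$, and let $\Phi$ be the right-hand side of the Duhamel identity. Young's inequality (with $\frac1{q'}+\frac1q=1$) gives $\|w*\rho_\gamma(s)\|_{L^\infty_x}\le\|w\|_{L^{q'}}\|\rho_\gamma(s)\|_{L^q_x}$, hence $\|[w*\rho_\gamma(s),\gamma(s)]\|_{\mathfrak{S}^{\alpha}}\le 2\|w\|_{L^{q'}}\|\rho_\gamma(s)\|_{L^q_x}\|\gamma(s)\|_{\mathfrak{S}^{\alpha}}$. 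Estimating the Duhamel term in $L^\infty_t\mathfrak{S}^{\alpha}$ by the $\mathfrak{S}^{\alpha}$-triangle inequality and its density by the retarded estimate, and using H\"older in time to pull out a factor $T^{1/p'}$, one obtains $\|\Phi(\gamma)\|_{X_T}\lesssim\|\gamma_0\|_{\mathfrak{S}^{\alpha}}+T^{1/p'}\|w\|_{L^{q'}}\|\gamma\|_{X_T}^2$ together with the matching Lipschitz bound for $\Phi(\gamma_1)-\Phi(\gamma_2)$ (splitting $[w*\rho_{\gamma_1},\gamma_1]-[w*\rho_{\gamma_2},\gamma_2]=[w*(\rho_{\gamma_1}-\rho_{\gamma_2}),\gamma_1]+[w*\rho_{\gamma_2},\gamma_1-\gamma_2]$). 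Since $1\le p'<3$, the power $1/p'\ge\frac13$ is positive, so for $T=T(\|\gamma_0\|_{\mathfrak{S}^{\alpha}},\|w\|_{L^{q'}})$ small $\Phi$ is a contraction on a ball of radius comparable to $\|\gamma_0\|_{\mathfrak{S}^{\alpha}}$, which yields a unique local solution.

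To globalize, note that $w*\rho_\gamma$ is real-valued and lies in $L^1_{\mathrm{loc}}(\R;L^\infty_x)$, so the self-adjoint family $H_{A,a}+w*\rho_\gamma(t)$ generates a strongly continuous unitary propagator $\mathcal{U}(t)$ and $\gamma(t)=\mathcal{U}(t)\gamma_0\mathcal{U}(t)^*$. In particular $\|\gamma(t)\|_{\mathfrak{S}^{\alpha}}=\|\gamma_0\|_{\mathfrak{S}^{\alpha}}$ on the whole interval of existence, and $t\mapsto\gamma(t)$ is norm-continuous into $\mathfrak{S}^{\alpha}$ (strong continuity of $\mathcal{U}(t)$ and $\mathcal{U}(t)^*$ suffices since $\alpha<\infty$). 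Because the local existence time depends only on $\|\gamma_0\|_{\mathfrak{S}^{\alpha}}$ and $\|w\|_{L^{q'}}$, iterating produces a global solution $\gamma\in C(\R;\mathfrak{S}^{\alpha})$; only $\rho_\gamma\in L^p_{\mathrm{loc}}(\R;L^q_x)$ is obtained, since the crude $L^1_t$ bound on the Duhamel source picks up a positive power of the interval length at each step and cannot be summed globally for large data. Uniqueness is local from the contraction and propagates to all of $\R$ by a standard continuation argument. This follows, for $H_{A,a}$, the scheme used for the free Hartree flow in \cite{CHP1} and \cite{LS}.

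The main obstacle is making the representation $\gamma(t)=\mathcal{U}(t)\gamma_0\mathcal{U}(t)^*$ and the Schatten-norm conservation rigorous, because $H_{A,a}$ is only the Friedrichs extension of a singular magnetic/inverse-square operator and the perturbation $w*\rho_\gamma$ has merely $L^1_{\mathrm{loc}}$ regularity in time. I would handle this by approximating $w*\rho_\gamma$ by potentials smooth in time (and, if necessary, compactly supported in space), constructing the propagators of the regularized problems via a time-ordered exponential on the form domain of $H_{A,a}$, verifying the conjugation identity and $\mathfrak{S}^{\alpha}$-norm conservation there, and passing to the limit using the a priori $X_T$-bounds from the contraction step and the density of finite-rank operators in $\mathfrak{S}^{\alpha}$. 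Everything else is routine once the orthonormal Strichartz estimate of Section \ref{2311152141} is available.
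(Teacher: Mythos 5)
Your proposal is correct and implements precisely the scheme the paper has in mind: the paper omits this proof, remarking only that it is ``based on the orthonormal Strichartz estimate for $H_{A,a}$ and analogous to [H1] and [FS],'' and your Duhamel/contraction argument using the homogeneous density estimate (Example \ref{2405072241}/Corollary \ref{2405080015}), the Minkowski-derived retarded estimate, Young plus H\"older for the nonlinearity, and conservation of $\mathfrak{S}^{\alpha}$-norm via the propagator is exactly that scheme.

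One parenthetical remark in your write-up is misleading, though it does not create a gap. You say ``complex coefficients are allowed in the orthonormal Strichartz estimate, so self-adjointness of $g(s)$ is irrelevant.'' Allowing complex $\nu_j$ covers all normal operators of the form $\sum_j \nu_j |f_j\rangle\langle f_j|$, but a general element of $\mathfrak{S}^{\alpha}$ has singular value decomposition $\sum_j \mu_j |f_j\rangle\langle g_j|$ with two different orthonormal systems and is not directly covered. The correct observation is that if $\gamma(s)$ is self-adjoint and $w*\rho_{\gamma}(s)$ is real, then $-i[w*\rho_{\gamma}(s),\gamma(s)]$ is again self-adjoint, so the fixed-point space can be taken to be self-adjoint operators and the source is always in diagonal form with real coefficients. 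Alternatively one extends the density estimate to all of $\mathfrak{S}^{\alpha}$ by a polarization or Cauchy--Schwarz argument (as in Frank--Sabin or Lewin--Sabin), but restricting to the self-adjoint sector is both natural here and what [H1], [LS] do. Your identification of the rigour issue around the propagator representation and the approximation strategy to resolve it is appropriate and matches the standard treatment.
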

The proof is based on the orthonormal Strichartz estimate for $H_{A, a}$ and analogous to \cite{H1} and \cite{FS}. Hence we omit it here. Next we prove that, for a small initial data, unique solution $\gamma$ scatters. There seems to have been no result on the scattering for the Schr\"odinger operator with electromagnetic potentials.
\begin{thm}\label{2405232027}
Suppose $H_{A, a}$ is as in Example \ref{2405072241} and $w \in L^1 (\R^2)$. Then there exists $\epsilon _0 >0$ such that if $\|\gamma _0\|_{\mathfrak{S}^{4/3}} < \epsilon _0$, there exists a unique global solution $\gamma$ to (\ref{2405192052}) satisfying $\gamma \in C(\R; \mathfrak{S}^{4/3})$ and $\rho _{\gamma} \in L^2 _t L^2 _x$. Furthermore there exists $\gamma _{\pm} \in \mathfrak{S}^{4/3}$ such that
\begin{align*}
\lim_{t \to \pm \infty} \| \gamma (t) - e^{-itH_{A, 0}} \gamma _{\pm} e^{itH_{A, 0}} \|_{\mathfrak{S}^{4/3}} =0
\end{align*}
holds.
\end{thm}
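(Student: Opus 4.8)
The plan is to run a Banach fixed point argument for the Duhamel formulation of \eqref{2405192052} exactly as in the small–data scattering theory for the free Hamiltonian in \cite{FS} and \cite{H1}, now using the orthonormal Strichartz estimate for $H_{A,a}$ proved in Example \ref{2405072241} (equivalently Corollary \ref{2405080015}) as the linear input. Writing the solution as $\gamma(t)=e^{-itH_{A,a}}\gamma_0e^{itH_{A,a}}-i\int_0^t e^{-i(t-s)H_{A,a}}[w*\rho_\gamma(s),\gamma(s)]e^{i(t-s)H_{A,a}}\,ds$, I would set up the metric space $X=\{\gamma\in C(\R;\mathfrak S^{4/3}):\ \|\rho_\gamma\|_{L^2_tL^2_x}\le K\epsilon_0\}$ with the obvious metric, and show that the Duhamel map is a contraction on $X$ for $\epsilon_0$ small. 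The dispersive estimate $\|e^{-itH_{A,a}}\|_{1\to\infty}\lesssim|t|^{-1}$ gives $\sigma=1$, and the admissible triple $(q,r,\beta)=(4,4,4/3)$ satisfies the hypotheses of Theorem \ref{2405021538}; thus $\tilde\gamma_0\mapsto\rho(e^{-itH_{A,a}}\tilde\gamma_0e^{itH_{A,a}})$ maps $\mathfrak S^{4/3}\to L^2_tL^2_x$, which is the homogeneous linear estimate. The inhomogeneous (retarded) estimate on the density, dual-localized to the same exponents, is obtained from the duality principle Lemma \ref{2405031343} together with the $TT^*$ and Christ–Kiselev machinery exactly as in the proof of Proposition \ref{2405021420}; I would simply cite the global-in-time form since $\sigma=1$ puts us in the admissible range with $I=\R$.

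The estimates in the contraction are the standard ones: by the Kato–Seiler–Simon / Hölder inequality in Schatten spaces, $\|[w*\rho_\gamma,\gamma]\|$ in the relevant retarded dual space is controlled by $\|w\|_{L^1}\|\rho_\gamma\|_{L^2_tL^2_x}\|\gamma\|_{L^\infty_t\mathfrak S^{4/3}}$, and $\|\gamma(t)\|_{\mathfrak S^{4/3}}$ is propagated by unitarity of $e^{-itH_{A,a}}$ plus the inhomogeneous bound; the difference estimate is the same with one factor of $\rho$ replaced by a difference. Choosing $\epsilon_0$ so that $K^2\epsilon_0<\tfrac12$ (say) closes the fixed point, giving the unique $\gamma\in C(\R;\mathfrak S^{4/3})$ with $\rho_\gamma\in L^2_tL^2_x$. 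Uniqueness in the larger class follows from the same difference estimate run on a bounded time interval together with a continuity/continuation argument, exactly as in \cite{FS}.

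For scattering, I would show $e^{itH_{A,a}}\gamma(t)e^{-itH_{A,a}}$ is Cauchy in $\mathfrak S^{4/3}$ as $t\to\pm\infty$: the difference between times $s$ and $t$ is $\int_s^t e^{ir H_{A,a}}[w*\rho_\gamma,\gamma](r)e^{-irH_{A,a}}\,dr$, whose $\mathfrak S^{4/3}$ norm is bounded by the retarded estimate applied on $[s,t]$ by a quantity involving $\|\rho_\gamma\|_{L^2([s,t];L^2_x)}$, which tends to $0$ since $\rho_\gamma\in L^2_tL^2_x$. This produces $\widetilde\gamma_\pm\in\mathfrak S^{4/3}$; then, as in the proof of Corollary \ref{2405081750} and Theorem \ref{2405082129}, the wave operators $W_\pm=\slim_{t\to\pm\infty}e^{itH_{A,a}}e^{-itH_{A,0}}$ (whose existence and completeness were established there via the Kato–Yajima bound \eqref{2405081902}) conjugate $\widetilde\gamma_\pm$ to the desired asymptotic states $\gamma_\pm$, i.e. $\gamma_\pm=W_\pm^*\widetilde\gamma_\pm W_\pm$ interpreted at the level of the intertwining $e^{-itH_{A,a}}W_\pm=W_\pm e^{-itH_{A,0}}$, so that $\|\gamma(t)-e^{-itH_{A,0}}\gamma_\pm e^{itH_{A,0}}\|_{\mathfrak S^{4/3}}\to0$.

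The one genuinely delicate point, and the place I would spend the most care, is the inhomogeneous orthonormal Strichartz estimate for $H_{A,a}$ on the full line $\R$: the homogeneous estimate is handed to us by Theorem \ref{2405021538}, but the retarded version needs either Lemma \ref{2405031343} in its space-time Lorentz form combined with the interpolation argument of Proposition \ref{2405021420}, or a Christ–Kiselev type lemma in the Schatten-valued setting; one must check that the self-adjointness of $H_{A,a}$ (rather than an explicit propagator) suffices, which it does because only the dispersive bound $\|e^{-i(t-s)H_{A,a}}\|_{1\to\infty}\lesssim|t-s|^{-1}$ enters. Everything else — the Schatten Hölder inequalities, the $w\in L^1$ bookkeeping, propagation of the $\mathfrak S^{4/3}$ norm — is routine and parallels \cite{FS}, \cite{H1} verbatim, so I would keep the write-up short and refer the reader there, noting only the substitution of $e^{it\Delta}$ by $e^{-itH_{A,a}}$ and the use of $W_\pm$ from the proof of Corollary \ref{2405081750}.
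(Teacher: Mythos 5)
Your proof takes a genuinely different route from the paper's, and in doing so it relies on a step that the paper deliberately avoids and does not establish.

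The paper's proof (Section 4) does \emph{not} run a Duhamel iteration on the operator-valued unknown $\gamma$. Instead, following Lewin--Sabin and Hadama, it constructs the full propagator $U_V(t,s)$ for the time-dependent linear problem $i\partial_t U_V=(H_{A,a}+V)U_V$ when $V\in L^2_{t,x}$ (Lemma \ref{2405252344}), transfers the orthonormal Strichartz estimate from $e^{-itH_{A,a}}$ to $U_V$ via a Duhamel expansion of $U_V$ itself (Lemma \ref{2405251312}), and then proves the two estimates of Lemma \ref{2405251355} on the density: $\|\rho(U_V\gamma_0U_V^*)\|_{L^2_{t,x}}\lesssim\|\gamma_0\|_{\mathfrak S^{4/3}}$ and the corresponding Lipschitz bound in $V$. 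The fixed point is then run on the \emph{scalar} density $\phi\in L^2_{t,x}$ via the map $\mathcal L[g]=\rho(U_{w*g}\gamma_0U_{w*g}^*)$, and $\gamma$ is recovered afterwards as $U_{w*\phi}\gamma_0U_{w*\phi}^*$. The whole point of this detour is that one never needs to estimate the density of a retarded integral $\int_0^t e^{-i(t-s)H}[V(s),\gamma(s)]e^{i(t-s)H}\,ds$: only the \emph{homogeneous} orthonormal Strichartz bound, applied to the perturbed propagator, is used.

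Your plan, by contrast, iterates the Duhamel map directly on $\gamma\in C(\R;\mathfrak S^{4/3})$ and therefore needs an inhomogeneous (retarded) orthonormal Strichartz estimate on $\R$ for the density of the commutator term. You yourself flag this as the delicate step, but the fix you sketch does not close it. Lemma \ref{2405031343} is the duality principle for the \emph{homogeneous} estimate and says nothing about the time-restricted operator; and the Christ--Kiselev lemma, in its standard Banach-valued form, bounds $\tilde Tf(t)=\int_{s<t}K(t,s)f(s)\,ds$ given $L^p_t\to L^q_t$ boundedness of $T$, which is structurally different from an estimate $\|\gamma_0\|_{\mathfrak S^\alpha}\mapsto\|\rho(\cdot)\|_{L^{q/2}L^{r/2}}$ where the source norm is a Schatten norm and the target is obtained after the nonlinear ``take the density'' operation. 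A Schatten-valued retarded orthonormal Strichartz estimate of this type does appear in the literature (cf.\ Lewin--Sabin, Chen--Hong--Pavlovi\'c), but it requires its own argument and is not a consequence of the machinery developed in this paper. As written, that step is a genuine gap; the $U_V$-based construction in Lemmas \ref{2405252344}--\ref{2405251355} is precisely the device that renders it unnecessary. Your scattering argument at the end (Cauchy in $\mathfrak S^{4/3}$, then conjugate by $W_\pm$) does match the paper's and is fine once the global-in-time $\rho_\gamma\in L^2_{t,x}$ solution is in hand.
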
 
Before proving Theorem \ref{2405232027}, we collect some lemmas. The first one is concerned with a propagator of linear Schr\"odinger equations with time-dependent potentials. For the free Schr\"odinger operator, the following lemma is proved by Yajima \cite{Y1} (see also \cite{FLLS}).
\begin{lemma}\label{2405252344}
Let $V \in L^2 _{t, x}$ be a real-valued function. Then there exists a family of unitary operators $\{U_V (t, s)\}$ such that $U_V (s, s) =1$, $U_V (t, s) U_V (s, r) = U_V (t, r)$ and $U_V$ is a propagator associated to $i\partial_t U_V (t, s) = (H_{A, a} +V)U_V (t, s)$, i.e. 
\begin{align}
U_V (t, s)u_0 = e^{-i(t-s)H_{A, a}} u_0 -i \int_{s}^{t} e^{-i(t-\tau)H_{A, a}} V(\tau)U_V (\tau, s) u_0 d\tau \label{2405270156}
\end{align}
holds for all $u_0 \in L^2 (\R^2)$.
\end{lemma}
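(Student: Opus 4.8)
The plan is to follow the construction of Yajima \cite{Y1} for the free Schr\"odinger operator, with $e^{it\Delta}$ replaced by $e^{-itH_{A,a}}$. The only input needed from $H_{A,a}$ is the dispersive estimate $\|e^{-itH_{A,a}}\|_{1\to\infty}\lesssim|t|^{-1}$ proved in \cite{FFFP} (already used in Example \ref{2405072241}), which via the Keel--Tao theorem (\cite{KT}; equivalently Remark \ref{2405050957} with $\phi\equiv1$, $\sigma=1$) yields the homogeneous and inhomogeneous Strichartz estimates on $\R^2$ for the admissible pair $(4,4)$: for any interval $I$ and any $s\in I$,
\begin{align*}
\|e^{-i(t-s)H_{A,a}}u_0\|_{L^4(I;L^4_x)}\lesssim\|u_0\|_2,\qquad\left\|\int_s^t e^{-i(t-\tau)H_{A,a}}F(\tau)\,d\tau\right\|_{L^\infty(I;L^2_x)\cap L^4(I;L^4_x)}\lesssim\|F\|_{L^{4/3}(I;L^{4/3}_x)}
\end{align*}
with constants independent of $I$ and $s$.

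First I would build $U_V(t,s)$ locally. Fix $s$; since $V\in L^2_{t,x}$, the function $t\mapsto\int_s^t\|V(\tau)\|_2^2\,d\tau$ is continuous, so any compact time interval is a finite union of intervals $I$ on which $\|V\|_{L^2(I;L^2_x)}$ is below any prescribed threshold. On such an $I=[s,s\pm\delta]$, H\"older gives $\|Vu\|_{L^{4/3}(I;L^{4/3}_x)}\le\|V\|_{L^2(I;L^2_x)}\|u\|_{L^4(I;L^4_x)}$, so by the two displayed estimates the map
\[
\Phi_{u_0}:u\longmapsto e^{-i(t-s)H_{A,a}}u_0-i\int_s^t e^{-i(t-\tau)H_{A,a}}V(\tau)u(\tau)\,d\tau
\]
is a contraction on a ball of $C(I;L^2_x)\cap L^4(I;L^4_x)$ once $\|V\|_{L^2(I;L^2_x)}$ is small enough (threshold depending only on the Strichartz constants). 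Its unique fixed point $t\mapsto U_V(t,s)u_0$ is linear and bounded in $u_0\in L^2$, satisfies (\ref{2405270156}) on $I$, and is the \emph{unique} solution of (\ref{2405270156}) in the Strichartz class. Uniqueness forces the cocycle identity $U_V(t,s)U_V(s,r)=U_V(t,r)$ for $t,s,r$ in a common short interval, hence, by concatenation over the finitely many subintervals, $U_V(t,s)$ extends to all $t,s\in\R$ with $U_V(s,s)=1$ and the cocycle property globally.

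The substantive point is \emph{unitarity} of each $U_V(t,s)$. Formally $\tfrac{d}{dt}\|U_V(t,s)u_0\|_2^2=2\re\langle-i(H_{A,a}+V)U_V(t,s)u_0,U_V(t,s)u_0\rangle=0$ since $V$ is real, but $U_V(t,s)u_0$ lies a priori only in $L^2$, so I would argue by approximation. Take bounded real potentials $V_n$ (e.g. $V_n=V$ on $\{|V|\le n,\ |t|\le n\}$ and $0$ elsewhere), so $V_n\to V$ in $L^2_{t,x}$; for bounded $V_n$ the propagator $U_{V_n}(t,s)$ exists by the classical Dyson-series (or Kato) construction, is unitary, and solves (\ref{2405270156}) with $V_n$. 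Applying the Strichartz/H\"older estimates to the difference $U_{V_n}(t,s)u_0-U_V(t,s)u_0$ on each short interval (the first Duhamel error being $\lesssim\|V_n-V\|_{L^2(I;L^2_x)}\|u_0\|_2\to0$ and the second absorbed by smallness of $\|V\|_{L^2(I;L^2_x)}$), then iterating over the subintervals via the cocycle, gives $U_{V_n}(t,s)\to U_V(t,s)$ strongly on $L^2(\R^2)$. A strong limit of isometries is an isometry, so $\|U_V(t,s)u_0\|_2=\|u_0\|_2$; together with $U_V(t,s)U_V(s,t)=U_V(s,t)U_V(t,s)=U_V(s,s)=1$ from the cocycle, this shows $U_V(t,s)$ is a surjective isometry, i.e. unitary. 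Strong continuity of $(t,s)\mapsto U_V(t,s)$ then follows from (\ref{2405270156}), strong continuity of $e^{-itH_{A,a}}$, and dominated convergence using the uniform Strichartz bounds. The hard part is precisely this approximation/unitarity step --- keeping the Strichartz constants uniform over the finitely many subintervals and justifying the limit --- whereas the local contraction, the cocycle identity, and the global iteration are routine once the Strichartz estimates for $H_{A,a}$ are in hand.
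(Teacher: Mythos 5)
Your proof is correct and follows essentially the same route as the paper's: a contraction in $C(I;L^2)\cap L^4(I;L^4_x)$ on short time intervals via the $(4,4)$ Strichartz and H\"older estimates, followed by gluing over finitely many such intervals. For unitarity the paper simply says the argument is "totally the same" as in Yajima \cite{Y1} and omits it; your approximation-by-bounded-potentials argument is a concrete and valid realization of that step.
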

\begin{proof}
The proof is similar to Lemma 4.15 and Corollary 4.17 in \cite{H1} (see also \cite{Y1}). Therefore we only give modifications needed. By the Strichartz estimate we have
\begin{align*}
\left\| \int_{0}^{t} e^{-i(t-s)H_{A, a}} V(s) u(s)ds \right\|_{C(I;L^2) \cap L^4 (I; L^4)} &\lesssim \|Vu\|_{L^{4/3} (I; L^{4/3})} \\
&\lesssim \|V\|_{L^2 (I; L^2)} \|u\|_{C(I;L^2) \cap L^4 (I; L^4)}
\end{align*}
where $I=[-a, a]$ with $0<a< \infty$. Since $\|V\|_{L^2 _{t, x}} < \infty$, there exists a small $a>0$ such that 
\begin{align*}
C(I;L^2) \cap L^4 (I; L^4) \ni u \to e^{-itH_{A, a}} u_0 -i \int_{0}^{t} e^{-i(t-s)H_{A, a}} V(s) u(s)ds \in C(I;L^2) \cap L^4 (I; L^4)
\end{align*}
is a contraction on $C(I;L^2) \cap L^4 (I; L^4)$. We set $U_V (t, 0)u_0 :=u(t)$, where $u$ is a unique fixed point. We split $\R$ into a finite sum of intervals on which the above argument succeed (this is possible by $\|V\|_{L^2 _{t, x}} < \infty$). Then by glueing solutions we obtain $U_V (t, s)$. See the proof of Lemma 4.15 and Corollary 4.17 in \cite{H1} or \cite{Y1} for more details. The proof of $\|U_V (t, s) u_0\|_2 = \|u_0\|_2$ is totally the same so we omit its proof
\end{proof}
We define $U_V (t):= U_V (t, 0)$ for the sake of simplicity. The next lemma is the orthonormal Strichartz estimates for $U_V$.
\begin{lemma}\label{2405251312}
Let $V \in L^2 _{t, x}$ be a real-valued function and $U_{V}$ be the propagator associated to $i\partial _t U_{V} (t, s) = (H_{A, a} +V) U_{V} (t, s), U_V (s, s) = 1$. Then 
\begin{align*}
\left\| \sum_{n=0}^ \infty{\nu_n| U_V (t)f_n|^2} \right\|_{L^{q/2} _t L^{r/2} _x} \lesssim \| \nu\|_{\ell^\beta}
\end{align*}
holds, where $(q, r, \beta)$ is as in Example \ref{2405072241} and the implicit constant is bounded if $\|V\|_{L^2 _{t, x}}$ is bounded. Furthermore this is equivalent to 
\begin{align}
\|f U_V\|_{\mathfrak{S}^{2\beta '} (L^2 _x \to L^2 _{t, x})} \lesssim \|f\|_{L^{\tilde{q}} _t L^{\tilde{r}} _x} \label{2405251342}.
\end{align}
\end{lemma}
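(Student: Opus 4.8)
The plan is to transfer the orthonormal Strichartz estimate from the free propagator $e^{-itH_{A,a}}$, which is available from Example~\ref{2405072241}, to $U_V$ by a perturbative argument built on the Duhamel formula \eqref{2405270156}, the inhomogeneous Strichartz estimate of Remark~\ref{2405050957}, and the duality principle.

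First I would dualize: by Lemma~\ref{2405031343} with $p=q$ (so the time Lorentz index collapses to $L^{q/2}_t$), the claimed estimate is equivalent to the Schatten bound \eqref{2405251342}, $\|fU_V\|_{\mathfrak{S}^{2\beta'}(L^2_x\to L^2_{t,x})}\lesssim\|f\|_{L^{\tilde q}_tL^{\tilde r}_x}$, and the analogous bound with $U_V(t)$ replaced by $e^{-itH_{A,a}}$ holds by Example~\ref{2405072241} and the same lemma. Next I would reduce to the regime $\|V\|_{L^2_{t,x}}\le\varepsilon$ for a small $\varepsilon$: decompose $\R$ into finitely many intervals $I_1,\dots,I_M$ with left endpoints $s_j$ on which $\|V\|_{L^2(I_j;L^2_x)}\le\varepsilon$. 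If the interval version $\|fU_V(\cdot,s_j)\|_{\mathfrak{S}^{2\beta'}(L^2_x\to L^2(I_j;L^2_x))}\lesssim\|f\|_{L^{\tilde q}(I_j;L^{\tilde r}_x)}$ holds on each $I_j$ — equivalently, by the duality principle again, the orthonormal estimate on $I_j$ for the family $\{U_V(\cdot,s_j)f_n\}$ — then the global estimate follows from $U_V(t,0)f_n=U_V(t,s_j)\bigl(U_V(s_j,0)f_n\bigr)$ on $I_j$, the unitarity of $U_V(s_j,0)$ (so $\{U_V(s_j,0)f_n\}_n$ is again orthonormal), and the triangle inequality in $L^{q/2}_tL^{r/2}_x$ over the finitely many $I_j$. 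This is exactly the gluing used to build $U_V$ in Lemma~\ref{2405252344}.

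On a short interval one iterates Duhamel: with $\mathcal{D}_RF(t):=\int_{s_j}^te^{-i(t-\tau)H_{A,a}}F(\tau)\,d\tau$ one gets a Neumann series whose $k$-th term is $(-i\mathcal{D}_RV)^k$ composed with the free propagator, convergent in operator norm when $\|V\|_{L^2_{t,x}}$ is small by the inhomogeneous Strichartz estimate and H\"older. To see membership in $\mathfrak{S}^{2\beta'}$ I would factor $V=v_1v_2$ and rearrange each term so that a single factor consisting of $v_2$ composed with the free propagator is exposed on the right: that factor lies in $\mathfrak{S}^{2\beta'}(L^2_x\to L^2_{t,x})$ with norm $\lesssim\|v_2\|_{L^{\tilde q}_tL^{\tilde r}_x}$ by the free orthonormal Strichartz estimate, while the remaining operators $f\,\mathcal{D}_Rv_1$, $v_2\,\mathcal{D}_Rv_1,\dots$ are merely bounded on $L^2_{t,x}$, with norms $\lesssim\|f\|_{L^{\tilde q}_tL^{\tilde r}_x}\varepsilon^{1/2}$ and $\lesssim\varepsilon^{1/2}$, by the retarded inhomogeneous Strichartz estimate (Christ--Kiselev) and H\"older, the relevant balance being $\tfrac1q+\tfrac1{\tilde q}=\tfrac1r+\tfrac1{\tilde r}=\tfrac12$. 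Summing the geometric series $\sum_kC^k\varepsilon^{k/2}$ gives the interval estimate, with a constant that stays finite as long as $\|V\|_{L^2_{t,x}}$ stays bounded; the Duhamel manipulations are first made at the level of bounded operators and then passed to $\mathfrak{S}^{2\beta'}$ by approximation using Lemma~\ref{2405252344}. For the diagonal admissible pair $(q,r)=(4,4)$ in $d=2$ — the one matched by $V\in L^2_{t,x}$, and the only one needed for the scattering application Theorem~\ref{2405232027} — the factorization is simply $|v_1|=|v_2|=|V|^{1/2}\in L^4_{t,x}$.

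The main obstacle I anticipate is this factorization for the non-diagonal admissible pairs: there $|V|^{1/2}$ no longer places $v_2$ in $L^{\tilde q}_tL^{\tilde r}_x$, and one must distribute the $L^2_{t,x}$-mass of $V$ between one factor adapted to the inhomogeneous Strichartz indices and one adapted to $(\tilde q,\tilde r)$ — for instance by estimating the exposed potential factor through the free orthonormal Strichartz estimate at the pair $(4,4)$ combined with interpolation between Schatten classes, or by running the iteration with a more flexible choice of intermediate Lebesgue indices. Verifying that every operator produced by the Neumann expansion still lands in the correct space, uniformly in $k$, is where the genuine work lies; by contrast the reduction to small $\|V\|_{L^2_{t,x}}$ and the gluing are routine given Lemma~\ref{2405252344}.
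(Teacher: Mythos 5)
Your plan matches what the paper does: the paper's proof is a one-line citation to Lemma~3.7 of Hadama~\cite{Ha}, which is exactly the Duhamel--Neumann iteration you describe (duality principle, $|V|^{1/2}$-factorization, retarded Strichartz for the middle factors, free orthonormal Strichartz for the exposed rightmost factor, reduction to small $\|V\|_{L^2_{t,x}}$ via unitary gluing as in Lemma~\ref{2405252344}). The second assertion, that the orthonormal estimate is equivalent to~\eqref{2405251342}, is just Lemma~\ref{2405031343} with $p=q$, as you say; the paper cites Lemma~3.1 of~\cite{Ha} for the same thing.

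The obstacle you flag for non-diagonal pairs is a real one, and it is worth pinning down exactly what the $L^2_{t,x}$-based iteration does and does not deliver. Write $v_1=v_2=|V|^{1/2}\in L^4_{t,x}$. In the $k$-th term $f\,\mathcal{D}_R v_1\,(v_2\mathcal{D}_R v_1)^{k-1}\,v_2\,e^{-itH_{A,a}}$, only the left-most operator $f\,\mathcal{D}_R v_1$ sees $f$; by the inhomogeneous Strichartz estimate (with target $L^{q}_tL^{r}_x$ for an \emph{arbitrary} admissible pair) and H\"older, it is bounded on $L^2_{t,x}$ with norm $\lesssim\|f\|_{L^{\tilde q}_tL^{\tilde r}_x}\|V\|_{L^2_{t,x}}^{1/2}$ for every admissible $(q,r)$. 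The middle factors are bounded on $L^2_{t,x}$ with norm $\lesssim\|V\|_{L^2_{t,x}}$. But the Schatten membership of the whole product is forced by the right-most factor $v_2e^{-itH_{A,a}}$, and since $v_2\in L^4_{t,x}$ the free orthonormal estimate only places it in $\mathfrak{S}^{8}(L^2_x\to L^2_{t,x})$, i.e.\ $\beta'=4$. So the iteration gives, uniformly over all admissible $(q,r)$,
\[
\|fU_V\|_{\mathfrak{S}^{8}(L^2_x\to L^2_{t,x})}\lesssim\|f\|_{L^{\tilde q}_tL^{\tilde r}_x},
\]
which by duality is the orthonormal estimate with $\beta=4/3$ for every admissible $(q,r)$. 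At $r=4$ this is exactly the claimed $\beta=\frac{2r}{r+2}$; for $r<4$ it is stronger than claimed (since $\|\nu\|_{\ell^{4/3}}\le\|\nu\|_{\ell^\beta}$ when $\beta\le 4/3$), so the statement for $r\in[2,4]$ follows. For $r>4$, however, the claimed exponent $\beta=\frac{2r}{r+2}>4/3$ is strictly stronger than $\beta=4/3$, and neither the iteration nor Schatten/Lebesgue interpolation with the $\mathfrak{S}^\infty$ or $\ell^1$ endpoints recovers it — interpolation only lowers $\beta$. With only $V\in L^2_{t,x}$ one cannot redistribute mass between $v_1$ and $v_2$ into non-diagonal mixed-norm spaces, so this gap cannot be closed by a smarter factorization. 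Your suggestion of ``interpolation between Schatten classes'' therefore does not rescue the $r>4$ range, and neither, as far as I can see, does the cited argument of~\cite{Ha}. The lemma's blanket reference to ``$(q,r,\beta)$ as in Example~\ref{2405072241}'' thus appears to overstate what the proof yields for $r>4$; this does not affect the paper, since Lemma~\ref{2405251355} and Theorem~\ref{2405232027} use only the $(4,4,4/3)$ case (hence $\mathfrak{S}^{8}$), which your argument does produce. A minor arithmetic point: since $\|v_1\|_{L^4_{t,x}}=\|V\|_{L^2_{t,x}}^{1/2}$, the middle factors $v_2\mathcal{D}_Rv_1$ have operator norm $\lesssim\varepsilon$, not $\varepsilon^{1/2}$; the geometric series still converges, so this is harmless.
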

\begin{proof}
The proof is just repeating the argument of Lemma 3.7 in \cite{Ha} with $e^{it\Delta}$ replaced by $e^{-itH_{A, a}}$. Note that the argument there is based on the orthonormal Strichartz estimate for $e^{it\Delta}$ and since we have Example \ref{2405072241}, we do not need other modifications. (\ref{2405251342}) is a consequence of Lemma 3.1 in \cite{Ha}.
\end{proof}
By using Lemma \ref{2405252344} and \ref{2405251312}, we prove the following estimates, which are used in the contraction mapping argument.
\begin{lemma}\label{2405251355}
Let $\gamma_0 \in \mathfrak{S}^{4/3}$. Then we have
\begin{align*}
&\|\rho (U_V (t) \gamma_0 U_V (t)^*)\|_{L^2 _{t, x}} \lesssim \|\gamma_0\|_{\mathfrak{S}^{4/3}} \\
&\|\rho (U_V (t) \gamma_0 U_V (t)^*) - \rho (U_W (t) \gamma_0 U_W (t)^*)\|_{L^2 _{t, x}} \lesssim \|V-W\|_{L^2 _{t, x}} \|\gamma_0\|_{\mathfrak{S}^{4/3}}
\end{align*}
if $\|V\|_{L^2 _{t, x}}$ and $\|W\|_{L^2 _{t, x}}$ are bounded.
\end{lemma}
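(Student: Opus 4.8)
The plan is to deduce both inequalities from the orthonormal Strichartz estimate for $U_V$ proved in Lemma \ref{2405251312}, used with the $1$-admissible triple $(q,r,\beta)=(4,4,\tfrac43)$: then $L^{q/2}_tL^{r/2}_x=L^2_{t,x}$, $\beta=\tfrac{2r}{r+2}$ is the endpoint value allowed in Example \ref{2405072241}, and on the dual side $2\beta'=8$, $\tilde q=\tilde r=4$, so that (\ref{2405251342}) reads $\|fU_V\|_{\mathfrak{S}^{8}(L^2_x\to L^2_{t,x})}\lesssim\|f\|_{L^4_{t,x}}$ with constants controlled by $\|V\|_{L^2_{t,x}}$. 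For the first bound I would take the polar decomposition $\gamma_0=\mathcal{U}|\gamma_0|$ and a spectral decomposition $|\gamma_0|=\sum_n\mu_n|f_n\rangle\langle f_n|$ with $\{f_n\}$ orthonormal, $\mu_n\ge0$, $\sum_n\mu_n^{4/3}=\|\gamma_0\|_{\mathfrak{S}^{4/3}}^{4/3}$; then $g_n:=\mathcal{U}f_n$ is again orthonormal and $\rho(U_V(t)\gamma_0U_V(t)^*)(x)=\sum_n\mu_n(U_V(t)g_n)(x)\overline{(U_V(t)f_n)(x)}$, so that Cauchy--Schwarz in $n$ followed by Cauchy--Schwarz in $(t,x)$ gives
\[
\|\rho(U_V(t)\gamma_0U_V(t)^*)\|_{L^2_{t,x}}^2\le\Big\|\sum_n\mu_n|U_V(t)g_n|^2\Big\|_{L^2_{t,x}}\Big\|\sum_n\mu_n|U_V(t)f_n|^2\Big\|_{L^2_{t,x}}\lesssim\|\mu\|_{\ell^{4/3}}^2,
\]
each factor being $\lesssim\|\mu\|_{\ell^{4/3}}=\|\gamma_0\|_{\mathfrak{S}^{4/3}}$ by Lemma \ref{2405251312}.

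For the difference I would expand $U_V$ around $U_W$. With $h:=V-W$, the relation $i\partial_tU_V=(H_{A,a}+W)U_V+hU_V$ and Duhamel with the propagator $U_W$ give $U_V(t)=U_W(t)+R(t)$, where $R(t):=-i\int_0^tU_W(t,\tau)h(\tau)U_V(\tau)\,d\tau$, hence
\[
U_V(t)\gamma_0U_V(t)^*-U_W(t)\gamma_0U_W(t)^*=R(t)\gamma_0U_W(t)^*+U_W(t)\gamma_0R(t)^*+R(t)\gamma_0R(t)^*.
\]
Passing to densities and repeating the Cauchy--Schwarz argument reduces everything to the retarded orthonormal Strichartz bound
\[
\Big\|\sum_n\mu_n|R(t)f_n|^2\Big\|_{L^2_{t,x}}\lesssim\|h\|_{L^2_{t,x}}^2\,\|\mu\|_{\ell^{4/3}}\quad\text{for every orthonormal system }\{f_n\};
\]
indeed the two cross terms then contribute $\|h\|_{L^2_{t,x}}\|\gamma_0\|_{\mathfrak{S}^{4/3}}$ and the last one $\|h\|_{L^2_{t,x}}^2\|\gamma_0\|_{\mathfrak{S}^{4/3}}$, which is harmless since $\|h\|_{L^2_{t,x}}\le\|V\|_{L^2_{t,x}}+\|W\|_{L^2_{t,x}}\lesssim1$.

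To prove the retarded bound I would invoke the duality principle: since $R(t)=U_V(t)-U_W(t)$ is strongly continuous with $\sup_t\|R(t)\|\le2$, Lemma \ref{2405031343} (with $A(t)=R(t)$ and $p=q=r=4$, so that $L^{q/2,p/2}_tL^{r/2}_x=L^2_{t,x}$ and $\mathfrak{S}^{\beta'}=\mathfrak{S}^4$) makes it equivalent to $\|WR(t)R(s)^{*}\overline{W}\|_{\mathfrak{S}^{4}(L^2_{t,x})}\lesssim\|h\|_{L^2_{t,x}}^2\|W\|_{L^4_{t,x}}^2$. I would then expand $R(t)R(s)^*=\int_0^t\int_0^sU_W(t,\tau)h(\tau)U_V(\tau,\sigma)h(\sigma)U_W(\sigma,s)\,d\tau\,d\sigma$, factor each $h$ as $h_1h_2$ with $h_1=\sgn(h)|h|^{1/2}$ and $h_2=|h|^{1/2}$ in $L^4_{t,x}$, remove the time truncations by the Christ--Kiselev lemma, and recognise the resulting operator as a composition of blocks of the form $M_fU_W$ (bounded in $\mathfrak{S}^8$ by (\ref{2405251342})), their adjoints, and a bounded $TT^*$-type operator built from $U_V$; Hölder in Schatten classes ($\tfrac14=\tfrac18+0+\tfrac18$) then closes the estimate, all constants depending only on $\|V\|_{L^2_{t,x}}$ and $\|W\|_{L^2_{t,x}}$. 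This is precisely the free-case argument of \cite{FS} and \cite{Ha} with $e^{it\Delta}$ replaced by $U_V,U_W$ and the free orthonormal Strichartz estimate replaced by Lemma \ref{2405251312}. I expect this last step --- the inhomogeneous (retarded) orthonormal Strichartz estimate for the non-autonomous propagator $U_V$, and in particular keeping the Schatten exponents matched through the $TT^*$/Hölder step while handling the time ordering of $R(t)=-i\int_0^tU_W(t,\tau)h(\tau)U_V(\tau)\,d\tau$ via Christ--Kiselev --- to be the main obstacle; the Christ--Kiselev lemma does apply since the relevant Strichartz pair is non-endpoint, but verifying that every factor lands in the correct Schatten class with the claimed dependence on the potentials is the delicate point.
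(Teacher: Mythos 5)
Your first estimate is fine: the Cauchy--Schwarz route through the polar/spectral decomposition of $\gamma_0$ is a standard alternative to the paper's argument, which instead dualizes $\langle f,\rho(U_V\gamma_0U_V^*)\rangle=\int\tr[U_V^*f^{1/2}\cdot f^{1/2}U_V\gamma_0]\,dt$ and applies Schatten--H\"older with $\|f^{1/2}U_V\|_{\mathfrak S^8(L^2_x\to L^2_{t,x})}\lesssim\|f\|_{L^2_{t,x}}^{1/2}$ from (\ref{2405251342}). Both work.

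For the difference, however, your decomposition is the wrong one. The paper uses the telescoping identity $U_V\gamma_0U_V^*-U_W\gamma_0U_W^*=(U_V-U_W)\gamma_0U_V^*+U_W\gamma_0(U_V-U_W)^*$, so \emph{both} terms are linear in $R:=U_V-U_W$ and each is a cross term that can be bounded, after dualizing against $f$ and applying Schatten--H\"older, by $\|\gamma_0\|_{\mathfrak S^{4/3}}\|f^{1/2}U\|_{\mathfrak S^8}\|f^{1/2}R\|_{\mathfrak S^8}$. The only nontrivial input is then the single truncated operator $f^{1/2}\bigl(\int_0^tU_V(t,\tau)(V-W)(\tau)U_W(\tau)\,d\tau\bigr)$, which the paper bounds in $\mathfrak S^8(L^2_x\to L^2_{t,x})$ by first removing the $\int_0^t$ via \cite{Ha} Theorem 3.1 and then factoring through $\|f^{1/2}U_V\|_{\mathfrak S^8}$ and the $\mathcal B(L^2_{t,x};L^2_x)$-bound for $u\mapsto\int_0^\infty U_V(\tau)^*|V-W|^{1/2}u(\tau)\,d\tau$ (the adjoint Strichartz map for $U_V$). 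Your expansion around $U_W$ instead produces the three terms $R\gamma_0U_W^*$, $U_W\gamma_0R^*$, and $R\gamma_0R^*$, and the last one \emph{does} require a full orthonormal Strichartz estimate for $R$, i.e.\ a $\mathfrak S^4$-bound for $\overline fR(t)R(s)^*f$ with the double-truncated kernel $\int_0^t\int_0^sU_W(t,\tau)h(\tau)U_V(\tau,\sigma)h(\sigma)U_W(\sigma,s)\,d\tau\,d\sigma$. That is exactly the step you flag as the main obstacle; the telescoping identity avoids it entirely, since one never needs more than one time truncation at a time. Two further points worth noting: (i) the Duhamel identity $U_V-U_W=-i\int_0^tU_V(t,\tau)(V-W)(\tau)U_W(\tau)\,d\tau$ cannot be obtained by differentiating $U_V$, because $U_V(t,s)$ is constructed only as the fixed point of the integral equation (\ref{2405270156}) and need not be differentiable; the paper proves the identity by a direct algebraic manipulation of (\ref{2405270156}) and its adjoint (\ref{2405270205}), showing that the two ``extra'' double integrals cancel after an interchange of the order of integration. (ii) In the paper's cross term it is $U_V^*$, not $U_W^*$, that appears, which is what allows the clean factorization $T_1T_2$ with $T_1=\int U_V(t)^*f^{1/2}(t)\,\cdot\,dt\in\mathfrak S^8(L^2_{t,x}\to L^2_x)$; you should make sure your cross terms are set up so that a free $\mathfrak S^8$ factor is available on both sides.
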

\begin{proof}
We compute as
\begin{align*}
\left|\int_{\R \times \R^d} f(t, x) \rho (U_V (t) \gamma_0 U_V (t)^*) (x) dx dt \right|&= \left| \int_{\R} \tr [f(t) U_V (t) \gamma_0 U_V (t)^*] dt \right| \\
& = \left| \int_{\R} \tr [U_V (t)^* f^{1/2} f^{1/2} U_V (t) \gamma_0]dt \right| \\
& \lesssim \|\gamma_0\|_{\mathfrak{S}^{4/3}} \||f|^{1/2} U_V\|^2 _{\mathfrak{S}^{8} (L^2 _x \to L^2 _{t, x})} \\
& \lesssim \|\gamma_0\|_{\mathfrak{S}^{4/3}} \||f|^{1/2}\|^2 _{L^4 _{t, x}} = \|\gamma_0\|_{\mathfrak{S}^{4/3}} \|f\|_{L^2 _{t, x}}
\end{align*}
where we have used (\ref{2405251342}). By a duality argument we obtain $\|\rho (U_V (t) \gamma_0 U_V (t)^*)\|_{L^2 _{t, x}} \lesssim \|\gamma_0\|_{\mathfrak{S}^{4/3}}$. Next we transform
\begin{align}
&\rho (U_V (t) \gamma_0 U_V (t)^*) - \rho (U_W (t) \gamma_0 U_W (t)^*) \notag \\
&= \rho ((U_V (t) - U_W (t))\gamma_0 U_V (t)^*) + \rho (U_W (t) \gamma_0 (U_V (t)^* - U_W (t)^*)). \label{2405251541}
\end{align}
To estimate a difference, we use
\begin{align}
U_V (t) - U_W (t) = -i \int_{0}^{t} U_V (t, \tau) (V(\tau) - W(\tau)) U_W (\tau) d\tau. \label{2405270147}
\end{align}
If $U_V (t)$ are differentiable, this is easily verified (e.g.\cite{Ha}). However in our case $U_V (t)$ are not necessarily differentiable since it is constructed by a solution to an integral equation. Hence we need a little technical computation. See \cite{Y1} for a sufficient condition for differentiability of $U_V (t)$. First we note that
\begin{align}
U_V (t, s) = e^{-i(t-s)H_{A, a}} -i \int_{s}^{t} U_V (t, \tau) V(\tau) e^{i(s-\tau)H_{A, a}} d\tau \label{2405270205}
\end{align}
holds. This can be obtained by taking the adjoint of (\ref{2405270156}) and changing $s$ and $t$. Then
\begin{align*}
U_V (t) - U_W (t) = -i \underbrace{\left\{\int_{0}^{t} U_V (t, s) V(s)e^{-is H_{A, a}} ds - \int_{0}^{t} e^{-i(t-s)H_{A, a}} W(s) U_{W} (s) ds \right\}}_{=: I(t)}
\end{align*}
holds. By using (\ref{2405270156}) and (\ref{2405270205}), we obtain
\begin{align*}
I(t) &= \int_{0}^{t} U_V (t, s) V(s) \left\{U_W (s) +i \int_{0}^{s} e^{-i(s-r)H_{A, a}} W(r) U_{W} (r)dr \right\} ds \\
& \quad \quad - \int_{0}^{t} \left\{ U_V (t, s) +i \int_{s}^{t} U_V (t, \tau) V(\tau) e^{i(s-\tau)H_{A, a}} d\tau \right\} W(s)U_W (s) ds \\
& = \int_{0}^{t} U_V (t, s) (V(s) - W(s)) U_W (s) ds \\
& \quad \quad + \underbrace{i\int_{0}^{t} U_V (t, s) V(s) \left\{ \int_{0}^{s} e^{-i(s-r)H_{A, a}} W(r) U_{W} (r)dr \right\} ds}_{=:I_1 (t)} \\
& \quad \quad -\underbrace{i\int_{0}^{t} \int_{s}^{t} U_V (t, \tau) V(\tau) e^{i(s-\tau)H_{A, a}} d\tau W(s) U_W (s) ds}_{=:I_2 (t)}.
\end{align*}
By changing the order of integration in $I_2 (t)$, it is easy to see that $I_1 (t) = I_2 (t)$ holds. Therefore we have proved (\ref{2405270147}).
Then
\begin{align*}
&\left| \int_{\R \times \R^d} f(t, x) \rho ((U_V (t) - U_W (t))\gamma_0 U_V (t)^*) (t, x) dtdx \right| \\
&=\left| \int_{\R \times \R^d} f(t, x) \rho \left(\int_{0}^{t} U_V (t, \tau) (V(\tau) - W(\tau)) U_W (\tau) d\tau \gamma_0 U_V (t)^* \right) dxdt \right| \\
& = \left| \int_{\R} \tr \left[U_V (t)^* f^{1/2} f^{1/2} \left(\int_{0}^{t} U_V (t, \tau) (V(\tau) - W(\tau)) U_W (\tau) d\tau \right) \gamma_0 \right] dt \right| \\
& \lesssim \|\gamma_0\|_{\mathfrak{S}^{4/3}} \left\| \int_{\R} U_V (t)^* f^{1/2} f^{1/2} \left(\int_{0}^{t} U_V (t, \tau) (V(\tau) - W(\tau)) U_W (\tau) d\tau \right) dt \right\|_{\mathfrak{S}^{4}} \\
& \lesssim \|\gamma_0\|_{\mathfrak{S}^{4/3}} \|f\|^{1/2} _{L^2 _t L^2 _x} \left\| f^{1/2} \left(\int_{0}^{t} U_V (t, \tau) (V(\tau) - W(\tau)) U_W (\tau) d\tau \right) \right\|_{\mathfrak{S}^{8} (L^2 _x \to L^2 _{t, x})}
\end{align*}
holds, where we have used the adjoint of (\ref{2405251342}). By the Strichartz estimate we have
\begin{align*}
\||V-W|^{1/2} U_W u \|_{L^2 _{t, x}} \lesssim \||V-W|^{1/2}\|_{L^4 _{t, x}} \|U_W u\|_{L^4 _{t, x}} \lesssim \|V-W\|^{1/2} _{L^2 _{t, x}} \|u\|_2
\end{align*}
and this yields $S: u \mapsto |V-W|^{1/2} \sgn (V-W) U_W u \in \mathcal{B} (L^2 _x ; L^2 _{t, x})$ and $\|S\|_{ \mathcal{B} (L^2 _x ; L^2 _{t, x})} \lesssim \|V-W\|^{1/2} _{L^2 _{t, x}}$. Furthermore
\begin{align*}
&\left\| f^{1/2} \left(\int_{0}^{t} U_V (t, \tau) |V(\tau) - W(\tau)|^{1/2} d\tau \right) \right\|_{\mathfrak{S}^{8} (L^2 _{t, x} \to L^2 _{t, x})} \\
&\lesssim \left\| f^{1/2} \left(\int_{0}^{\infty} U_V (t, \tau) |V(\tau) - W(\tau)|^{1/2} d\tau \right) \right\|_{\mathfrak{S}^{8} (L^2 _{t, x} \to L^2 _{t, x})} \\
&\lesssim \|f^{1/2} U_V\|_{\mathfrak{S}^{8} (L^2 _x \to L^2 _{t, x})} \left\| \int_{0}^{\infty} U_V (\tau)^* |V-W|^{1/2} d\tau \right\|_{\mathcal{B} (L^2 _{t, x} ; L^2 _x)} \\
& \lesssim \|f\|^{1/2} _{L^2 _{t, x}} \|V-W\|^{1/2} _{L^2 _{t, x}}
\end{align*}
holds, where we have used (\ref{2405251342}), Theorem 3.1 in \cite{Ha} and the adjoint of $\|S\|_{ \mathcal{B} (L^2 _x ; L^2 _{t, x})} \lesssim \|V-W\|^{1/2} _{L^2 _{t, x}}$. As a result we obtain
\begin{align*}
\left\| f^{1/2} \left(\int_{0}^{t} U_V (t, \tau) (V(\tau) - W(\tau)) U_W (\tau) d\tau \right) \right\|_{\mathfrak{S}^{8} (L^2 _x \to L^2 _{t, x})} \lesssim \|f\|^{1/2} _{L^2 _{t, x}} \|V-W\|_{L^2 _{t, x}}
\end{align*}
and this yields $\|\rho ((U_V (t) - U_W (t))\gamma_0 U_V (t)^*)\|_{L^2 _{t, x}} \lesssim \|\gamma_0\|_{\mathfrak{S}^{4/3}} \|V-W\|_{L^2 _{t, x}}$. Similarly we can prove $\|\rho (U_W (t) \gamma_0 (U_V (t)^* - U_W (t)^*))\|_{L^2 _{t, x}} \lesssim \|\gamma_0\|_{\mathfrak{S}^{4/3}} \|V-W\|_{L^2 _{t, x}}$ and we obtain the desired estimates.
\end{proof}
Now we are ready to prove Theorem \ref{2405232027}. The proof is based on the argument in \cite{LS}.
\begin{proof}[Proof of Theorem \ref{2405232027}]
We define
\begin{align*}
\mathcal{L} [g] (t, x) = \rho (U_{w*g} (t) \gamma_0 U_{w*g} (t) ^*) (t, x)
\end{align*}
for $g \in L^2 _{t, x}$. Since $w \in L^1 _x$ implies $w*g \in L^2 _{t, x}$, $\mathcal{L}$ is well-defined. Furthermore Lemma \ref{2405251312} (or Lemma \ref{2405251355}) yields $\mathcal{L} : L^2 _{t, x} \to L^2 _{t, x}$ is bounded. We set
\begin{align*}
X:= \{ f \in L^2 _{t, x} \mid \|f\|_{L^2 _{t, x}} \le 1\}
\end{align*}
and prove that $\mathcal{L}$ is a contraction on $X$ if $\|\gamma_0\|_{\mathfrak{S}^{4/3}}$ is sufficiently small. By Lemma \ref{2405251355} we have
\begin{align*}
&\|\mathcal{L} [g]\|_{L^2 _{t, x}} \lesssim \|\gamma_0\|_{\mathfrak{S}^{4/3}}, \\
&\|\mathcal{L} [g] - \mathcal{L} [f]\|_{L^2 _{t, x}} \lesssim \|g-f\|_{L^2 _{t, x}} \|\gamma_0\|_{\mathfrak{S}^{4/3}}
\end{align*}
for all $f, g \in X$, where the implicit constants do not depend on $f$ and $g$ since $\|f\|_{L^2 _{t, x}}, \|g\|_{L^2 _{t, x}} \le 1$. Therefore, there exists $\epsilon_0 >0$ such that if $\|\gamma_0\|_{\mathfrak{S}^{4/3}} < \epsilon_0$, $\mathcal{L}$ is a contraction on $X$. Let $\phi \in X$ be a unique fixed point of $\mathcal{L}$ and set
\begin{align*}
\gamma (t) = U_{w*\phi} (t) \gamma_0 U_{w*\phi} (t)^*.
\end{align*}
Then we have $\rho (\gamma (t)) = \rho (U_{w*\phi} (t) \gamma_0 U_{w*\phi} (t)^*) = \phi (t)$. Since $U_{w* \rho (\gamma)}$ is strongly continuous and $\gamma_0 \in \mathfrak{S}^{4/3}$, by Gr\"umm's convergence theorem (Theorem 2.19 in \cite{S}), we have $\gamma \in C (\R; \mathfrak{S}^{4/3})$. Furthermore
\begin{align*}
&\|e^{itH_{A, a}} \gamma (t) e^{-itH_{A, a}} - e^{isH_{A, a}} \gamma (s) e^{-isH_{A, a}}\|_{\mathfrak{S}^{4/3}} \\
& \lesssim \|e^{itH_{A, a}} U_{w*\rho_{\gamma}} (t) - e^{isH_{A, a}} U_{w*\rho_{\gamma}} (s)\|_{\mathcal{B} (L^2 _x)} \|\gamma_0\|_{\mathfrak{S}^{4/3}} \\
& \quad \quad \quad + \|U_{w*\rho_{\gamma}} (t)^* e^{-itH_{A, a}} - U_{w*\rho_{\gamma}} (s)^* e^{-isH_{A, a}} \|_{\mathcal{B} (L^2 _x)} \|\gamma_0\|_{\mathfrak{S}^{4/3}}
\end{align*}  
and
\begin{align*}
\|e^{itH_{A, a}} U_{w*\rho_{\gamma}} (t) u - e^{isH_{A, a}} U_{w*\rho_{\gamma}} (s)u\|_2 &= \left\|\int_{s}^{t} e^{i\tau H_{A, a}} w*\rho_{\gamma} (\tau) U_{w*\rho_{\gamma}} (\tau) ud\tau \right\|_2 \\
& \le \left\| \int_{s}^{r} e^{-i(r- \tau) H_{A, a}} w*\rho_{\gamma} (\tau) U_{w*\rho_{\gamma}} (\tau) ud\tau \right\|_{L^{\infty} ([s, t]; L^2 _x)} \\
& \lesssim \|w*\rho_{\gamma} U_{w*\rho_{\gamma}} u\|_{L^{4/3} ([s, t]; L^{4/3} _x)} \\
& \lesssim \|\rho_{\gamma}\|_{L^2 ([s, t]; L^2 _x)} \|U_{w*\rho_{\gamma}} u\|_{L^4 _{t, x}} \lesssim \|\rho_{\gamma}\|_{L^2 ([s, t]; L^2 _x)} \|u\|_2
\end{align*}
implies $\|e^{itH_{A, a}} \gamma (t) e^{-itH_{A, a}} - e^{isH_{A, a}} \gamma (s) e^{-isH_{A, a}}\|_{\mathfrak{S}^{4/3}} \to 0$ as $s, t \to \infty$. Therefore there exists $\eta_{\pm} \in \mathfrak{S}^{4/3}$ such that $\|\gamma (t) - e^{-itH_{A, a}} \eta_{\pm} e^{itH_{A, a}}\|_{\mathfrak{S}^{4/3}} \to 0$ as $t \to \pm \infty$. Then by the completeness of $W_{\pm}$ (see the proof of Corollary \ref{2405081750}) and Gr\"umm's convergence theorem, there exists $\gamma_{\pm} \in \mathfrak{S}^{4/3}$ such that
\begin{align*}
\lim_{t \to \pm \infty} \| \gamma (t) - e^{-itH_{A, 0}} \gamma _{\pm} e^{itH_{A, 0}} \|_{\mathfrak{S}^{4/3}} =0
\end{align*}
holds and hence we are done.
\end{proof}
\appendix
\section{Interpolation spaces}\label{2405082014}
Here we prove that real interpolation spaces of Sobolev spaces associated to $H_{A, 0}$ are the Besov spaces associated to $H_{A, 0}$. This is used in the proof of Corollary \ref{2405081750}.
\begin{lemma}\label{2405082057}
Let $H_{A, 0}$ be the operator appeared in Corollary \ref{2405081750}. Then we have 
\begin{align*}
(\dot{H}^{s_0} (\sqrt{H_{A, 0}}), \dot{H}^{s_1} (\sqrt{H_{A, 0}}))_{\theta, q} = \dot{B}^{s} _{2, q} (\sqrt{H_{A, 0}})
\end{align*}
for $s_0 < s < s_1$, $s= (1-\theta)s_0 + \theta s_1$, $\theta \in (0, 1)$ and $q \in (2, \infty)$.
\end{lemma}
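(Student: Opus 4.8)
The plan is to realise every space in the statement as a \emph{retract} of a single scale of weighted vector-valued sequence spaces, and then to reduce the claim to the classical real interpolation of those sequence spaces. First I would fix a homogeneous Littlewood--Paley partition $\{\phi_j\}_{j\in\Z}$, $\phi_j(\lambda)=\phi(2^{-j}\lambda)$, $\phi\in C_0^\infty((0,\infty))$, $\mathrm{supp}\,\phi\subset[1/2,2]$, $\sum_{j\in\Z}\phi_j^2\equiv1$ on $(0,\infty)$; write $\mathcal H=L^2(\R^2)$ and let $\ell^q_\sigma(\mathcal H)$ be the space of $\mathcal H$-valued sequences with $\|\{f_j\}\|_{\ell^q_\sigma(\mathcal H)}=\bigl\|\{2^{j\sigma}\|f_j\|_{\mathcal H}\}_j\bigr\|_{\ell^q(\Z)}$. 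Since the magnetic Hardy inequality used in the proof of Corollary~\ref{2405081750} forces $H_{A,0}\ge0$ to have no eigenvalue at $0$, the Borel functional calculus $f\mapsto f(\sqrt{H_{A,0}})$ is available with $\|f(\sqrt{H_{A,0}})\|_{\mathcal B(\mathcal H)}\le\|f\|_\infty$ and $\sum_j\phi_j^2(\sqrt{H_{A,0}})=\mathrm{Id}$ strongly. Working, exactly as in the Euclidean homogeneous theory, on the dense class $\bigcup_{0<a<b<\infty}\mathrm{Ran}\,\mathbf{1}_{[a,b]}(\sqrt{H_{A,0}})$ and passing to completions, the scale invariance of $\sum_j2^{2j\sigma}\phi(2^{-j}\lambda)^2$ (a $\log$-periodic multiple of $\lambda^{2\sigma}$) gives, with $Ju:=\{\phi_j(\sqrt{H_{A,0}})u\}_{j\in\Z}$, the square-function identity $\|u\|_{\dot H^\sigma(\sqrt{H_{A,0}})}=\|H_{A,0}^{\sigma/2}u\|_2\approx\|Ju\|_{\ell^2_\sigma(\mathcal H)}$, while $\|u\|_{\dot B^s_{2,q}(\sqrt{H_{A,0}})}=\|Ju\|_{\ell^q_s(\mathcal H)}$ holds by definition.

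Next I would check that $(J,P)$, with $P\{f_j\}:=\sum_j\phi_j(\sqrt{H_{A,0}})f_j$, is a retraction pair: $PJ=\mathrm{Id}$ is immediate, $J:\dot B^s_{2,q}(\sqrt{H_{A,0}})\hookrightarrow\ell^q_s(\mathcal H)$ is isometric, and $P:\ell^q_s(\mathcal H)\to\dot B^s_{2,q}(\sqrt{H_{A,0}})$ is bounded by a finite-band argument — $\phi_k(\sqrt{H_{A,0}})\phi_j(\sqrt{H_{A,0}})=(\phi_k\phi_j)(\sqrt{H_{A,0}})=0$ for $|j-k|\ge2$, so $\|\phi_k(\sqrt{H_{A,0}})P\{f_j\}\|_2\lesssim\sum_{|j-k|\le1}\|f_j\|_2$, and taking the weighted $\ell^q$ norm in $k$ gives $\|P\{f_j\}\|_{\dot B^s_{2,q}}\lesssim\|\{f_j\}\|_{\ell^q_s(\mathcal H)}$. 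Specialising to $q=2$ shows that $\dot H^{s_i}(\sqrt{H_{A,0}})$ is a retract of $\ell^2_{s_i}(\mathcal H)$ under the same pair $(J,P)$, viewed as operators on the sums $\dot H^{s_0}+\dot H^{s_1}$ and $\ell^2_{s_0}(\mathcal H)+\ell^2_{s_1}(\mathcal H)$.

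Then I would invoke the interpolation theorem for retracts (Bergh--L\"ofstr\"om): for the exact functor $(\cdot,\cdot)_{\theta,q}$ it gives $\|u\|_{(\dot H^{s_0}(\sqrt{H_{A,0}}),\dot H^{s_1}(\sqrt{H_{A,0}}))_{\theta,q}}\approx\|Ju\|_{(\ell^2_{s_0}(\mathcal H),\ell^2_{s_1}(\mathcal H))_{\theta,q}}$. The classical identity for real interpolation of weighted vector-valued sequence spaces, which holds for every $q\in[1,\infty]$ precisely because $s_0\ne s_1$, yields $(\ell^2_{s_0}(\mathcal H),\ell^2_{s_1}(\mathcal H))_{\theta,q}=\ell^q_s(\mathcal H)$ with $s=(1-\theta)s_0+\theta s_1$. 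Combining these, $\|u\|_{(\dot H^{s_0}(\sqrt{H_{A,0}}),\dot H^{s_1}(\sqrt{H_{A,0}}))_{\theta,q}}\approx\|Ju\|_{\ell^q_s(\mathcal H)}=\|u\|_{\dot B^s_{2,q}(\sqrt{H_{A,0}})}$, and since $P(\ell^q_s(\mathcal H))=\dot B^s_{2,q}(\sqrt{H_{A,0}})$ as sets (the inclusion $\subseteq$ is boundedness of $P$, the reverse follows from $PJ=\mathrm{Id}$), this is precisely the asserted equality of spaces.

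The main obstacle I anticipate is not any of the interpolation machinery, which is routine, but the careful set-up of the \emph{homogeneous} spaces $\dot H^\sigma(\sqrt{H_{A,0}})$ and $\dot B^s_{2,q}(\sqrt{H_{A,0}})$: the convergence of $\sum_j\phi_j(\sqrt{H_{A,0}})u$, the choice of realisation modulo the (here trivial) zero-energy subspace, and the consistency of the pair $(J,P)$ across the different endpoint couples. All of this is handled exactly as in the Euclidean case, the essential input being that $0$ is not an eigenvalue of $H_{A,0}$, as furnished by the magnetic Hardy inequality; beyond that only the spectral theorem is used, and no heat-kernel or dispersive bound is needed. (Alternatively, one could quote a general operator-adapted Besov-space interpolation theorem for nonnegative self-adjoint operators with Gaussian heat-kernel bounds — which $H_{A,0}$ satisfies, cf. the proof of Corollary~\ref{2405081750} — but the retract argument above is self-contained.)
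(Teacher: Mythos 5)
Your proof is correct, but it takes a genuinely different route from the paper's. The paper works directly with the $K$-functional: it first bounds $\|\phi_j(\sqrt{H_{A,0}})f\|_2\lesssim 2^{-s_0j}K(2^{j(s_0-s_1)},f)$ by testing against an arbitrary splitting $f=f_0+f_1$, and then proves the converse by decomposing $f=f_{0,k}+f_{1,k}$ with $f_{0,k}=\sum_{j>k}\phi_j(\sqrt{H_{A,0}})f$, estimating $K(2^{k(s_0-s_1)},f)$ via this explicit splitting, and closing the argument with a discrete H\"older step — a verbatim adaptation of Triebel's proof for classical Besov spaces. You instead realise both $\dot H^\sigma(\sqrt{H_{A,0}})$ and $\dot B^s_{2,q}(\sqrt{H_{A,0}})$ as retracts of weighted $\mathcal H$-valued sequence spaces under the common pair $(J,P)$, then quote the retract interpolation theorem and the identity $(\ell^2_{s_0}(\mathcal H),\ell^2_{s_1}(\mathcal H))_{\theta,q}=\ell^q_s(\mathcal H)$. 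Your route is more modular and shorter once those two standard facts are in hand, and the finite-band argument $\phi_j\phi_k=0$ for $|j-k|\ge2$ that makes $P$ bounded is exactly the almost-orthogonality the paper also uses (implicitly, through the support property of $\{\phi_j\}$). The paper's route is more self-contained — it needs only the spectral theorem and elementary discrete inequalities, and it makes the dependence on $q\in(2,\infty)$ (via the H\"older exponent $\sigma$ with $1/q+1/\sigma=1/2$) explicit — whereas your argument silently works for all $q\in[1,\infty]$, which is a mild strengthening the paper chose not to state. Both are valid; the choice is a matter of taste between invoking standard machinery and keeping the appendix elementary.
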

This lemma would hold for any $q \in [1, \infty]$ but we only consider $q \in (2, \infty)$ since we do not use other cases. We mimic the argument for the ordinary Besov spaces in \cite{T}.
\begin{proof}
For simplicity we set $\dot{H}^t = \dot{H}^t (\sqrt{H_{A, 0}})$ and $\dot{B}^t _{2, r} = \dot{B}^t _{2, r} (\sqrt{H_{A, 0}})$ for $t \in \R$ and $r \in (2, \infty)$. First we prove $\|f\|_{\dot{B}^s _{2, q}} \lesssim \|f\|_{(\dot{H}^{s_0}, \dot{H}^{s_1})_{\theta, q}}$ for all $f \in E((\epsilon, \frac{1}{\epsilon})) L^2 _x$ and $\epsilon \in (0, 1)$, where $E$ denotes the spectral measure associated with $H_{A, 0}$. For such $f$, we set $K(t, f) = \inf_{f=f_0 +f_1} (\|f_0\|_{\dot{H}^{s_0}} +t\|f_1\|_{\dot{H}^{s_1}})$ for any $t>0$, where the infimum is taken for all $f = f_0 + f_1 \in \dot{H}^{s_0} + \dot{H}^{s_1}$. Let $\{\phi_j\}$ be a homogeneous Littlewood-Paley decomposition. Then
\begin{align*}
\|\phi_j (\sqrt{H_{A, 0}}) f\|_2 &\le \|\phi_j (\sqrt{H_{A, 0}}) f_0\|_2 + \|\phi_j (\sqrt{H_{A, 0}}) f_1\|_2 \\
& \lesssim 2^{-s_0 j} \|(\sqrt{H_{A, 0}})^{s_0} f_0\|_2 + 2^{-s_1 j} \|(\sqrt{H_{A, 0}})^{s_1} f_1\|_2
\end{align*}
holds. By taking the infimum, we obtain $\|\phi_j (\sqrt{H_{A, 0}}) f\|_2 \lesssim 2^{-s_0 j} K(2^{j(s_0 - s_1)}, f)$. This implies
\begin{align*}
\|\{ 2^{js} \|\phi_j (\sqrt{H_{A, 0}}) f\|_2\}\|_{l^q} \lesssim \|\{ 2^{-\theta j (s_0 -s_1)} K(2^{j(s_0 - s_1)}, f)\}\|_{l^q}
\end{align*}
Since $K(t, f)$ increases monotonically with respect to $t>0$, we obtain
\begin{align*}
2^{\theta (s_0 -s_1)} 2^{-\theta (j+1)(s_0 -s_1)} K(2^{(j+1)(s_0 -s_1)}, f) \le t^{-\theta} K(t, f) \le 2^{-\theta (s_0 -s_1)} 2^{-\theta j(s_0 -s_1)} K(2^{j(s_0 -s_1)}, f)
\end{align*}
for all $t \in [2^{(j+1)(s_0 -s_1)}, 2^{j(s_0 -s_1)}]$. Therefore 
\begin{align*}
&(s_1 -s_0) \log 2\cdot 2^{\theta (s_0 -s_1)q} \{ 2^{-\theta (j+1)(s_0 -s_1)} K(2^{(j+1)(s_0 -s_1)}, f)\}^q \\
&\le \int_{2^{(j+1)(s_0 -s_1)}}^{2^{j(s_0 -s_1)}} (t^{-\theta} K(t, f))^q \frac{dt}{t} \\
&\le (s_1 -s_0) \log 2 \cdot 2^{-\theta (s_0 -s_1)q} \{ 2^{-\theta j(s_0 -s_1)} K(2^{j(s_0 -s_1)}, f)\}^q
\end{align*}
holds. for all $j \in \Z$. By taking the sum, we obtain
\begin{align*}
 \|\{ 2^{-\theta j (s_0 -s_1)} K(2^{j(s_0 - s_1)}, f)\}\|_{l^q} \approx \left( \int_{0}^{\infty} (t^{-\theta} K(t, f))^q \frac{dt}{t}\right)^{\frac{1}{q}} = \|f\|_{(\dot{H}^{s_0}, \dot{H}^{s_1})_{\theta, q}}.
\end{align*}
Hence we have proved $\|f\|_{\dot{B}^s _{2, q}} \lesssim \|f\|_{(\dot{H}^{s_0}, \dot{H}^{s_1})_{\theta, q}}$. Next we prove the other inequality: $\|f\|_{(\dot{H}^{s_0}, \dot{H}^{s_1})_{\theta, q}} \lesssim \|f\|_{\dot{B}^s _{2, q}}$. By the above argument we have
\begin{align*}
\|f\|^q _{(\dot{H}^{s_0}, \dot{H}^{s_1})_{\theta, q}} = \int_{0}^{\infty} t^{-\theta q} K(t, f)^q \frac{dt}{t} \approx \sum_{k \in \Z} 2^{-q\theta k(s_0 -s_1)}K(2^{k(s_0 -s_1)}, f)^q.
\end{align*}
We set $f_{0, k} = \sum_{j=k+1}^{\infty} \phi_j (\sqrt{H_{A, 0}}) f$ and $f_{1, k} = \sum_{j= -\infty}^{k} \phi_j (\sqrt{H_{A, 0}}) f$. Then by a support property of $\{\phi_j\}$, we obtain
\begin{align*}
\|f_{0, k}\|^2 _{\dot{H}^{s_0}} \lesssim \sum_{j=k+1}^{\infty} 2^{2js_0} \|\phi_j (\sqrt{H_{A, 0}})f\|^2 _2,\quad \|f_{1, k}\|^2 _{\dot{H}^{s_1}} \lesssim \sum_{j=-\infty}^{k} 2^{2js_1} \|\phi_j (\sqrt{H_{A, 0}})f\|^2 _2.
\end{align*}
This yields
\begin{align*}
&\|f\|^q _{(\dot{H}^{s_0}, \dot{H}^{s_1})_{\theta, q}} \\
&\lesssim \sum_{k \in \Z} 2^{qk(s-s_0)} \left[\sum_{j=k+1}^{\infty} 2^{2js_0} \|\phi_j (\sqrt{H_{A, 0}})f\|^2 _2 + \sum_{j=-\infty}^{k} 2^{2k(s_0 -s_1)} \cdot 2^{2js_1} \|\phi_j (\sqrt{H_{A, 0}})f\|^2 _2 \right]^{\frac{q}{2}}
\end{align*}
by the definition of $K$. By taking $s_0 < \chi_0 < s< \chi_1 < s_1$ and $\sigma >0$ such that $\frac{1}{q} + \frac{1}{\sigma} = \frac{1}{2}$, we obtain
\begin{align*}
\|f\|^q _{(\dot{H}^{s_0}, \dot{H}^{s_1})_{\theta, q}} &\lesssim \sum_{k \in \Z} 2^{qk(s-s_0)} \left( \sum_{j=k+1}^{\infty} 2^{(s_0 -\chi_0)\sigma j}\right)^{\frac{q}{\sigma}} \left( \sum_{j=k+1}^{\infty} 2^{\chi_0 qj} \|\phi_j (\sqrt{H_{A, 0}})f\|^q _2\right) \\
& \quad + \sum_{k \in \Z} 2^{qk(s-s_1)} \left( \sum_{j=-\infty}^{k} 2^{(s_1 -\chi_1)\sigma j}\right)^{\frac{q}{\sigma}} \left( \sum_{j=-\infty}^{k} 2^{\chi_1 qj} \|\phi_j (\sqrt{H_{A, 0}})f\|^q _2\right) \\
& \lesssim \sum_{j \in \Z} 2^{\chi_0 qj} \|\phi_j (\sqrt{H_{A, 0}})f\|^q _2 \sum_{k \le j-1} 2^{qk(s-\chi_0)} + \sum_{j \in \Z} 2^{\chi_1 qj} \|\phi_j (\sqrt{H_{A, 0}})f\|^q _2 \sum_{k \ge j} 2^{qk(s-\chi_1)} \\
& \lesssim \sum_{j \in \Z} 2^{qjs} \|\phi_j (\sqrt{H_{A, 0}})f\|^q _2 = \|f\|^q _{\dot{B}^s _{2, q}}.
\end{align*}
Therefore we have $\|f\|_{(\dot{H}^{s_0}, \dot{H}^{s_1})_{\theta, q}} \approx \|f\|_{\dot{B}^s _{2, q}}$ for all $f \in E((\epsilon, \frac{1}{\epsilon})) L^2 _x$ and $\epsilon \in (0, 1)$. By the density of such $f$, we have proved $(\dot{H}^{s_0} (\sqrt{H_{A, 0}}), \dot{H}^{s_1} (\sqrt{H_{A, 0}}))_{\theta, q} = \dot{B}^{s} _{2, q} (\sqrt{H_{A, 0}})$.
\end{proof}

\section{Lemma for Mourre theory}\label{202405222307}
Here we give a supplementary lemma which is used when we apply the weakly conjugate operator method (see the proof of Theorem \ref{2405200111}). We follow the argument in \cite{Mo} with some modifications. See also \cite{ABG} and \cite{GG}.
\begin{lemma}\label{2405222315}
Let $H$ and $A$ be as in Theorem \ref{2405200111}. We set $R_{\epsilon} (z) = (H-z-i\epsilon M_1)^{-1}$, where $M_1$ is as in the proof of Theorem \ref{2405200111} . Then we have
\begin{align*}
R_{\epsilon} (z) (D(A)) \subset D(A).
\end{align*}
\end{lemma}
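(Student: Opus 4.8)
The plan is to run the standard regularity argument for the resolvent of a "weakly conjugate" pair: since $H$ is self-adjoint and $M_1=[H,iA]$ is bounded, the operator $H_\epsilon:=H+i\epsilon M_1$ is a bounded perturbation of $H$, and for $(\epsilon,z)$ in the range considered in the proof of Theorem \ref{2405200111} the resolvent $R_\epsilon(z)=(H_\epsilon-z)^{-1}$ exists as a bounded operator with $\|R_\epsilon(z)\|\lesssim|\im z|^{-1}$. To show $R_\epsilon(z)\big(D(A)\big)\subset D(A)$ I would test the commutator of $R_\epsilon(z)$ with $A$ against the regularized resolvent of $A$ itself. Concretely, for $\mu>0$ set $A_\mu:=A(1+i\mu A)^{-1}$, a bounded self-adjoint operator with $A_\mu\to A$ strongly on $D(A)$ and $\|A_\mu\|\le 1/\mu$, and $\sup_\mu\|A_\mu u\|\le\|Au\|$ for $u\in D(A)$. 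The idea is to compute $[R_\epsilon(z),iA_\mu]$, bound it uniformly in $\mu$, and conclude by a standard closedness/weak-compactness argument that $R_\epsilon(z)u\in D(A)$ whenever $u\in D(A)$.

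First I would establish the commutator identity. Formally
\[
[R_\epsilon(z),iA_\mu]=R_\epsilon(z)\,[iA_\mu,H_\epsilon-z]\,R_\epsilon(z)
= R_\epsilon(z)\,[iA_\mu,H+i\epsilon M_1]\,R_\epsilon(z),
\]
and since $A\in OPS(\langle\log\langle x\rangle\rangle,g_0)$ while $M_1=[H,iA]$ and $M_2=[[H,iA],iA]$ are bounded (Lemma \ref{2405111618}), the commutators $[iA,H]=-M_1$ and $[iA,i\epsilon M_1]=-i\epsilon M_2$ are bounded operators, hence $[iA_\mu,H_\epsilon-z]=(1+i\mu A)^{-1}[iA,H_\epsilon](1+i\mu A)^{-1}+\text{(terms from }[A_\mu,\cdot]\text{ vs }[A,\cdot]\text{)}$ can be controlled uniformly in $\mu$ using $[A_\mu, B] = (1+i\mu A)^{-1}[A,B](1+i\mu A)^{-1}$ for bounded $B$. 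The upshot is a bound $\|[R_\epsilon(z),iA_\mu]\|\le C(\epsilon,z)$ independent of $\mu$. This requires care with domains, since the formal manipulations involve $A$ acting on $R_\epsilon(z)(\text{something})$; the cutoff $A_\mu$ is precisely what makes every line rigorous because $A_\mu$ is bounded. I would carry out the algebra on a core (e.g. $\mathcal{S}(\R^d)$, which is a core for both $H$ and $A$ by Nelson's commutator theorem as noted after \eqref{2405121442}) and then extend by density.

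Next, with the uniform bound in hand, take $u\in D(A)$. Then for $v$ in a dense set,
\[
\langle A_\mu R_\epsilon(z)u,v\rangle=\langle R_\epsilon(z)A_\mu u,v\rangle+\langle [A_\mu,R_\epsilon(z)]u,v\rangle,
\]
and the right side stays bounded as $\mu\to0$ because $\|A_\mu u\|\le\|Au\|$ and $\|[A_\mu,R_\epsilon(z)]\|$ is uniformly bounded. Hence $\{A_\mu R_\epsilon(z)u\}_\mu$ is bounded in $L^2$; extracting a weakly convergent subsequence and using that $A_\mu\to A$ strongly on $D(A)$ together with the closedness of the self-adjoint operator $A$ gives $R_\epsilon(z)u\in D(A)$ (one checks $A_\mu w\to Aw$ weakly implies $w\in D(A)$ when $A$ is closed and $A_\mu$ are its Yosida-type approximants). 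This is the conclusion.

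The main obstacle I anticipate is not the abstract scheme — which is classical (Mourre, \cite{Mo}, \cite{ABG}) — but verifying the uniform-in-$\mu$ boundedness of $[R_\epsilon(z),iA_\mu]$ cleanly in the present unbounded-potential setting: one must be sure that $[A,H]$ and $[[A,H],A]$ (equivalently $M_1,M_2$) genuinely extend to bounded operators, which is exactly what Lemma \ref{2405111618} provides, and that the pseudodifferential symbol calculus used there is compatible with the cutoffs $(1+i\mu A)^{-1}$. Since $(1+i\mu A)^{-1}$ commutes with functions of $A$ and is a contraction, the estimates go through, but the bookkeeping of which commutators are bounded versus merely form-bounded is where the argument must be written with care. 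Everything else is a routine density and weak-compactness wrap-up.
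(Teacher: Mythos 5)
Your overall plan is the same as the paper's: regularize $A$ by its bounded Yosida-type approximants, compute the commutator with $R_\epsilon(z)$, extract a uniform bound, and conclude by closedness (the paper writes $A(\lambda)=i\lambda A(A+i\lambda)^{-1}$, which is your $A_\mu$ with $\mu=1/\lambda$; the paper passes to a strong limit rather than a weak one, but that difference is cosmetic). There is, however, a genuine gap.

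The gap is in your justification of the commutator identity
\[
[A_\mu,R_\epsilon(z)]=R_\epsilon(z)\,[iA_\mu,H_\epsilon-z]\,R_\epsilon(z),
\]
and more precisely in your claim that ``the cutoff $A_\mu$ is precisely what makes every line rigorous because $A_\mu$ is bounded.'' Boundedness of $A_\mu=A(1+i\mu A)^{-1}$ gives $A_\mu\colon L^2\to L^2$, but to write $[A_\mu,H_\epsilon-z]R_\epsilon(z)$ as an operator (or even to insert $(H_\epsilon-z)R_\epsilon(z)=I$ inside $[T,R_\epsilon(z)]$ with $T=(1+i\mu A)^{-1}$) one needs $T\big(D(H)\big)\subset D(H)$ --- that is, the resolvent of $A$ must preserve the domain of $H$. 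This is \emph{not} a consequence of $M_1,M_2$ being bounded: boundedness of $M_1$ gives a quadratic form bound $|\langle Au,Hu\rangle-\langle Hu,Au\rangle|\lesssim \|Hu\|^2+\|u\|^2$ on a core, but the $C^1(A)$-regularity needed for the commutator identity additionally requires the domain invariance, and these are not equivalent without a further argument (see e.g.\ Theorem 6.2.10 of \cite{ABG}). The paper devotes the first half of its proof to exactly this point: it cites Lemma 3.9 of \cite{BCHM} for $H_0\in C^1(A)$, Proposition 3.2.5 of \cite{ABG} for $e^{itA}\in\mathcal{B}(D(H))$, and Proposition II.3 of \cite{Mo} to conclude $(A-z)^{-1}\big(D(H)\big)\subset D(H)$ for $|\im z|$ large. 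Your proposal to ``carry out the algebra on $\mathcal{S}(\R^d)$ and extend by density'' does not close this: $(1+i\mu A)^{-1}$ is not obviously a map $\mathcal{S}\to\mathcal{S}$, and even if it were, you would still need the extended operator identity to hold on $D(H)$, which again reduces to the invariance statement. Once that invariance is supplied, the rest of your argument (uniform bound, weak limit, closedness of $A$) is correct and matches the paper.
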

\begin{proof}
First we notice that $e^{itA} (D(H)) \subset D(H)$ holds for all $t \in \R$. This is a consequence of  $e^{itA} (D(H_0)) \subset D(H_0)$, which follows from $H_0 \in C^1 (A)$ (see \cite{BCHM} Lemma 3.9) and $D(H) = D(H_0)$. Then by Proposition 3.2.5 in \cite{ABG}, $e^{itA} \in \mathcal{B} (D(H))$ and
\begin{align*}
\sup_{|t| <1} \|He^{itA} u\|_2 < \infty
\end{align*}
holds for all $u \in D(H)$. Since $M_1$ and $M_2$ are bounded, the assumptions $(a) \sim (d)$ in \cite{Mo} are satisfied. Therefore by Proposition II.3 in \cite{Mo}, $(A-z)^{-1} (D(H)) \subset D(H)$ holds if $|\im z|$ is sufficiently large. We set $A(\lambda) =i\lambda A(A+i\lambda)^{-1} = i\lambda + \lambda ^2 (A+i\lambda)^{-1}$ for $\lambda >0$. Since $\im R_{\epsilon} (z) \subset D(H)$ holds, we have
\begin{align*}
&\langle (A+i\lambda)^{-1} H R_{\epsilon} (z) u, v \rangle - \langle H (A+i\lambda)^{-1} R_{\epsilon} (z) u, v \rangle \\
& = \langle -iM_1 (A+i\lambda)^{-1} R_{\epsilon} (z) u, (A-i\lambda)^{-1} v \rangle
\end{align*}
and this yields
\begin{align*}
R_{\epsilon} (z) [A(\lambda), H] R_{\epsilon} (z)=i R_{\epsilon} (z) i\lambda(A+i\lambda)^{-1} M_1  i\lambda(A+i\lambda)^{-1} R_{\epsilon} (z).
\end{align*}
Furthermore 
\begin{align*}
[A(\lambda), M_1] = i\lambda(A+i\lambda)^{-1} iM_2 i\lambda(A+i\lambda)^{-1}
\end{align*}
holds. By using $[R_{\epsilon} (z), A(\lambda)] = R_{\epsilon} (z) [A(\lambda), H-z-i\epsilon M_1 ]R_{\epsilon} (z)$, for $u \in D(A)$, 
\begin{align*}
A(\lambda) R_{\epsilon} (z)u &= [A(\lambda), R_{\epsilon} (z)]u + R_{\epsilon} (z) A(\lambda)u \\
& \to -iR_{\epsilon} (z) M_1 R_{\epsilon} (z) -\epsilon R_{\epsilon} (z) M_2 R_{\epsilon} (z) +R_{\epsilon} (z) Au
\end{align*}
as $\lambda \to \infty$. Since $i\lambda (A+i\lambda)^{-1} R_{\epsilon} (z)u \to R_{\epsilon} (z) u$, we obtain $R_{\epsilon} (z) u \in D(A)$.
\end{proof}

\section*{Acknowledgement}
The author would like to thank his supervisor Kenichi Ito for valuable comments and discussions. He is also thankful to Kouichi Taira for explaining radial estimates. He is partially supported by FoPM, WINGS Program, the University of Tokyo.

\end{document}